\documentclass[10pt]{article}

\usepackage[utf8]{inputenc}
\usepackage[english] {babel}
\usepackage{easyReview}

\usepackage{amsmath}
\usepackage{amsfonts}
\usepackage{amssymb}
\usepackage{amsthm}
\usepackage{thmtools}
\usepackage{mathtools}
\usepackage{bbm}

\usepackage{braket}
\usepackage{enumerate}
\usepackage{esint}
\usepackage{hyperref}
\usepackage{enumitem}

\numberwithin{equation}{section}

\usepackage{cleveref}

\newcommand{\mres}{\mathbin{\vrule height 1.6ex depth 0pt width
0.13ex\vrule height 0.13ex depth 0pt width 1.3ex}}  

\newtheorem{theorem}{Theorem}[section]
\newtheorem{corollary}[theorem]{Corollary}
\newtheorem{lemma}[theorem]{Lemma}
\newtheorem{proposition}[theorem]{Proposition}

\theoremstyle{definition} \newtheorem{definition}[theorem]{Definition}
\theoremstyle{definition}  
\theoremstyle{definition}  \newtheorem{example}[theorem]{Example}
\theoremstyle{definition}  \newtheorem{remark}[theorem]{Remark}

\theoremstyle{definition}  

\theoremstyle{definition}  





\mathtoolsset{
	showonlyrefs
}

\def\Xint#1{\mathchoice
	{\XXint\displaystyle\textstyle{#1}}%
	{\XXint\textstyle\scriptstyle{#1}}%
	{\XXint\scriptstyle\scriptscriptstyle{#1}}%
	{\XXint\scriptscriptstyle\scriptscriptstyle{#1}}%
	\!\int}
\def\XXint#1#2#3{{\setbox0=\hbox{$#1{#2#3}{\int}$ }
		\vcenter{\hbox{$#2#3$ }}\kern-.6\wd0}}

\def\dashint{\Xint-}

\makeatletter
\let\save@mathaccent\mathaccent
\newcommand*\if@single[3]{%
	\setbox0\hbox{${\mathaccent"0362{#1}}^H$}%
	\setbox2\hbox{${\mathaccent"0362{\kern0pt#1}}^H$}%
	\ifdim\ht0=\ht2 #3\else #2\fi
}

\newcommand*\rel@kern[1]{\kern#1\dimexpr\macc@kerna}

\newcommand*\widebar[1]{\@ifnextchar^{{\wide@bar{#1}{0}}}{\wide@bar{#1}{1}}}

\newcommand*\wide@bar[2]{\if@single{#1}{\wide@bar@{#1}{#2}{1}}{\wide@bar@{#1}{#2}{2}}}
\newcommand*\wide@bar@[3]{%
	\begingroup
	\def\mathaccent##1##2{%
		\let\mathaccent\save@mathaccent
		\if#32 \let\macc@nucleus\first@char \fi
		\setbox\z@\hbox{$\macc@style{\macc@nucleus}_{}$}%
		\setbox\tw@\hbox{$\macc@style{\macc@nucleus}{}_{}$}%
		\dimen@\wd\tw@
		\advance\dimen@-\wd\z@
		\divide\dimen@ 3
		\@tempdima\wd\tw@
		\advance\@tempdima-\scriptspace
		\divide\@tempdima 10
		\advance\dimen@-\@tempdima
		\ifdim\dimen@>\z@ \dimen@0pt\fi
		\rel@kern{0.6}\kern-\dimen@
		\if#31
		\overline{\rel@kern{-0.6}\kern\dimen@\macc@nucleus\rel@kern{0.4}\kern\dimen@}%
		\advance\dimen@0.4\dimexpr\macc@kerna
		\let\final@kern#2%
		\ifdim\dimen@<\z@ \let\final@kern1\fi
		\if\final@kern1 \kern-\dimen@\fi
		\else
		\overline{\rel@kern{-0.6}\kern\dimen@#1}%
		\fi
	}%
	\macc@depth\@ne
	\let\math@bgroup\@empty \let\math@egroup\macc@set@skewchar
	\mathsurround\z@ \frozen@everymath{\mathgroup\macc@group\relax}%
	\macc@set@skewchar\relax
	\let\mathaccentV\macc@nested@a
	\if#31
	\macc@nested@a\relax111{#1}%
	\else
	\def\gobble@till@marker##1\endmarker{}%
	\futurelet\first@char\gobble@till@marker#1\endmarker
	\ifcat\noexpand\first@char A\else
	\def\first@char{}%
	\fi
	\macc@nested@a\relax111{\first@char}%
	\fi
	\endgroup
}

\usepackage[a4paper,top=3.2cm,bottom=2.7cm,left=4.5cm,right=4.5cm, bindingoffset=0mm]{geometry}
\usepackage{a4wide}
\usepackage{microtype}

\title{Generalized minimizers for the capillarity energy outside convex obstacles}
\author{Gabriele Nani and Simone Vincini}
\date{\today}

\begin{document}
	\maketitle
    \begin{abstract}
        In general, the capillarity energy outside an unbounded obstacle may fail to admit volume-constrained minimizers, as infimizing sequences may escape to infinity.

        We study this minimization problem outside a closed convex obstacle $C$. 
        Via a $\Gamma$-convergence result for the capillarity energies outside sequences of closed convex obstacles converging in the Hausdorff sense, we prove that the infimum of the functional is attained by a \emph{generalized minimizer}, namely, a finite union of regions, each of which minimizes the capillarity energy outside a limit of translations of $C$ in its volume class, with the total volume satisfying the prescribed constraint. Each generalized minimizer can be suitably approximated by an infimizing sequence, thus providing a description of its asymptotic behaviour.

        As an application, we study the isoperimetric profile associated with the capillarity energy and investigate how the asymptotic geometry of the obstacle affects both this profile and generalized minimizers and their connectedness.
    \end{abstract}
	\tableofcontents
	\section{Introduction}
        The classical capillarity problem originates in the study of equilibrium configurations of liquid droplets inside a container, under the combined effects of surface tension and adhesion to the boundary. 
        Its mathematical formulation can be traced back to the works of Young, Laplace, and Gauss, and was later developed in a rigorous variational framework by Finn \cite{Finn}, whose monograph remains a standard reference on the subject. A modern treatment from the viewpoint of geometric measure theory and geometric variational problems can be found in the book by Maggi \cite{Ma12}.
        
        In this paper we consider the equivalent formulation in which the droplet is constrained to lie in the complement of a given closed convex set $C$, which plays the role of the obstacle. 
        This setting naturally arises in several applied contexts, such as models for nanowires studied by Fonseca, Fusco, Leoni and Morini \cite{FoFuLeMo23}; see the references therein for further discussion of the modelling aspects.
        
        In variational terms, the problem is formulated as the minimization, under a volume constraint, of the capillarity energy functional $\mathcal{F}_{\lambda, C}$.
        When the obstacle has full dimension, i.e.~it is a convex body, its capillarity energy is defined as
        \begin{equation}\label{eq: energia}
             \mathcal{F}_{\lambda, C}(E) \coloneqq P(E;\mathbb{R}^N\setminus C) - \lambda \,\mathcal{H}^{N-1}(\partial^*E \cap \partial C),
        \end{equation}
        where $C \subset \mathbb{R}^N$ is a convex body, $P(\cdot;\mathbb{R}^N\setminus C)$ denotes the relative perimeter outside $C$, $\partial^*E$ is the reduced boundary of $E$, and $\lambda \in (-1,1)$ is the adhesion coefficient;
        When the obstacle has codimension 1, the wet part $\mathcal{H}^{N-1}(\partial^*E \cap \partial C)$ is replaced by the integral of the traces on both sides, while with larger codimension it vanishes completely, see \eqref{eq: energy}.
        In the following, for readability, we will just write $\mathcal{F}_{C}$ to refer to the capillarity energy.
        
        The main goal of the paper is to study the existence of (generalized) minimizers for the capillarity energy outside closed convex obstacles in presence of a volume constraint, to describe the associated isoperimetric profile and study the properties of the (generalized) minimizers.

        \paragraph{Background.}
        Similar problems have already been thoroughly studied.
        In the special case $\lambda = 0$, the functional reduces to the relative perimeter outside $C$, and the problem becomes the relative isoperimetric problem in $\mathbb{R}^N \setminus C$.
        In this context, Choe, Ghomi, and Ritoré \cite{ChoeGhomiRitore} proved the relative isoperimetric inequality in the exterior of a convex body, showing that, with the same volume constraint, the energy of the minimizers outside of a flat obstacle is less than the energy of the minimizers outside a general closed convex obstacle.
        They furthermore characterized the equality case in the assumption of smoothness of the boundary of the obstacle, showing that it implies the existence of flat facets in $\partial C$; the smoothness assumption was later dropped by Fusco and Morini \cite{FuMo23}. 
        More recently, this results have been extended to the full range of $\lambda \in (-1, 1)$ by Fusco, Julin, Morini, and Pratelli \cite{IsopCap}.

        The first part of this paper will be devoted to the study of the existence of (possibly generalized) minimizers for the energy \eqref{eq: energia} outside closed convex obstacles.
        When the obstacle $C$ is compact, the existence of minimizers with a volume constraint is known and can be proved e.g.~via the Direct Method of the Calculus of Variations and some clever slicing arguments, similar to the ones in \cite[Proposition 3.2]{ritoreExistenceCharacterizationRegions2004}.
        The situation changes drastically when $C$ is unbounded. 
        In this case, minimizers may fail to exist even for simple geometries and minimizing sequences typically escape to infinity.
        This loss of compactness provides one of the main difficulties in the study of isoperimetric and capillarity problems in unbounded settings.

        A closely related approach to this loss of compactness was developed by Nardulli \cite{Nardulli2014} in the study of volume-constrained minimizers of the perimeter on complete noncompact Riemannian manifolds.
        There, the same lack of compactness for minimizing sequences is tamed via a concentration--compactness--type analysis: under suitable assumptions, minimizing sequences are shown to decompose into a finite number of components, some of which may drift to infinity.        
        Taking pointed limits of the ambient manifold, Nardulli introduced a notion of generalized isoperimetric region, living partly in suitable limit manifolds at infinity.
        Via this notion, a generalized existence theorem is proved in this enlarged framework. Since the introduction of the ideas of Nardulli, related concentration--compactness and asymptotic decomposition techniques have been developed and applied in several geometric and variational settings, especially in the study of isoperimetric problems on noncompact Riemannian manifolds and, more recently, in the setting of $\mathrm{RCD}(K,N)$ spaces.
        There, analogous decomposition results have been obtained by Antonelli, Nardulli, and Pozzetta \cite{refId0}.
        We refer to the survey by Pozzetta \cite{PozzettaSurvey} and references therein for an overview of these developments in the Riemannian and synthetic setting.  

        The study of the corresponding loss of compactness phenomenon for the capillarity energy outside unbounded convex bodies in the Euclidean space, where some mass of minimizing sequences may escape to infinity is already present in \cite{FoFuLeMo23}.
        There the authors study a volume constrained capillarity energy functional outside certain unbounded convex bodies and obtain an existence result by exploiting a particular cylindrical structure of the obstacle at infinity, which allows one to recover compactness by translating escaping components along the invariant directions.
        In particular, given $C\subset \mathbb R^N$ a convex body such that $C\cap \{x_N <0\}$ is a semi-cylinder with compact section, $v>0$ and $\lambda\in(-1, 1)$, there exists $F\subset \mathbb{R}^N\setminus C$ minimizing $\mathcal{F}_C$ among all sets $E\subset \mathbb{R}^N\setminus C$ such that $|E|=v$.
		The proof is a more or less standard application of the direct method of the calculus of variations: up to a perturbation, which deals with the volume constraint, the functional is shown to have a minimum $F_R$ in any ball $B(0, R)$; a regularity argument proves that the number of connected components of $F_R$ is equibounded, so that every component escaping at infinity can be slid back along the cylinder and kept at finite distance; finally, by compactness and semicontinuity, a minimum is found.
		
		It is then natural to ask what happens when instead one imposes no condition on the obstacle $C$, except for being a convex body. 
        Since $C$ does not enjoy any invariance by translations, the escaping connected components of the sequence $F_R$ cannot be moved without changing the obstacle. 
        However, the energy is still invariant by translation of both the obstacle and the competitor: for all $x\in\mathbb R^N$
		\begin{equation}
			\mathcal F_C(E)=\mathcal F_{C+x}(E+x).
		\end{equation}
		A way to describe the asymptotic behaviour of a sequence $E_n$ escaping at infinity is then to translate back to the finite the sets $E_n$ and deal with the sequence of translations of $C$ by a diverging sequence of vectors $x_n$. 
        It is then natural to look at limits $\tilde{C}=\lim\limits_{n \to +\infty} C+x_n$, where the limit is to be intended in the Kuratowski convergence.
		This sequences have been studied by Leonardi, Ritoré and Vernadakis in \cite{LeRiVe22} in order to deal with similarly escaping sets in the study of the relative isoperimetric problem but in the inside of convex bodies. The limits can be shown to be convex cylinders, with possibly empty interior. 
		In order to understand what happens in the limit, thus,  we have to extend the definition of $\mathcal F_C$ to the case in which $C$ has positive codimension. If the codimension is bigger than one, we just neglect the contribution of the wet part $\mathcal \partial^* E\cap C$, as it is natural since $\mathcal H^{n-1}(C)=0$. If instead $C$ has codimension 1 (and thus we may assume $C\subset \{x_n=0\}$), we substitute $P(E; C)$ with $P(E\cap\{x_N>0\}; C) +P(E\cap\{x_N<0\}; C)$.

        \paragraph{Main results.}
        Our first main result is to prove that the asymptotic behaviour of minimizing sequences can be fully described by looking at the capillarity energy on asymptotic cylinders of the obstacle $C$. 
        We do so by generalizing competitors outside $C$ to asymptotic configurations, which include competitors outside the asymptotic cylinders.
        \begin{definition}\label{def: asymptotic configuration}
			Let $C\subset\mathbb R^N$ be a closed convex set.
            We call the couple $(\mathbf{E}, \mathbf{C})$ an \emph{asymptotic configuration} for $C$ if there exists $k\in \mathbb{N}\cup\{0\}$ such that
            \begin{equation}
				\mathbf E=(E_0, \ldots, E_k), \qquad \mathbf C=(C,C_1, \ldots, C_k),
			\end{equation} 
            where $C_1, \ldots, C_k$ are asymptotic cylinders for $C$, $E_0\subset \mathbb R^N\setminus C$ is a set of finite perimeter, $E_j\subset\mathbb R^N\setminus C_j$ with $|E_j|>0$, $j=1, \ldots, k$ are sets of finite perimeter.
            
			We say that $(\mathbf{E}, \mathbf{C})$ has length $k$ and volume $|\mathbf E|=\sum_{j=0}^k |E_j|$.
        \end{definition}  
		Given an asymptotic configuration, euristically the collection of limits of connected components of a minimizing sequence, we can compute its energy
        \begin{equation}
			\mathcal{G}_C(\mathbf E; \mathbf C)=\mathcal F_C(E_0)+\sum_{j=1}^k \mathcal F_{C_j} (E_j).
		\end{equation}

        \begin{definition}\label{def: generalized minimizer}
			Let $C\subset\mathbb R^N$ be a closed convex set and $v>0$. 
            
            An \emph{asymptotic} (or \emph{ generalized}) \emph{minimizer} for $C$ with volume constraint $v$ is an asymptotic configuration $(\mathbf{E}, \mathbf{C})$ such that
            \begin{equation}
                \mathcal{G}_C(\mathbf{E}, \mathbf{C}) \leq \mathcal{G}_C(\tilde{\mathbf{E}}, \tilde{\mathbf{C}})
            \end{equation}
            among all of the asymptotic configurations $(\tilde{\mathbf{E}}, \tilde{\mathbf{C}})$ for $C$ with volume $v$.
        \end{definition} 
        
        We are able to prove that the infimum of the capillarity energy outside $C$ is attained as a minimum when the energy is extended to asymptotic configurations.
		\begin{theorem}\label{teo: generalized}
			Let $C$ be a closed convex set, $\lambda\in \left(-1, 1\right)$ and $v>0$.		
			There exist a generalized minimizer $(\mathbf E, \mathbf C)$ for $C$ with volume constraint $v$. 
            Moreover:
			\begin{equation}
				\mathcal{G}_C(\mathbf E, \mathbf C)=\min \mathcal{G}_C( \tilde{\mathbf E}, \tilde{\mathbf C})=\inf \mathcal{F}_C(F),
			\end{equation}
			where $|\mathbf E|=v$ and $( \tilde{\mathbf E}, \tilde{\mathbf C})$ varies among all asymptotic configurations for $C$ of volume $v$ and $F\subset\mathbb R^N\setminus C$ ranges among all sets of finite perimeter with volume $v$.
		\end{theorem}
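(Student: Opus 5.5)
We argue by proving two inequalities and then extracting a minimizer from an infimizing sequence. Write $I(v)=\inf\{\mathcal F_C(F): F\subset\mathbb R^N\setminus C,\ |F|=v\}$ and $J(v)=\inf \mathcal G_C(\tilde{\mathbf E},\tilde{\mathbf C})$ over asymptotic configurations of volume $v$.

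\textbf{Step 1: $J(v)\le I(v)$.} This is immediate, since any admissible $F$ gives the configuration of length $0$.

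\textbf{Step 2: $I(v)\le J(v)$.} Take a configuration $(\tilde{\mathbf E},\tilde{\mathbf C})$ with $C_j=\lim_n (C-x^j_n)$ in the Kuratowski sense and $|x^j_n|\to\infty$. By a density argument, and using that $C_j$ is a closed convex cylinder, we first reduce to $E_j$ bounded, with $\mathcal F_{C_j}(E_j)$ changing by at most $\varepsilon$. Here we use the $\Gamma$-limsup part of the $\Gamma$-convergence result for obstacles converging in the Hausdorff sense, which is convergence on compact sets after translation. It gives sets $E_j^n\subset\mathbb R^N\setminus(C-x^j_n)$ with $E^n_j\to E_j$ and $\mathcal F_{C-x^j_n}(E^n_j)\to\mathcal F_{C_j}(E_j)$. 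We then translate back to $E^n_j+x^j_n\subset\mathbb R^N\setminus C$. Because the $x^j_n$ diverge, and pairwise differences also diverge (we may pass to subsequences, or otherwise merge components living in the same cylinder), the translated pieces are disjoint and far apart, so their energies add. The volume defect is fixed by a small dilation or a standard volume-fixing variation, which costs $o(1)$ in energy. Letting $\varepsilon\to0$ gives $I(v)\le J(v)$.

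\textbf{Step 3: existence of a minimizer.} We follow the strategy of \cite{FoFuLeMo23}. For each $R$, take a minimizer $F_R$ of a volume-penalized capillarity energy outside $C$ inside $B(0,R)$. These satisfy uniform density estimates: a lower volume density bound at boundary points, valid uniformly in $R$ by the convexity of the obstacle and $|\lambda|<1$. It follows that $F_R$ splits into at most $K$ pieces (with $K$ independent of $R$) at mutual distances tending to infinity. More precisely, cover $F_R$ by cubes of unit side. The concentration–compactness lemma together with the lower density bound yields finitely many centers $x^j_R$ such that, up to subsequences, $F_R-x^j_R$ converges locally in $L^1$ to $E_j$, with no loss of mass.

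\begin{itemize}
\item If $|x^j_R|$ stays bounded, the piece gives $E_0$ outside $C$.
\item Otherwise, $C-x^j_R$ converges to an asymptotic cylinder $C_j$, by the compactness of translates proved in \cite{LeRiVe22}. The $\Gamma$-liminf inequality then gives $\liminf\mathcal F_{C-x^j_R}(F_R\cap U_j)\ge \mathcal F_{C_j}(E_j)$ on the localized pieces.
\end{itemize}

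Summing over $j$, and using that the energy localizes because the pieces are separated, we get $\mathcal G_C(\mathbf E,\mathbf C)\le\liminf \mathcal F_C(F_R)=I(v)$. Since mass is conserved, $|\mathbf E|=v$. Combined with Step 2 this gives $\mathcal G_C(\mathbf E,\mathbf C)= I(v)=J(v)$, so the configuration is a minimizer.

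\textbf{Main obstacle.} The hardest step is the no-loss-of-mass and finiteness statement in Step 3: ruling out vanishing and infinitely many small pieces, uniformly in $R$. We would handle it as follows. First derive uniform density and perimeter bounds near $\partial C$; these need the lower bound $\mathcal F_C(E)\ge(1-|\lambda|)P(E;\mathbb R^N\setminus C)$ together with a relative isoperimetric inequality outside convex sets. Then use the strict subadditivity-type inequality $I(v)<I(v_1)+I(v-v_1)$ failing only in the presence of genuinely separated pieces. This excludes dichotomy losses, since mass spread into many small bits costs perimeter of order $v_i^{(N-1)/N}$, which is concave.
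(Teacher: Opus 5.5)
Your overall architecture matches the paper's: penalize the volume, minimize in $B(0,R)\setminus C$, use density estimates at fixed $\Lambda$ to get finitely many bounded pieces, translate the escaping ones back, and pass to the limit with the $\Gamma$-liminf (and the $\Gamma$-limsup for the reverse inequality). Steps 1 and 2 are fine; the small volume correction in Step 2 costs $o(1)$.

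The genuine gap is in Step 3, in the sentence ``since mass is conserved, $|\mathbf E|=v$''. The sets $F_R$ minimize the \emph{penalized} functional $\mathcal F^v_{C,\Lambda}$, so in general $|F_R|\neq v$. Mass conservation only gives $|\mathbf E|=\lim_R|F_R|=:w$. What you actually obtain is $\mathcal G_C(\mathbf E;\mathbf C)+\Lambda|w-v|\le I(v)$; also, $\liminf\mathcal F_C(F_R)=I(v)$ is unjustified, and only $\le$ holds.

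You cannot fix this by taking $\Lambda=\Lambda_R\to\infty$. The radius $r_0(\lambda,\Lambda)$ and the bounds on the number and diameter of components all degenerate as $\Lambda\to\infty$, and these are exactly what give you finitely many pieces and no loss of mass. Combining with your Step 2 at volume $w$ does not close it either. That gives $I(w)+\Lambda|w-v|\le I(v)$, and to exclude $w<v$ you would need a \emph{linear} bound $I(v)\le I(w)+L(v-w)$ with $L<\Lambda$. Adding a small disjoint ball only yields $I(v)\le I(w)+c\,(v-w)^{(N-1)/N}$, which is useless for $w$ close to $v$.

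The missing ingredient is an exact-penalization result with a threshold independent of $R$ and of the obstacle. There is $\Lambda_0(\lambda,v)$ such that, for $\Lambda\ge\Lambda_0$, every generalized minimizer of $\mathcal G^v_{C,\Lambda}$ has volume exactly $v$. The paper proves this by contradiction:
\begin{itemize}
\item The case $|\mathbf E|>v$ is excluded by a hyperplane cut.
\item If $|\mathbf E|<v$ along $\Lambda_n\to\infty$, a concentration lemma finds a unit cube carrying a fixed amount of mass of some component.
\item Blowing up at a reduced-boundary point of the limit, away from the limit obstacle, gives a ball in which the Esposito--Fusco deformation increases volume by $c\,\sigma r^N$ at perimeter cost $O(\sigma)$. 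This beats the penalization for large $\Lambda_n$.
\end{itemize}
To use this, you must also know that your limit configuration minimizes the \emph{penalized} generalized problem. So Step 2 has to be proved as $\inf\mathcal F^v_{C,\Lambda}=\inf\mathcal G^v_{C,\Lambda}$, not only for the constrained infima. Finally, an infimizing sequence with volume exactly $v$ is recovered by removing small balls.

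Your ``main obstacle'' paragraph does not touch this issue. The strict subadditivity you invoke there is neither available for general $C$ nor needed: finiteness and non-loss of mass already follow from the density estimates at fixed $\Lambda$, and generalized minimizers may genuinely have several components.

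A minor point: the centers $x^j_R$ should lie in (or at bounded distance from) $C$. If $d(x^j_R,C)\to\infty$, the limit obstacle is $\emptyset\notin\mathcal K(C)$. The paper avoids this by using that every component touches $C$.
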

        For geometrically simple obstacles, this result allows to prove the existence of minimizers, generalizing the existence result in \cite{FoFuLeMo23} (see \Cref{cor: nanowires}).
        In order to prove \Cref{teo: generalized}, we first show an interesting intermediate result in \Cref{teo: gamma conv}: the $\Gamma$-convergence of the capillarity energy outside equibounded closed convex obstacles when such obstacles converge in the Hausdorff distance, no matter the possible loss of dimension.

        The existence of generalized minimizers follows a simple recipe.
        We first relax the volume constraint to a volume penalization with a parameter $\Lambda$ and minimize the penalized functional in large balls.
        These ``local'' minimizers satisfy good uniform density estimates (\Cref{prop: density est}), allowing us to identify and separate connected components.
        We then track each sequence of connected component as the radius diverges: if the sequence remains bounded, we consider its limit outside $C$; if the sequence diverges, we translate it back to the finite and consider its limit outside a corresponding asymptotic cylinder, giving rise to an asymptotic configuration.
        Finally, $\Gamma$-convergence allows us to control the asymptotic behaviour of the energy and well-known arguments (\Cref{prop: esposito fusco unb}) conclude that, for a large enough penalization, the minimizer attains the volume constraint.
        
        Furthermore, if $(\mathbf E, \mathbf C)$ is an asymptotic minimizer, all of its components $E_j$ are actual minimizers of the capillarity energy outside the corresponding obstacle $C_j$ within their volume class (\Cref{prop: gli E_i sono Lambda minimi}).
        This provides a lot of tools in order to begin a finer study of the generalized minimizers and of the isoperimetric profile $I_C$ associated to $\mathcal F_C$, defined by
        \begin{align*}
            I_C(v):= \inf\left \{\mathcal F_C(E): E\subset\mathbb R^N\setminus C,\ |E|=v\right \},
        \end{align*}
        which are the next goals of the paper. First, by exploiting the existence of generalized minimizers and their first properties, we study the isoperimetric profile $I_C$, which we prove to be strictly increasing and locally Lipschitz away from $0$, with a characterization of its derivative at differentiability points (\Cref{prop: proprietà del profilo isoperimetrico}).
        The same $\Gamma$-convergence result also yields an upper semicontinuity result for the isoperimetric profile with respect to the Kuratowski convergence of the obstacle, as shown in \Cref{prop: semicontinuity isop profile}.

        We then move on to study the effect of the asymptotic geometry of the obstacle on the isoperimetric profile and on the connectedness of minimizers.
        In \Cref{teo: principale d*(C)} we prove a dichotomy with respect to the asymptotic dimension $d^*(C)$, which roughly quantifies the number of directions where the obstacle is unbounded.
        
        First, using the characterization of the equality case in \cite{IsopCap}, we show that when $d^*(C)$ is at least $N-1$, where $N$ is the ambient dimension, the isoperimetric profile is the lowest possible, namely equal to that of the half-space. Moreover, generalized minimizers consist of a spherical cap sitting on a flat part of the boundary of $C$ or on a flat part of the boundary of an asymptotic cylinder of $C$. When the asymptotic dimension is strictly less than $N-1$, we show that the isoperimetric profile approaches asymptotically the perimeter of the sphere, showing that the wet part contributes less and less as the volume grows.
        In this case, via a blow-up and a blow-down procedure, we are able to prove that generalized minimizers consist of a single connected component in the large-volume and small-volume regimes.

        Whether connectedness holds for intermediate volumes in the case $d^*(C)<N-1$ remains open in general (except for some particular cases, see e.g.~\cite{maderbaumdicker2025quantitative}). 
        However, in \Cref{prop: bound componenti e diametro} we show that the number of connected components of a generalized minimizer is bounded by a universal constant, independent of the obstacle and of the volume.
		
        \paragraph{Organization of the paper.}
		In \Cref{section: preliminari} we collect the preliminary material and recall the main tools from geometric measure theory and convex geometry that will be needed in the sequel. 
        \Cref{section: Gamma} is devoted to the proof of the $\Gamma$-convergence result, \Cref{teo: gamma conv}, for the capillarity functional under Hausdorff convergence of equibounded closed convex obstacles. In \Cref{section: existence} we introduce the notion of asymptotic configurations and prove the main existence theorem for generalized minimizers, \Cref{teo: generalized}. In \Cref{section: profilo} we establish a series of fundamental properties of the isoperimetric profile $I_C$ associated to the capillarity energy functional. Finally, \Cref{section: connessione} is devoted to the study of the connectedness properties of generalized minimizers and to their dependence on the asymptotic geometry of the obstacle. 
				
	\section{Preliminaries}\label{section: preliminari}
    \subsection{Notation and basic definitions}
    \paragraph{Notation.}
		Whenever not otherwise specified, any set will be a subset of $\mathbb{R}^N$, where $N>1$ is fixed once and for all throughout the paper. Any dependence on the dimension $N$ will be left implicit. 
        
		The notation $B\left(x, R\right)$ will denote the open ball of radius $R$ centered in $x\in \mathbb R^N$, whereas writings such as $\{f\left(x\right)>\left(=, <, \ldots\right)g\left(x\right)\}$ will denote the subset of $\mathbb R^N$ where the relation is satisfied.
        The notation $A\subset B$ means that $A$ is a subset of $B$, while $A\Subset B$ means $\bar{A} \subset B$, where $\bar{A}$ denotes the topological closure of $A$. The topological interior of a set $A$ is denoted with ${\rm int}(A)$.
        Every set considered (again, if not mentioned explicitly) will be assumed to be Borel measurable. Given a set $A$, we denote its Lebesgue measure by $|A|$ and its cardinality by $\# A$. We will relabel subsequences whenever convenient, without further mention, to simplify the notation.
        With a quite usual abuse of notation, we will say that a sequence of sets converges in $L^1$ (or in an $L^1$ sense/fashion) if that applies to the characteristic functions of the sets. The same will apply for $L^1_{loc}$.
        Given $(\mu_n)_{n\in \mathbb N}$ a sequence of $\mathbb R^k$-valued Radon measures and $\mu$ another $\mathbb R^k$-valued Radon measure, we say that $\mu_n\xrightharpoonup{*} \mu$ if for all $\varphi\in C^0_c(\mathbb R^N; \mathbb R^k)$ it holds
        \begin{equation}
            \int_{\mathbb R^N}\varphi\cdot d\mu_n\to \int_{\mathbb R^N}\varphi\cdot d\mu.
        \end{equation}

        We will call convex body a closed convex set of full dimension, i.e.~with non-empty interior.

        \paragraph{Functions of bounded variation and sets of finite perimeter.}
        A function $u\in L^1\left(\Omega\right)$ is said to be of bounded variation if its distributional derivative $Du$ is a finite Radon measure. Given a domain $\Omega$, we call $BV(\Omega)$ the space of functions of bounded variation on $\Omega$.
        Similarly, if $u\in L^1_{loc}(\Omega)$ and $Du$ is a Radon measure, we write $u\in BV_{loc}(\Omega)$. 
        
        Given a measurable set $E\subset \Omega$, we say that $E$ is a set of finite (resp. locally finite) perimeter in $\Omega$ if $D \mathbbm{1}_E$ is a finite Radon measure (resp. $D \mathbbm{1}_E$  is a Radon measure). Given $B\subset \Omega$ a Borel set, we write $P(E; B)=|D \mathbbm{1}_E|(B)$, $P(E)=P(E; \mathbb R^N)$. In particular, if $B$ is open, 
			\begin{equation}\label{eq: perimetro}
				P(E; B)=\sup\left\{ \int_E {\rm div}\, T\;dx:\; T\in C^1_c(B),\, |T|\leq 1\right\}.
			\end{equation}
			Given a sequence $(u_n)_{n\in \mathbb N}\subset BV(\Omega)$ and $u\in BV(\Omega)$, we say that $u_n\to u$ strictly if $u_n\to u$ in $L^1$ and $|Du_n|(\Omega)\to |Du|(\Omega)$. Similarly, if $(E_n)_{n\in \mathbb N}$ is a sequence of sets of finite perimeter, we say that $E_n$ converges strictly to a set $E$ if the characteristic functions $\mathbbm{1}_{E_n}$ converge strictly to $\mathbbm{1}_E$, i.e.\ if $E_n\to E$ in $L^1$ and $P(E_n)\to P(E)$.
			
			For a function $u\in BV(\Omega)$ there is a well defined notion of trace on rectifiable sets in the domain or on the boundary of the domain.
			\begin{theorem}[{\cite[Theorem 3.77]{AmFuPa00}}]\label{teo: trace 1}
				Let $u\in BV(\Omega)$ and let $\Gamma \subset \Omega$ be a $\mathcal H^{N-1}$--rectifiable set oriented by $\nu$. Then, for $\mathcal H^{N-1}$--a.e. $x\in\Gamma$ there exist $\operatorname{Tr}^+_\Gamma u(x), \operatorname{Tr}^-_\Gamma u(x)$ such that
				\begin{equation}
					\lim_{\rho \to 0^+} \dashint_{B^\pm(x, \rho, \nu(x))}|u(y)-\operatorname{Tr}^\pm_\Gamma u(x)|\;dy=0,
				\end{equation}
				where $B^\pm(x, \rho, \nu(x))=B(x, \rho)\cap\{\pm x\cdot \nu> 0\}$.
				Moreover, $Du_{|\Gamma}=(\operatorname{Tr}^+_\Gamma u-\operatorname{Tr}^-_\Gamma u)\nu\,\mathcal H^{N-1}_{|\Gamma}$.
			\end{theorem}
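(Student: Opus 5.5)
The statement is the BV trace theorem \cite[Theorem 3.77]{AmFuPa00}, recalled here as a tool. I would prove it by reduction to the one-sided traces of BV functions on Lipschitz graphs, combined with the rectifiability of $\Gamma$ and a blow-up argument.

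\emph{Step 1: reduction to $C^1$ hypersurfaces.} Since $\Gamma$ is $\mathcal H^{N-1}$-rectifiable, up to an $\mathcal H^{N-1}$-null set it is contained in a countable union of $C^1$ hypersurfaces $M_i$. Moreover, at $\mathcal H^{N-1}$-a.e.\ point of $\Gamma\cap M_i$ the orientation $\nu$ agrees, up to sign, with the normal to $M_i$, since approximate tangent planes coincide a.e.\ on the intersection. Because the claims are pointwise a.e., it suffices to treat $\Gamma=M$, a $C^1$ graph, and a fixed orientation; flipping $\nu$ just swaps the two traces.

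\emph{Step 2: traces on a $C^1$ graph.} I split a neighbourhood $U$ of $M$ into the open sets $U^\pm$ lying on either side of $M$. Each $U^\pm$ is Lipschitz near $M$, so by the boundary trace theorem for $BV$ on Lipschitz domains the restrictions $u|_{U^\pm}$ have traces $u^\pm\in L^1(M,\mathcal H^{N-1})$. These satisfy the Gauss--Green formula
\[
\int_{U^\pm} u\,{\rm div}\,\varphi \,dx=-\int_{U^\pm}\varphi\cdot dDu \mp \int_M u^\pm\,\varphi\cdot\nu\,d\mathcal H^{N-1}.
\]
Adding the two identities gives $Du\mres M=(u^+-u^-)\nu\,\mathcal H^{N-1}\mres M$, which is the last claim.

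\emph{Step 3: Lebesgue-type convergence.} This is the main obstacle, since it asks for convergence of averages on half-balls and not merely an $L^1$ trace. I would use the local Poincaré-type estimate from the construction of the trace: for $x\in M$,
\[
\dashint_{B^+(x,\rho,\nu(x))}|u-u^+(x)|\,dy\lesssim \frac{|Du|(B^+(x,\rho))}{\rho^{N-1}}+\dashint_{M\cap B(x,\rho)}|u^+-u^+(x)|\,d\mathcal H^{N-1}+o(1).
\]
The $o(1)$ term accounts for the discrepancy between the half-ball and the region above the graph, which is controlled by the $C^1$ flatness of $M$.

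The right-hand side tends to $0$ at $\mathcal H^{N-1}$-a.e.\ $x$, for two reasons. The second term vanishes at Lebesgue points of $u^+$ with respect to $\mathcal H^{N-1}\mres M$. For the first term, the density $|Du|(B^+(x,\rho))/\rho^{N-1}$ tends to $0$ for $\mathcal H^{N-1}$-a.e.\ $x\in M$, because the open set $U^+$ does not charge $M$. This last fact follows from the standard density lemma: if $\mu(M)=0$, then $\limsup_\rho \mu(B(x,\rho))/\rho^{N-1}=0$ for $\mathcal H^{N-1}$-a.e.\ $x\in M$. The case of $u^-$ is symmetric, and the countable union over the $M_i$ concludes the proof.
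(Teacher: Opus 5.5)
The paper does not prove this statement; it is quoted from \cite[Theorem 3.77]{AmFuPa00}, so the comparison has to be with that source.

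Your argument is a correct proof, but it runs in a different order from the source, essentially the one Evans and Gariepy use for the fine properties of $BV$ functions. You first obtain one-sided traces on the two Lipschitz sides $U^\pm$ of a $C^1$ piece $M$. Adding the two Gauss--Green formulas then gives $Du\mres M=(u^+-u^-)\nu\,\mathcal H^{N-1}\mres M$. Next you prove the Lebesgue-point property via the Poincar\'e-with-trace estimate and the density lemma, and finally you pass to a general rectifiable $\Gamma$ by covering.

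In \cite{AmFuPa00} the order is reversed. Theorem 3.77 is proved first, and the boundary trace theorem (Theorem 3.87, quoted in the paper as \Cref{teo: trace 2}) is derived afterwards by applying it to the extension of $u$ by zero.

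Your order reduces everything to a hands-on computation on Lipschitz graphs. The reference's order makes boundary traces and gluing of $BV$ functions across a reduced boundary direct corollaries of a single statement. The price of your order is that Step~2 must rely on a boundary trace theorem proved directly, for instance by smooth approximation and integration along vertical lines as in \cite[Chapter 2]{Giusti1984}. It cannot be the version in \cite{AmFuPa00}, otherwise the argument is circular. Such a direct theorem already contains the Lebesgue-point property on $B(x,\rho)\cap U^\pm$, so Step~3 only has to transfer it from $B(x,\rho)\cap U^\pm$ to the half-balls.

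Two points need tightening.

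\textbf{Step~1.} The measure identity is not a pointwise a.e.\ statement. The $M_i$ cover $\Gamma$ only up to an $\mathcal H^{N-1}$-null set $N_0$. To get $Du\mres N_0=0$ you need the standard fact that $|Du|$ vanishes on $\mathcal H^{N-1}$-null Borel sets. This follows from the coarea formula together with $P(\{u>t\};\cdot)=\mathcal H^{N-1}\mres\partial^*\{u>t\}$.

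\textbf{Step~3.} $C^1$ flatness only gives $\bigl|B^+(x,\rho,\nu(x))\,\Delta\,(B(x,\rho)\cap U^+)\bigr|=o(\rho^N)$. It does not by itself control the integral of $|u-u^+(x)|$ over that region, so your $o(1)$ term is not yet justified. You have to use the estimate on the other side as well, for instance
\[
\int_{B^+(x,\rho,\nu(x))\setminus U^+}|u-u^+(x)|\,dy\le\int_{B(x,\rho)\cap U^-}|u-u^-(x)|\,dy+|u^+(x)-u^-(x)|\,\bigl|B^+(x,\rho,\nu(x))\setminus U^+\bigr|=o(\rho^N)
\]
at points where both one-sided estimates hold. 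Alternatively, a Sobolev--Poincar\'e bound in $L^{N/(N-1)}$ does the same job.

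With these two additions the proof is complete.
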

			
			\begin{theorem}[{\cite[Theorem 3.87]{AmFuPa00}}] \label{teo: trace 2}
				Let $\Omega\subset\mathbb R^N$ be an open domain with bounded Lipschitz boundary. Let $u\in BV(\Omega)$. Then, for $\mathcal H^{N-1}$--a.e. $x\in\partial\Omega$ there exists $\operatorname{Tr}_\Omega u(x)$ such that
				\begin{equation}
					\lim_{\rho \to 0^+}\dashint_{B(x, \rho)\cap\Omega }|u(y)-\operatorname{Tr}_\Omega u(x)|\;dy=0.
				\end{equation}
				The trace operator which sends $u$ to $\operatorname{Tr}_\Omega u$ is continuous from $BV(\Omega)$ to $L^1(\partial \Omega)$ where the first is endowed with the topology of strict convergence and the second with the strong topology.
			\end{theorem}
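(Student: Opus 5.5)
The statement is the classical trace theorem, quoted from \cite[Theorem 3.87]{AmFuPa00}. I would prove it by localization, a uniform boundary estimate, and approximation.

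\textbf{Localization.} Since $\partial\Omega$ is bounded and Lipschitz, I cover it by finitely many cylinders $Q_i$. In each $Q_i$, after a rotation, $\Omega\cap Q_i=\{(x',x_N): x_N>g_i(x')\}\cap Q_i$ with $g_i$ Lipschitz. I also fix a smooth partition of unity $\{\varphi_i\}$ subordinate to this cover. For $u\in BV(\Omega)$ each $\varphi_i u$ is again $BV$, with
\begin{equation}
|D(\varphi_i u)|\le |Du|+\|\nabla\varphi_i\|_\infty|u|\,\mathcal L^N .
\end{equation}
Hence it suffices to treat $u$ supported in a single cylinder over the epigraph of a Lipschitz function $g$.

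\textbf{Key estimate for smooth functions.} Let $u\in C^\infty(\Omega)\cap BV(\Omega)$ and set $\Omega_\varepsilon=\{x\in\Omega\cap Q: x_N-g(x')<\varepsilon\}$. I integrate $\partial_N u$ along vertical segments and then average in the height. This gives, for $0<s<t<\varepsilon$,
\begin{equation}
\int |u(x',g(x')+s)-u(x',g(x')+t)|\,dx'\le |Du|(\Omega_\varepsilon),
\end{equation}
and, averaging over $t$,
\begin{equation}
\int |u(x',g(x')+s)|\,dx'\le |Du|(\Omega_\varepsilon)+\frac1\varepsilon\int_{\Omega_\varepsilon}|u|\,dx .
\end{equation}
The first inequality shows that $u(\cdot,g(\cdot)+s)$ is Cauchy in $L^1$ as $s\to0^+$. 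So the trace $\operatorname{Tr}u$ exists, and it satisfies
\begin{equation}
\|\operatorname{Tr}u\|_{L^1(\partial\Omega)}\le C\Big(|Du|(\Omega_\varepsilon)+\varepsilon^{-1}\|u\|_{L^1(\Omega_\varepsilon)}\Big),
\end{equation}
where the constant $C$ depends only on $\mathrm{Lip}(g)$, through the area factor $\sqrt{1+|\nabla g|^2}$.

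\textbf{Extension to $BV$ and continuity.} For general $u\in BV(\Omega)$, I use the Anzellotti--Giaquinta approximation: there are $u_n\in C^\infty(\Omega)$ with $u_n\to u$ strictly. Strict convergence together with lower semicontinuity on open sets gives
\begin{equation}
\limsup_n |Du_n|(\overline{\Omega_\varepsilon}\cap\Omega)\le |Du|(\overline{\Omega_\varepsilon}\cap\Omega),
\end{equation}
and the right-hand side tends to $0$ as $\varepsilon\to0$. Applying the key estimate to $u_n-u_m$ and choosing $\varepsilon$ small first, then $n,m$ large, shows that $(\operatorname{Tr}u_n)$ is Cauchy in $L^1(\partial\Omega)$. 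I then define $\operatorname{Tr}u$ as its limit. The same argument, run with any strictly convergent sequence $v_n\to v$ in $BV(\Omega)$, gives both the independence of the definition from the approximating sequence and the claimed continuity of the trace operator.

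\textbf{Pointwise Lebesgue property (main obstacle).} The hardest step is the pointwise statement, which has to hold at $\mathcal H^{N-1}$-a.e.\ boundary point. I would apply the key estimate on small cylinders $Q_\rho(x)$ centered at $x\in\partial\Omega$, comparing $u$ on $Q_\rho(x)\cap\Omega$ with its trace; this gives
\begin{equation}
\dashint_{B(x,\rho)\cap\Omega}|u-\operatorname{Tr}u(x)|\,dy\le C\Big(\frac{|Du|(\Omega\cap B(x,c\rho))}{\rho^{N-1}}+\dashint_{\partial\Omega\cap B(x,c\rho)}|\operatorname{Tr}u-\operatorname{Tr}u(x)|\,d\mathcal H^{N-1}\Big).
\end{equation}
The second term vanishes at Lebesgue points of $\operatorname{Tr}u\in L^1(\partial\Omega)$. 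For the first term I need
\begin{equation}
\lim_{\rho\to0}\rho^{1-N}|Du|(\Omega\cap B(x,\rho))=0\quad\text{for }\mathcal H^{N-1}\text{-a.e. }x\in\partial\Omega .
\end{equation}
I would prove this with the standard density lemma: for $t>0$, the set of boundary points where the upper density exceeds $t$ has $\mathcal H^{N-1}$-measure at most $C t^{-1}|Du|(\Omega\cap U)$ for every neighbourhood $U$ of $\partial\Omega$, by a Vitali covering argument. Letting $U\downarrow\partial\Omega$ sends $|Du|(\Omega\cap U)\to0$, since $|Du|$ is a finite measure on $\Omega$ and $\Omega\cap U$ decreases to the empty set. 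This yields the claim.
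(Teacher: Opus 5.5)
The paper does not prove this statement; it quotes it from \cite{AmFuPa00}, where the continuity under strict convergence is, I believe, the companion Theorem 3.88 rather than part of Theorem 3.87. So there is no internal argument to compare against. What you wrote is the classical self-contained proof, essentially the one in Evans--Gariepy:
\begin{itemize}
\item the fundamental theorem of calculus along vertical segments over a Lipschitz graph;
\item strict density of $C^\infty(\Omega)\cap BV(\Omega)$ (Anzellotti--Giaquinta);
\item a Poincar\'e-type estimate on small cylinders, combined with the covering lemma giving $\rho^{1-N}|Du|(\Omega\cap B(x,\rho))\to 0$ for $\mathcal H^{N-1}$-a.e.\ $x\in\partial\Omega$.
\end{itemize}
The structure is sound. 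Its merit is that one boundary-strip estimate,
\[
\|\operatorname{Tr}u\|_{L^1(\partial\Omega)}\le C\Big(|Du|(\overline{\Omega_\varepsilon}\cap\Omega)+\varepsilon^{-1}\|u\|_{L^1(\Omega_\varepsilon)}\Big),
\]
gives existence, the bound $\|\operatorname{Tr}u\|_{L^1}\le C\|u\|_{BV}$, and continuity all at once. It also makes visible that the real mechanism is that $|Du_n|$ does not concentrate on $\partial\Omega$.

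One step needs more care. Your reduction ``it suffices to treat $u$ supported in a single cylinder'' is fine for existence and for the pointwise statement. It is not literally valid for the continuity claim, because strict convergence is not preserved by multiplication with a cutoff. For example, on the unit cube take $u(x)=x_1$, $u_n(x)=\lfloor nx_1\rfloor/n$ and $\varphi(x)=1-x_1$. Then $u_n\to u$ strictly, yet $|D(\varphi u_n)|\to 1\neq \tfrac12=|D(\varphi u)|$. The reason is that the jump part of $\varphi Du_n$ and the absolutely continuous part $u_n\nabla\varphi\,dx$ do not cancel, while for $u$ they do. The same example applied to $u_n-u$ shows that differences of strictly convergent sequences need not converge strictly. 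This matters when you apply the estimate to $v_n-v$, which uses linearity of the extended trace.

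You already have the fix. Your Cauchy and continuity arguments only use $L^1$ convergence together with
\[
\lim_{\varepsilon\to0}\limsup_{n}|Du_n|(\overline{\Omega_\varepsilon}\cap\Omega)=0 .
\]
This non-concentration property does pass to $\varphi_i u_n$ and to differences, thanks to your inequality $|D(\varphi_i u)|\le |Du|+\|\nabla\varphi_i\|_\infty|u|\,\mathcal L^N$. Phrase the localization, the independence of the approximating sequence, and the linearity and continuity step in terms of this property rather than strict convergence, and the proof is complete.
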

            \begin{remark}
                If $\Omega\subset\mathbb R^N$ is an open domain with bounded Lipschitz boundary, $\partial \Omega$ is a rectifiable set oriented by the exterior normal $\nu$.
                In this case, if $u\in BV(\mathbb R^N)$, then $u_{|\Omega}\in BV(\Omega)$  and $\operatorname{Tr}_\Omega  u_{|\Omega} =\operatorname{Tr}^-_{\partial \Omega} u$.
            \end{remark}
			
			One of the most fundamental results in the theory of sets of finite perimeter is the so called De Giorgi structure theorem, that allows to represent the Gauss--Green measures $\mu_E\coloneqq D\mathbbm{1}_E$ in terms of the Hausdorff measure on the reduced boundary:
			\begin{theorem}[De Giorgi Structure Theorem, {\cite[Theorem 15.5 and Theorem 15.9]{Ma12}}]
				Let $E\subset\mathbb R^N$ be a set of (locally) finite perimeter. Then there exist a (locally) $\mathcal H^{N-1}$--rectifiable set $\partial^* E\subset \partial E$, called \emph{reduced boundary} of $E$, and a function $\nu_E:\partial^* E\to\mathbb{S}^{N-1}$, called \emph{outward generalized normal}, such that:
				\begin{itemize}
					\item the polar decomposition of $\mu_E$ reads as
					\begin{equation}
						\mu_E=-\nu_E\; \mathcal H^{N-1}\mres\partial^* E;
					\end{equation}
					\item $\nu_E (x)\perp T_x \partial^* E$ for $\mathcal H^{N-1}$ every $x$ in $\partial^* E$;
					\item $\lim\limits_{r\to 0^+}\left(\frac{E-x}{r}\right)= \{y\in \mathbb{R}^N \, : \, y\cdot \nu_E(x)<0\}$ in $L^1_{loc}$ for every $x\in\partial^* E$.
				\end{itemize}
			\end{theorem}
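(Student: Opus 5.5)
We would follow the classical route of De Giorgi, as in \cite{Ma12}. Since the statement is local, it suffices to treat a set $E$ of locally finite perimeter and work in balls. The first step is to \emph{define} the reduced boundary via differentiation of measures. The Gauss--Green measure $\mu_E=D\mathbbm{1}_E$ is absolutely continuous with respect to its total variation $|\mu_E|$. The Besicovitch differentiation theorem then gives, for $|\mu_E|$-a.e.\ $x$, the existence of
\begin{equation}
\lim_{r\to0^+}\frac{\mu_E(B(x,r))}{|\mu_E|(B(x,r))}=:-\nu_E(x),\qquad |\nu_E(x)|=1 .
\end{equation}
We declare $\partial^*E$ to be the set of $x\in{\rm supp}\,|\mu_E|$ where this limit exists and has modulus one. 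Then $\mu_E=-\nu_E\,|\mu_E|\mres\partial^*E$ by construction, and $\partial^*E\subset{\rm supp}\,|\mu_E|\subset\partial E$.

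The second step is the blow-up analysis at a fixed $x\in\partial^*E$. Set $E_{x,r}=(E-x)/r$. We would prove uniform density estimates
\begin{equation}
c\,r^{N-1}\le |\mu_E|(B(x,r))\le C\,r^{N-1},\qquad \min\{|E\cap B(x,r)|,|B(x,r)\setminus E|\}\ge c\,r^N .
\end{equation}
The upper bound comes from comparing $E$ with $E\setminus B(x,r)$ for a.e.\ $r$, using that $|\mu_E(B(x,r))|\approx|\mu_E|(B(x,r))$ at reduced boundary points. The lower bounds come from the relative isoperimetric inequality combined with the same comparison. By BV compactness, the family $E_{x,r}$ is then precompact in $L^1_{loc}$, and every limit $F$ has locally finite perimeter. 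Using lower semicontinuity together with the fact that $|\mu_{E_{x,r}}(B_s)|/|\mu_{E_{x,r}}|(B_s)\to1$, we get that $\mu_F=-\nu_E(x)|\mu_F|$ with a constant direction. A set whose Gauss--Green measure has constant direction is invariant along the orthogonal hyperplane and monotone in the $\nu_E(x)$-direction, so it is a half-space; the density estimates exclude the trivial cases $F=\emptyset$ and $F=\mathbb R^N$. Hence $F=\{y\cdot\nu_E(x)<0\}$. Since the limit is independent of the subsequence, the whole family converges, which proves the third bullet.

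The step we expect to be the main obstacle is upgrading the convergence to $|\mu_{E_{x,r}}|\xrightharpoonup{*}\mathcal H^{N-1}\mres\nu_E(x)^\perp$, with no loss of mass. We would try to obtain this from the inequality
\begin{equation}
\nu\cdot\mu_{E_{x,r}}(B_s)\le|\mu_{E_{x,r}}|(B_s),
\end{equation}
together with lower semicontinuity of the total variation and the identity that the divergence theorem gives for the half-space. This yields $|\mu_E|(B(x,r))/(\omega_{N-1}r^{N-1})\to1$.

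With this density at hand, the remaining bullets follow. First, the comparison theorems for densities of measures give $|\mu_E|=\mathcal H^{N-1}\mres\partial^*E$, completing the polar decomposition. Second, the convergence of the blow-ups to a hyperplane, which holds for every point of $\partial^*E$, shows that $\partial^*E$ admits approximate tangent planes $\nu_E(x)^\perp$ with multiplicity one. Third, to get rectifiability we use a Egorov-type argument: on large compact subsets of $\partial^*E$ the convergence of the blow-ups is uniform, so these subsets are contained in finitely many Lipschitz graphs. This gives countable $\mathcal H^{N-1}$-rectifiability and $\nu_E(x)\perp T_x\partial^*E$.
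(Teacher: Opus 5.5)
Your route is the classical De Giorgi argument, i.e.\ the proof of \cite[Theorems 15.5 and 15.9]{Ma12} that the paper simply cites, and its overall structure is right. However, one step does not go through in the order you state it.

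From $|\mu_E|(B(x,r))/(\omega_{N-1}r^{N-1})\to 1$ at every $x\in\partial^*E$ and $|\mu_E|(\mathbb R^N\setminus\partial^*E)=0$, the density comparison theorems only give $\mathcal H^{N-1}\mres\partial^*E\le|\mu_E|\le 2^{N-1}\,\mathcal H^{N-1}\mres\partial^*E$. The upper comparison loses the factor $2^{N-1}$ because a covering set of diameter $d$ meeting $\partial^*E$ is only known to lie in a ball of radius $d$ centred at a point of $\partial^*E$, which is where the density information is available. So $|\mu_E|=\mathcal H^{N-1}\mres\partial^*E$ cannot be your first consequence; it has to come last. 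The fix is a reordering:
\begin{itemize}
\item Use the lower bound $\mathcal H^{N-1}\mres\partial^*E\le|\mu_E|$ to get local finiteness of $\mathcal H^{N-1}\mres\partial^*E$. It also guarantees that the $|\mu_E|$-null exceptional set in your Egorov step is $\mathcal H^{N-1}$-null.
\item Prove rectifiability of $\partial^*E$.
\item Then $\mathcal H^{N-1}\mres\partial^*E$ has density one $\mathcal H^{N-1}$-a.e.\ on the locally rectifiable set $\partial^*E$. The upper bound, even with its factor $2^{N-1}$, still gives $|\mu_E|\ll\mathcal H^{N-1}\mres\partial^*E$. Hence the Radon--Nikodym derivative is a ratio of two densities both equal to one, which gives the equality.
\end{itemize}
The equality then yields the polar decomposition. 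Via $|\mu_{E_{x,r}}|=\mathcal H^{N-1}\mres((\partial^*E-x)/r)$, it also gives the identification $T_x\partial^*E=\nu_E(x)^\perp$.

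Three smaller points.
\begin{itemize}
\item In the Lipschitz-graph step, $L^1_{loc}$-closeness of $E_{x,r}$ to the half-space, uniformly in $x\in K$, is not enough by itself. You also need the volume density estimates, with a radius uniform on $K$, applied at the second point $y\in K$. That is what converts $L^1$-closeness into the cone condition $|(y-x)\cdot\nu_E(x)|\le\varepsilon|y-x|$.
\item The no-loss-of-mass step is not a real obstacle, but the inequality you quote points the wrong way: like lower semicontinuity, it only bounds $|\mu_{E_{x,r}}|(B_s)$ from below. The needed upper bound on $\limsup_r|\mu_{E_{x,r}}|(B_s)$ comes from the defining property $|\mu_{E_{x,r}}(B_s)|/|\mu_{E_{x,r}}|(B_s)\to1$ together with $\mu_{E_{x,r}}(B_s)\to\mu_F(B_s)$ for a.e.\ $s$. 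This is the same computation that gave you the constant direction of $\mu_F$.
\item The density estimates must be used at every small scale to force the limit half-space $\{y\cdot\nu_E(x)<\alpha\}$ to have $\alpha=0$, not merely to exclude $F=\emptyset$ and $F=\mathbb R^N$.
\end{itemize}
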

            In some occasions it is useful to replace the reduced boundary of a set with its so called essential boundary.
            \begin{definition}
                Let $E$ be a Borel set. For $\alpha\in[0, 1]$, we write
                \begin{equation}
                    E^{(\alpha)}\coloneqq\left\{x\in\mathbb R^N:\; \lim_{r\to 0^+} \frac{|E\cap B(x, r)|}{|B(x, r)|}=\alpha\right\}.
                \end{equation}
                
                The essential boundary of $E$ is defined as $\partial^e E\coloneqq \mathbb{R}^N\setminus (E^{(0)}\cup E^{(1)})$.
            \end{definition}
			By the De Giorgi structure theorem, $\partial^*E\subset E^{\left(\frac{1}{2}\right)}\subset \partial^e E$;
            Federer's Theorem improves the inclusion by stating that the two boundaries are $\mathcal H^{n-1}$-equivalent.
			\begin{theorem}[Federer's Theorem, {\cite[Theorem 16.2]{Ma12}}] \label{teo: Federer}
				Let $E$ be a set of locally finite perimeter. Then
				\begin{equation}
					\mathcal H^{N-1}(\partial^e E\setminus \partial ^*E)=0.
				\end{equation}
			\end{theorem}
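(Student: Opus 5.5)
The statement is Federer's Theorem, that $\mathcal H^{N-1}(\partial^e E\setminus\partial^*E)=0$. I would prove it locally, so I may assume $E$ has finite perimeter in a large ball. The strategy is to show that every point of the essential boundary outside the reduced boundary lies in a set of vanishing $\mathcal H^{N-1}$ measure. The key ingredients are the De Giorgi structure theorem (already quoted above) and a relative isoperimetric inequality in balls, $\min\{|E\cap B(x,r)|,|B(x,r)\setminus E|\}^{(N-1)/N}\le c\,P(E;B(x,r))$.

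\textbf{Step 1 (perimeter density at essential boundary points).} Fix $x\in\partial^e E$. Then $x\notin E^{(0)}\cup E^{(1)}$, so there exist $\epsilon>0$ and radii $r_k\to0$ along which both $|E\cap B(x,r_k)|$ and $|B(x,r_k)\setminus E|$ are at least $\epsilon r_k^N$. A continuity argument in $r$ upgrades this to all small $r$ in a comparable form. Concretely, if the volume fraction oscillates, one finds radii where it equals a fixed intermediate value. More robustly, I would work directly with $\limsup_{r\to0} P(E;B(x,r))/r^{N-1}>0$, which the isoperimetric inequality gives along $r_k$. Hence $\partial^e E\subset\{x:\limsup_{r\to0} |\mu_E|(B(x,r))/r^{N-1}\ge c(\epsilon)>0\}$ for some $\epsilon=\epsilon(x)>0$.

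\textbf{Step 2 (density comparison).} Set $Z=\partial^eE\setminus\partial^*E$. By the De Giorgi structure theorem, $|\mu_E|=\mathcal H^{N-1}\mres\partial^*E$, so $|\mu_E|(Z)=0$ because $Z\cap\partial^*E=\emptyset$. Write $Z=\bigcup_j Z_j$, where $Z_j$ is the set of points with upper density at least $1/j$. The standard upper density comparison lemma (the Vitali covering estimate $t\,\mathcal H^{N-1}(A)\le C|\mu|(U)$ for any open $U\supset A$ on which the upper $(N-1)$-density of $|\mu|$ is at least $t$) gives $\mathcal H^{N-1}(Z_j)\le C j\,|\mu_E|(U)$ for every open $U\supset Z_j$. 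By outer regularity of the Radon measure $|\mu_E|$ and $|\mu_E|(Z_j)=0$, the right-hand side can be made arbitrarily small. Therefore $\mathcal H^{N-1}(Z_j)=0$, and taking a countable union gives $\mathcal H^{N-1}(Z)=0$.

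\textbf{Main obstacle.} The delicate point is Step 1: making the lower density bound quantitative from the mere failure of $x$ to lie in $E^{(0)}\cup E^{(1)}$. The fix is to note that Step 2 only needs a positive upper density. Positivity along a single sequence $r_k$, with a constant depending on $x$, is therefore enough once we stratify by $j$, and the relative isoperimetric inequality in balls supplies exactly this.
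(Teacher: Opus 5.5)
Your proof is correct and follows essentially the standard argument behind the cited \cite[Theorem 16.2]{Ma12}, which the paper quotes without proof: the relative isoperimetric inequality along a sequence of radii (found by the intermediate value theorem when the density oscillates) gives positive upper $(N-1)$-density of $|\mu_E|$ at every point of $\partial^e E$, and since $|\mu_E|(\partial^e E\setminus\partial^*E)=0$, the density comparison lemma concludes. You should delete the aside that the two-sided volume bound ``upgrades to all small $r$'': it is false in general (nested annuli with super-geometrically shrinking radii give a finite-perimeter set with a point where the density oscillates between $0$ and $1$), but, as you note yourself, only the $\limsup$ along $r_k$ is needed.
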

			\begin{remark}
				The reduced boundary somewhat defines what is the smallest boundary for a set in the same equivalence class as $E$. Indeed, it is possible to show that $\partial E^{(1)}=\widebar{\partial^* E}$.
			\end{remark}

        \paragraph{Capillarity energy.}
		Given a convex body $C$ and a parameter (the \emph{adhesion coefficient}) $\lambda\in(-1, 1)$, the capillarity energy of a set $E\subset \mathbb R^N\setminus C$ of finite perimeter is
			\begin{equation}
            \mathcal{F}_C\left(E\right)=P\left(E;\mathbb{R}^N\setminus C\right)-\lambda P\left(E; C\right).
			\end{equation}
        In the following, we will extend the definition of $\mathcal F_C$ to possibly lower dimensional closed convex sets $C$. 
        In particular, given $E\subset \mathbb R^N\setminus C$ and $\lambda\in(-1, 1)$ we define
		\begin{equation}\label{eq: energy}
			\mathcal F_C(E)=
			\begin{cases}
				P\left(E;\mathbb{R}^N\setminus C\right)-\lambda P\left(E; C\right)\quad&\text{if dim}(C)=N,\\
				P\left(E;\mathbb{R}^N\setminus C\right)-\lambda \int_C \left(\operatorname{Tr}_{C}^+ \mathbbm{1}_E+ \operatorname{Tr}_{C}^- \mathbbm{1}_E\right)\,d\mathcal H^{N-1}
				&\text{if dim}(C)=N-1,\\
				P\left(E;\mathbb{R}^N\setminus C\right) &\text{if dim}(C) \leq N-2.
				\end{cases}
			\end{equation}
        We extend the functional $\mathcal F_C$ to all Borel sets by setting it to be $+\infty$ outside the class of sets of finite perimeter contained in $\mathbb R^N\setminus C$. 
        We will refer to the quantity $\mathcal{F}_{C}(E)-P(E,\mathbb{R}^N\setminus C)$ as the \emph{wet contribution} of $E$.
        
        Given a Borel set $\Omega \subset \mathbb{R}^N$, we also define a ``localized'' version of the functional
            \begin{equation}
				\mathcal F_C(E,\Omega)=
				\begin{cases}
					P(E, (\mathbb{R}^N\setminus C)\cap \Omega) - \lambda P(E,C \cap \Omega)\hspace{3.66cm}&\text{if dim}(C)=N,
                    \\
					P\left(E,(\mathbb{R}^N\setminus C)\cap \Omega\right)-\lambda \int_{C\cap \Omega} \left(\operatorname{Tr}_{C}^+ \mathbbm{1}_E+ \operatorname{Tr}_{C}^- \mathbbm{1}_E\right)\,d\mathcal H^{N-1} &\text{if dim}(C)=N-1,\\
					P\left(E,(\mathbb{R}^N\setminus C)\cap \Omega\right) &\text{if dim}(C) \leq N-2.
				\end{cases}
			\end{equation}
            In above definitions, for the case $\dim (C)=N-1$, we are fixing any orientation of $C$, since the value of the integral doesn't depend on this choice.
            Similarly to above, we extend the functional to $+\infty$ outside the class of sets of finite perimeter contained in $\mathbb R^N\setminus C$. 
            
			In order to exploit the regularity properties of $\Lambda$--minimizers in the study of volume constrained minima of $\mathcal{F}_C$, we will be interested in a penalized version of the functional: given a fixed volume $v>0$, a parameter $\Lambda>0$ and a Borel set $E$, we denote
			\begin{equation}
				\mathcal{F}_{C, \Lambda}^{v}\left(E\right)= \mathcal{F}_C\left(E\right)+\Lambda \left||E|-v\right|.
			\end{equation} 
            
		\subsection{Local existence and regularity}
        The existence of minimizers for the capillarity problem in the case of a compact convex obstacle does not present significant difficulties.
        In this subsection we show that minimizers exist and satisfy good regularity properties, that will be instrumental in the study of the more involved noncompact case. The following useful lemma is a generalization of an argument present in \cite[Proposition 19.3]{Ma12}.
			\begin{lemma}\label{lemma: local bulk wet estimate}
				Let $C\subset \mathbb R^N$ be a set of locally finite perimeter such that $\mathcal H^{N-1}(\partial C\setminus \partial^* C)=0$. 
                Then for all $\eta, \delta>0$ there exists $m=m\left(\eta, \delta, C\right)>0$ such that for all $E\subset \mathbb{R}^N\setminus C$ finite perimeter sets it holds
				\begin{equation}
					P\left(E; \partial^* C\right)\leq P\left(E; I_\delta\right)+m|E\cap I_\delta|+\eta,
				\end{equation}
				where $I_\delta=\{x \in \mathbb{R}^N\setminus C:\; 0 < d\left(x, C\right)<\delta \}$.
		\end{lemma}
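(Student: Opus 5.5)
The plan is a calibration argument in the spirit of \cite[Proposition 19.3]{Ma12}. We test the Gauss--Green formula for $E$ against a vector field that is close to the outer normal $\nu_C$ on $\partial^* C$ and is cut off at distance $\delta$ from $C$. The constant $m$ comes from the divergence of this field, and $\eta$ absorbs the error in approximating $\nu_C$.

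\textbf{Step 1: choice of the field.} Since $\mu_C=-\nu_C\,\mathcal H^{N-1}\mres\partial^*C$ is a Radon measure, $C^1_c(\mathbb R^N;\mathbb R^N)$ is dense in $L^1(\mathcal H^{N-1}\mres\partial^*C;\mathbb R^N)$, by Lusin's theorem and mollification. We therefore fix $T\in C^1_c(\mathbb R^N;\mathbb R^N)$ with $|T|\le 1$ and
\begin{equation}
\int_{\partial^*C}|T-\nu_C|\,d\mathcal H^{N-1}<\eta,
\end{equation}
where truncating $T$ radially keeps $|T|\le 1$. This requires $P(C)<\infty$ (or working in a fixed bounded region). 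I would first do the argument in that case.

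For general $C$ of locally finite perimeter, the uniformity in $E$ is the delicate point, since a set $E$ far away cannot be seen by a compactly supported $T$. I would try to handle it by choosing $T$ not compactly supported but with $\|\operatorname{div}T\|_\infty<\infty$ and $\int_{\partial^*C}|T-\nu_C|<\eta$. Failing that, I would restrict to the convex sets actually used, where $\nu_C$ extends as $\nabla d(\cdot,C)$, which is $1$-Lipschitz outside $C$. I expect this globalization to be the main obstacle; the core computation below is unaffected.

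\textbf{Step 2: cutoff and divergence theorem.} Let $\varphi(x)=(1-d(x,C)/\delta)^+$. This function is Lipschitz, equals $1$ on $C$, vanishes where $d(\cdot,C)\ge\delta$, and satisfies $|\nabla\varphi|\le 1/\delta$. The field $\varphi T$ is Lipschitz and compactly supported, so the Gauss--Green formula gives
\begin{equation}
\int_E \operatorname{div}(\varphi T)\,dx=\int_{\partial^*E}\varphi\, T\cdot\nu_E\,d\mathcal H^{N-1}.
\end{equation}
Since $|E\cap C|=0$ and $\varphi=0$ off $C\cup I_\delta$, the left-hand side is supported in $E\cap I_\delta$. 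Hence it is bounded in absolute value by $(\|\operatorname{div}T\|_\infty+1/\delta)\,|E\cap I_\delta|$, which suggests setting $m:=\|\operatorname{div}T\|_\infty+1/\delta$.

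For the right-hand side, we split $\partial^*E$ into three pieces:
\begin{itemize}
\item $\partial^*E\cap I_\delta$, whose contribution has absolute value at most $P(E;I_\delta)$;
\item points with $d\ge\delta$, where $\varphi=0$;
\item $\partial^*E\cap\partial C$.
\end{itemize}
The part $\partial^*E\cap\operatorname{int}C$ is empty, because $E$ has density $0$ there. By assumption $\mathcal H^{N-1}(\partial C\setminus\partial^*C)=0$, so the last piece reduces to $\partial^*E\cap\partial^*C$.

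\textbf{Step 3: sign on the wet part.} At $\mathcal H^{N-1}$-a.e.\ $x\in\partial^*E\cap\partial^*C$ we have $\nu_E(x)=-\nu_C(x)$. Indeed, both blow-ups are half-spaces, and they must be complementary since $E\cap C=\emptyset$. Therefore
\begin{equation}
-\int_{\partial^*E\cap\partial^*C}T\cdot\nu_E=\int_{\partial^*E\cap\partial^*C}T\cdot\nu_C\ \ge\ P(E;\partial^*C)-\int_{\partial^*C}|T-\nu_C|\ \ge\ P(E;\partial^*C)-\eta.
\end{equation}
Combining this with Step 2 gives
\begin{equation}
P(E;\partial^*C)-\eta\le -\int_E\operatorname{div}(\varphi T)+\int_{\partial^*E\cap I_\delta}\varphi T\cdot\nu_E\le m|E\cap I_\delta|+P(E;I_\delta),
\end{equation}
which is the claim.
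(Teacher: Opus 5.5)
Your Steps 2--3 are the same calibration argument as the paper's. You apply Gauss--Green for $E$ against a field that approximates the normal of $C$ on $\partial^*C$ and is supported in $\overline{C}\cup I_\delta$; $m$ comes from the divergence and $\eta$ from the $L^1(\mathcal H^{N-1}\mres\partial^*C)$ approximation error. The paper builds the support condition into $T$ and takes $T\approx-\nu_C$. Your product $\varphi T$ with $\varphi=(1-d(\cdot,C)/\delta)^+$ is an equivalent and slightly cleaner way to arrange this.

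When $P(C)<\infty$ your argument is complete. This covers every place the paper uses the lemma: compact convex obstacles in \Cref{prop: semicontinuity}, and $\mathbb R^N\setminus C$ with $C$ a bounded convex body in the $\liminf$ inequality of \Cref{section: Gamma}. One small correction: $\varphi=1$ on all of $\overline{C}$, so to discard $E\cap(\overline{C}\setminus C)$ you also need $|\partial C|=0$. This follows from $\mathcal H^{N-1}(\partial C\setminus\partial^*C)=0$ and the local finiteness of $\mathcal H^{N-1}\mres\partial^*C$.

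The globalization you flag is a genuine gap, and it cannot be closed in the stated generality. The paper's proof does not overcome it either. There $T\in C^\infty_c(B(0,R))$ and $m=\|\nabla T\|_\infty$ depends on $R$, so the final ``letting $R\to+\infty$'' is not justified. What both arguments prove for general $C$ is the localized bound, with $P(E;\partial^*C\cap B(0,R))$ on the left and $m$ depending also on $R$.

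The global statement is in fact false when $P(C)=+\infty$. Take $N=2$ and $C=\mathbb R^2\setminus\bigcup_{k\geq 1}Q_k$ with $Q_k=(3k,3k+1)\times(0,1)$. This set is closed, has locally finite perimeter, and $\partial C\setminus\partial^*C$ (the corners) is countable. Given $\eta,\delta$ and a candidate $m$, put $s=\min\{1/2,\delta/2,(2-\sqrt2)/m\}$. Let $E$ be the union over $k=1,\dots,K$ of the triangles with vertices $(3k,0)$, $(3k+s,0)$, $(3k,s)$. Then $P(E;\partial^*C)=2Ks$, $P(E;I_\delta)=\sqrt2\,Ks$ and $|E\cap I_\delta|=Ks^2/2$. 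The inequality would therefore give $Ks(2-\sqrt2-ms/2)\le\eta$ for every $K$. This is impossible, because $2-\sqrt2-ms/2\ge(2-\sqrt2)/2$. In particular, the field in your first proposed fix, with $\|\operatorname{div}T\|_\infty<\infty$ and $\int_{\partial^*C}|T-\nu_C|\,d\mathcal H^{N-1}<\eta$, does not exist in general.

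Your convex fallback is the right repair, but not for the reason you give. The field $\nabla d(\cdot,C)$ is not Lipschitz up to $\partial C$: at distance $r$ from a corner of a square it rotates at rate $\sim 1/r$. Your Steps 2--3 only need an upper bound for $-\operatorname{div}(\varphi T)$ on $E\cap I_\delta$. For $T=\nabla d(\cdot,C)$, convexity of $d(\cdot,C)$ gives $\operatorname{div}(\varphi\nabla d)=\varphi\,\Delta d-\delta^{-1}\mathbbm{1}_{\{0<d<\delta\}}\ge-\delta^{-1}$ a.e.\ outside $C$.

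To make this rigorous, replace $d(\cdot,C)$ by the distance from the inner parallel bodies $\{x\in C: d(x,\mathbb R^N\setminus C)\ge 1/j\}$. Their distance gradients are Lipschitz on a neighbourhood of $\mathbb R^N\setminus{\rm int}(C)$, converge to $\nu_C$ on $\partial^*C$, and still satisfy the same lower divergence bound. Then let $j\to\infty$ by dominated convergence. This yields $P(E;\partial^*C)\le P(E;I_\delta)+\delta^{-1}|E\cap I_\delta|$ for every convex body $C$, bounded or not, i.e.\ the lemma with $m=1/\delta$ and $\eta=0$.
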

		\begin{proof}
			Let $\eta, \delta>0$ and $C, E$ as above. 
            Let $R>0$, by \eqref{eq: perimetro} and a cut-off argument, there exists $T\in C^{\infty}_c\left(B(0,R);\mathbb R^N\right)$ such that $|T|\leq 1$, $\int_{\partial^*C \cap B(0,R)}\left(1+T\cdot \nu_C\right)\, d\mathcal{H}^{N-1}<\eta$ and $\text{supp}( T)\subset \bar{C}\cup I_\delta$. 
            By choosing $m=\|\nabla T\|_\infty$ and recalling $\mathcal H^{N-1}(\partial C\setminus \partial^* C)=0$, we get
			\begin{equation}
					\int_E \operatorname{div}T\, dx\leq m |E\cap I_\delta|. 
			\end{equation}
			On the other hand, by the Gauss--Green formula 
			\begin{align}
				\int_E \operatorname{div}T\, dx&= \int_{\partial^*E\cap \partial^*C \cap B(0,R)}  T\cdot \nu_E\;d\mathcal{H}^{N-1}
				+ \int_{\partial^*E\setminus \bar{C}} T\cdot \nu_E\;d\mathcal{H}^{N-1} \\
				&\geq P\left(E;\partial^* C\cap B(0,R)\right)-\eta-P\left(E;I_\delta\right).
			\end{align}
            Putting everything together and letting $R\to +\infty$ yields the desired inequality.
			\end{proof}
        A version of the same estimate is in place also when the obstacle has dimension $N-1$.
        \begin{lemma}\label{lemma: local bulk wet estimate N-1}
			Let $C\subset \{x_N=0\}$ be a bounded set 
            such that $\mathcal H^{N-1}(\partial C\cap C)=0$, where by $\partial C$ we mean the boundary of $C$ in $\{x_N=0\}$. 
            Then for all $\eta, \delta>0$ there exists $m=m\left(\eta, \delta, C\right)>0$ such that for all sets $E\subset \mathbb{R}^N$ of finite perimeter it holds
				\begin{equation}\label{eq: local bulk wet estimate N-1}
                \int_C {\rm Tr}^+_C \mathbbm{1}_E\,d\mathcal H^{N-1}\leq P(E; C\times (0, \delta))+m|E\cap\; C\times(0, \delta)|+\eta.
            \end{equation}
            The same holds replacing ${\rm Tr}^+_C$ with ${\rm Tr}^-_C$ and $C\times (0, \delta)$ with $C\times (-\delta, 0)$.
		\end{lemma}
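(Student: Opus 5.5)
We mimic the proof of Lemma \ref{lemma: local bulk wet estimate}, using a calibration vector field that equals (nearly) $-e_N$ on $C$ and is supported in the upper slab $\bar C\times[0,\delta)$. Since $C$ is bounded in $\{x_N=0\}$ and $\mathcal H^{N-1}(\partial C\cap C)=0$, the relative interior ${\rm int}_{N-1}(C)$ carries all of $\mathcal H^{N-1}\mres C$. Given $\eta>0$, we choose a compact $K\subset {\rm int}_{N-1}(C)$ with $\mathcal H^{N-1}(C\setminus K)<\eta$. We then choose a cut-off $\varphi\in C^\infty_c({\rm int}_{N-1}(C))$ with $0\le\varphi\le1$ and $\varphi=1$ on $K$, and a profile $\psi\in C^\infty_c((-\delta,\delta))$ with $0\le\psi\le1$ and $\psi(0)=1$. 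Finally we set
\begin{equation}
T(x',x_N)=-\varphi(x')\psi(x_N)\,e_N,
\end{equation}
so that $T\in C^1_c$, $|T|\le1$, and $m:=\|{\rm div}\,T\|_\infty=\|\varphi\|_\infty\|\psi'\|_\infty$ depends only on $\eta,\delta,C$.

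The key step is to apply the Gauss--Green formula to $E^+:=E\cap\{x_N>0\}$ rather than to $E$. The set $E^+$ has finite perimeter, and by Theorem \ref{teo: trace 1} its reduced boundary on $\{x_N=0\}$ coincides, up to $\mathcal H^{N-1}$-null sets, with $\{{\rm Tr}^+_{\{x_N=0\}}\mathbbm 1_E=1\}$, with outer normal $-e_N$ there. We orient $C$ by $e_N$, so that ${\rm Tr}^+_C$ is the trace from above. Then
\begin{equation}
\int_{E^+}{\rm div}\,T\,dx=\int_{\{x_N=0\}}{\rm Tr}^+\mathbbm 1_E\,\varphi\,d\mathcal H^{N-1}+\int_{\partial^*E\cap\{x_N>0\}}T\cdot\nu_E\,d\mathcal H^{N-1}.
\end{equation}
Here we used $T\cdot(-e_N)=\varphi\psi(0)=\varphi$ on $\{x_N=0\}$.

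It remains to estimate each term. Since ${\rm supp}\,T\cap\{x_N>0\}\subset C\times(0,\delta)$, the left-hand side is at most $m|E\cap C\times(0,\delta)|$. The second term on the right is at least $-P(E;C\times(0,\delta))$, because $|T|\le1$ and $T$ vanishes outside $C\times(0,\delta)$ in the open upper half-space. The first term on the right is at least $\int_C{\rm Tr}^+_C\mathbbm 1_E\,d\mathcal H^{N-1}-\mathcal H^{N-1}(C\setminus K)$, since $0\le{\rm Tr}^+\le1$, $\varphi=1$ on $K$, and $\varphi$ is supported in $C$. Rearranging the three bounds gives \eqref{eq: local bulk wet estimate N-1}. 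For the ${\rm Tr}^-$ version we use $T=+\varphi\psi\,e_N$ and $E^-=E\cap\{x_N<0\}$.

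The main obstacle is the correct identification of the boundary contribution of $E^+$ on the hyperplane with the one-sided trace. This identification needs the standard fact that $D\mathbbm 1_{E^+}\mres\{x_N=0\}=-{\rm Tr}^+\mathbbm 1_E\,e_N\,\mathcal H^{N-1}$, which follows from Theorem \ref{teo: trace 1} applied to $\mathbbm 1_{E^+}$, whose lower trace vanishes. Note also that, unlike in Lemma \ref{lemma: local bulk wet estimate}, no hypothesis $E\subset\mathbb R^N\setminus C$ is needed here, and no limit $R\to\infty$ is required because $C$ is bounded.
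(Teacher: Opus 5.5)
Your proof is correct and is essentially the paper's argument: a vertical cut-off field supported in $C\times(-\delta,\delta)$, the Gauss--Green formula applied to $E\cap\{x_N>0\}$ (the paper uses $E\cap\{x_N\geq 0\}$, which is the same set up to a null set), and an inner approximation of $C$, where your compact $K$ plays the role of the paper's $C_\sigma$. The one blemish is a sign in your last paragraph: with the paper's convention $D\mathbbm{1}_{E^+}\mres\{x_N=0\}=+\operatorname{Tr}^+\mathbbm{1}_E\,e_N\,\mathcal H^{N-1}$, which is equivalent to the outer normal $\nu_{E^+}=-e_N$ that your computation actually uses, so nothing else in the argument changes.
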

        \begin{proof}
            The proof is similar to the one of \Cref{lemma: local bulk wet estimate}. Let $\eta, \delta>0$ and $C, E$ as above.

            Let us start by considering $F=E\cap \{x_N\geq 0\}$.
            Notice that ${\rm Tr}^+_C \mathbbm{1}_E={\rm Tr}^+_C \mathbbm{1}_F$, $P(E; C\times (0, \delta)) = P(F; C\times (0, \delta))$ and $|E\cap\; C\times(0, \delta)|=|F\cap\; C\times(0, \delta)|$.
            In particular, it is enough to prove the statement for $F$.

            Fix $\sigma>0$.
            Let $C_\sigma=\{x\in C: d(x, \{x_N=0\}\setminus C)>\sigma\}$ and notice that, by assumption, $\mathcal H^{N-1}(C_\sigma)\to \mathcal H^{N-1}(C)$ as $\sigma\to 0$.
            Let also $\varphi_\sigma\in C^1_c(C\times (-\delta, \delta))$ be such that $-1\leq \varphi_\sigma\leq 0$ and $\varphi\equiv-1$ on $C_\sigma$.
            Finally, let $e_N$ be the $N$-th versor. By the Gauss--Green formula 
			\begin{align}
				\int_F \operatorname{div}(\varphi_\sigma e_N)\, dx&= -\int_{\partial^*F\cap C}  \varphi_\sigma \;d\mathcal{H}^{N-1}
				+ \int_{\partial^* F\setminus C} \varphi_\sigma \,(e_N\cdot \nu_F)\;d\mathcal{H}^{N-1} \\
				&\geq \int_{C_\sigma} {\rm Tr}^+_C \mathbbm{1}_F\,d\mathcal H^{N-1}-P\left(F;C\times(0, \delta)\right).
			\end{align}
            On the other hand, 
            \begin{equation}
                \int_F \operatorname{div}(\varphi_\sigma e_N)\, dx\leq \|\nabla \varphi_\sigma\|_\infty |F\cap \,C\times (-\delta, \delta)|=\|\nabla \varphi_\sigma\|_\infty |F\cap \,C\times (0, \delta)|.
            \end{equation}

            Taking $\sigma$ small enough that
            \begin{equation}
                \int_{C_\sigma} {\rm Tr}^+_C \mathbbm{1}_F\,d\mathcal H^{N-1}\geq \int_{C}{\rm Tr}^+_C \mathbbm{1}_F\,d\mathcal H^{N-1}-\eta
            \end{equation}
            concludes the proof.
        \end{proof}
            
		Another well known and fundamental result is that, when the obstacle is convex, the wet part is always controlled by the outside perimeter. 
		\begin{proposition}[{\cite[Lemma 5.1]{FuMo23}}]\label{prop: bulk contact inequality}
			Let $C\subset \mathbb R^N$ be a convex body and $E \subset \mathbb{R}^N\setminus C$ a set with finite measure and finite perimeter. Then
			\begin{equation}
				P\left(E; \partial C\right)\leq P\left(E; \mathbb{R}^N\setminus C\right)
			\end{equation}
		\end{proposition}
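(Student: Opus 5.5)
The idea is to bound the wet part by projecting onto $C$. The nearest-point projection $\pi_C:\mathbb R^N\to C$ is well defined because $C$ is closed and convex. It is $1$-Lipschitz, and it restricts to the identity on $\partial C$. Since the $1$-Lipschitz property does not directly relate the two perimeters, I would instead use a calibration, the divergence argument already seen in \Cref{lemma: local bulk wet estimate}, with the vector field $T=-\nabla d_C$ on $\mathbb R^N\setminus C$. Here $d_C$ is the distance function to $C$.

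The key facts are the following. On $\mathbb R^N\setminus C$ the function $d_C$ is $C^{1,1}_{loc}$, with $\nabla d_C(x)=(x-\pi_C(x))/d_C(x)$ and $|\nabla d_C|=1$. By convexity of $C$ the function $d_C$ is convex, so $\Delta d_C\geq 0$ in the distributional sense, and in fact a.e. on $\mathbb R^N\setminus C$. At $\mathcal H^{N-1}$-a.e. point of $\partial C$ the outer normal $\nu_C$ exists. There $\nabla d_C$ extends continuously from outside with value $\nu_C$, and the outer normal of $E$ is $\nu_E=-\nu_C$, because $E\subset\mathbb R^N\setminus C$ touches $\partial C$ from outside.

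Formally, the Gauss--Green formula applied to $E$ with the field $-\nabla d_C$ would give
\begin{equation}
-\int_E \Delta d_C\,dx=\int_{\partial^*E\cap\partial C}(-\nu_C)\cdot\nu_E\,d\mathcal H^{N-1}-\int_{\partial^*E\setminus C}\nabla d_C\cdot\nu_E\,d\mathcal H^{N-1}.
\end{equation}
The left-hand side is $\leq 0$. The first term on the right equals $P(E;\partial C)$, and the second is bounded in absolute value by $P(E;\mathbb R^N\setminus C)$. Together these yield $P(E;\partial C)\leq P(E;\mathbb R^N\setminus C)$.

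The main obstacle is making this rigorous, because $\nabla d_C$ is not $C^1$ up to $\partial C$, and $C$ may be unbounded. I would regularize as follows. For $\varepsilon>0$, replace $C$ by its outer parallel set $C_\varepsilon=\{d_C\leq\varepsilon\}$, which is convex with $C^{1,1}$ boundary. Apply the argument to $E\setminus C_\varepsilon$ and the field $-\nabla d_C$, which is Lipschitz there. Multiply by a cutoff $\chi_R$ to localize. The error from the cutoff is at most $\|\nabla\chi_R\|_\infty|E|\to 0$, which is where finite measure is used. For a.e. $\varepsilon$ the slice $\partial^*E$ and $\partial C_\varepsilon$ are transversal in measure, so this gives
\begin{equation}
\int_{\partial C_\varepsilon}\mathrm{Tr}\,\mathbbm 1_E\,d\mathcal H^{N-1}\leq P(E;\{d_C>\varepsilon\}).
\end{equation}
To let $\varepsilon\to0$, I would use that the projection $\pi_C:\partial C_\varepsilon\to\partial C$ is onto and $1$-Lipschitz. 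Together with the $L^1$ convergence of the traces, which follows from the coarea formula and the BV trace continuity (\Cref{teo: trace 2}) on the slab $\{0<d_C<\varepsilon\}$, this gives the lower bound by $\int_{\partial C}\mathrm{Tr}\,\mathbbm1_E=P(E;\partial C)$. Alternatively, one can skip this limiting step by testing directly with $\varphi(d_C)\nabla d_C$ and averaging in $\varepsilon$, exactly as in the proof of \Cref{lemma: local bulk wet estimate}, and choosing the profile so that the bulk term keeps a favourable sign thanks to $\Delta d_C\geq0$.
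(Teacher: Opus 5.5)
Your argument is correct and is essentially the alternative proof the paper sketches in \Cref{rmk: bulk contact inequality in the case N-1}: calibrate with $\nabla d_C$, whose divergence is nonnegative by convexity, and approximate $C$ by outer parallel sets (the paper itself defers the full proof to \cite{FuMo23}). The limit $\varepsilon\to0$ is easier than your projection/trace-convergence step: with your $C_\varepsilon=\{d_C\le\varepsilon\}$, for a.e.\ $\varepsilon$ one has $P(E\setminus C_\varepsilon)=P(E;\{d_C>\varepsilon\})+\mathcal H^{N-1}(E^{(1)}\cap\partial C_\varepsilon)$ and $E\setminus C_\varepsilon\to E$ in $L^1$, so lower semicontinuity of the perimeter directly gives $P(E;\partial C)\le\liminf_{\varepsilon\to0}\mathcal H^{N-1}(E^{(1)}\cap\partial C_\varepsilon)$; note that your closing ``alternative'' with $\varphi(d_C)\nabla d_C$ does not actually bypass this limit.
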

		\begin{remark}\label{rmk: bulk contact inequality in the case N-1}
			A proof different from the one in \cite{FuMo23} can be obtained in the case of regular sets by noticing that $T=\nabla d(C, \cdot)$ is an extension of the outer normal of $C$ and has positive divergence: integrating by parts $\operatorname{div} T$ in $E$ yields the inequality. By approximation (for instance, with $C^\eta\coloneqq \{d(x, C)<\eta\}$), the inequality can be shown to hold for general $C$'s. In particular, if $C$ has dimension $N-1$ (and we may assume $C\subset \{x_N=0\}$), it can be strengthened to 
			\begin{equation}
				\int_C \operatorname{Tr}_{C}^+ \mathbbm{1}_E \, d\mathcal H^{N-1}\leq P(E; \{x_N>0\}), 
			\end{equation} 
			where we assume that $C$ is oriented by $e_N$.
		\end{remark}

        As a consequence of \Cref{prop: bulk contact inequality} and \Cref{rmk: bulk contact inequality in the case N-1}, we have

        \begin{corollary}\label{cor: coercivity}
            Let $C$ be a closed convex set.
            Then for every set of finite perimeter $E\subset \mathbb R^N\setminus C$ it holds
            \begin{equation}
                \mathcal{F}_C(E)\geq \frac{1-\lambda}{2} P(E).
            \end{equation}
        \end{corollary}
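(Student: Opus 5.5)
We split according to the dimension of $C$ and in each case aim for the pointwise bound $\mathcal F_C(E)\ge P(E;\mathbb R^N\setminus C)-|\lambda|\,W(E)$. Here $W(E)$ is the ``wet mass'' of $E$, and it satisfies $W(E)\le P(E;\mathbb R^N\setminus C)$. We may assume $E$ has finite perimeter and finite measure, which is the setting of \Cref{prop: bulk contact inequality}. Set $A=P(E;\mathbb R^N\setminus C)$. In every case we will also show $P(E)=A+W(E)$ with $W\le A$. Given this, we get $\mathcal F_C(E)\ge A-|\lambda|W\ge A-W$ when $\lambda\ge0$, and trivially $\mathcal F_C(E)\ge A$ when $\lambda<0$. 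Since $P(E)=A+W\le 2A$, we have $A\ge P(E)/2$ and hence $A-\lambda W\ge (1-\lambda)A\ge\frac{1-\lambda}{2}P(E)$ for $\lambda\ge0$. For $\lambda<0$ we have $\mathcal F_C(E)\ge A\ge P(E)/2\ge \frac{1-\lambda}{2}P(E)$ only if $1-\lambda\le1$, which fails. We must therefore handle $\lambda<0$ more carefully: there $\mathcal F_C(E)=A+|\lambda|W$, and we want $A+|\lambda|W\ge\frac{1+|\lambda|}{2}(A+W)$. This is equivalent to $\frac{1-|\lambda|}{2}A\ge\frac{1-|\lambda|}{2}W$, i.e.\ again $W\le A$. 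So the whole proof reduces to two facts: the decomposition $P(E)=A+W$, and the inequality $W\le A$.

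If $\dim C=N$, take $W=P(E;C)=P(E;\partial C)$. This equality holds because $E\subset\mathbb R^N\setminus C$ implies $\partial^*E$ does not meet $\mathrm{int}\,C$. Then $P(E)=A+W$ is immediate, and $W\le A$ is exactly \Cref{prop: bulk contact inequality}.

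If $\dim C\le N-2$, then $\mathcal H^{N-1}(C)=0$, so $P(E;C)=0$ and $W=0$. Hence $\mathcal F_C(E)=A=P(E)\ge\frac{1-\lambda}{2}P(E)$, since $\frac{1-\lambda}{2}<1$.

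The case $\dim C=N-1$ is the main step. Take $C\subset\{x_N=0\}$ oriented by $e_N$, and set $W=\int_C(\operatorname{Tr}^+_C\mathbbm 1_E+\operatorname{Tr}^-_C\mathbbm 1_E)\,d\mathcal H^{N-1}$. By \Cref{teo: trace 1}, $D\mathbbm 1_E\mres C=(\operatorname{Tr}^+-\operatorname{Tr}^-)e_N\,\mathcal H^{N-1}$, so $P(E;C)=\int_C|\operatorname{Tr}^+-\operatorname{Tr}^-|\le W$. This gives $P(E)\le A+W$, which is the direction actually needed: $A-\lambda W\ge(1-\lambda)A$, $2A\ge A+W\ge P(E)$, and similarly for $\lambda<0$. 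To get $W\le A$, apply \Cref{rmk: bulk contact inequality in the case N-1} to each side: $\int_C\operatorname{Tr}^+\le P(E;\{x_N>0\})$ and symmetrically $\int_C\operatorname{Tr}^-\le P(E;\{x_N<0\})$. These are both parts of $P(E;\mathbb R^N\setminus C)$ since $C\subset\{x_N=0\}$, and summing gives $W\le A$. The only delicate point is justifying the remark's inequality for the traces of $E$ rather than of $E\cap\{x_N>0\}$. Since the traces coincide, we would apply it to $E\cap\{x_N>0\}$, whose perimeter in $\{x_N>0\}$ equals $P(E;\{x_N>0\})$.
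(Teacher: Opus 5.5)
Your proof is correct and is the argument the paper leaves implicit: the paper states the corollary as a direct consequence of \Cref{prop: bulk contact inequality} and \Cref{rmk: bulk contact inequality in the case N-1}. You combine those with $P(E)\le P(E;\mathbb R^N\setminus C)+W$ and then split on the sign of $\lambda$, exactly as needed. One correction: finite measure of $E$ is not a ``we may assume'' reduction but a genuinely required (implicit) hypothesis, since for a ball $C$, $\lambda>0$ and $E=\mathbb R^N\setminus C$ one gets $\mathcal F_C(E)=-\lambda\,\mathcal H^{N-1}(\partial C)<0<\frac{1-\lambda}{2}P(E)$.
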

             
		The proof of the semicontinuity of the energy functional follows the one of \cite[Lemma 3.3]{FoFuLeMo23} (see also \cite[Proposition 19.3]{Ma12}).
		\begin{proposition}\label{prop: semicontinuity}
			Let $C$ be a compact convex set. Let $\left(E_n\right)_{n\in \mathbb N}$ be a sequence of sets of finite perimeter in $\mathbb R^N$ such that $E_n\to E$ in $L^1$ for some set $E\subset \mathbb R^N$. Then
			\begin{equation}
				\mathcal{F}_C \left(E\right)\leq \liminf_n \mathcal{F}_C \left(E_n\right)
			\end{equation}
		\end{proposition}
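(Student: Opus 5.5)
We argue by cases on $\dim(C)$, always reducing to the two localization lemmas above. First, we may assume $L:=\liminf_n \mathcal F_C(E_n)<+\infty$ and, passing to a subsequence, that the liminf is a limit and each $E_n$ is a set of finite perimeter contained in $\mathbb R^N\setminus C$ (up to null sets). By \Cref{cor: coercivity}, $P(E_n)\le \frac{2}{1-\lambda}\mathcal F_C(E_n)$ is bounded. Hence $E$ has finite perimeter, $|E\cap C|=0$ by $L^1$ convergence, and $\mu_{E_n}\xrightharpoonup{*}\mu_E$. If $\dim C\le N-2$, the functional is $P(\cdot;\mathbb R^N\setminus C)$. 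Since $\mathbb R^N\setminus C$ is open, lower semicontinuity of the perimeter on open sets concludes. If $\lambda\le 0$ in the full-dimensional case, $\mathcal F_C\ge P(\cdot;\mathbb R^N\setminus C)+|\lambda|P(\cdot;C)$ is a sum of a lower semicontinuous term and a term we must still handle. So the case that really matters is $\lambda>0$, where the wet term enters with a negative sign.

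\textbf{Case $\dim C=N$, $\lambda>0$.} Since $C$ is a compact convex body, $\partial C$ is Lipschitz and $\mathcal H^{N-1}(\partial C\setminus\partial^*C)=0$, and $P(E;C)=P(E;\partial C)$. Fix $\eta,\delta>0$ and write
\begin{equation}
\mathcal F_C(E_n)=P(E_n;\mathbb R^N\setminus (C\cup \bar I_\delta))+P(E_n;I_\delta)-\lambda P(E_n;\partial C).
\end{equation}
Applying \Cref{lemma: local bulk wet estimate} to $E_n$ gives $P(E_n;\partial C)\le P(E_n;I_\delta)+m|E_n\cap I_\delta|+\eta$. Hence
\begin{equation}
\mathcal F_C(E_n)\ge P(E_n;\mathbb R^N\setminus (C\cup\bar I_\delta))+(1-\lambda)P(E_n;I_\delta)-\lambda m|E_n\cap I_\delta|-\lambda\eta,
\end{equation}
which is a sum of nonnegative multiples of perimeters in open sets, plus a term continuous in $L^1$. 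Taking the liminf via lower semicontinuity on open sets gives
\begin{equation}
L\ge P(E;\mathbb R^N\setminus(C\cup\bar I_\delta))+(1-\lambda)P(E;I_\delta)-\lambda m|E\cap I_\delta|-\lambda\eta.
\end{equation}
(Using $\{0<d<\delta\}$ and its complement loses at most $P(E;\{d=\delta\})$; choosing $\delta$ outside the countable set where this is positive removes it.)

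The key step, and the main obstacle, is recovering the wet term of the limit $E$ from this inequality. The term $(1-\lambda)P(E;I_\delta)$ must be rewritten as $P(E;I_\delta)-\lambda P(E;I_\delta)$. Then we apply \Cref{lemma: local bulk wet estimate} again, now to $E$, in the reverse direction: we need $P(E;I_\delta)\ge P(E;\partial C)-m|E\cap I_\delta|-\eta$. That only yields a bound in the wrong direction for a negative coefficient, so we argue differently. Let $\delta\to0$ with $m=m(\eta,\delta)$ fixed but $|E\cap I_\delta|\to0$. The trouble is that $m$ depends on $\delta$. The plan is therefore to choose $\delta$ first, then estimate $m|E\cap I_\delta|$ only after passing to the limit in $n$. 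We then let $\delta\to0$ along the sequence, controlling $m(\eta,\delta)|E\cap I_\delta|$ via the fact that the vector field $T$ can be taken with $|\nabla T|\lesssim 1/\delta$ while $|E\cap I_\delta|=o(\delta)$; the latter follows from the trace of $E$ on $\partial C$ and $P(E;I_\delta)\to0$.

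More cleanly, we expect to replace $-\lambda P(E_n;I_\delta)$ by the exact Gauss--Green identity with $T$ equal to a smooth extension of $\nu_C$. The tested quantities $\int_{E_n}\operatorname{div}T$ converge, and for the limit we use $P(E;\partial C)=-\int_E \operatorname{div}T+\int_{\partial^*E\setminus C}T\cdot\nu_E+o(1)$. Combining this with lower semicontinuity of the total variation of the measures $\mu_{E_n}\mres(\mathbb R^N\setminus C)$ weighted by $(1+\lambda T\cdot\nu)$, which is the anisotropic Reshetnyak lower semicontinuity for the integrand $|p|+\lambda T\cdot p\ge0$, gives $L\ge \mathcal F_C(E)-C\eta$.

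\textbf{Case $\dim C=N-1$.} We take $C\subset\{x_N=0\}$. Since $C$ is compact convex, its relative boundary is $\mathcal H^{N-1}$-null. We run the same argument separately on each side, using \Cref{lemma: local bulk wet estimate N-1} on $C\times(0,\delta)$ and on $C\times(-\delta,0)$, or equivalently the integrand $|p|\pm\lambda e_N\cdot p$ localized near $C$ with a cutoff. Both traces then appear, via the Gauss--Green identity of that lemma's proof, as limits of $\int_{E_n\cap\{\pm x_N>0\}}\operatorname{div}(\varphi_\sigma e_N)$.
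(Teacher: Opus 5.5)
Your final argument (the Reshetnyak route) is correct, and it differs from the paper's. Choose $T\in C^1_c(\mathbb R^N;\mathbb R^N)$ with $|T|\le 1$ and $\int_{\partial C}(1-T\cdot\nu_C)\,d\mathcal H^{N-1}<\eta$. Then Gauss--Green gives, for every finite-perimeter $F\subset\mathbb R^N\setminus C$,
\[
\mathcal F_C(F)=\int_{\partial^*F\setminus C}\bigl(1-\lambda\,T\cdot\nu_F\bigr)\,d\mathcal H^{N-1}+\lambda\int_F\operatorname{div}T\,dx-\lambda\,e(F),
\]
where $e(F)=\int_{\partial^*F\cap\partial C}(1-T\cdot\nu_C)\,d\mathcal H^{N-1}\in[0,\eta)$. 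You should state explicitly that this error is uniform in $F$; it holds because $1-T\cdot\nu_C\ge 0$. Up to that error, $\mathcal F_C$ is an anisotropic perimeter on the open set $\mathbb R^N\setminus C$ plus an $L^1$-continuous volume term. The integrand is continuous, convex, $1$-homogeneous and bounded below by $(1-|\lambda|)|p|$, so Reshetnyak's theorem closes the argument. This covers every $\lambda\in(-1,1)$ at once, so your split by the sign of $\lambda$, and the unfinished case $\lambda\le 0$, are unnecessary. For $\dim C=N-1$, the field $\operatorname{sgn}(x_N)\varphi_\sigma e_N$ plays the role of $T$. It is continuous on $\mathbb R^N\setminus C$ because $\operatorname{supp}\varphi_\sigma$ projects into the relative interior of $C$, and the error is at most $2\mathcal H^{N-1}(C\setminus C_\sigma)$.

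Your first attempt should be dropped, because it rests on a false claim. $|E\cap I_\delta|$ is not $o(\delta)$ when $E$ wets $C$: for $C=\overline{B(0,1)}$ and $E=B(0,2)\setminus C$ one has $|E\cap I_\delta|\sim N\omega_N\delta$. So with $m\sim 1/\delta$, the term $\lambda m|E\cap I_\delta|$ does not vanish; it is exactly where the wet part of $E$ sits.

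The paper keeps the localization approach but applies \Cref{lemma: local bulk wet estimate} to the symmetric difference:
\[
|P(E_n;\partial C)-P(E;\partial C)|\le P(E_n\Delta E;\partial C)\le P(E_n;I_\delta)+P(E;I_\delta)+m|(E_n\Delta E)\cap I_\delta|+\eta .
\]
Then $P(E_n;I_\delta)$ is absorbed by the bulk perimeter of $E_n$ (using $|\lambda|<1$). The $m$-term tends to $0$ as $n\to\infty$ with $\delta$ fixed, and $P(E;I_\delta)\to 0$ as $\delta\to 0$. This symmetric-difference trick is the idea your first route was missing. It needs only lower semicontinuity of the perimeter on open sets. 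Your route uses the heavier Reshetnyak theorem, but in exchange it exhibits the structure of $\mathcal F_C$ as an anisotropic perimeter and makes the role of $|\lambda|<1$ transparent.
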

        \begin{proof}
            Let first $C$ be a convex body, i.e.\ $\dim C=N$.
            Let $(E_n)_{n\in \mathbb N}$ and $E$ be as in the statement. We can assume that $ \liminf\limits_n \mathcal{F}_C \left(E_n\right) <+\infty$ and that $E$ has finite perimeter.
            Fix $\delta, \eta>0$ and set $I_\delta\coloneqq \{x \in \mathbb{R}^N\setminus C:\;  d\left(x, C\right)<\delta \}$.
            Notice that, by Lemma \ref{lemma: local bulk wet estimate},
            \begin{align}
              |P(E_n; \partial C)-P(E;\partial C)|&\leq P(E\Delta E_n; \partial C)\\
              &\leq P(E\Delta E_n; I_\delta)+m|E\Delta E_n\cap I_\delta|+\eta.
            \end{align}
            In particular, 
            \begin{align}
                \mathcal F_C(E_n)-\mathcal F_C(E)&\geq P(E_n, \mathbb{R}^N\setminus C)-P(E, \mathbb{R}^N \setminus C)-\lambda\left( P(E_n; \partial C)-P(E;\partial C) \right)\\
                &\geq P(E_n; \mathbb R^N\setminus \widebar{I}_\delta)+P(E_n; I_\delta)-P(E, \mathbb{R}^N \setminus C)-|P(E_n; \partial C)-P(E;\partial C)|\\
                &\geq P(E_n; \mathbb R^N\setminus \widebar{I}_\delta)+P(E_n; I_\delta)-P(E, \mathbb{R}^N \setminus C)\\
                &\quad-P(E\Delta E_n; I_\delta)-m|E\Delta E_n\cap I_\delta|-\eta\\
                &\geq P(E_n; \mathbb R^N\setminus \widebar{I}_\delta)-P(E, \mathbb{R}^N \setminus C)-P(E; I_\delta)-m|E\Delta E_n\cap I_\delta|-\eta.
            \end{align}
            Sending $n\to \infty$, 
            \begin{equation}
                \liminf_n \mathcal F_C(E_n)-\mathcal F_C(E)\geq P(E; \mathbb R^N\setminus \widebar{I}_\delta)-P(E, \mathbb{R}^N \setminus C)-P(E; I_\delta)-\eta.
            \end{equation}
            Now, sending $\delta\to0$ and $\eta\to 0$, it follows
            \begin{equation}
                \mathcal F_C(E)\leq \liminf_n \mathcal F_C(E_n).
            \end{equation}

            If $\dim C<N-1$, the thesis follows from the semicontinuity of the perimeter.

            Finally, the case $\dim C=N-1$ follows as in the previous case, by applying \Cref{lemma: local bulk wet estimate N-1} instead of \Cref{lemma: local bulk wet estimate}.
        \end{proof}
            
		\begin{corollary}\label{cor: esistenza}
			Let $C$ be a closed convex set and let $R>0$. 
            Then there exists $E_R\subset B\left(0, R\right)\setminus C$ such that
			\begin{equation}\label{eq: minimi locali}
				\mathcal F_{C, \Lambda}^{v}\left(E_R\right)=\min \{\mathcal F_{C, \Lambda}^{v}\left(E\right): \; E\subset B\left(0, R \right) \setminus C\}  .
			\end{equation}
		\end{corollary}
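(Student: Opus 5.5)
The plan is to apply the Direct Method of the Calculus of Variations in the class $\mathcal A_R\coloneqq\{E\subset B(0,R)\setminus C \text{ of finite perimeter}\}$.

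\textbf{Step 1: the infimum is finite and minimizing sequences are bounded.} The empty set is admissible, so the infimum is at most $\Lambda v<+\infty$. By \Cref{cor: coercivity}, every $E\in\mathcal A_R$ satisfies
\begin{equation}
\mathcal F^v_{C,\Lambda}(E)\geq \mathcal F_C(E)\geq \frac{1-\lambda}{2}P(E)\geq 0 .
\end{equation}
Hence, for a minimizing sequence $(E_n)\subset\mathcal A_R$, the perimeters $P(E_n)$ are uniformly bounded. Since also $E_n\subset B(0,R)$, the standard compactness theorem for sets of finite perimeter gives a subsequence with $E_n\to E$ in $L^1$. The limit satisfies $E\subset \overline{B(0,R)}\setminus C$ up to null sets, because $C$ is closed and $L^1$ limits preserve a.e. inclusion. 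As $\partial B(0,R)$ is Lebesgue-null, we may take $E\subset B(0,R)\setminus C$ after modifying $E$ on a null set. Moreover, $|E_n|\to|E|$, so the penalization term $\Lambda\,\bigl||E_n|-v\bigr|$ is continuous along the sequence.

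\textbf{Step 2: lower semicontinuity of $\mathcal F_C$.} This is the main obstacle, because \Cref{prop: semicontinuity} is stated only for compact $C$. Here $C$ is merely closed, possibly unbounded.

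We reduce to the compact case by localization. Set $C_R\coloneqq C\cap \overline{B(0,R+1)}$, which is compact and convex. Every set in $\mathcal A_R$ lies in $B(0,R)$, so for such sets $\mathcal F_C$ and $\mathcal F_{C_R}$ can differ only through contributions located near $\partial B(0,R+1)$.

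Checking the three cases of \eqref{eq: energy}:
\begin{itemize}
\item The perimeter outside the obstacle and the wet part both live in $\overline{B(0,R)}$, where $C$ and $C_R$ coincide.
\item The wet contributions agree, since traces are local.
\item The dimension of $C_R$ may drop relative to $C$, for instance if $C$ only touches the ball. If it does, we instead choose a slightly larger radius $R'$ such that $\dim(C\cap \overline{B(0,R')})=\dim C$, or we simply rerun the proof of \Cref{prop: semicontinuity} directly with $C$.
\end{itemize}

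The second option works without changes. That proof only uses \Cref{lemma: local bulk wet estimate} (or \Cref{lemma: local bulk wet estimate N-1} applied to the bounded set $C\cap B(0,R+1)$), together with the inclusions $E_n,E\subset B(0,R)$. Every quantity involved is therefore localized in a bounded region.

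In particular, we need $\mathcal H^{N-1}(\partial C\setminus\partial^*C)=0$, which holds for convex bodies since they are Lipschitz domains. For $\dim C=N-1$, the relative boundary of $C\cap B(0,R+1)$ inside the hyperplane is $\mathcal H^{N-1}$-null by convexity. If $\dim C\le N-2$, the claim reduces to lower semicontinuity of the perimeter on the open set $\mathbb R^N\setminus C$. Either way we get
\begin{equation}
\mathcal F_C(E)\le\liminf_n\mathcal F_C(E_n).
\end{equation}

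\textbf{Step 3: conclusion.} Combining Steps 1 and 2,
\begin{equation}
\mathcal F^v_{C,\Lambda}(E)\le\liminf_n \mathcal F^v_{C,\Lambda}(E_n)=\inf_{\mathcal A_R}\mathcal F^v_{C,\Lambda}.
\end{equation}
Since $E\in\mathcal A_R$, it is a minimizer, and we take $E_R\coloneqq E$.
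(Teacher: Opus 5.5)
Your proposal is correct and follows the argument the paper has in mind. The paper gives no separate proof and treats the corollary as an immediate consequence of the direct method: coercivity from \Cref{cor: coercivity}, compactness in $BV$ inside $B(0,R)$, and the lower semicontinuity of \Cref{prop: semicontinuity} applied after replacing $C$ by the compact set $C\cap\overline{B(0,R+1)}$, which leaves $\mathcal F_C$ unchanged on subsets of $B(0,R)$. Your worry about a dimension drop is harmless, because it can only occur when $C$ misses the open ball $B(0,R+1)$, and then all wet terms vanish for sets contained in $B(0,R)$.
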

        \begin{definition}[$\Lambda$-minimizer]
            Let $C$ be a closed convex set and 
            let $E\subset \mathbb{R}^N \setminus C$.
            For $R>0$ we say that $E$ is a \emph{$\Lambda$-minimizer of $\mathcal{F}_C$ in $B(0,R)$} if $E\subset B(0, R)$ and
            \begin{align}
            \mathcal{F}_C\left(E\right)\leq \mathcal{F}_C\left(F\right)+\Lambda|E\Delta F| \label{eq: lambda minimality in B(0,R)}
            \end{align}
            holds for all $F\subset B(0,R)\setminus C$.
            In general, we say that $E$ is a \emph{$\Lambda$-minimizer of $\mathcal{F}_C$} if for every open ball $B$
            \begin{equation}
                \mathcal{F}_C\left(E, B\right)\leq \mathcal{F}_C\left(F, B\right)+\Lambda|E\Delta F| \label{eq: lambda minimality}
            \end{equation}  
            holds for every set $F \subset \mathbb{R}^N \setminus C$ such that $F \Delta E \Subset B$.
        \end{definition}
		Thanks to the volume perturbation, if $E$ is a local minimizer as in \eqref{eq: minimi locali}, then it is a $\Lambda$-minimizers of $\mathcal{F}_C$ in $B(0,R)$. Indeed, for every set $F \subset B(0,R) \setminus C$ of finite perimeter,
    \begin{equation}\label{eq: Lambda min non loc}
    \begin{split}
        \mathcal{F}_{C}(E)
        &\le \mathcal{F}_{C}(F)
        + \Lambda \Bigl( \bigl||F| - v\bigr| - \bigl||E| - v\bigr| \Bigr) \\
        &\le \mathcal{F}_{C}(F)
        + \Lambda \bigl||F| - |E|\bigr| \\
        &\le \mathcal{F}_{C}(F) + \Lambda |F \Delta E|.
    \end{split}
    \end{equation}
    It is standard that $\Lambda$-minimizers satisfy useful density estimates, as recalled in the following proposition.
    \begin{proposition}\label{prop: density est}
        Let $C$ be a closed convex set and
        let $E\subset \mathbb{R}^N\setminus C$ be such that it is either a $\Lambda$-minimizer of $\mathcal{F}_C$ or a $\Lambda$-minimizer of $\mathcal{F}_C$ in $B(0,R)$ for an $R>0$.
        Then  
        \begin{enumerate}[label=(\arabic*)]
            \item\label{it: regularity min} $E$ is $\mathcal L^N$--equivalent to the open set $\Omega= E^{(1)} \subset \mathbb R^N$ that satisfies $\partial \Omega=\partial^e\Omega$.
            \item\label{it: density est} There exist $c_0=c_0\left(\lambda\right)>0, r_0=r_0\left(\lambda, \Lambda\right)\in\left(0, 1\right)$ such that if $\Omega'$ is any connected component of $\Omega$, one has \begin{equation} \label{eq: density estimates 2}
                |\Omega'\cap B\left(x, r\right)|\geq c_0 r^N
            \end{equation}
            for all $x\in \widebar{ \Omega'}$ and $r\in \left(0, r_0\right)$.
        \end{enumerate}
            In the case where $E$ is a $\Lambda$-minimizer of $\mathcal{F}_C$ in $B(0,R)$, we have
        \begin{enumerate}[label=(\arabic*)]
        \setcounter{enumi}{2}
            \item\label{it: perim est} Let $x\in B(0, R)$ and $0<r<1$ be such that $\widebar{B}(x, r)\cap C=\emptyset$ and $B(x,r) \Subset B(0,R)$. Then $P(E, B(x, r))\leq (N+\Lambda)\omega_Nr^{N-1}.$
            \item\label{it: height est} There exists $h=h(\lambda, \Lambda)>0$ such that, for every connected component $\Omega'$ of $\Omega$ and $x\in \widebar{\Omega}'$ it holds $\sup\limits_{z\in \widebar{\Omega}'\cap B(x, \frac{r_0}{2})} d(z, C)\geq h$.
        \end{enumerate}
    \end{proposition}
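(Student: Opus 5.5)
The plan is to reduce everything to standard $\Lambda$-minimizer theory for the relative perimeter with a bounded boundary term, by comparing with suitable competitors. We begin with item \ref{it: perim est}, which is the simplest. Since $\widebar B(x,r)\cap C=\emptyset$, on $B(x,r)$ the functional is just the perimeter. We compare $E$ with $F=E\setminus B(x,s)$ for $s<r$ close to $r$, chosen so that $\mathcal H^{N-1}(\partial^*E\cap\partial B(x,s))=0$. Then $F\subset B(0,R)\setminus C$ is admissible. The $\Lambda$-minimality \eqref{eq: lambda minimality in B(0,R)} gives
\begin{equation}
P(E;B(x,s))\le \mathcal H^{N-1}(\partial B(x,s))+\Lambda|B(x,s)|\le N\omega_N s^{N-1}+\Lambda\omega_N s^N,
\end{equation}
and $s<1$ then yields the bound. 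Letting $s\to r$ concludes.

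For items \ref{it: regularity min} and \ref{it: density est}, we prove two-sided density estimates for $E$ at points of $\partial^e E$, first at scales $r<r_0$. The lower bound for $|E\cap B(x,r)|$ comes from comparing with $F=E\setminus B(x,r)$ for a.e.\ $r$. The cut creates the new boundary $\mathcal H^{N-1}(E^{(1)}\cap\partial B(x,r))$ and removes the contribution of $E$ in $B(x,r)$. The removed contribution is at least $(1-|\lambda|)$ times the perimeter in $B(x,r)$ if we handle the wet part as follows: near $C$, we use the localized form of \Cref{prop: bulk contact inequality} and \Cref{rmk: bulk contact inequality in the case N-1} applied to $E\cap B(x,r)$. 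This gives
\begin{equation}
P(E\cap B;\partial C)\le P(E\cap B;\mathbb R^N\setminus C),
\end{equation}
so that $\mathcal F_C(E\cap B(x,r))\ge \frac{1-\lambda}{2}P(E\cap B(x,r))$ as in \Cref{cor: coercivity}. Setting $m(r)=|E\cap B(x,r)|$, this gives the differential inequality
\begin{equation}
c(\lambda)\,m(r)^{\frac{N-1}{N}}\le m'(r)+\Lambda m(r),
\end{equation}
and hence $m(r)\ge c_0r^N$ for $r<r_0(\lambda,\Lambda)$. The symmetric estimate for the complement uses $F=E\cup (B(x,r)\setminus C)$. Here the wet part is increased, but the added boundary on $\partial C$ is paid with weight $-\lambda$, with $|\lambda|<1$, and the isoperimetric inequality relative to the convex set is again controlled by \Cref{prop: bulk contact inequality}. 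These two estimates imply that $E^{(1)}$ is open and that $\partial E^{(1)}=\partial^eE^{(1)}$, which is item \ref{it: regularity min}. To pass to a single connected component $\Omega'$, we note that any component is itself a $\Lambda$-minimizer: replacing $E$ by $(E\setminus\Omega')\cup G$ with $G\Delta\Omega'$ small and far from the other components localizes the argument. Alternatively, we apply the lower estimate at $x\in\widebar{\Omega'}$ to $E\cap \Omega'$, using that distinct components have disjoint boundaries in $\mathbb R^N\setminus C$ up to null sets.

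Item \ref{it: height est} is where we expect the main difficulty. We argue by contradiction: suppose $\widebar{\Omega}'\cap B(x,r_0/2)$ lies in the slab $\{d(\cdot,C)<h\}$ with $h$ small. By \ref{it: density est}, $|\Omega'\cap B(x,r_0/2)|\ge c_0(r_0/2)^N$. On the other hand, this volume is at most $|\{d(\cdot,C)<h\}\cap B(x,r_0)|\lesssim h\,r_0^{N-1}$, because the $h$-neighbourhood of a convex set inside a ball has volume of order $h$ times the area of its boundary in a slightly bigger ball, and that area is bounded by $C(N)r_0^{N-1}$ by convexity. Choosing $h$ small in terms of $c_0$ and $r_0$ gives a contradiction. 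The delicate point is to apply the density estimate at a centre $x'$ such that the relevant ball stays within $B(x,r_0/2)$, for instance by taking $x'=x$ and radius $r_0/2$. The convex-geometry volume bound $|\{0<d(\cdot,C)<h\}\cap B(x,\rho)|\le C(N)h\rho^{N-1}$ should follow from the coarea formula and the monotonicity of the perimeter of convex sets under inclusion.
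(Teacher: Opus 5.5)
Your proposal is correct and essentially follows the paper. Items (3) and (4) are argued exactly as there: the cut-out competitor $E\setminus B(x,s)$ at radii with $\mathcal H^{N-1}(\partial^*E\cap\partial B(x,s))=0$, and the density estimate at $x$ with radius $r_0/2$ combined with the coarea formula and the bound $\mathcal H^{N-1}(\{z\in B(x,r): d(z,C)=s\})\le N\omega_N r^{N-1}$ from monotonicity of the perimeter of convex sets. For items (1)--(2) the paper simply invokes \cite[Lemma 3.6]{FoFuLeMo23}, whose content is the comparison/ODE argument you sketch, with \Cref{rmk: bulk contact inequality in the case N-1} replacing \Cref{prop: bulk contact inequality} in codimension one. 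One detail to keep in mind for a $\Lambda$-minimizer in $B(0,R)$: the competitor $E\cup(B(x,r)\setminus C)$ is admissible only when $B(x,r)\subset B(0,R)$. This still suffices for item (1), since $E$ has density at most $1/2$ at points of $\partial B(0,R)$, and the uniform lower estimate of item (2) only uses the always-admissible competitor $E\setminus B(x,r)$.
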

    \begin{proof}
        The proof of the first two items is identical to that of \cite[Lemma 3.6]{FoFuLeMo23}. The case $\dim(C)=N-1$ is treated exactly as the others, except that one has to use the strengthened version of \Cref{prop: bulk contact inequality} stated in \Cref{rmk: bulk contact inequality in the case N-1}.

        Let $x\in B(0, R)$ and $r>0$ be such that $\widebar{B}(x, r)\cap C=\emptyset$ and consider $F=E\setminus B(x, r)$. 
        Since the right-hand side of the inequality in point \ref{it: perim est} is continuous in $r$ and the left-hand side lower semicontinuous, by foliaton lemma it is enough to prove the statement in the hypothesis that $P(E; \partial B(x, r))=0$.
        In particular, 
        \begin{align}
            \mathcal{F}_C\left(F\right)&=\mathcal{F}_C\left(E\right)-P(E; B(x, r))+P(B(x, r); E^{(1)})\\
            &\leq \mathcal{F}_C\left(E\right)-P(E; B(x, r))+N\omega_N r^{N-1}.
        \end{align}
        Plugging the estimate in \eqref{eq: lambda minimality in B(0,R)}, we get
        \begin{equation}
            P(E; B(x, r))\leq N\omega_N r^{N-1}+\Lambda |E\Delta F|\leq (N+\Lambda) \omega_N r^{N-1}.
        \end{equation}

        Finally, let us prove item \ref{it: height est}.
        Let $r=\frac{r_0}2$, $x\in \widebar{\Omega}'$ and $H=\sup\limits_{z\in \Omega'\cap B(x, r)} d(z, C)$.
        Notice that, by the density estimates, 
        \begin{equation}\label{eq: misura intersezione}
            c_0 r^N\leq |\Omega'\cap B(x, r)|\leq |B(x, r)\cap C^{H}|, 
        \end{equation}
        where $C^H=\{z\in \mathbb R^N\setminus C:0\leq d(z, C)\leq H\}$.
        Let us estimate $|B(x, r)\cap C^{H}|$.
        By the Coarea formula, 
        \begin{equation}
            |B(x, r)\cap C^{H}|=\int_0^H \mathcal H^{N-1}\left(\{z\in B(x, r):d(z, C)=s\}\right)\,ds.
        \end{equation}
        Now 
        \begin{equation}
            \{z\in B(x, r):d(z, C)=s\}\subset \partial \{z\in B(x, r): d(z, C)\leq s\}\eqqcolon \partial C_s
        \end{equation}
        for all $s\in [0, H]$.
        Moreover, the sets $C_s$ are convex, so by Proposition \ref{prop: bulk contact inequality}, 
        \begin{equation}
            \mathcal H^{N-1}(\{z\in B(x, r):d(z, C)=s\})\leq \mathcal H^{N-1}(\partial C_s)\leq \mathcal{H}^{N-1}(\partial B(x, r))= N\omega_Nr^{N-1}.
        \end{equation}
        In particular, by \eqref{eq: misura intersezione} we get
        \begin{equation}
            H\geq \frac{c_0 r}{N\omega_N}=\frac{c_0 r_0}{2 N\omega_N}\eqqcolon h.
        \end{equation}
    \end{proof}

\begin{remark}\label{rmk: componenti connesse e scelta del rappresentante}
    By \cite[Theorem 2]{AmbrosioCaselles}, If $E \subset \mathbb{R}^N$ is such that $E^{(1)}$ is open and $\partial E^{(1)} = \partial^e E$, then the $M$-connected components (see \cite[Definition 1]{AmbrosioCaselles} for the definition) of $E$ coincides with the (topological) connected components of $E^{(1)}$. Hence, by \Cref{prop: density est}, if $E \subset \mathbb{R}^N$ is a $\Lambda$-minimizer of $\mathcal{F}_C$, we can unambiguously refer to the connected components of $E$ as those of its representative $E^{(1)}$. Moreover, in the rest of the paper, unless otherwise specified, whenever $E$ is a $\Lambda$-minimizer of $\mathcal{F}_C$, we will always choose $E^{(1)}$ as its precise representative.
\end{remark}

            Thanks to the density estimate in \Cref{prop: density est}, it is possible to prove that there is a bound on the number of connected components of the minimizers of the relaxed functional and on the diameter of its connected components
            \begin{corollary} \label{cor: stima sulle comp connesse e loro diametro}
                Let $v>0$ and $\Lambda>0$. There exist $M=M(\lambda, \Lambda, v)>0$ and $d=d(\lambda, \Lambda, v)>0$ such that if $C$ is a closed convex set, $R>0$ and $E\subset\mathbb{R}^N\setminus C$ is a minimizer of  $\mathcal{F}^{v}_{C,\Lambda}$ among subsets of $B(0,R) \setminus C$ (or among subsets of $\mathbb{R}^N\setminus C$) then $E$ that has less than $M$ connected components and every connected component has diameter less than $d$.
            \end{corollary}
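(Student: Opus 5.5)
We first obtain a uniform energy bound for $E$. We compare with a competitor: a ball $B$ of volume $v$ contained in $B(0,R)\setminus C$, if one exists. If not, we use the empty set, which has $\mathcal F^v_{C,\Lambda}(\emptyset)=\Lambda v$. In either case
\begin{equation}
\mathcal F^v_{C,\Lambda}(E)\le \Lambda v,
\end{equation}
so we may always simply compare with $\emptyset$. By \Cref{cor: coercivity} this gives
\begin{equation}
\frac{1-\lambda}{2}P(E)\le \Lambda v \quad\text{and}\quad \Lambda\bigl||E|-v\bigr|\le \Lambda v .
\end{equation}
Hence $P(E)\le P_0:=\frac{2\Lambda v}{1-\lambda}$ and $|E|\le 2v$, with constants independent of $C$ and $R$. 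In the case of minimization among subsets of $\mathbb R^N\setminus C$ the same comparison works. By \eqref{eq: Lambda min non loc}, or its analogue for balls, $E$ is a $\Lambda$-minimizer, so \Cref{prop: density est} applies. Its constants $c_0(\lambda)$ and $r_0(\lambda,\Lambda)$ do not depend on $C$ or $R$.

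To bound the number of components, we let $\Omega'$ be any connected component of $\Omega=E^{(1)}$ and pick $x\in\widebar{\Omega'}$. Then \eqref{eq: density estimates 2} with $r=r_0/2$ gives $|\Omega'|\ge c_0 (r_0/2)^N$. Since the components are disjoint and $\sum|\Omega'|\le |E|\le 2v$, the number of components is at most
\begin{equation}
M:=\frac{2v}{c_0(r_0/2)^N}+1 .
\end{equation}

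For the diameter, we argue by a covering. Let $\Omega'$ be a component and let $x_1,\dots,x_k\in\widebar{\Omega'}$ be a maximal family of points with $|x_i-x_j|\ge r_0$. The balls $B(x_i,r_0/2)$ are pairwise disjoint, and each satisfies $|\Omega'\cap B(x_i,r_0/2)|\ge c_0(r_0/2)^N$. Therefore
\begin{equation}
k\le \frac{2v}{c_0(r_0/2)^N}.
\end{equation}
By maximality, the balls $B(x_i,r_0)$ cover $\widebar{\Omega'}$. Because $\Omega'$ is connected, the union of these balls is connected: the intersection graph of the balls must be connected, since otherwise $\widebar{\Omega'}$ would split into two relatively open, disjoint, nonempty pieces. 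A chain argument through this graph then yields $\operatorname{diam}\Omega'\le 2kr_0$. We set $d:=2r_0\cdot\frac{2v}{c_0(r_0/2)^N}+2r_0$.

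The only delicate points are the following.
\begin{enumerate}
\item Checking that \Cref{prop: density est}\ref{it: density est} holds up to $\partial B(0,R)$ for minimizers constrained in $B(0,R)$. This is asserted there, so we simply invoke it.
\item The connectedness-of-cover step, which is elementary.
\end{enumerate}
The perimeter bound is not really needed; the volume bound $|E|\le 2v$ suffices.
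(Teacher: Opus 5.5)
Your proof is correct and follows essentially the same route as the paper. The component count comes from the density estimates of \Cref{prop: density est} together with $|E|\le 2v$, and the diameter bound comes from covering by boundedly many balls of radius comparable to $r_0$ plus chaining along a connected component. Your maximal $r_0$-separated net inside each $\widebar{\Omega'}$ is an elementary substitute for the paper's Vitali $5r$-covering of all of $E$. You also usefully make explicit two points the paper leaves implicit: that $|E|\le 2v$ follows from comparison with $\emptyset$ and \Cref{cor: coercivity}, and the connectedness argument behind the chaining step.
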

            \begin{proof} 
            Denote with $E$ the $\mathcal{L}^N$-representative of it given by \Cref{prop: density est}.
            Let $\Omega_1, \dots , \Omega_{\widebar{n}}$ be the connected components of $E$. Let $r=r(\lambda, \Lambda)=\frac{r_0(\lambda, \Lambda)}{2}$ and $c_0=c_0(\lambda)$, where $r_0$ and $c_0$ are the constants of \Cref{prop: density est}.
                Then:
                \begin{equation}
                    2 v\geq | E | \geq \Bigl\vert \sum_{i=1}^{\widebar{n}} \Omega_i \cap B(x_i,r) \Bigr\vert = \sum_{i=1}^{\widebar{n}} \Bigl\vert \Omega_i \cap B(x_i,r) \Bigr\vert \geq \widebar{n}c_0 r^N.
                \end{equation}
            So the bound on the number of connected components follows. By an application of Vitali's Covering Lemma, we may find a subset $A\subset E$ such that the closed balls of the family $\left\{ \widebar{B(x,r)} : x \in A \right\}$ are pairwise disjoint and 
            \begin{equation}
                E \subset \bigcup_{x \in A} \widebar{B(x,5r)}.
            \end{equation}
            
            Thanks to the density estimate in \Cref{prop: density est} we have $2v \geq \vert E \vert \geq \#A c_o r^N$.
            If $\Omega$ is a connected component of $E$, since $\Omega \subset \bigcup_{x \in A} \widebar{B(x,5r)} $, we have that
            \begin{equation}
                \text{diam}(\Omega) \leq 10 r \#A \leq \frac{20 v}{c_0 r^{N-1}}.
            \end{equation}
            \end{proof}

\subsection{An approximation result}
In order to prove the approximation result below, we shall use the following lemma, first proved in \cite{Ivanov} and later mentioned in \cite[Point 2 of the Notes for Section 2.3]{Schneider_2013}.

\begin{lemma}\label{lem: Ivanov}
Let $C$ be a convex body. Then, for $\mathcal{L}^N$-a.e. $x\in\mathbb{R}^N$, and for every line $L$ passing through $x$, one has $\mathcal{H}^1(L\cap \partial C)=0$.
\end{lemma}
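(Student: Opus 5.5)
The plan is to first translate the condition on lines into a statement about faces of $C$, and then show that the resulting exceptional set is a Lebesgue-null union of affine pieces of supporting hyperplanes.

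\emph{Step 1: reduction to segments in $\partial C$.} Fix a line $L$. Then $L\cap C$ is a closed segment, possibly empty, degenerate or unbounded. If $L$ meets ${\rm int}(C)$, then $L\cap\partial C$ consists of at most the two endpoints of this segment, so $\mathcal H^1(L\cap\partial C)=0$. Otherwise $L\cap C=L\cap\partial C$. Hence $\mathcal H^1(L\cap\partial C)>0$ holds if and only if $L$ contains a nondegenerate segment $S\subset\partial C$. In that case the minimal face $F$ of $C$ containing the relative interior of $S$ has $1\le\dim F\le N-1$, and $L\subset{\rm aff}(F)$. The exceptional set is therefore
\begin{equation}
B=\bigcup\{\,{\rm aff}(F):\ F \text{ face of } C,\ 1\le \dim F\le N-1\,\},
\end{equation}
and the statement is exactly $|B|=0$. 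Since $C\cap B\subset\partial C$ and $|\partial C|=0$, it suffices to show $|B\setminus C|=0$.

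\emph{Step 2: localisation and slicing.} I would split $B=\bigcup_{k=1}^{N-1}B_k$ according to $k=\dim F$, and treat each $B_k$ separately. Each set ${\rm aff}(F)$ lies in a supporting hyperplane $H_F$ of $C$, so it is null. The difficulty is that there may be uncountably many such faces; for a circular cylinder, for instance, all the vertical generators are $1$-faces. To handle this I would parametrise $B_k\setminus C$ as follows. Pick a point $y$ in the relative interior of $F$ lying in a fixed Lipschitz transversal slice of $\partial C$, so that one redundant parameter along $F$ is removed. Then write a point of $B_k$ as $y+w$ with $w$ in the $k$-dimensional direction space of $F$. The aim is to show that this parametrisation is a countable union of Lipschitz images of sets of dimension at most $N-1$; then the area formula gives $\mathcal L^N(B_k)=0$.

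A cleaner route, which I would try first, is a Fubini argument over directions. Fix $u\in\mathbb S^{N-1}$ and let $\pi_u$ be the orthogonal projection onto $u^\perp$. A line parallel to $u$ contains a segment of $\partial C$ only if its projection lies in the relative boundary of the $(N-1)$-dimensional convex set $\pi_u(C)$, which is $\mathcal H^{N-1}$-null in $u^\perp$. Consequently, for each fixed direction the union of the bad lines with that direction is null. The remaining task is to pass from "null for each fixed direction" to "null after the union over all directions". For this I would use the map $(u,x)\mapsto x$ restricted to the set of pairs (direction, point) such that the line through $x$ with direction $u$ contains a segment of $\partial C$, and try to show that this set has Hausdorff dimension at most $N-1$ in $\mathbb S^{N-1}\times\mathbb R^N$.

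\emph{Main obstacle.} The whole difficulty is in this last dimension count: showing that the union over uncountably many directions (equivalently, over uncountably many faces) stays Lebesgue-null. The expected mechanism is that directions of segments in $\partial C$ come paired with their base points through the face structure: a segment in direction $u$ must lie in a supporting hyperplane with normal orthogonal to $u$, which costs one degree of freedom. I expect the counting to go through by combining this constraint with the $(N-1)$-rectifiability of $\partial C$. If the general count turns out to be too delicate, I would instead rely on the original argument of \cite{Ivanov}, as the statement already does by citing it.
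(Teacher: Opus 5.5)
Your Step 1 is a correct reformulation: $x$ is exceptional if and only if $x\in\operatorname{aff}(F)$ for some proper face $F$ with $\dim F\ge 1$. Your fixed-direction observation is also correct, since the bad lines parallel to $u$ project into the relative boundary of $\pi_u(C)$ and therefore form a null set. You can also dispose of $k=N-1$ directly. Facets have positive $\mathcal H^{N-1}$-measure and pairwise disjoint relative interiors in $\partial C$, which is $\sigma$-finite, so there are countably many of them; this already settles $N=2$.

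For $1\le k\le N-2$, however, the proposal stops exactly where the lemma has content, and the claim you plan to prove there is false. Let $\mathcal P$ be the set of pairs $(u,x)$ such that $x+\mathbb R u$ contains a nondegenerate segment of $\partial C$. For $N\ge 3$, $\dim_{\mathcal H}\mathcal P\le N-1$ fails. If $F$ is a facet with $P=\operatorname{aff}(F)$, every pair with $x\in P$, $u\in\mathbb S^{N-1}$ parallel to $P$, and $x+\mathbb R u$ meeting the relative interior of $F$ lies in $\mathcal P$. These pairs form a nonempty relatively open piece of a $(2N-3)$-dimensional manifold; already for a cube in $\mathbb R^3$, $\mathcal P$ has positive $\mathcal H^3$-measure. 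The reason is that $(u,x)\mapsto x$ has $(k-1)$-dimensional fibres over $\operatorname{aff}(F)$ when $\dim F=k$. So the count must be done stratum by stratum. The bound you would then need, e.g.\ $\mathcal H^{N+k-1}(\mathcal P_k)=0$ for the $k$-stratum combined with Eilenberg's inequality, is no easier than showing that the union of the $k$-planes $\operatorname{aff}(F)$ over all $k$-faces is null. That is the statement itself, so this route is circular at its key step.

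The first route has the same gap in another form. Fixing one base point $y$ per face on a transversal slice of $\partial C$ removes only one parameter, leaving $(N-2)+k>N-1$ parameters when $k\ge 2$. More importantly, the area formula needs the map sending $y$ to the direction space of the face through $y$ to be Lipschitz, at least on countably many pieces. You neither prove this nor say why it should hold. It is precisely the quantitative control on how faces of a convex body can vary (using that they all lie in supporting hyperplanes of one convex set) that Ivanov's argument supplies.

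The paper gives no proof of this lemma either; it imports it from \cite{Ivanov} (cf.\ \cite{Schneider_2013}). What you have is therefore a correct reduction followed by the same citation, not an independent proof. Either present it that way, or supply a genuine regularity or covering argument for the family of $k$-faces with $1\le k\le N-2$.
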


The previous lemma yields the following proposition.

\begin{proposition}\label{prop: misura bordo conv e conv dilatato}
Let $C$ be a convex body. Then, for $\mathcal{L}^N$-a.e. $x\in\mathbb{R}^N$, there exists a countable set $S\subset (0,+\infty)$ such that
\begin{equation}
\mathcal{H}^{N-1}\bigl(\partial(\lambda (C-x)) \cap \partial (C-x)\bigr)=0
\end{equation}
for every $\lambda\in (0,+\infty)\setminus S$.
\end{proposition}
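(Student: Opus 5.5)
Fix $x$ given by \Cref{lem: Ivanov}: every line through $x$ meets $\partial C$ in an $\mathcal H^1$-null set. After translating, assume $x=0$ and write $K=C-x$. The plan is to pass to polar coordinates centred at $0$ and reduce everything to a one-dimensional statement along rays.

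\textbf{Step 1 (radial structure of $\partial K$).} If $0\in {\rm int}(K)$, then $\partial(\lambda K)\cap\partial K=\emptyset$ for every $\lambda\neq 1$, since each ray from $0$ meets $\partial K$ exactly once, at radius $\rho_K(\theta)$; here we take $S=\{1\}$. If instead $0\notin{\rm int}(K)$, each ray $\{t\theta:t>0\}$ meets $\partial K$ in a set $\partial K\cap \{t\theta\}$, which is either a segment or at most two points. The lemma forces it to be $\mathcal H^1$-null, so each ray meets $\partial K$ in at most two points (the entry and exit radii $a(\theta)\le b(\theta)$). The degenerate case where $\partial K$ contains a segment on a line through $0$ is exactly what the lemma excludes, and this is where the choice of $x$ is used.

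\textbf{Step 2 (a countable family of disjoint sets).} For $\lambda>0$ define
\begin{equation}
A_\lambda=\bigl\{y\in\partial K:\ y/\lambda\in\partial K\bigr\}\supseteq \partial(\lambda K)\cap\partial K .
\end{equation}
If $y\in A_\lambda$, then on the ray through $y$ both radii $|y|$ and $|y|/\lambda$ belong to the set of at most two boundary radii. Consequently, for $\lambda\ne 1$ the ratio $b(\theta)/a(\theta)$ equals $\lambda^{\pm1}$. Next I would use the map $\pi:\partial K\to\mathbb S^{N-1}$, $y\mapsto y/|y|$, which is at most two-to-one. The sets $\pi(A_\lambda)\cup\pi(A_{1/\lambda})$ for distinct pairs $\{\lambda,1/\lambda\}$, $\lambda\neq1$, are pairwise disjoint subsets of $\mathbb S^{N-1}$, because on a given direction the ratio $b/a$ is a single number.

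\textbf{Step 3 (countability).} The main obstacle is converting disjointness on the sphere into a countability statement for $\mathcal H^{N-1}$ on $\partial K$. I would work locally away from $0$: cover $\partial K\setminus B(0,1/k)$ by finitely many pieces that are Lipschitz graphs, on which $\pi$ is locally Lipschitz. A disjoint family of measurable sets in $\mathbb S^{N-1}$ with finite total $\mathcal H^{N-1}$-measure can contain only countably many sets of positive measure. I also need the converse direction: $\mathcal H^{N-1}(A_\lambda)>0$ should imply $\mathcal H^{N-1}(\pi(A_\lambda))>0$. This fails if $A_\lambda$ lies in the part of $\partial K$ where the normal is orthogonal to the radial direction, i.e.\ on cone-like pieces $\{t\theta\}$. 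Precisely there, however, the rays are tangent to $\partial K$, and the contact set contains segments of lines through $0$ in a positive-measure way. I expect to rule this out using the lemma again, perhaps via the coarea formula for $\pi$ restricted to the tangential part: the Jacobian of $\pi$ vanishes there, and a positive-measure subset with zero Jacobian would produce fibres that are positive-length radial segments contained in $\partial K$, contradicting Step 1. Since $\pi^{-1}$ of any direction has at most two points, the area formula then gives $\mathcal H^{N-1}(A_\lambda)>0\Rightarrow\mathcal H^{N-1}(\pi(A_\lambda))>0$.

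\textbf{Step 4 (conclusion).} Take $S$ to be $\{1\}$ together with the countably many $\lambda$ for which $\mathcal H^{N-1}(A_\lambda)>0$. Taking the union over $k\in\mathbb N$ of the local countable sets from Step 3 keeps $S$ countable, and for every $\lambda\notin S$ we get $\mathcal H^{N-1}(\partial(\lambda K)\cap\partial K)\le \mathcal H^{N-1}(A_\lambda)=0$, as required.
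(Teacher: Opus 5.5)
Your Steps 1--2 are correct and rest on the same mechanism as the paper: by Ivanov's lemma, every ray from $x$ meets $\partial K$ in at most two points. The gap is in Step 3, at the point you yourself flag as the main obstacle.

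To get $\mathcal H^{N-1}(A_\lambda)>0\Rightarrow\mathcal H^{N-1}(\pi(A_\lambda))>0$, you argue that a positive-measure set where the Jacobian of $\pi$ vanishes would produce radial segments of positive length in $\partial K$. That inference is false. The area (or coarea) formula only says that such a set is mapped onto an $\mathcal H^{N-1}$-null subset of $\mathbb S^{N-1}$; it says nothing about the fibres. A Lipschitz map can be injective and still have zero Jacobian on a set of positive measure, e.g.\ $t\mapsto\int_0^t\mathbbm{1}_{[0,1]\setminus F}(s)\,ds$ with $F\subset[0,1]$ closed, nowhere dense and $|F|>0$. Pointwise, too, a ray tangent to $\partial K$ at $y$ need not contain any segment of $\partial K$. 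So Step 3, as written, does not establish the implication you need.

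The implication is nevertheless true, for a different reason: the tangential set never meets $A_\lambda$ when $\lambda\neq 1$.
\begin{itemize}
\item If $y\in A_\lambda$, the ray through $y$ meets $\partial K$ in two distinct points.
\item The chord between them cannot lie in $\partial K$ (Ivanov), so its relative interior lies in ${\rm int}(K)$.
\item Hence every outer normal $\nu$ at $y$ satisfies $\nu\cdot y\neq 0$, so $J\pi>0$ $\mathcal H^{N-1}$-a.e.\ on $A_\lambda$.
\item If $\mathcal H^{N-1}(A_\lambda)>0$, the area formula with multiplicity at most two gives $0<\int_{A_\lambda}J\pi\,d\mathcal H^{N-1}\le 2\,\mathcal H^{N-1}(\pi(A_\lambda))$.
\end{itemize}

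Better still, you do not need the sphere at all. For distinct $\lambda,\mu\in(0,+\infty)\setminus\{1\}$, the closed sets $A_\lambda$ and $A_\mu$ are already disjoint in $\partial K$: a common point $y$ would give three distinct points $y$, $y/\lambda$, $y/\mu$ of $\partial K$ on one ray. Since $\partial K$ has $\sigma$-finite $\mathcal H^{N-1}$ measure, only countably many $A_\lambda$ can have positive measure, which is the statement.

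This is in substance the paper's proof. The paper splits $\partial C$ into the first-hit, second-hit and single-hit points $C_1,C_2,C_3$ of rays from $x$. It then observes that for $0<\lambda<1$ only $\lambda(C_2-x)\cap(C_1-x)$ can carry measure, and applies the foliation lemma to the disjoint family $(\lambda(C_2-x))_{0<\lambda<1}$.

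Two minor points: for unbounded $K$, rays from an interior point meet $\partial K$ at most once, not exactly once. And in Step 3 you need countably many bounded Lipschitz pieces rather than finitely many.
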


\begin{proof}
We distinguish the cases $x\in {\rm int}(C)$ and $x\notin C$. Assume first that $x\in {\rm int}(C)$. In this case, by convexity, the family
\begin{equation}
\bigl(\partial(\lambda (C-x))\bigr)_{\lambda>0}
\end{equation}
is pairwise disjoint. Hence, by the foliation lemma,
\begin{equation}
\mathcal{H}^{N-1}\bigl(\partial(\lambda (C-x))\cap \partial(C-x)\bigr)=0
\end{equation}
for every $\lambda>0$ except at most countably many values of $\lambda$.

Assume now that $x\in \mathbb{R}^N\setminus C$. By \Cref{lem: Ivanov}, for $\mathcal{L}^N$-a.e. such $x$, every line $L$ passing through $x$ satisfies
\begin{equation}
\mathcal{H}^1(L\cap \partial C)=0.
\end{equation}
Let $x$ be such that previous property is satisfied, since $C$ is convex, every line through $x$ intersects $\partial C$ in at most two points. We then decompose $\partial C$ into the following three disjoint sets:
\begin{equation}
C_1:=\Bigl \{ y\in \partial C \ :\ \exists \ \nu\in \mathbb{S}^{N-1}, \ \exists \ 0<t<s
\ \text{such that } y=x+t\nu \text{ and } x+s\nu\in \partial C \Bigr \},
\end{equation}

\begin{equation}
C_2:=\Bigl \{ y\in \partial C \ :\ \exists \ \nu\in \mathbb{S}^{N-1}, \ \exists \ 0<t<s
\ \text{such that } y=x+s\nu \text{ and } x+t\nu\in \partial C \Bigr \},
\end{equation}
and
\begin{align}
C_3:=\Bigl \{ y\in \partial C \ :\ &\exists \ \nu\in \mathbb{S}^{N-1}, \ \text{such that } (x+\mathbb{R}\nu)\cap \partial C= \{y\} \Bigr \}.
\end{align}
Thus, $C_1$ and $C_2$ consist respectively of the first and second intersection points of rays starting from $x$ with $\partial C$, while $C_3$ contains the points corresponding to rays having a unique intersection with $\partial C$. Fix $\lambda>0$ with $\lambda\neq 1$. By construction,
\begin{equation}
\lambda(C_1-x)\cap (C_1-x)=\emptyset,
\qquad
\lambda(C_2-x)\cap (C_2-x)=\emptyset, 
\end{equation}
and moreover
\begin{equation}
\lambda(C_3-x)\cap \partial(C-x)=\emptyset,
\end{equation}
since points of $C_3$ correspond to rays intersecting $\partial C$ at exactly one point. In the case $0<\lambda<1$ (the case $\lambda>1$ being analogous), we also have, again by construction, that
\begin{equation}
\lambda(C_1-x)\cap (C_2-x)=\emptyset,
\end{equation}
Therefore, if
\begin{equation}
\mathcal{H}^{N-1}\bigl(\partial(\lambda(C-x))\cap \partial(C-x)\bigr)>0,
\end{equation}
then necessarily
\begin{equation}
\mathcal{H}^{N-1}\bigl(\lambda(C_2-x)\cap (C_1-x)\bigr)>0.
\end{equation}
On the other hand, the family
\begin{equation}
\bigl(\lambda(C_2-x)\bigr)_{0<\lambda<1}
\end{equation}
is pairwise disjoint. Hence, applying once again the foliation lemma, we conclude that
\begin{equation}
\mathcal{H}^{N-1}\bigl(\lambda(C_2-x)\cap (C_1-x)\bigr)=0
\end{equation}
for every $\lambda\in(0,1)$ except at most countably many values. This concludes the proof.
\end{proof}
As a consequence of the previous proposition, we prove the following lemma.

\begin{lemma}\label{lm: quasi ovunque disgiunti}
Let $(C_k)_{k\in\mathbb{N}}$ be a sequence of convex bodies. For $\mathcal{L}^N$-a.e. $x\in\mathbb{R}^N$ there exists a set $A\subset (0,+\infty)$ such that for every $\lambda,\mu\in A$, with $\lambda\neq\mu$, one has
\begin{equation}
\mathcal{H}^{N-1}\bigl(\lambda(\partial C_k - x)\cap \mu(\partial C_k - x)\bigr)=0
\end{equation}
for every $k\in\mathbb{N}$. Moreover, for every $\delta>0$ the set $A\cap (1-\delta,1)$ is uncountable.
\end{lemma}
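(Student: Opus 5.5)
We reduce the statement to \Cref{prop: misura bordo conv e conv dilatato} applied to each $C_k$ separately, and then intersect the resulting co-countable sets of good parameters. Note first that $\lambda(\partial C_k-x)=\partial(\lambda(C_k-x))$. Moreover, for $\lambda,\mu>0$ with $\lambda\neq\mu$,
\begin{equation}
\lambda(\partial C_k-x)\cap\mu(\partial C_k-x)=\mu\Bigl(\tfrac{\lambda}{\mu}(\partial C_k-x)\cap(\partial C_k-x)\Bigr).
\end{equation}
Since $\mathcal H^{N-1}$ scales by the factor $\mu^{N-1}$, the intersection on the left is $\mathcal H^{N-1}$-null as soon as the ratio $\lambda/\mu$ does not lie in the exceptional countable set $S_k(x)$ given by \Cref{prop: misura bordo conv e conv dilatato} for $C_k$ and $x$.

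Fix $x$ outside the union over $k$ of the $\mathcal L^N$-null exceptional sets provided by \Cref{prop: misura bordo conv e conv dilatato}; this union is a countable union of null sets, hence null. For such $x$, let $S\coloneqq\bigcup_k S_k(x)\subset(0,+\infty)$. This set is countable, and so is the multiplicative subgroup $G$ of $(0,+\infty)$ generated by $S$, since it consists of finite products of elements of $S$ and their inverses. It then suffices to build an uncountable $A\subset(0,+\infty)$, with uncountably many elements in each interval $(1-\delta,1)$, such that $\lambda/\mu\notin S$ for all distinct $\lambda,\mu\in A$.

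The main step is this choice of $A$. We choose $A$ as a set of representatives of distinct cosets of $G$, so that $\lambda/\mu\notin G\supset S$ whenever $\lambda\neq\mu$ in $A$. The quotient $(0,+\infty)/G$ is uncountable. Each coset $\lambda G$ is countable and contains elements close to $1$ only through $G$, so a plain choice of representatives gives no control on how they sit in $(1-\delta,1)$. We therefore argue more concretely.

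Taking logarithms, we need $B\subset\mathbb R$ such that differences of distinct elements avoid the countable set $T=\log S$, and such that $B\cap(\log(1-\delta),0)$ is uncountable for every $\delta$. We build $B$ by transfinite induction along a well-ordering of length $\omega_1$, or more simply via a Bernstein/Hamel-type argument. Alternatively, we fix a perfect set $P\subset(-1,0)$ accumulating at $0$, say a union of Cantor sets $P_j\subset(-1/j,-1/(j+1))$, and pick $B\subset P$ independent over the countable field $\mathbb Q(T)$ using a Mycielski-type theorem. By Mycielski's theorem, the relation "$b-b'\in T$" is a countable union of closed nowhere dense relations on $P_j^2$, so each $P_j$ contains a perfect, hence uncountable, subset $Q_j$ avoiding it. Exponentiating, $A=\bigcup_j \exp(Q_j)$, and we must also handle cross pairs between different $j$. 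Applying Mycielski's theorem once on the perfect set $\bigcup_j P_j\cup\{0\}$ with the meager relation $\{(b,b'):b\neq b',\,b-b'\in T\}$ yields a perfect set $Q$ avoiding it. We need $Q$ to meet every $(\log(1-\delta),0)$ uncountably. To arrange this, we run the Mycielski construction inside $\bigcup_j P_j$ and at stage $j$ keep a branch inside $P_j$; the standard fusion argument allows this. This fusion step is where the care lies. Once $A$ is obtained, the displayed scaling identity gives $\mathcal H^{N-1}\bigl(\lambda(\partial C_k-x)\cap\mu(\partial C_k-x)\bigr)=0$ for every $k$ and every pair of distinct $\lambda,\mu\in A$.
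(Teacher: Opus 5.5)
Your reduction matches the paper's. You take the union $S$ of the countable exceptional sets $S_k(x)$. You use the scaling identity to reduce to finding $A$ with $\lambda/\mu\notin S$ for distinct $\lambda,\mu\in A$. You then look for $A$ among representatives of distinct cosets of the countable group $G$ generated by $S$.

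\textbf{The gap.} The only remaining nontrivial point is making $A\cap(1-\delta,1)$ uncountable for every $\delta$, and your proposal asserts this rather than proves it.
\begin{itemize}
\item You correctly note that one application of Mycielski's theorem on $\bigcup_j P_j\cup\{0\}$ may return a perfect set inside a single $P_j$. However, the remedy you propose is not a ``standard fusion argument''. The Mycielski construction shrinks the pieces of the Cantor scheme arbitrarily to avoid the closed nowhere dense sets, so it gives no control over where the branches accumulate. Keeping one branch in every $P_j$ is exactly what still has to be proved.
\item Building $Q_1,Q_2,\dots$ one at a time does not work without coordination either. The points of $P_j$ forbidden by earlier choices form a countable union of closed sets, and this union can be all of $P_j$. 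For example, take $P_j=P_i+t$ with $t\in T$ and $Q_i=P_i$; the latter is a legitimate output when no two distinct points of $P_i$ differ by an element of $T$.
\item ``Independent over $\mathbb{Q}(T)$'' is not the right condition. The relation $b-b'=t$ is affine, not linear, in the elements of $B$.
\end{itemize}

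\textbf{The missing idea.} It is much simpler than fusion, and it answers exactly your objection that a plain choice of coset representatives gives no control. You may enlarge $S$ by $\mathbb{Q}^+$ without losing countability or the property defining $S$. Then $G$, and therefore every coset $\lambda G$, is dense in $(0,+\infty)$.
\begin{itemize}
\item Split the $2^{\aleph_0}$ cosets into countably many families $L_n$, each of cardinality $2^{\aleph_0}$.
\item For each coset in $L_n$, choose its representative inside $(1-\frac1n,1)$.
\item The resulting $A$ has pairwise ratios outside $G\supset S$.
\item $A\cap(1-\delta,1)$ contains all representatives coming from $L_n$ as soon as $\frac1n\le\delta$.
\end{itemize}
This is the paper's argument.

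Alternatively, the $\omega_1$-recursion you mention in passing also closes the gap, provided you actually carry it out. At each stage only countably many values are forbidden. A bijection $\omega_1\to\omega_1\times\mathbb{N}$ then lets you place $\aleph_1$ points in each interval $(\log(1-\frac1m),0)$.
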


\begin{proof}
As a consequence of \Cref{prop: misura bordo conv e conv dilatato}, for $\mathcal{L}^N$-a.e. $x\in\mathbb{R}^N$ there exists a countable set $S\subset (0,+\infty)$ such that
\begin{equation}
\mathcal{H}^{N-1}\bigl(\lambda(\partial C_k-x)\cap \partial (C_k-x)\bigr)=0
\end{equation}
for every $k\in\mathbb{N}$ and every $\lambda\in (0,+\infty)\setminus S$. Fix such an $x \in\mathbb{R}^N$ and the corresponding set $S$.

Without loss of generality, we may assume that $\mathbb{Q}^+\subset S$. Let $\Gamma$ be the multiplicative subgroup of $(0,+\infty)$ generated by $S$, namely
\begin{equation}
\Gamma=\left\{\frac{t_1\cdots t_l}{s_1\cdots s_m}:\; l,m\in \mathbb{N}, \, \, t_i,s_j\in S, \, \, i=1,\dots, l \, \, j= 1, \dots, m\right\}.
\end{equation}
Since $S$ is countable, also $\Gamma$ is countable.

We define an equivalence relation on $(0,+\infty)$ by
\begin{equation}
\lambda \sim \mu
\qquad \Longleftrightarrow \qquad
\frac{\lambda}{\mu}\in \Gamma.
\end{equation}

Each equivalence class is countable and dense in $(0,+\infty)$.

If $\mu \notin [\lambda]_{\sim}$, then $\frac{\lambda}{\mu}\notin S$, hence 
\begin{equation}
\mathcal{H}^{N-1}\bigl(\lambda(\partial C_k-x)\cap \mu(\partial C_k -x)\bigr) =0
\end{equation}
for every $k\in\mathbb{N}$.

Let $L$ denote the set of equivalence classes. Since $(0,+\infty)$ has cardinality $2^{\aleph_0}$ and each class is countable, $L$ has cardinality $2^{\aleph_0}$. Hence we can write
\begin{equation}
L=\bigsqcup_{n\in\mathbb{N}} L_n
\end{equation}
where the union is disjoint and each $L_n$ has cardinality $2^{\aleph_0}$.

For every $n\in\mathbb{N}$ and for every equivalence class $[\lambda]_{\sim}\in L_n$, choose a representative of the class $[\lambda]_{\sim}$ in the interval
$\left(1- \frac{1}{n},\,1 \right)$,
which is possible since each equivalence class is dense. Let $A_n$ denote the set of all representatives chosen in this way. Then
\begin{equation}
A_n \subset \left(1-\frac{1}{n},\,1 \right),
\qquad |A_n|=2^{\aleph_0}.
\end{equation}

Finally, set
\begin{equation}
A=\bigcup_{n\in\mathbb{N}} A_n.
\end{equation}
By construction, $A$ is the desired set.
\end{proof}
            
			The next approximation result will be used extensively in the following.
			\begin{lemma} \label{lm: approssimazione}
				Let $(C_k)_{k\in\mathbb{N}}$ be a sequence of convex bodies and let $E\subset\mathbb R^N\setminus C_1$ be a set of finite perimeter with $|E|<+\infty$. Then there exist a sequence of bounded open sets with $C^\infty$ boundary $(E_n)_{n\in \mathbb N}$ such that
				\begin{itemize}
					\item $E_n\setminus C_1\xrightarrow{n\to +\infty} E$ strictly;
					\item $P(E_n; \partial C_k)=0$ for every $n,k \in \mathbb{N}$;
					\item $\operatorname{Tr}_{\partial C_1}^+ \mathbbm{1}_{E_n}\xrightarrow{n\to +\infty} \operatorname{Tr}_{\partial C_1}^+ \mathbbm{1}_{E}$ in $L^1(\partial C_1, \mathcal H^{N-1})$.
				\end{itemize}
			\end{lemma}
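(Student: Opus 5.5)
The plan is to mollify and then rescale about a well-chosen center, using \Cref{lm: quasi ovunque disgiunti} to arrange that the smooth approximants put no perimeter on any $\partial C_k$.

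\textbf{Step 1: smooth approximation.} Extend $E$ by $\mathbbm{1}_E$ on $\mathbb R^N$; since $E\subset\mathbb R^N\setminus C_1$, $E$ has finite perimeter in $\mathbb R^N$. By the standard approximation theorem (\cite[Theorem 13.8]{Ma12}: mollification, a Sard-type choice of superlevel set, and truncation with a large ball chosen by the foliation lemma) there are bounded open sets $F_n$ with $C^\infty$ boundary such that $F_n\to E$ in $L^1$ and $P(F_n)\to P(E)$, i.e.\ strict convergence in $\mathbb R^N$.

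\textbf{Step 2: choice of center and dilation to kill contact.} Pick $x\in{\rm int}(C_1)$ in the full-measure set given by \Cref{lm: quasi ovunque disgiunti}, with its uncountable set $A\cap(1-\delta,1)$. For each $n$, the sets $\{\mu(\partial F_n-x)+x\}_{\mu}$ have disjoint pieces away from $x$ only up to a null set of parameters. More simply, for fixed $n$ and $k$, $P(\mu(F_n-x)+x;\partial C_k)=\mu^{N-1}\mathcal H^{N-1}(\partial F_n\cap(\mu^{-1}(\partial C_k-x)+x))$, and the family $\mu^{-1}(\partial C_k-x)$, $\mu\in A$, is $\mathcal H^{N-1}$-essentially pairwise disjoint. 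Since $\mathcal H^{N-1}(\partial F_n)<\infty$, only countably many $\mu\in A$ give positive measure, for each $k$. Countably many $n,k$ exclude countably many $\mu$, so we can choose $\mu_n\in A$ with $\mu_n\to1$ and set $E_n:=\mu_n(F_n-x)+x$. These are smooth, bounded, satisfy $P(E_n;\partial C_k)=0$ for all $n,k$, and still converge strictly to $E$, since $\mu_n\to1$ (by continuity of translations/dilations in $L^1$, and $P(E_n)=\mu_n^{N-1}P(F_n)$). A diagonal choice is harmless.

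\textbf{Step 3: strictness after removing $C_1$, and traces.} Since $E\cap C_1=\emptyset$, we have $E_n\setminus C_1\to E$ in $L^1$. For the perimeters, $P(E_n\setminus C_1)=P(E_n;\mathbb R^N\setminus C_1)+\mathcal H^{N-1}(\partial C_1\cap E_n)$, using $P(E_n;\partial C_1)=0$. By lower semicontinuity on the open set $\mathbb R^N\setminus C_1$ and strict convergence, $P(E_n;\mathbb R^N\setminus C_1)\to P(E;\mathbb R^N\setminus C_1)$ and $P(E_n;{\rm int}\,C_1)\to0$, because $P(E;\overline{C_1})=P(E;\partial C_1)$ and mass cannot concentrate there beyond the limit. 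The main obstacle is to show that $\mathcal H^{N-1}(\partial C_1\cap E_n)\to P(E;\partial C_1)=\int_{\partial C_1}{\rm Tr}^+\mathbbm 1_E$, i.e.\ the trace claim.

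I would obtain this from \Cref{teo: trace 2} applied on $\Omega=\mathbb R^N\setminus C_1$ (localized to a large ball, so the boundary is bounded Lipschitz). Strict convergence of $\mathbbm 1_{E_n}\to\mathbbm 1_E$ in $\Omega$ gives ${\rm Tr}\,\mathbbm 1_{E_n}\to{\rm Tr}\,\mathbbm 1_E$ in $L^1(\partial C_1)$. Since $E_n$ is smooth with $\mathcal H^{N-1}(\partial E_n\cap\partial C_1)=0$, its exterior trace is $\mathbbm 1_{E_n}$ a.e.\ on $\partial C_1$. This yields the third bullet, and then $P(E_n\setminus C_1)\to P(E)$, which is the first bullet.
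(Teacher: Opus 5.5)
There is a genuine gap in Step 3, at the point where the lemma has its content. Steps 1--2 give only strict convergence $E_n\to E$ in $\mathbb R^N$. Set $\Omega=\mathbb R^N\setminus C_1$. Since $|\mu_E|$ charges $\partial C_1=\partial\Omega$ with mass $P(E;\partial C_1)$, semicontinuity yields only $P(E;\Omega)\le\liminf_n P(E_n;\Omega)\le\limsup_n P(E_n;\Omega)\le P(E;\Omega)+P(E;\partial C_1)$. Nothing in your construction decides on which side of $\partial C_1$ the approximating boundary lies near the wet part. Your two asserted limits are in fact incompatible: together with $P(E_n;\partial C_1)=0$ they would force $P(E_n)\to P(E;\Omega)<P(E)$ whenever the wet part is nonzero.

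The construction as written really fails. Take $N=2$, $C_k=C_1=[-2,2]\times[-4,0]$, $E=(0,1)^2$, $F_n$ a smoothing of $(0,1)\times(1/n,1)$ with $\overline{F_n}\subset\{x_2\ge 1/n\}$, $x=(1/2,-1)$, and $\mu_n\in A$ with $1-\mu_n<1/(2n)$. Then $\overline{E_n}\subset\{x_2\ge \mu_n/n-(1-\mu_n)\}\subset\{x_2>0\}$. Hence $\operatorname{Tr}^+_{\partial C_1}\mathbbm 1_{E_n}=0$, while $\operatorname{Tr}^+_{\partial C_1}\mathbbm 1_E=\mathbbm 1_{(0,1)\times\{0\}}$, and $P(E_n;\Omega)\to P(E;\Omega)+1$. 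So your appeal to \Cref{teo: trace 2} has no strict convergence in $\Omega$ to rest on.

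The missing idea is that the dilation toward an interior point of $C_1$ does more than keep perimeter off the $\partial C_k$. It is what moves the wet part $\partial^*E\cap\partial C_1$ into ${\rm int}\,C_1$, so it has to act on $E$ itself, before the smoothing.

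The paper dilates first. With $0\in{\rm int}\,C_1$ and $\lambda<1$, one has $P(\lambda E;\Omega)=\lambda^{N-1}P(E;\mathbb R^N\setminus\lambda^{-1}C_1)$. Since $C_1\subset\lambda^{-1}C_1$ and $\bigcap_{\lambda<1}\lambda^{-1}C_1=C_1$, dominated convergence gives $P(\lambda_nE;\Omega)\to P(E;\Omega)$. This is strict convergence in $\Omega$, hence convergence of traces. Moreover $P(\lambda_nE;\partial C_k)=0$ by the choice $\lambda_n\in A$.

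Only then does the paper smooth. For a set $F$ with $P(F;\partial C_1)=0$, strict convergence $F_j\to F$ in $\mathbb R^N$ does give $P(F_j;\Omega)\to P(F;\Omega)$, because the limit measure does not charge $\partial\Omega$. A diagonal argument concludes.

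Your ordering can be repaired, but only by fixing the dilation before letting $n\to\infty$. Fix $\mu_m\in A$ outside the countable bad set of $E$ and of all the $F_n$. Then take $n_m$ so large that $P(\mu_m(F_{n_m}-x)+x;\Omega)$ is close to $P(\mu_m(E-x)+x;\Omega)$. That diagonal choice is where the proof actually happens, not a harmless afterthought.
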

			\begin{proof}
                By \Cref{lm: quasi ovunque disgiunti} there exists $x\in{\rm int}(C_1)$ and a set $A\subset (0,+\infty)$ such that the conclusions of the previous Lemma hold.  
                Without loss of generality we can assume $x=0$.
				By standard arguments (considering the intersection with balls of diverging radii), we can also assume that there exists $R>0$ such that $E\Subset B(0, R)$.
				
				\textbf{Step 1:} Approximation with sets $F$ such that $P(F, \partial C_k)=0$ for every $k\in\mathbb{N}$.\\
				Let us set, for $\lambda\in(0, 1)$, $F_\lambda=\lambda E$. It is immediate to see that $F_\lambda\to E$ strictly as $\lambda \to 1$. Let us show that there exist a sequence $\lambda_n\to 1^{-}$ such that
                \begin{equation}
                    P(F_{\lambda_n}; \partial C_k)=0 \ \text{for every} \ n,k\in\mathbb{N} \label{eq: foliation nel lemma di approx}
                \end{equation} 
                Indeed, since for every $k\in\mathbb{N}$ the family $(\frac{1}{\lambda} \partial C_k)_{\lambda \in A}$ is $\mathcal{H}^{N-1}$-pairwise disjoint, by the Foliation Lemma we have that there exist a countable set $B \subset A$ such that $P(E, \frac{1}{\lambda} \partial C_k) = 0$ for every $k\in\mathbb{N}$ and every $\lambda \in A \setminus B$.
                By \Cref{lm: quasi ovunque disgiunti}, $1$ is an accumulation point of $A \setminus B$, hence we can find a sequence $\lambda_n \to 1^{-}$ such that \eqref{eq: foliation nel lemma di approx} holds.
                
                Notice moreover that $\mathbbm{1}_{F_{\lambda_n}\setminus C_1}\to \mathbbm{1}_E$ strictly in $BV(\mathbb{R}^N \setminus C_1)$. Indeed,
                \begin{equation}
                P(F_{\lambda_n}, \mathbb{R}^N \setminus C_1)
                = \lambda_n^{N-1} P\!\left(E, \mathbb{R}^N \setminus \frac{1}{\lambda_n} C_1 \right)
                = \int \lambda_n^{N-1}\mathbbm{1}_{\mathbb{R}^N \setminus (\frac{1}{\lambda_n} C_1)} \, d|\mu_E|.
                \end{equation}
                
                Since the integrand converges pointwise to $\mathbbm{1}_{\mathbb{R}^N \setminus C_1}$ and $P(E) < +\infty$, by the Dominated Convergence Theorem we obtain
                \begin{equation}
                P(F_{\lambda_n}, \mathbb{R}^N \setminus C_1) \to P(E, \mathbb{R}^N \setminus C_1).
                \end{equation}
                
                Therefore, \Cref{teo: trace 2} yields
                \begin{equation}
                \operatorname{Tr}_{\partial C_1}^{+}\mathbbm{1}_{F_{\lambda_n}}
                \to
                \operatorname{Tr}_{\partial C_1}^{+}\mathbbm{1}_E
                \qquad \text{in } L^1(\partial C_1,\mathcal H^{N-1}).
                \end{equation}
                
                In particular, this also implies that
                \begin{equation}
                P(F_{\lambda_n} \setminus C_1)\to P(E).
                \end{equation}
				
			\textbf{Step 2:} Approximation by smooth sets.\\
				It is now sufficient to approximate a set $F$ such that $P(F;\partial C_k)=0$ for every $k\in\mathbb{N}$ and $F\Subset B(0,R)$ for some $R>0$.
            By the standard theory of sets of finite perimeter, there exists a sequence $(F_n)_{n\in \mathbb N}$ of open sets with smooth boundary such that $F_n\to F$ strictly in $BV$ and $F_n\Subset B(0,R)$ for every $n\in \mathbb N$. 
            By the same argument as in Step 1, we may assume that $P(F_n;\partial C_k)=0$ for all $n,k\in\mathbb{N}$. Since $F_n \to F$ in $L^1$, we have $\mu_{F_n}\xrightharpoonup{*}\mu_F$.
            Moreover, since $P(F_n)\to P(F)<+\infty$, by \cite[Proposition 4.30]{Ma12} we obtain $|\mu_{F_n}|\xrightharpoonup{*}|\mu_F|$. Since $P(F,\partial C_1)=0$, by \cite[Proposition 4.26]{Ma12} it follows that
            $
            P(F_n,\mathbb{R}^N\setminus C_1)\to P(F,\mathbb{R}^N\setminus C_1).
            $
            Hence $\mathbbm{1}_{F_n\setminus C_1}\to \mathbbm{1}_{F\setminus C_1}$ strictly in $BV(\mathbb{R}^N\setminus C_1)$, which, by \Cref{teo: trace 2}, yields the conclusion.
                \end{proof}

		\subsection{Asymptotic geometry of convex sets}
			In order to study the asymptotic geometric properties of convex sets it is useful to recall the notion of Kuratowski convergence.
			\begin{definition}
				Let $(F_n)_{n\in \mathbb N}$ be a sequence of sets in $\mathbb R^N$. We say that $F_n$ converges in the sense of Kuratowski to a (possibly empty) set $F$ if
				\begin{itemize}
					\item Every point $x\in F$ is the limit of a sequence $(x_n)_{n\geq1}$ such that $x_n\in F_n$ eventually,
					\item any limit point of any sequence $(x_n)_{n\in \mathbb N}$, with $x_n\in F_n$, must be in $F$.
				\end{itemize}
			\end{definition} 
			It is easy to see that the class of closed convex sets is closed with respect to this convergence. We recall that the family of closed subsets of $\mathbb{R}^N$ can be endowed with a metrizable topology whose sequential convergence coincides with Kuratowski convergence. We shall refer to this topology as the Kuratowski topology, since it is the unique metrizable topology that induces Kuratowski convergence.
			Kuratowski convergence has good compactness properties:
			\begin{theorem}[Blaschke's Theorem]
				Let $(F_n)_{n\in \mathbb N}$ be a sequence of closed sets. Then there exists some subsequence which converges in the Kuratowski sense.
			\end{theorem}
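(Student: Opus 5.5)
The plan is a diagonal extraction over a countable base of $\mathbb R^N$, followed by defining the limit set explicitly and checking the two Kuratowski conditions by hand. Let $(B_j)_{j\in\mathbb N}$ be an enumeration of all open balls with rational centers and rational radii.

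\textbf{Step 1: extraction.} For a fixed ball $B_j$ and any sequence of closed sets, either infinitely many of them meet $B_j$ or only finitely many do. So we can pass to a subsequence along which either $F_n\cap B_j\neq\emptyset$ eventually or $F_n\cap B_j=\emptyset$ eventually. We do this successively for $j=1,2,\dots$ and take the diagonal subsequence, still denoted $(F_n)$. Along it, for every $j$, exactly one of the following holds:
\begin{itemize}
\item[(a)] $F_n\cap B_j\neq\emptyset$ for all large $n$;
\item[(b)] $F_n\cap B_j=\emptyset$ for all large $n$.
\end{itemize}
We call $B_j$ \emph{good} in case (a). We then define
\begin{equation}
F\coloneqq\{x\in\mathbb R^N:\ \text{every } B_j \text{ containing } x \text{ is good}\}.
\end{equation}

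\textbf{Step 2: $F$ is closed.} Let $x\notin F$. Then some $B_j\ni x$ is not good, so $F_n\cap B_j=\emptyset$ eventually. Every $y\in B_j$ also lies in this non-good ball, hence $y\notin F$. So the complement of $F$ is open.

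\textbf{Step 3: limit points of sequences lie in $F$.} Let $x$ be a limit point of a sequence $(x_n)$ with $x_n\in F_n$, say $x_{n_k}\to x$. Any $B_j\ni x$ contains $x_{n_k}$ for all large $k$, so $B_j$ meets infinitely many $F_n$. This rules out (b), so $B_j$ is good and $x\in F$.

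\textbf{Step 4: points of $F$ are limits.} Let $x\in F$. For each $m$ choose a rational ball $B_{j_m}\ni x$ of radius less than $1/m$. Since it is good, there are thresholds $N_1<N_2<\cdots$ with $F_n\cap B_{j_m}\neq\emptyset$ for all $n\ge N_m$. For $N_m\le n<N_{m+1}$ pick $x_n\in F_n\cap B_{j_m}$. Then $x_n\in F_n$ for all $n\ge N_1$ and $|x_n-x|<2/m$, so $x_n\to x$.

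Steps 3 and 4 are exactly the two conditions in the definition of Kuratowski convergence, so $F_n\to F$. The case $F=\emptyset$ (mass escaping to infinity) is covered automatically, since then Step 4 is vacuous.

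The only point needing care is the diagonal extraction. Each stage only refines the previous subsequence, so the dichotomy (a)/(b) for $B_j$ persists along every further subsequence, in particular along the diagonal one. Everything else is direct verification.
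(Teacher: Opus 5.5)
Your proof is correct and complete. The diagonal extraction over the rational balls gives, for every ball of a countable base, the dichotomy that it is either eventually hit or eventually missed. Steps 2--4 then verify that $F$ is closed and that both Kuratowski conditions hold exactly as in the paper's definition, including the degenerate case $F=\emptyset$.

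The paper gives no proof of this statement. It is recalled as a classical compactness fact and used only in its Hausdorff form for the equibounded traces $F_n\cap\overline{G}$ in \Cref{lemma: local hausdorff}, and for the existence of asymptotic cylinders. So there is no argument to compare against; yours is the standard proof of Kuratowski's compactness theorem.

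Two small remarks. First, you never use that the $F_n$ are closed, which is fine: the result holds for arbitrary sequences of sets. Second, your version, combined with the equivalence of Kuratowski and Hausdorff convergence for equibounded sequences that the paper recalls right after the statement, yields exactly the Hausdorff-metric selection invoked in \Cref{lemma: local hausdorff}.

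An alternative standard route applies Arzel\`a--Ascoli to the $1$-Lipschitz functions $\min\{d(\cdot,F_n),1\}$, with the convention that this function is $\equiv 1$ when $F_n=\emptyset$. That route is less elementary than your combinatorial extraction. In exchange, it also exhibits a metric inducing Kuratowski convergence, namely locally uniform convergence of truncated distance functions, which is the metrizability the paper takes for granted.
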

			We recall that for an equibounded sequence of closed sets Kuratowski convergence and convergence with respect to Hausdorff metric are equivalent.
            The following useful result allows to interpret Kuratowski convergence as Hausdorff convergence on (some) bounded open sets. Since we were not able to find a reference for this, we present a simple proof.
			\begin{lemma}\label{lemma: local hausdorff}
				Let $F\subset\mathbb R^N$ a closed set and $(F_n)_{n\in \mathbb N}$ be a sequence of closed sets in $\mathbb R^N$ such that $F_n\to F$ in the sense of Kuratowski.
				Let $G$ be a bounded open set such that $F\cap G\neq \emptyset$ and assume that $\widebar{F\cap G}=F\cap \widebar G$. 
                Then  $F_n\cap\widebar{G}\to F\cap\widebar{G}$ in the Hausdorff metric. This in particular holds if both $F, G$ are convex.
			\end{lemma}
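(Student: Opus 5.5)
Since $F_n\cap\widebar G$ and $F\cap\widebar G$ are compact and contained in the bounded set $\widebar G$, Hausdorff convergence is equivalent to Kuratowski convergence for such equibounded sequences of closed sets. The plan is therefore to verify the two Kuratowski conditions for $F_n\cap\widebar G\to F\cap\widebar G$. We must also check that $F_n\cap\widebar G\neq\emptyset$ eventually, so that the Hausdorff distance makes sense; this will follow from the first condition.

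\emph{Limit points.} Let $x_n\in F_n\cap\widebar G$, and let $x$ be the limit of a subsequence. Then $x\in F$ by the Kuratowski convergence $F_n\to F$, and $x\in\widebar G$ because $\widebar G$ is closed. Hence $x\in F\cap\widebar G$.

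\emph{Approximation.} This is the only place where the hypothesis $\widebar{F\cap G}=F\cap\widebar G$ is used. Let $x\in F\cap\widebar G$ and $\varepsilon>0$. By hypothesis there is $y\in F\cap G$ with $|x-y|<\varepsilon$. Since $F_n\to F$, there are $y_n\in F_n$ with $y_n\to y$. As $G$ is open, $y_n\in G$ for $n$ large, so $d(x,F_n\cap\widebar G)<2\varepsilon$ eventually. Because $\varepsilon$ is arbitrary, $d(x,F_n\cap\widebar G)\to0$, and we can select $x_n\in F_n\cap\widebar G$ with $x_n\to x$. Applying this to any point of $F\cap G\neq\emptyset$ also gives nonemptiness of $F_n\cap\widebar G$ for large $n$. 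This density step is the only delicate point; without the hypothesis, points of $F\cap\partial G$ could fail to be approximated from $F_n\cap\widebar G$, for instance when $F_n$ touches $G$ only tangentially in the limit.

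\emph{Convex case.} Let $F,G$ be convex, $G$ open and bounded, and $F\cap G\neq\emptyset$. The inclusion $\widebar{F\cap G}\subset F\cap\widebar G$ always holds. For the reverse, fix $y\in F\cap G$ and $x\in F\cap\widebar G$. For $t\in(0,1]$ set $z_t=(1-t)x+ty$. Then $z_t\in F$ by convexity of $F$. Moreover $z_t\in G$, by the standard fact that the open segment joining a point of the closure of an open convex set to an interior point lies in its interior. Letting $t\to0$ gives $x\in\widebar{F\cap G}$, which completes the plan.
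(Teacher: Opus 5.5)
Your proof is correct and uses the same two ingredients as the paper's: limit points of $F_n\cap\overline{G}$ lie in $F\cap\overline{G}$, and points of $F\cap G$ are approximated from $F_n\cap G$ by the openness of $G$, with the hypothesis $\overline{F\cap G}=F\cap\overline{G}$ covering the points of $F\cap\partial G$. The differences are only in packaging. The paper runs a subsequence argument via Blaschke's theorem and identifies the Hausdorff limit $K$ through $F\cap G\subset K\subset F\cap\overline{G}$. You instead check the two Kuratowski conditions directly and invoke the Kuratowski--Hausdorff equivalence for equibounded sets, which the paper recalls just before the lemma. You also supply the segment argument for the convex case, which the paper leaves to the reader.
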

			\begin{proof}
    Let us show that any subsequence of $F_n\cap\widebar{G}$ has a further subsequence converging in the Hausdorff metric to $F\cap \widebar{G}$.
    Let $(F_{n_j}\cap\widebar{G})_{j\in \mathbb N}$ be such a subsequence. By Kuratowski convergence, given $x\in F\cap G$, there exists a sequence $(x_{n_j})_{j\in \mathbb N}$ such that $x_{n_j}\in F_{n_j}$ eventually and $x_{n_j}\to x$. Hence, $F_{n_j}\cap\widebar{G}\neq \emptyset$ eventually. By Blaschke's theorem and the fact that $G$ is bounded, there exist a compact set $K\subset\mathbb R^N$ and a further subsequence
    $(F_{n_{j_k}}\cap\widebar{G})_{k\in \mathbb N}$ such that $F_{n_{j_k}}\cap\widebar{G}\to K$ in the Hausdorff metric. Let us show that $K=F\cap \widebar{G}$. Let $(x_{n_{j_k}})_{k\in \mathbb N}$ be a sequence such that $x_{n_{j_k}}\in F_{n_{j_k}}\cap\widebar{G}$, and let $x$ be any limit point of that sequence. By Kuratowski convergence of $(F_n)_{n\in \mathbb N}$, it follows that $x\in F$. Moreover, since $\widebar{G}$ is closed, we have $x\in\widebar{G}$, and hence $x\in F\cap\widebar{G}$. This implies that $K\subset F\cap\widebar{G}$. Let $x\in F\cap G$. By Kuratowski convergence, there exists a sequence $(x_{n_{j_k}})_{k\geq1}$ such that $x_{n_{j_k}}\in F_{n_{j_k}}$ and $x_{n_{j_k}}\to x$. Since $G$ is open, we have
    $x_{n_{j_k}}\in F_{n_{j_k}}\cap G\subset F_{n_{j_k}}\cap\widebar{G}
    $
    eventually. This implies
    \begin{equation}
        F\cap G\subset K\subset F\cap\widebar{G}.
    \end{equation} Since $K$ is compact and, by assumption, $\widebar{F\cap G}=F\cap\widebar{G}$, the thesis follows. Finally, it follows easily from the properties of convex sets that if $F$ is a closed convex set and $G$ is an open convex set, then $\widebar{F\cap G}=F\cap\widebar{G}$.
\end{proof}
		
			\noindent We start the study of asymptotic geometry of convex sets by recalling some structural properties proved in \cite[Lemma 3.1]{LeRiVe22}
			\begin{lemma}
				Let $C\subset\mathbb R^N$ be an unbounded closed convex set and let $(x_n)_{n\in \mathbb N}\subset C$ be a diverging sequence. Then, modulo subsequences, there exists a closed convex set $K$ such that $C-x_n\to K$ in the Kuratowski sense. Moreover, $K$ contains a line and thus it is a convex cylinder.
			\end{lemma}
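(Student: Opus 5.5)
The plan has three parts: extract a Kuratowski limit $K$ by compactness, show $K$ is a closed convex set containing the origin, and produce a full line inside $K$, which then forces the cylinder structure.

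\textbf{Compactness and convexity.} The sets $C-x_n$ are closed, so Blaschke's Theorem gives a subsequence with $C-x_n\to K$ in the Kuratowski sense. Since $0=x_n-x_n\in C-x_n$ for every $n$, the first condition in the definition of Kuratowski convergence forces $0\in K$. Convexity of $K$ passes to the limit: for $y,z\in K$ we take $y_n,z_n\in C-x_n$ with $y_n\to y$ and $z_n\to z$. Then $ty_n+(1-t)z_n\in C-x_n$ converges to $ty+(1-t)z$, which therefore lies in $K$. Closedness of $K$ is automatic for Kuratowski limits.

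\textbf{Finding the line.} Pass to a further subsequence so that $x_n/|x_n|\to \nu\in\mathbb S^{N-1}$, and fix a base point $x_0\in C$. I claim $\mathbb R\nu\subset K$.

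For $t>0$ and $n$ large with $|x_n - x_0|> t$, the segment $[x_0,x_n]$ lies in $C$. Along it, the point
\[
x_n - t\,\frac{x_n-x_0}{|x_n-x_0|}\in C
\]
gives the element $-t\frac{x_n-x_0}{|x_n-x_0|}$ of $C-x_n$, and this converges to $-t\nu$. Hence $-t\nu\in K$.

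For the positive direction I use the recession cone. Since the unit vectors $(x_n-x_0)/|x_n-x_0|$ converge to $\nu$, a standard argument shows that $\nu$ is a recession direction of $C$: for fixed $s$, the points $x_0+s\frac{x_n-x_0}{|x_n-x_0|}$ lie in $C$ once $|x_n-x_0|\geq s$, and they converge to $x_0+s\nu$, which is in $C$ because $C$ is closed. The recession property does not depend on the base point, so $x_n+t\nu\in C$ for all $t\ge0$. Thus $t\nu\in C-x_n$ for every $n$, and therefore $t\nu\in K$.

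Combining the two directions gives the whole line $\mathbb R\nu\subset K$.

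\textbf{Cylinder structure.} A closed convex set containing a line $\mathbb R\nu$ is invariant under translations by $\mathbb R\nu$, because the recession cone of a closed convex set does not depend on the base point. So $K=K'\times\mathbb R\nu$ with $K'=K\cap\nu^\perp$, i.e. $K$ is a convex cylinder.

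The only delicate point is the positive direction $t\nu$. That is why I go through the recession cone rather than along segments, since segments alone only yield $-\nu$.
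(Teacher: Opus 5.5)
Your proof is correct and complete. The one slip is that $0\in K$ follows from the \emph{second} Kuratowski condition (limit points of sequences $y_n\in C-x_n$ lie in $K$) applied to the constant sequence $0$, not from the first. The paper gives no proof of its own and simply cites \cite[Lemma 3.1]{LeRiVe22}; your route is the standard argument for this fact: Blaschke compactness, then the segments $[x_0,x_n]$ giving the half-line $\{-t\nu: t\ge 0\}$ in the limit, then the recession direction $\nu$ of $C$ giving $t\nu\in C-x_n$ for all $n$ and $t\ge 0$.
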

			Following the terminology in \cite{LeRiVe22}, we say:
			\begin{definition}\label{def: asymptotic cylinders}
				Let $C\subset\mathbb R^N$ be a closed convex set.
                We say that $K\subset \mathbb R^N$ is an \emph{asymptotic cylinder} for $C$ if there exists $(x_n)_{n\in \mathbb N}\subset C$ such that $|x_n|\to +\infty$ and $C-x_n\to K$ in the sense of Kuratowski.
                We denote by $\mathcal{K}(C)$ the set of asymptotic cylinders of $C$.
			\end{definition}
			We have introduced the notion of asymptotic cylinder in order to provide a setting in which look for minimizers of the capillarity energy. We know that convexity is preserved by convergence in the Hausdorff metric and even in the convergence of Kuratowski. What is not preserved, in general,  is the property of being a convex body, as the following example shows:
			\begin{example}[\cite{LeRiVe22}]\label{es: loss of dim}
				Let $Q\coloneqq\{(x, y, z)\in\mathbb R^3:\, x^2+y^2\leq 1 , \;z\geq 0\}$ the unitary half cylinder and $P\coloneqq \{(x, y, z)\in\mathbb R^3:\, z=(x-1)^2 ,\; y=0,\; x\geq 1\}$ a parabolic curve. Let $C$ be the closed convex envelope of $Q\cup P$. Let us fix a diverging sequence $(x_n)_{n\in \mathbb N}\subset [1, +\infty)$ and set $p_n=(x_n, 0, (x_n-1)^2)$ the point on $P$ with abscissa $x_n$ for all $n\in \mathbb N$.
				Then any asymptotic cylinder of $C$ defined by subsequences of $p_n$ must have dimension at most $2$.
			\end{example}
			The situation depicted in \Cref{es: loss of dim} is not even the worst possible: if we think about $C\times C$ in $\mathbb R^6$, we see that the dimensions lost in the limit may be more than one.
			We conclude with a simple remark on the properties of Kuratowski converging convex bodies.
			\begin{remark}[{\cite[Remarks 2.1, 2.2]{FoFuLeMo23}}]\label{rmk: convergenza convessi}
				Let $C_n$ be a sequence of convex bodies and $C$ be a convex body. Then $C_n\to C$ in the sense of Kuratowski if and only if $\mathbbm{1}_{C_n}\to \mathbbm{1}_{C}$ pointwise a.e.. Moreover, if this holds, $|\mu_{C_n}|=\mathcal H^{N-1} \mres \partial C_n \xrightharpoonup{*} |\mu_{C}|  =\mathcal H^{N-1} \mres \partial C$ in duality with $C^0_c(\mathbb R^N)$. In particular, if the sets are equibounded, $\mathbbm{1}_{C_n}\to \mathbbm{1}_{C}$ strictly.
			\end{remark}

            It will be useful in the following to observe that for a closed convex set $C$, an asymptotic cylinder of another asymptotic cylinder of $C$ is itself an asymptotic cylinder of $C$. 

\begin{proposition}\label{prop: i cilindri asint dei cilindri asint sono cilindri asint}
    Let $C$ be a closed convex set. If $K \in \mathcal{K}(C)$ and $L \in \mathcal{K}(K)$, then $L \in \mathcal{K}(C)$.
\end{proposition}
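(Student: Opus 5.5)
The natural route is a diagonal argument using the metrizability of Kuratowski convergence. By Definition \ref{def: asymptotic cylinders}, there is a diverging sequence $(x_n)\subset C$ with $C-x_n\to K$, and a diverging sequence $(y_m)\subset K$ with $K-y_m\to L$. I want a diverging sequence $(z_m)\subset C$ with $C-z_m\to L$. Fix a metric $d_K$ on closed subsets of $\mathbb R^N$ inducing Kuratowski convergence.

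\textbf{Step 1: translations pass to the limit.} For fixed $m$, translation by $y_m$ is Kuratowski-continuous, directly from the definition: if $F_n\to F$ then $F_n-y\to F-y$. Hence $C-x_n-y_m\to K-y_m$ as $n\to\infty$.

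\textbf{Step 2: approximate $y_m$ inside $C$.} The shifted sets $C-x_n-y_m$ are translates of $C$, but the point $x_n+y_m$ need not lie in $C$. Since $y_m\in K$, Kuratowski convergence gives points $c_n^m\in C$ with $c_n^m-x_n\to y_m$ as $n\to\infty$. Replacing $x_n+y_m$ by $c_n^m$ changes the translate by the vanishing vector $w_n^m=c_n^m-x_n-y_m$. A translate of a Kuratowski-convergent sequence by vanishing vectors has the same limit, again checked directly from the two defining conditions. Therefore $C-c_n^m\to K-y_m$ as $n\to\infty$.

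\textbf{Step 3: diagonal choice.} For each $m$, pick $n_m$ so that $d_K(C-c^m_{n_m},K-y_m)<1/m$ and $|c^m_{n_m}|\geq m$. The second condition is possible because $|c_n^m|\geq |x_n|-|y_m|-|w_n^m|\to\infty$. Set $z_m=c^m_{n_m}\in C$. By the triangle inequality,
\begin{equation}
d_K(C-z_m,L)\leq \tfrac1m+d_K(K-y_m,L)\to0,
\end{equation}
so $C-z_m\to L$ with $|z_m|\to\infty$, and hence $L\in\mathcal K(C)$.

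\textbf{Main obstacle.} The main point is keeping the base points in $C$ and making them diverge; Step 2 handles this. A minor technicality is relying on metrizability, which the paper recalls. Alternatively, the diagonal step could be done by hand via Lemma \ref{lemma: local hausdorff}, using Hausdorff convergence on balls $\widebar{B(0,j)}$ and choosing $n_m$ so that the Hausdorff distance on $\widebar{B(0,m)}$ is small. That version needs care when $L$ misses some balls, but $0\in L$, since it is the limit of $0\in K-y_m$, so balls centered at $0$ suffice.
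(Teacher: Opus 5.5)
Your proof is correct and follows essentially the same route as the paper: a diagonal argument based on the metrizability (first countability) of the Kuratowski topology, combined with approximating $y_m$ by points of $C$ shifted by $x_n$ and the stability of Kuratowski limits under vanishing translations. Your ordering is slightly cleaner than the paper's double diagonalization, since you first replace $x_n+y_m$ by $c_n^m\in C$ for each fixed $m$ and then diagonalize once. You also explicitly verify $|z_m|\to\infty$, which the paper leaves implicit.
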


\begin{proof}
    Throughout the proof, all limits of sets are taken in the Kuratowski sense. The argument is a standard diagonalization. We can find a diverging sequence $(x_n)_{n\in\mathbb{N}}$ in $C$ and a diverging sequence $(y_m)_{m\in\mathbb{N}}$ in $K$ such that
    \begin{equation}
        K = \lim_{n \to +\infty} (C - x_n) \, \, \, \, \, \text{and} \, \, \, \, \, L = \lim_{m \to +\infty} (K - y_m).
    \end{equation}
    Since the collection of closed sets equipped with the Kuratowski topology is first countable, we can find $(V_k)_{k \in \mathbb{N}}$, a countable basis of neighborhoods of $L$. For every $k \in \mathbb{N}$, we can find a strictly increasing sequence of indices $(m_k)_{k\in\mathbb{N}}$ such that $K - y_{m_k} \in V_k.$ By the convergence $C - x_n \to K$, we can then choose a strictly increasing sequence of indices $(n_k)_{k\in\mathbb{N}}$ such that
    $C - x_{n} - y_{m_k} \in V_k$ for every $n \geq n_k$. Since $y_{m_k} \in K = \lim\limits_{l \to +\infty} (C - x_{n_l})$, for every $k$ there exists a sequence $(z_l^k)_{l\in\mathbb{N}}$ in $C$ such that $y_{m_k} = \lim\limits_{l \to +\infty} (z_l^k - x_{n_l}).$  By a diagonal argument, we can find a subsequence $(n_{l_k})_{k\in\mathbb{N}}$ of indices such that, letting $z_k = z^{k}_{l_k}$, we have
    \begin{align}
        z_k - (x_{n_{l_k}} + y_{m_k}) \to 0. \label{eq1: diagonalizzazione conv kuratowski}
    \end{align}
    Since the Kuratowski topology is Hausdorff and $C - x_{n_{l_k}} - y_{m_k} \in V_k$ for every $k$, we have
    \begin{align}
        L = \lim_{k \to +\infty} (C - x_{n_{l_k}} - y_{m_k}). \label{eq2: diagonalizzazione conv kuratowski}
    \end{align}
    
    Using \eqref{eq1: diagonalizzazione conv kuratowski} and \eqref{eq2: diagonalizzazione conv kuratowski}, we end up with $C - z_k \to L$, which proves that $L \in \mathcal{K}(C)$.
\end{proof}

	\section{\texorpdfstring{$\Gamma$}{Gamma}--convergence of the functionals}\label{section: Gamma}
		A key tool in our study of minimizing sequences is the $\Gamma$--convergence of the capillarity energies when the underlying obstacles do converge with respect to the Hausdorff metric. in order to prove this, we split the argument in two parts, namely we distinguish whether the dimension is preserved or not in the limit.
		\subsection{No loss of dimension}
			When the full dimensionality is preserved in the limit, more and simpler tools are at disposal to compare sets outside the converging obstacles (and their energy) to ones outside the limit one.
			We start by proving a simple lemma that will be useful in this regard: up to homotheties (with dilation constant which goes to 1), we can always assume that the convergence of the obstacles is ``monotone'', either upwards or downward as needed.
			\begin{lemma}\label{lemma: aggiustamento convessi}
				Let $C\subset\mathbb{R}^N$ be a bounded convex body such that $0\in {\rm int}(C)$. Let $(C_n)_{n\in \mathbb N}$ be a sequence of convex bodies such that
				$C_n\to C$ in the Hausdorff metric.
				Then there exists a sequence $(\varepsilon_n)_{n\in \mathbb N}$ such that:
				\begin{itemize}
					\item $\varepsilon_n\to 0$;
					\item $(1-\varepsilon_n) C_n\Subset {\rm int}(C)$ eventually;
					\item $C\Subset (1+\varepsilon_n)C_n$ eventually.
				\end{itemize}
			\end{lemma}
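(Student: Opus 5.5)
The natural tool is the Minkowski gauge of $C$. Since $C$ is a bounded convex body with $0\in{\rm int}(C)$, there exist $0<r<R$ with $B(0,r)\subset C\subset B(0,R)$. Let $\rho_C(x)=\inf\{t>0: x\in tC\}$ be the gauge, so that $C=\{\rho_C\le 1\}$ and ${\rm int}(C)=\{\rho_C<1\}$. Set $\delta_n\coloneqq d_H(C_n,C)\to 0$. The plan is to show that the Hausdorff closeness $C_n\subset C+\overline{B}(0,\delta_n)$ and $C\subset C_n+\overline{B}(0,\delta_n)$ converts into multiplicative inclusions with a factor $1\pm O(\delta_n)$. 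We then take $\varepsilon_n$ to be a suitable multiple of $\delta_n$, plus a vanishing slack such as $1/n$ to obtain strict inclusions.

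\textbf{Step 1} (downward inclusion). Since $B(0,r)\subset C$, convexity gives, for every $s>0$,
\begin{equation}
C+\overline{B}(0,s)\subset C+\tfrac{s}{r}C=\bigl(1+\tfrac{s}{r}\bigr)C .
\end{equation}
Hence $C_n\subset (1+\delta_n/r)C$. Choosing $\varepsilon_n>0$ with $(1-\varepsilon_n)(1+\delta_n/r)<1$, for instance $\varepsilon_n=2\delta_n/r+1/n$, we get
\begin{equation}
(1-\varepsilon_n)C_n\subset \theta_n C\quad\text{with } \theta_n<1 .
\end{equation}
Because $0\in{\rm int}(C)$ and $C$ is compact, $\theta_n C$ is compact and contained in $\{\rho_C\le\theta_n\}\subset{\rm int}(C)$. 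This gives $(1-\varepsilon_n)C_n\Subset{\rm int}(C)$.

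\textbf{Step 2} (upward inclusion). This is the step needing care, since here we must use convexity of $C_n$ rather than of $C$. First, $C_n$ eventually contains a uniform ball around the origin. To see this, suppose $B(0,r/2)\not\subset C_n$. Then there is $y\in B(0,r/2)\setminus C_n$, and by Hahn--Banach a unit vector $e$ with $C_n\subset\{x\cdot e\le y\cdot e\}\subset\{x\cdot e< r/2\}$. The point $re\in C$, however, lies at distance at least $r/2$ from this half-space. This contradicts $\delta_n<r/2$. So for $n$ large, $B(0,r/2)\subset C_n$. Repeating the computation of Step 1 with the roles exchanged gives
\begin{equation}
C\subset C_n+\overline{B}(0,\delta_n)\subset \bigl(1+\tfrac{2\delta_n}{r}\bigr)C_n .
\end{equation}
Then $(1+\varepsilon_n)C_n=\tfrac{1+\varepsilon_n}{1+2\delta_n/r}\bigl(1+\tfrac{2\delta_n}{r}\bigr)C_n$ is a dilation of a set containing $C$ by a factor $\mu_n>1$. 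Since $0$ is interior to $C_n$, for any compact $K\subset C_n'\coloneqq(1+2\delta_n/r)C_n$ we have $K\subset \mu_n C_n'$ with $K\Subset\mu_n C_n'$: every point of $C_n'$ has gauge at most $1<\mu_n$, so it lies in the interior of $\mu_n C_n'$. Applying this with $K=C$ gives $C\Subset(1+\varepsilon_n)C_n$.

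With the common choice $\varepsilon_n=2\delta_n/r+1/n\to 0$, both inclusions hold eventually, which proves the lemma. The only real obstacle is the uniform inner ball of Step 2, which the separation argument handles.
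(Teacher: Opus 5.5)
Your proposal is correct and follows essentially the same route as the paper. Both arguments take a uniform inner ball about the origin for $C$ and, eventually, for $C_n$, and then turn the Hausdorff inclusions into $C_n\subset(1+\delta_n/r)C$ and $C\subset(1+2\delta_n/r)C_n$ via $C+\overline{B}(0,s)\subset(1+s/r)C$. You go further than the paper in two places: you actually prove the uniform inner ball for $C_n$ by a separation argument, where the paper only asserts it, and you obtain the strict inclusions $\Subset$ through the $1/n$ slack and the gauge, where the paper works with open $a_n$-neighbourhoods.
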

			\begin{proof}
				By the Hausdorff convergence and convexity, there exists $r>0$ such that
				\begin{equation}
					B(0,r) \Subset {\rm int}(C) \quad \text{and} \quad B(0, r) \Subset {\rm int}(C_n)\subset C_n 
				\end{equation}
				eventually.
				By the definition of Hausdorff convergence, there exist an infinitesimal sequence $(a_n)_{n\in \mathbb N}$ such that
				\begin{equation}
					C_n\Subset C+B(0, a_n)\subset \left( 1+\frac{a_n}{r}\right)C
				\end{equation}
				eventually and 
				\begin{equation}
					C\Subset C_n+ B(0, a_n) \subset \left( 1+\frac{a_n}{r}\right)C_n
				\end{equation}
				eventually.
				By setting $\varepsilon_n=\max\left\{\frac{a_n}{r} , 1-\left( 1+\frac{a_n}{r}\right)^{-1}\right\}$, the thesis follows.
			\end{proof}
			This dilation argument is key in the proof of the $\Gamma$--convergence result, as it allows to cut sets in a controlled fashion, providing an approximating sequence for the $\Gamma-\limsup$ and a nicer sequence for the $\Gamma-\liminf$.
			\begin{theorem}
				Let $C$ be a bounded convex body and let $(C_n)_{n\in\mathbb{N}}$, be a sequence of convex bodies converging to $C$ in the Hausdorff metric. Then the following statements hold:
				\begin{itemize}
					\item Let $E\subset\mathbb{R}^N\setminus C$ be a set of finite perimeter such that $|E|<+\infty$. Then there exists a sequence $(E_n)_{n\in \mathbb N}$ of sets of finite perimeter such that $E_n\to E$ strictly, $E_n \subset \mathbb{R}^N \setminus C_n$ and 
					\begin{equation}\label{eq: limsup}
						\mathcal F_{C_n}(E_n)\to \mathcal{F}_C(E).
					\end{equation}
					Moreover, if $E\Subset B(x, R)$ for some $x\in \mathbb R^N$, $R>0$, the sets $E_n$ can be chosen to be such that $E_n\Subset B(x, R)$.
					\item Let $E\subset\mathbb{R}^N\setminus C$ be a set of finite perimeter such that $|E|<+\infty$ and $(E_n)_{n\in \mathbb N}$ be a sequence of sets of finite perimeter such that $E_n\to E$ in $L^1$ and $E_n \subset \mathbb{R}^N \setminus C_n$. Then
					\begin{equation}\label{eq: liminf}
						\mathcal F_C(E)\leq \liminf_{n \to +\infty} \mathcal F_{C_n}(E_n).
					\end{equation}
				\end{itemize}
			\end{theorem}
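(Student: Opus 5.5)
We argue by translating so that $0\in{\rm int}(C)$; Hausdorff convergence then gives $B(0,r)\Subset{\rm int}(C_n)$ eventually, and \Cref{lemma: aggiustamento convessi} provides $\varepsilon_n\to0$ with $(1-\varepsilon_n)C_n\Subset{\rm int}(C)$ and $C\Subset(1+\varepsilon_n)C_n$. The idea is to transfer competitors between $\mathbb R^N\setminus C$ and $\mathbb R^N\setminus C_n$ by dilating about the origin. The dilation factor tends to $1$, so energies change only by a factor $(1\pm\varepsilon_n)^{N-1}$ on each piece, and the only real issue is controlling how the wet part is affected.

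\textbf{Limsup.} By \Cref{lm: approssimazione}, applied to the sequence consisting of $C$ together with all the bodies $(1+\varepsilon_n)^{-1}C_n$ (and, if needed, the dilated bodies at a countable set of scales), and by a diagonal argument, it suffices to treat a smooth bounded open set $E$ with $P(E;\partial C)=0$, $P(E;\partial C_n/(1+\varepsilon_n))=0$, and $E\setminus C$ approximating the original set strictly with converging traces. For such $E$ set $E_n=(1+\varepsilon_n)(E\setminus C)$ in general position, adjusting $\varepsilon_n$ slightly if needed. Since $C_n\subset(1+\varepsilon_n)C$ up to the slack of the lemma, we have $E_n\subset\mathbb R^N\setminus C_n$. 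Moreover $E_n\to E\setminus C$ strictly, and $\mathcal F_{C_n}(E_n)=(1+\varepsilon_n)^{N-1}\mathcal F_{(1+\varepsilon_n)^{-1}C_n}(E\setminus C)$. The remaining task is to show that $\mathcal F_{K_n}(E\setminus C)\to\mathcal F_C(E\setminus C)$ for $K_n=(1+\varepsilon_n)^{-1}C_n\Subset C$, i.e.\ $K_n\to C$ from inside. The outer perimeter of $E\setminus C$ relative to $\mathbb R^N\setminus K_n$ equals $P(E\setminus C;\mathbb R^N\setminus C)$ plus the part of $\partial^*(E\setminus C)$ on $\partial C$, which lies outside $K_n$. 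We should therefore instead fatten the competitor: define $\tilde E_n=(E\setminus C)\cup(E\cap (C\setminus K_n))$, which fills the gap. Since $E$ is smooth and $P(E;\partial C)=0$, $\mathbbm 1_{\tilde E_n}$ is the restriction of $E$ outside $K_n$. Its wet part is $\mathcal H^{N-1}(E\cap\partial K_n)\to\mathcal H^{N-1}(E\cap\partial C)$ by \Cref{rmk: convergenza convessi} and $\mathcal H^{N-1}(\partial E\cap\partial C)=0$. Its outer perimeter is $P(E;\mathbb R^N\setminus K_n)\to P(E;\mathbb R^N\setminus C)$, since $P(E;{\rm int}C\setminus K_n)\to0$. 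Using $E_n:=(1+\varepsilon_n)\tilde E_n$ keeps the sets inside $B(x,R)$ after a small shrink. This gives \eqref{eq: limsup}.

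\textbf{Liminf.} Given $E_n\subset\mathbb R^N\setminus C_n$ with $E_n\to E$ and bounded energy, consider $G_n=(1+\varepsilon_n)E_n\subset\mathbb R^N\setminus(1+\varepsilon_n)C_n\subset\mathbb R^N\setminus C$. Then $G_n\to E$ in $L^1$ and \Cref{prop: semicontinuity} gives $\mathcal F_C(E)\le\liminf\mathcal F_C(G_n)$, so it suffices to show $\mathcal F_C(G_n)\le(1+\varepsilon_n)^{N-1}\mathcal F_{C_n}(E_n)+o(1)$. Up to scaling, this is a comparison of $\mathcal F_C(G)$ and $\mathcal F_{C'}(G)$ when $G\subset\mathbb R^N\setminus C'$ and $C\subset C'$ with $C'$ close to $C$. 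The wet part with respect to $C$ is $P(G;\partial C)$, which is zero unless $\partial G$ touches $\partial C\cap\partial C'$. Hence $\mathcal F_C(G)=P(G;\mathbb R^N\setminus C)-\lambda P(G;\partial C)$, while $\mathcal F_{C'}(G)=P(G;\mathbb R^N\setminus C)-P(G;\partial C')-\lambda P(G;\partial C')$ plus the common part on $\partial C\cap\partial C'$. So $\mathcal F_C(G)-\mathcal F_{C'}(G)=(1+\lambda)P(G;\partial C'\setminus\partial C)\ge0$, which goes the wrong way. This shows that the naive monotone comparison fails, and this is the main obstacle.

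To overcome it, we plan to use \Cref{lemma: local bulk wet estimate} uniformly in $n$. The estimate $P(G;\partial^*C)\le P(G;I_\delta)+m|G\cap I_\delta|+\eta$, run along the lines of the proof of \Cref{prop: semicontinuity} but with vector fields $T$ adapted to $C_n$, should have $m$ independent of $n$: one can take $T$ as a mollified $\nabla d(\cdot,C_n)$ cut off near $\partial C_n$, whose Lipschitz constant is controlled uniformly by Hausdorff convergence. This bounds the $n$-th wet part by perimeter in a $\delta$-layer plus a small error, and yields $\mathcal F_{C_n}(E_n)\ge P(E_n;\mathbb R^N\setminus \overline{I^n_\delta})-\lambda(\dots)$. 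Passing to the limit with lower semicontinuity of perimeter on open sets, and then letting $\delta,\eta\to0$ as in \Cref{prop: semicontinuity}, gives \eqref{eq: liminf}. Obtaining this uniform constant $m$ is the step we expect to require the most care.
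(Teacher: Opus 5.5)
\textbf{Limsup.} Your final construction is correct and is essentially the paper's, mirrored. The paper dilates so that $C\subset C_n$, takes $F\setminus C_n$, and needs $P(F;C_n\setminus C)\to0$. You shrink to $K_n\subset C$, take $F\setminus K_n$, and need $P(F;{\rm int}(C)\setminus K_n)\to0$. Both arguments then use $\mathcal H^{N-1}\mres\partial K_n\xrightharpoonup{*}\mathcal H^{N-1}\mres\partial C$ together with $\mathcal H^{N-1}(\partial F\cap\partial C)=0$. One harmless correction: with \Cref{lemma: aggiustamento convessi} as stated, the inner bodies should be $(1-\varepsilon_n)C_n$ rather than $(1+\varepsilon_n)^{-1}C_n$.

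\textbf{Liminf: there is a genuine gap.} Your reduction through $G_n=(1+\varepsilon_n)E_n$ enlarges the obstacle. As you compute yourself, $\mathcal F_C(G)-\mathcal F_{C'}(G)=(1+\lambda)P(G;\partial C'\setminus C)\ge0$, so this reduction cannot work.

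The fallback does not close either. The bound $P(E_n;\partial C_n)\le P(E_n;I^n_\delta)+m|E_n\cap I^n_\delta|+\eta$ is not ``layer perimeter plus a small error''. Indeed $m|E_n\cap I^n_\delta|\to m|E\cap I_\delta|$, and necessarily $m\gtrsim1/\delta$, so this term is at least of the order of the wet part of $E$ and does not vanish as $\delta\to0$. In \Cref{prop: semicontinuity} the volume term disappears only because the lemma is applied to $E_n\Delta E$ relative to the \emph{same} obstacle. Here $E$ need not lie outside $C_n$, and the two wet parts sit on different surfaces. So your sketch never actually compares $P(E_n;\partial C_n)$ with $P(E;\partial C)$, which is the whole content of the $\liminf$ when $\lambda>0$. (Also, a mollified $\nabla d(\cdot,C_n)$ has size about $1/2$ on $\partial C_n$, so even the $\eta$-error is not small.)

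\textbf{The missing idea: dilate the other way.} Rescale $E_n$ and $C_n$ by $1-\varepsilon_n$ so that $C_n\Subset{\rm int}(C)$, and use \Cref{lm: approssimazione} to assume $P(E_n;\partial C)=0$. Now $E_n$ may enter $C$; split it as $F_n=E_n\setminus C$ and $\tilde F_n=E_n\cap C$.

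For $\lambda\ge0$, apply \Cref{prop: bulk contact inequality} to $\tilde F_n$, viewed as a set outside the convex body $C_n$:
\[
P(E_n;\partial C_n)\le P(\tilde F_n;\mathbb R^N\setminus C_n)=P(F_n;\partial C)+P(E_n;{\rm int}(C)\setminus C_n).
\]
Since $\lambda\le1$, this gives $\mathcal F_C(F_n)\le\mathcal F_{C_n}(E_n)$. As $F_n\to E$, \Cref{prop: semicontinuity} on the \emph{fixed} obstacle $C$ concludes.

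For $\lambda<0$, the paper applies \Cref{lemma: local bulk wet estimate} to $\tilde F_n\subset C$ with obstacle $\mathbb R^N\setminus C$, using $|\tilde F_n|\to0$. More simply, you can write $\mathcal F_{C_n}(E_n)=(1+\lambda)P(E_n;\mathbb R^N\setminus C_n)-\lambda P(E_n)$. Both coefficients are positive, and each term is lower semicontinuous, since every open $U\Subset\mathbb R^N\setminus C$ is eventually contained in $\mathbb R^N\setminus C_n$.

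Your divergence idea can also be rescued, with a different mechanism. Use one fixed field $T$ built for $C$, and control $\int_{\partial C_n}(1+T\cdot\nu_{C_n})\,d\mathcal H^{N-1}$ uniformly via Reshetnyak continuity. Then subtract the Gauss--Green identities for $E_n$ and for $E$, so that the bulk terms $\int_{E_n}{\rm div}\,T-\int_E{\rm div}\,T\to0$.
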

			\begin{proof}
				Let's start by proving the $\liminf$ inequality. 	
				Let $(E_n)_{n\in \mathbb{N}}$ be a sequence of sets of finite perimeter such that $E_n\to E$ in $L^1$ and $E_n \subset \mathbb{R}^N \setminus C_n$.
                In order to more easily prove \eqref{eq: liminf}, we distinguish the case in which $\lambda$ is bigger or equal than 0 from the case in which it is smaller. 
                First, by \Cref{lm: approssimazione,lemma: aggiustamento convessi}, we may suppose that $C_n\Subset C$ and that $P(E_n;\partial C)=0$ for all $n$. 
                Let us set $F_n=E_n\setminus C$ and $\tilde{F}_n=E_n\cap C$. Notice that $F_n\to E$ in $L^1$. 
                Assume that $\lambda\geq 0$. By \Cref{prop: bulk contact inequality} on $\tilde F_n$ we get that 
				\begin{equation}
					P(E_n;\partial C_n)\leq P(\tilde F_n;\mathbb R^N\setminus C_n)=P(F_n; \partial C)+P(E_n; {\rm int}(C)\setminus C_n).
				\end{equation}
				This yields
				\begin{align}
					\mathcal F_C(F_n)&=P(E_n;\mathbb R^N\setminus C)- \lambda P(F_n;\partial C)\\
					&\leq P(E_n;\mathbb R^N\setminus C)-\lambda P(E_n;\partial C_n)+\lambda P(E_n; {\rm int}(C)\setminus C_n)\\
					&\leq P(E_n;\mathbb R^N\setminus C)+ P(E_n; {\rm int}(C)\setminus C_n)-\lambda P(E_n;\partial C_n)= \mathcal F_{C_n}(E_n).
				\end{align}
				Since the capillarity energy functional is lower semicontinuous (\Cref{prop: semicontinuity}), we get
				\begin{equation}
					\mathcal F_C(E)\leq \liminf_n \mathcal F_C(F_n)\leq \liminf_n \mathcal F_{C_n}(E_n).
				\end{equation}
				
				Finally, assume $\lambda<0$. Let us apply \Cref{lemma: local bulk wet estimate} on the sets $\tilde F_n\subset C$ with the role of the obstacle played by $\mathbb R^N\setminus C$. Fix $\delta, \eta>0$ and set $I_\delta\coloneqq \{x\in {\rm int}(C)\text{ s.t. }d(x, \mathbb R^N\setminus C)<\delta\}$.
				For $n\in \mathbb{N}$ we get:
				\begin{align}
					P(F_n; \partial C)=P(\tilde F_n; \partial C)&\leq P(E_n; I_\delta)+m(\delta, \eta)| E_n\cap C|+\eta\\
					&\leq P(E_n;\partial C_n)+P(E_n; {\rm int} (C)\setminus C_n)\\
					&+ m(\delta, \eta)|E_n\cap C|+\eta.
				\end{align}
				By semicontinuity, this yields
				\begin{align}
					\mathcal F_C(E)&\leq \liminf_n \mathcal F_C(F_n)\\
					&=\liminf_n\left[P(E_n; \mathbb R^N\setminus C)-\lambda P(F_n; \partial C)\right]\\
					&\leq \liminf_n \Big[ P(E_n; \mathbb R^N\setminus C)\\
					&\quad-\lambda \left( P(E_n;\partial C_n)+P(E_n; {\rm int} (C)\setminus C_n)+ m(\delta, \eta)| E_n\cap C|+\eta\right)\Big]\\
					&\leq \liminf_n \Big[ \left(P(E_n; \mathbb R^N\setminus C)+P(E_n; {\rm int} (C)\setminus C_n)\right) -\lambda P(E_n;\partial C_n)\\
					&\quad+ m(\delta, \eta)| E_n\cap C|+\eta\Big]\\
					&=\liminf_n\mathcal F_{C_n}(E_n)+\eta.
				\end{align}
				By the generality of $\eta> 0$, we get \eqref{eq: liminf}.

                Let us now prove the $\limsup$ inequality.
                Notice that, by the general theory of $\Gamma$-convergence, see e.g.\ \cite[Proposition 8.3]{dalmasoIntroductionGConvergence1993}, it is enough to prove it for every subsequence of a given subsequence.
                By \Cref{lemma: aggiustamento convessi}, we may assume, up to some dilations and a subsequence, that $C\subset C_{n+1}\subset C_{n}$ for all $n$. 
                Moreover, by \Cref{rmk: convergenza convessi}, $|\mu_{ C_n}|  \xrightharpoonup{*} |\mu_{ C}|$ in duality with $C^0_c(\mathbb R^N)$. 
                Finally, by \Cref{lm: approssimazione}, we may assume that $E=F\setminus C$, where $F$ is a bounded open set with $C^\infty$ boundary such that $P(F,\partial C)=P(F, \partial C_n)=0$ for all $n$, so, in particular, $|\mu_C|(\partial F)=0$.
                Let $E_n=F\setminus C_n$, then $P(E_n; C_n)=|\mu_{C_n}|(F)$ (see \cite[Theorem 16.3]{Ma12}). By \Cref{rmk: convergenza convessi}, $E_n\to E$ in $L^1$.
                Moreover, since $|\mu_C|(\partial F)=0$, it follows $P(E_n; C_n)=|\mu_{C_n}|(F)\to |\mu_C|(F)=P(E;C)$. Finally, 
				$P(E;\mathbb R^N\setminus C)-P(E_n;\mathbb R^N\setminus C_n)=P(F;C_n\setminus C)$: since $C_n\setminus C\searrow\emptyset$, the difference decreases to 0, yielding \eqref{eq: limsup}. 
			\end{proof}
			In particular, since the perturbation is $L^1$--continuous, the penalized functional $\Gamma$--converges:
			\begin{corollary}\label{cor: gamma limite dimensione piena}
				Let $\Lambda>0$ and $C\subset \mathbb R^N$ be a bounded convex body and $(C_n)_{n\in \mathbb N}$ be a sequence of bounded convex bodies converging to $C$ in the Hausdorff metric. Then:
				\begin{itemize}
					\item for every set $E\subset\mathbb R^N$ of finite perimeter and every sequence $(E_n)_{n\in \mathbb N}$ such that $E_n\to E$ in $L^1$ and $E_n \subset \mathbb{R}^N \setminus C_n$, 
					\begin{equation}
						 \mathcal F^{v}_{C, \Lambda}(E)\leq \liminf_n\mathcal F^{v}_{C_n, \Lambda}(E_n);
					\end{equation}
					\item for every set $E\subset\mathbb R^N$ of finite perimeter and volume there exist a sequence $(E_n)_{n\in \mathbb N}$ such that $E_n\to E$ strictly, $E_n \subset \mathbb{R}^N \setminus C_n$ and 
					\begin{equation}
						\mathcal F^{v}_{C, \Lambda}(E)=\lim_n \mathcal F^{v}_{C_n, \Lambda}(E_n).
					\end{equation}
				\end{itemize}
				In other words, $\mathcal F^{v}_{C, \Lambda}=\Gamma \text{-}\lim\limits_{n\to +\infty} \mathcal F^{v}_{C_n, \Lambda}$ in $L^1$.
			\end{corollary}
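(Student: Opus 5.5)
The corollary follows from the preceding theorem once we note that the penalization term $E\mapsto \Lambda\bigl||E|-v\bigr|$ is continuous with respect to $L^1$ convergence of sets of finite measure. I will treat the two inequalities separately, using the theorem as a black box.

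\emph{Liminf inequality.} Take $E_n\to E$ in $L^1$ with $E_n\subset\mathbb R^N\setminus C_n$. We may assume $\liminf_n \mathcal F^{v}_{C_n,\Lambda}(E_n)<+\infty$ and pass to a subsequence realizing the liminf. Then $\mathcal F_{C_n}(E_n)<+\infty$, so each $E_n$ has finite perimeter. Since $|E_n\Delta E|\to0$, we get $|E_n|\to|E|$. Three cases arise.
\begin{itemize}
\item If $|E|=+\infty$, then $|E_n\Delta E|<+\infty$ forces $|E_n|=+\infty$, so the right-hand side is $+\infty$ and there is nothing to prove.
\item If $E\not\subset \mathbb R^N\setminus C$ up to null sets, the liminf is still $+\infty$. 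Indeed, $C_n\to C$ in Hausdorff distance with $C$ a convex body, so $\mathbbm 1_{C_n}\to\mathbbm 1_C$ a.e. by \Cref{rmk: convergenza convessi}. Since $|E_n\cap C_n|=0$, this gives $|E\cap C|=\lim_n|E_n\cap C_n|=0$.
\item In the remaining case, $E\subset\mathbb R^N\setminus C$ has finite volume. The liminf part of the theorem gives $\mathcal F_C(E)\le\liminf_n\mathcal F_{C_n}(E_n)$, and adding the convergent term $\Lambda\bigl||E_n|-v\bigr|\to\Lambda\bigl||E|-v\bigr|$ yields the claim.
\end{itemize}

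\emph{Limsup inequality (recovery sequence).} Let $E$ have finite perimeter and volume. If $E\not\subset\mathbb R^N\setminus C$, then $\mathcal F^{v}_{C,\Lambda}(E)=+\infty$, and any sequence converging strictly to $E$ works; the constant sequence $E_n=E$ will do. Otherwise the limsup part of the theorem provides $E_n\subset\mathbb R^N\setminus C_n$ with $E_n\to E$ strictly and $\mathcal F_{C_n}(E_n)\to\mathcal F_C(E)$. Strict convergence implies $L^1$ convergence, hence $|E_n|\to|E|$, and the penalization terms converge as well. Summing the two limits gives $\mathcal F^{v}_{C_n,\Lambda}(E_n)\to\mathcal F^{v}_{C,\Lambda}(E)$.

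Together the two parts give the $\Gamma$-convergence. The only delicate point is the bookkeeping of the $+\infty$ conventions, in particular showing that the limit set stays outside $C$. This is handled by the a.e. convergence of the characteristic functions from \Cref{rmk: convergenza convessi}.
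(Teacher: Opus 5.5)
Your proof is correct and follows the paper's route: the paper obtains the corollary in one line from the preceding theorem together with the $L^1$-continuity of $E\mapsto\Lambda\bigl||E|-v\bigr|$, and your case analysis (infinite volume; $|E\cap C|>0$, which your a.e.\ convergence argument shows is actually vacuous in the liminf part rather than giving ``liminf $=+\infty$'') just makes the $+\infty$ bookkeeping explicit. One small caveat: your constant recovery sequence $E_n=E$ for $|E\cap C|>0$ does not satisfy $E_n\subset\mathbb R^N\setminus C_n$, but your own argument shows that no such admissible sequence can exist for that $E$. That case therefore has to be read in the $\Gamma$-convergence sense, where $\mathcal F^v_{C,\Lambda}(E)=+\infty$ makes the limsup inequality trivial, and this is how you in effect treated it.
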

		\subsection{Loss of dimension}
			When the full dimensionality is lost in the limit, it is no longer possible to resort to \Cref{lemma: aggiustamento convessi}: in particular there is no hope in having $C_n\nearrow C$, but it seems not clear even if it is possible to have (up to homotheties which tend to the identity) $C_n\searrow C$. It is easy to note that if more than one dimension is lost in the limit, the wet part of the functional converges uniformly to 0, so it is immediate that $\mathcal F_{C_n}\xrightarrow{\Gamma}P$. Let us focus on the case $\operatorname{dim} C=N-1$.
			 
			In order to study what happens when one single dimension is lost in the limit, we have to somehow describe effectively the collapse of the perimeter measure on the two sides of the limit. While it is immediate to see that the limit of the Gauss--Green measures vanishes in the limit due to cancellation, the convexity allows to show that the perimeter measures do converge to exactly the double of the $(N-1)$--Hausdorff measure of the limit. In order to describe what happens in the limit, then, it is useful to split the convex bodies $C_n$ in two halves whose Hausdorff measure converge to those of $C$, while the corresponding Gauss--Green measures converge to cancelling limits.
			
			A na\"ive idea would be to cut $C_n$ in half by intersecting with a sequence of hyperplanes $\{x_N=c_n\}$ for suitably chosen $c_n$, but a sequence of progressively rotating and thinning rectangles $C_n$ converging to a segment in $\mathbb R^2$ provides an easy counterexample. A further refining of this idea would be to find a sequence of suitably tilted and translated hyperplanes converging to $\{x_N=0\}$ and use those to split $C_n$: while this works in $\mathbb R^2$, it is not at all clear if it is possible to perform such a choice in higher dimension. 
			
			A perhaps unexpectedly simpler way out is to consider a ``nonlinear'' splitting. Given $K\subset \mathbb R^N$ a compact convex body, we can consider its projection $\tilde{K}=p(K)$, where $p:\mathbb R^N\to \{x_N=0\}$ is the orthogonal projection. In what follows, we will denote by $\partial \tilde{K}$ the boundary of $\tilde{K}$ in the topology of $\{x_N=0\}$. 
			By convexity, there exist $f^u, f^l:\tilde{K}\to \mathbb R$ such that $K=\{f^l(x')\leq x_n\leq f^u(x')\}$, with $f^l$ convex and $f^u$ concave. In particular, $\partial K$ can be split in two caps $K^u\coloneqq \{(x', f^u(x')): x'\in\tilde{K}\}$ and $K^l\coloneqq \{(x', f^l(x')): x'\in\tilde{K}\}$ and possibly a lateral component $K^s=\partial K\setminus (K^u\cup K^l)$. Finally, in view of the study of perimeters, notice that $K^s\subset \partial \tilde{K}\times \mathbb R$, so $\mathcal H^{N-1}(K^s)$ can easily be bounded in terms of $d_{\mathcal{H}}(K, \tilde{K})$ and $\mathcal H^{N-2}(\partial\tilde{K})$.
			
			In order to study the limit behaviour of the wet part, we will ``straighten'' the boundary of the converging obstacles. For $\varepsilon>0$ and $C\subset \{x_N=0\}$, set $C_\varepsilon\coloneqq \{x\in C: d(x, \{x_N=0\}\setminus C)> \varepsilon\}$.
			\begin{lemma}\label{lemma: funzioni calotte} 
				Let $(C_n)_{n\in\mathbb{N}}$ be a sequence of convex bodies converging in the sense of Hausdorff to $C$, a bounded convex body in $\{x_N=0\}$. Then, for each $\varepsilon>0$, it holds:
				\begin{itemize}
					\item $f_n^u, f_n^l$ are eventually well defined on the whole of $C_\varepsilon$ and are concave and convex, respectively;
					\item $f_n^u, f_n^l\to 0$ uniformly on $\tilde{C}_n$ (in particular, eventually on $C_\varepsilon$);
					\item $\operatorname{Lip} f_n^u, \operatorname{Lip} f_n^l\to 0$ in $C_\varepsilon$.
				\end{itemize}
			\end{lemma}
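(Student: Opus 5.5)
The plan is to prove the three items in order; the only genuine work is in the first and the third.

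\emph{Well-definedness on $C_\varepsilon$.} Fix $\varepsilon>0$ and let $\tilde C_n=p(C_n)$. Since $p$ is $1$-Lipschitz, $d_{\mathcal H}(\tilde C_n,C)\le d_{\mathcal H}(C_n,C)\to 0$, and both $\tilde C_n$ and $C$ are convex bodies in $\{x_N=0\}$. I claim $C_\varepsilon\subset\tilde C_n$ eventually. Otherwise pick $y\in C_\varepsilon\setminus\tilde C_n$ and separate $y$ from the convex set $\tilde C_n$ by a hyperplane of $\{x_N=0\}$. The half-ball of $B'(y,\varepsilon)\subset C$ lying on the far side contains points at distance $\ge\varepsilon$ from $\tilde C_n$, which contradicts the Hausdorff convergence once $d_{\mathcal H}<\varepsilon$. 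So $f^u_n,f^l_n$ are defined on $C_\varepsilon$. Concavity of $f^u_n$ and convexity of $f^l_n$ follow from the convexity of $C_n$: the upper boundary of a convex set is the graph of a concave function, and restricting to the convex subset $C_\varepsilon$ preserves this.

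\emph{Uniform convergence.} For $x'\in\tilde C_n$, the points $(x',f^u_n(x'))$ and $(x',f^l_n(x'))$ lie in $C_n$, hence within $d_{\mathcal H}(C_n,C)$ of a point of $C\subset\{x_N=0\}$. Therefore $|f^u_n|,|f^l_n|\le d_{\mathcal H}(C_n,C)\to 0$ uniformly on $\tilde C_n$.

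\emph{Lipschitz constants on $C_\varepsilon$.} This is the step I expect to be the main obstacle, because it needs room around $C_\varepsilon$ and the usual bound for concave functions. I would work on the slightly larger set $C_{\varepsilon/2}$, which by the first item is eventually contained in $\tilde C_n$, and set $\eta_n=\sup_{\tilde C_n}|f^u_n|\to 0$. Let $x'\in C_\varepsilon$ and let $e$ be any unit direction in $\{x_N=0\}$. The segment from $x'-\tfrac\varepsilon2 e$ to $x'+\tfrac\varepsilon2 e$ lies in $C_{\varepsilon/2}$. Concavity on this segment bounds every one-sided directional derivative (every element of the superdifferential) at $x'$ by the slopes of the chords:
\begin{equation}
|\partial_e f^u_n(x')|\le \frac{2\eta_n}{\varepsilon/2}=\frac{4\eta_n}{\varepsilon}.
\end{equation}
Hence $\operatorname{Lip}(f^u_n;C_\varepsilon)\le 4\eta_n/\varepsilon\to 0$. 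The same argument applied to $-f^l_n$, which is concave, gives the result for $f^l_n$.

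Since $\varepsilon$ is fixed throughout, these bounds are uniform in $n$, which is exactly what the statement requires.
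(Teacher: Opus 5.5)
Your proposal is correct and follows essentially the same route as the paper. The Hausdorff convergence of the projections $\tilde C_n\to C$ gives $C_\varepsilon\subset\tilde C_n$ eventually, the bound $\sup|f^u_n|,\sup|f^l_n|\le d_{\mathcal H}(C_n,C)$ gives uniform convergence, and the Lipschitz decay comes from the standard interior estimate for bounded concave functions; the only difference is that you write out that estimate (chord slopes on $C_{\varepsilon/2}$, giving $\operatorname{Lip}\le 4\eta_n/\varepsilon$), while the paper just cites well-known properties of convex functions.
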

			\begin{proof}
				Notice that since $C_n\to C$ with respect to Hausdorff distance, $\tilde{C}_n\coloneqq p(C_n)\to C$. In particular $C_\varepsilon \subset \tilde{C}_n$ eventually. Moreover, it is immediate to check that $\sup |f^u_n|, \sup |f^l_n|\leq d_{\mathcal{H}}(C_n, C)\to 0$, thus $\operatorname{osc} f_n^u, \operatorname{osc} f_n^l\leq 2 d_{\mathcal{H}}(C_n, C)\to 0$. The third item follows by well known properties of convex functions.
			\end{proof}
			This simple splitting satisfies the properties that we were looking for:
			\begin{corollary}\label{cor: convergence measures}
				Let $C$ be a bounded convex body in $\{x_N=0\}$ and let $(C_n)_{n \in \mathbb{N}}$ be a sequence of convex bodies in $\mathbb R^N$ Hausdorff converging to $C$. Set $\tilde{C_n}=p(C_n)$ and $f_n^u, f_n^l:\tilde{C_n}\to \mathbb R$, $C_n^u, C_n^l, C_n^s$ as above. Then
				\begin{itemize}
					\item for the Gauss--Green measures it holds
					\begin{equation}
						\left(\mu_{C_n}\right) \mres C_n^u, \left(\mu_{C_n}\right)\mres C_n^l\xrightharpoonup{*} \pm e_N \mathcal H^{N-1} \mres C;
					\end{equation}
					\item for the Hausdorff measures of the caps:
					\begin{equation}
						\mathcal H^{N-1}\mres  C_n^u, \mathcal H^{N-1} \mres C_n^l \xrightharpoonup{*} \mathcal H^{N-1}\mres C 
					\end{equation}
					and $\mathcal H^{N-1}(C_n^s)\to 0$.
				\end{itemize}
			\end{corollary}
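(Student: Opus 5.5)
Use the graph description from \Cref{lemma: funzioni calotte}: write $C_n^u=\{(x',f_n^u(x')):x'\in\tilde C_n\}$, and the same for the lower cap. By the area formula,
\begin{equation}
\mathcal H^{N-1}\mres C_n^u=(\Phi_n)_\#\bigl(\sqrt{1+|\nabla f_n^u|^2}\,\mathcal H^{N-1}\mres\tilde C_n\bigr),\qquad \Phi_n(x')=(x',f_n^u(x')),
\end{equation}
where $f_n^u$ is Lipschitz on every compact subset of the relative interior of $\tilde C_n$. On the upper cap the outer normal is
\begin{equation}
\nu=\frac{(-\nabla f_n^u,1)}{\sqrt{1+|\nabla f_n^u|^2}},
\end{equation}
so $(\mu_{C_n})\mres C_n^u=-(\Phi_n)_\#\bigl((-\nabla f_n^u,1)\,\mathcal H^{N-1}\mres\tilde C_n\bigr)$. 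The lower cap is handled the same way with the opposite sign. Hence it suffices to test against a fixed $\varphi\in C^0_c(\mathbb R^N)$ (or $C^0_c(\mathbb R^N;\mathbb R^N)$), and I split $\tilde C_n=C_\varepsilon\cup(\tilde C_n\setminus C_\varepsilon)$.

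On $C_\varepsilon$ the argument is direct. By \Cref{lemma: funzioni calotte}, $\Phi_n\to{\rm id}$ uniformly, so $\varphi\circ\Phi_n\to\varphi$ uniformly by uniform continuity of $\varphi$. Also $\operatorname{Lip} f_n^u\to0$ on $C_\varepsilon$, so $\sqrt{1+|\nabla f_n^u|^2}\to1$ and $\nabla f_n^u\to0$ uniformly there. Therefore the contribution of $C_\varepsilon$ converges to $\int_{C_\varepsilon}\varphi\,d\mathcal H^{N-1}$, respectively to $\mp\int_{C_\varepsilon}\varphi\, e_N\,d\mathcal H^{N-1}$, with the sign fixed by the convention $\mu=-\nu\mathcal H^{N-1}$.

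The main obstacle is the boundary strip $\tilde C_n\setminus C_\varepsilon$, where the gradients may blow up. I only need $\limsup_n\mathcal H^{N-1}\bigl(C_n^u\cap p^{-1}(\tilde C_n\setminus C_\varepsilon)\bigr)\le\omega(\varepsilon)$ with $\omega(\varepsilon)\to0$; the Gauss--Green contribution is then bounded by the same quantity. I would prove this using convexity and monotonicity of perimeter under inclusion of convex sets, as in \Cref{prop: bulk contact inequality}.

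First, the total perimeter converges: $\mathcal H^{N-1}(\partial C_n)\to2\mathcal H^{N-1}(C)$, since the perimeter of convex bodies is continuous under Hausdorff convergence (e.g.\ via Cauchy's formula, i.e.\ continuity of intrinsic volumes). The limit value comes from Cauchy's formula applied to the flat body $C$, which is counted twice.

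Second, the parts over $C_\varepsilon$ contribute at least $2\mathcal H^{N-1}(C_\varepsilon)-o(1)$ by the previous step. Subtracting,
\begin{equation}
\limsup_n\Bigl(\mathcal H^{N-1}(C_n^s)+\mathcal H^{N-1}\bigl((C_n^u\cup C_n^l)\cap p^{-1}(\tilde C_n\setminus C_\varepsilon)\bigr)\Bigr)\le 2\bigl(\mathcal H^{N-1}(C)-\mathcal H^{N-1}(C_\varepsilon)\bigr).
\end{equation}
Since $C$ is convex and full dimensional in $\{x_N=0\}$, $\mathcal H^{N-1}(C\setminus C_\varepsilon)\to0$ as $\varepsilon\to0$, so the right-hand side tends to $0$.

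This single estimate gives both that $\mathcal H^{N-1}(C_n^s)\to0$ and that the strip contributions vanish. Letting $n\to\infty$ and then $\varepsilon\to0$ yields the stated weak-$*$ convergences for both caps.
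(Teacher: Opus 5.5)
Your proof is correct. It shares its core with the paper's argument: over $C_\varepsilon$, both use the graph parametrization of the caps, \Cref{lemma: funzioni calotte} (uniform convergence $f_n^{u},f_n^{l}\to0$ and vanishing Lipschitz constants), and the area/coarea formula. The difference is how the remainder is controlled.

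\textbf{The paper's route.} It bounds the lateral part directly. It uses the inclusion $C_n^s\subset\partial\tilde C_n\times[-d_{\mathcal H}(C_n,C),d_{\mathcal H}(C_n,C)]$ together with $\mathcal H^{N-2}(\partial\tilde C_n)\to\mathcal H^{N-2}(\partial C)$, which comes from \Cref{rmk: convergenza convessi} applied inside $\{x_N=0\}$. It then says the cap convergences ``follow easily''. It never spells out the strip over $\tilde C_n\setminus C_\varepsilon$, where the gradients of the cap functions may blow up.

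\textbf{Your route.} You use one global mass count. On one side, $\limsup_n\mathcal H^{N-1}(\partial C_n)\le 2\mathcal H^{N-1}(C)$. On the other, the caps over $C_\varepsilon$ carry at least $2\mathcal H^{N-1}(C_\varepsilon)$. This holds for every large $n$ with no $o(1)$, since the projection is $1$-Lipschitz. The comparison disposes of $C_n^s$ and both boundary strips at once. It makes explicit the step the paper leaves implicit.

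\textbf{The cost, and how to reduce it.} Your route relies on an external fact: continuity of $V_{N-1}$ (or Cauchy's formula) when the limit is degenerate. In fact only the upper bound on $\mathcal H^{N-1}(\partial C_n)$ is needed, and it follows from the monotonicity you mention. With $d_n=d_{\mathcal H}(C_n,C)$ one has $C_n\subset\tilde C_n\times[-d_n,d_n]$. Hence $\mathcal H^{N-1}(\partial C_n)\le 2\mathcal H^{N-1}(\tilde C_n)+2d_n\,\mathcal H^{N-2}(\partial\tilde C_n)\to 2\mathcal H^{N-1}(C)$. This uses the same quantity $\mathcal H^{N-2}(\partial\tilde C_n)$ as the paper's lateral bound, so the two arguments can be merged.
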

			\begin{proof}
				Since $\tilde{C_n}\to \tilde{C}$ in Hausdorff distance, by \Cref{rmk: convergenza convessi} it holds that $\mathcal H^{N-2}(\partial \tilde{C}_n)\to \mathcal H^{N-2}(\partial \tilde{C})$, where the boundaries are intended as before in $\{x_N=0\}$. In particular, this implies
				\begin{equation}
					\mathcal{H}^{N-1}(C_n^s)\leq 2\mathcal H^{N-2}(\partial \tilde{C}_n)d_{\mathcal{H}}(C_n, C)\leq 2\mathcal H^{N-2}(\partial \tilde{C})d_{\mathcal{H}}(C_n, C)+o(1),
				\end{equation} which vanishes. The other results follow easily from \Cref{lemma: funzioni calotte}, the definition of the cap functions and coarea formula.
			\end{proof}
			The main tool that we will exploit to pass to the limit in the energies is a family of bilipschitz diffeomorphism in order to ``straighten'' part of the boundaries. By \Cref{lemma: funzioni calotte}, given $\varepsilon>0$ it is easy to build (for $n$ sufficiently large) a sequence $\Phi^\varepsilon_n:\mathbb R^N\to\mathbb R^N$ such that 
			\begin{equation}
				\Phi^\varepsilon_n(x', x_N)=\begin{cases}
					(x', x_N-f_n^l(x'))\qquad &\text{if } x'\in C_\varepsilon\\
					(x', x_N)\qquad &\text{if } x'\notin C,
				\end{cases}
			\end{equation}
			with $\Phi^\varepsilon_n$ being a sequence of bi--Lipschitz maps, uniformly converging to the identity and such that 
			\begin{equation}\label{eq: bilipschitz}
				\operatorname{Lip}(\Phi^\varepsilon_n), \operatorname{Lip}\left((\Phi^\varepsilon_n)^{-1}\right)\to 1
			\end{equation} 
			as $n$ goes to infinity.
			Analogously, $\Psi_n^\varepsilon$ can be built such that the same properties are satisfied, up to the value assumed in $C_\varepsilon\times\mathbb R$, which we set to $\Psi^\varepsilon_n(x', x_N)=(x', x_N-f_n^u(x'))$.
			
			\begin{proposition}\label{prop: gamma limsup perdita dimensione}
				Let $C$ be a bounded convex body in $\{x_N=0\}$ and $E\subset\mathbb{R}^N\setminus C$ a set of finite perimeter such that $|E|<+\infty$. Let $(C_n)_{n\in \mathbb N}$ be a sequence of convex bodies such that $C_n\to C$ in the Hausdorff metric. Then there exists a sequence $(E_n)_{n\in \mathbb N}$ of sets of finite perimeter such that $E_n \subset \mathbb{R}^N\setminus C_n$, $E_n\to E$ strictly and  $\mathcal F_{C_n}(E_n)\to \mathcal{F}_C(E)$. 
				Moreover, if $E\Subset B(x, R)$ for some $x\in \mathbb R^N$, $R>0$, the sets $E_n$ can be chosen to be such that $E_n\Subset B(x, R)$.
			\end{proposition}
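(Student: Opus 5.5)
We build the recovery sequence in three reductions: approximate $E$ by a nice set, straighten the two caps of $C_n$ with the bi-Lipschitz maps $\Phi^\varepsilon_n,\Psi^\varepsilon_n$, and finish with a diagonal argument in $\varepsilon$. Since $\Gamma$-$\limsup$ inequalities pass to strict limits once the energy is continuous along the approximation, we first reduce $E$. The energy $\mathcal F_C$ with $\dim C=N-1$ involves $P(E;\mathbb R^N\setminus C)$ and the two traces of $\mathbbm{1}_E$ on $C$. By the continuity of traces under strict convergence (\Cref{teo: trace 2}, applied separately in $\{x_N>0\}$ and $\{x_N<0\}$), we may approximate $E$ strictly by sets for which both $E\cap\{x_N>0\}$ and $E\cap\{x_N<0\}$ are bounded with smooth boundary away from $\{x_N=0\}$. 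The approximation should also give $\mathcal H^{N-1}(\partial^*E\cap \partial_{rel} C)=0$ and $\mathcal F_C$ converging. A further cut-off, removing from $E$ a thin neighbourhood of $(C\setminus C_\varepsilon)\times\mathbb R$ near $\{x_N=0\}$ (or, more simply, working with $C_\varepsilon$ as in the straightening), changes the energy by $o(1)$ as $\varepsilon\to0$. This uses $\mathcal H^{N-1}(C\setminus C_\varepsilon)\to0$ and a foliation choice of the cut level. Boundedness of $E$ inside $B(x,R)$ is preserved by all these operations.

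Fix $\varepsilon$. Split $E=E^+\cup E^-$ with $E^\pm=E\cap\{\pm x_N>0\}$ and define
\begin{equation}
E_n^\varepsilon=(\Psi^\varepsilon_n)^{-1}(E^+)\cap\{x_N>f^u_n\}\;\cup\;(\Phi^\varepsilon_n)^{-1}(E^-)\cap\{x_N<f^l_n\},
\end{equation}
with the part above $C_n^u$ handled by $\Psi$ and the part below $C_n^l$ by $\Phi$. Over $C_\varepsilon$ the maps send $\{x_N>0\}$ onto $\{x_N>f^u_n\}$ and $\{x_N<0\}$ onto $\{x_N<f^l_n\}$, so the set avoids $C_n$ there. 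Outside $C\times\mathbb R$ the maps are the identity and $E$ already avoids a neighbourhood of $C_n$ eventually, by Hausdorff convergence and the cut-off. In the transition region $(C\setminus C_\varepsilon)\times\mathbb R$ we simply remove $C_n$; this costs at most the $\mathcal H^{N-1}$ measure of $\partial C_n$ over $(C\setminus C_\varepsilon)\times \mathbb R$. By \Cref{cor: convergence measures} that measure is $\le 2\mathcal H^{N-1}(C\setminus C_{\varepsilon})+o(1)$, which is small in $\varepsilon$ plus small in $n$.

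We now compare energies. By \eqref{eq: bilipschitz} the perimeter outside $C_n$ satisfies $P(E^\varepsilon_n;\mathbb R^N\setminus C_n)\le (1+o(1))P(E;\mathbb R^N\setminus C)+\omega(\varepsilon)$. Lower semicontinuity together with $\mathbb R^N\setminus C_n$ exhausting $\mathbb R^N\setminus C$ gives the reverse inequality up to $o(1)$. For the wet part, the trace of $\mathbbm{1}_{E^\varepsilon_n}$ on $C_n^u$ over $C_\varepsilon$ is the pullback of ${\rm Tr}^+_C\mathbbm{1}_E$ under $x'\mapsto(x',f^u_n(x'))$. Its area factor $\sqrt{1+|\nabla f^u_n|^2}\to1$ uniformly by \Cref{lemma: funzioni calotte}, so $P(E_n^\varepsilon;C_n^u)\to\int_{C_\varepsilon}{\rm Tr}^+_C\mathbbm{1}_E$, and similarly for $C^l_n$ with ${\rm Tr}^-$. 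The lateral part $C_n^s$ contributes at most $\mathcal H^{N-1}(C_n^s)\to0$. Hence $\limsup_n|\mathcal F_{C_n}(E_n^\varepsilon)-\mathcal F_C(E)|\le\omega(\varepsilon)$ with $\omega(\varepsilon)\to0$, and $E^\varepsilon_n\to E^{(\varepsilon)}$ strictly, where $E^{(\varepsilon)}$ denotes the cut-off set. A diagonal choice $\varepsilon_n\to0$ yields the sequence $E_n$.

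The main obstacle is the transition region $(C\setminus C_\varepsilon)\times\mathbb R$, where the straightening maps interpolate and the caps need not be Lipschitz-flat. There we must verify that $E^\varepsilon_n$ really lies outside $C_n$ and that the extra perimeter is $O(\mathcal H^{N-1}(C\setminus C_\varepsilon))$. I would handle this by first removing from $E$ a slab $\{|x_N|<\delta\}\cap ((C\setminus C_{2\varepsilon})\times\mathbb R)$, with $\delta$ chosen by foliation, and bounding the added slab-boundary perimeter via $\mathcal H^{N-1}(C\setminus C_{2\varepsilon})$ and $\delta\,\mathcal H^{N-2}(\partial C_{2\varepsilon})$. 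I would then control the traces discarded on $C\setminus C_\varepsilon$ by the same small quantity.
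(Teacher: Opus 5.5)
Your construction gives the convergence of the energies, but it treats the upper cap differently from the paper. The paper uses a single straightening map. It sets $E_n^\varepsilon=E\setminus\Phi^\varepsilon_n(C_n)$, so that over $C_\varepsilon$ the lower face of $\Phi^\varepsilon_n(C_n)$ is exactly $C_\varepsilon$ and the lower wet term is $\int_{C_\varepsilon}\operatorname{Tr}^-_C\mathbbm{1}_E\,d\mathcal H^{N-1}$ at no cost. The upper face $C^\varepsilon_{n,c}$ stays curved, so the paper first invokes \Cref{lm: approssimazione} to replace $E\cap\{x_N>0\}$ by $F\cap\{x_N>0\}$, with $F$ smooth and $P(F;\{x_N=0\})=P(F;C^\varepsilon_{n,c})=0$ for all $n$. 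It then passes to the limit in $\mathcal H^{N-1}\mres C^\varepsilon_{n,c}(F)$ using $\mathcal H^{N-1}\mres C^\varepsilon_{n,c}\xrightharpoonup{*}\mathcal H^{N-1}\mres C_\varepsilon$ and the Portmanteau lemma. Pulling back gives the recovery set $(\Phi^{\varepsilon_n}_n)^{-1}(E)\setminus C_n$.

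You instead straighten both caps, transport $E^\pm$ separately, and read off both wet terms from the change of variables for traces on the Lipschitz graphs of $f^u_n,f^l_n$. Their area factors tend to $1$ uniformly on $C_\varepsilon$ by \Cref{lemma: funzioni calotte}. This is more direct: it works for any $E$ of finite perimeter, so your preliminary smoothing can be dropped and \Cref{lm: approssimazione} is not needed here. The price is gluing two different transports over $C\setminus C_\varepsilon$, which the paper's single global map avoids.

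The gluing is harmless, and your slab cut-off is superfluous. Take both maps of the form $(x',x_N)\mapsto(x',x_N-g_n(x'))$ with $g_n=0$ outside $C$. Then the union of the transported pieces coincides with $E$ outside $C\times\mathbb R$. Over $C\setminus C_\varepsilon$ its perimeter is at most $(1+o(1))$ times that of $E$, plus $(2+o(1))\,\mathcal H^{N-1}(C\setminus C_\varepsilon)$ coming from the transported copies of $\{x_N=0\}$. It then suffices to remove $C_n$ everywhere outside $C_\varepsilon\times\mathbb R$. This leaves $P(\cdot;\mathbb R^N\setminus C_n)$ unchanged and costs at most $\mathcal H^{N-1}(\partial C_n\setminus(C_\varepsilon\times\mathbb R))$ in the wet term; by \Cref{cor: convergence measures} its $\limsup$ is at most $2\mathcal H^{N-1}(C\setminus C_\varepsilon)$, exactly as in the paper.

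This removal is needed over $\tilde C_n\setminus C$ as well. Your claim that $E$ eventually avoids $C_n$ outside $C\times\mathbb R$ fails when $E$ touches the relative boundary of $C$ from outside and $C_n$ protrudes slightly beyond $C$. A slab over $C\setminus C_{2\varepsilon}$ does not reach that region.

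One caveat applies equally to the paper's proof. Let $G\coloneqq\{\operatorname{Tr}^+_C\mathbbm{1}_E=\operatorname{Tr}^-_C\mathbbm{1}_E=1\}$. If $\mathcal H^{N-1}(G)>0$, neither construction converges strictly in $BV(\mathbb R^N)$, and in fact no admissible sequence can. For your sequence, $P(E_n)=P(E_n;\mathbb R^N\setminus C_n)+P(E_n;\partial C_n)$ tends to $P(E;\mathbb R^N\setminus C)+\int_C(\operatorname{Tr}^+_C\mathbbm{1}_E+\operatorname{Tr}^-_C\mathbbm{1}_E)\,d\mathcal H^{N-1}$. By \Cref{teo: trace 1}, however, $P(E;C)=\int_C|\operatorname{Tr}^+_C\mathbbm{1}_E-\operatorname{Tr}^-_C\mathbbm{1}_E|\,d\mathcal H^{N-1}$, so the limit is $P(E)+2\mathcal H^{N-1}(G)$. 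For a general admissible sequence, $C_n$ eventually splits $E_n$ over $C_\varepsilon$ into an upper and a lower part converging to $E\cap\{\pm x_N>0\}$. Lower semicontinuity then gives $\liminf_n P(E_n)\geq P(E)+2\mathcal H^{N-1}(G\cap C_\varepsilon)$ for all but countably many $\varepsilon>0$; a ball centred at a relative interior point of $C$ is an example. What both arguments actually prove is $E_n\to E$ in $L^1$, $P(E_n;\mathbb R^N\setminus C_n)\to P(E;\mathbb R^N\setminus C)$ and $\mathcal F_{C_n}(E_n)\to\mathcal F_C(E)$, and you should claim only that.
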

			\begin{proof}
                Fix $\varepsilon>0$ and set $C_n^\varepsilon\coloneqq \Phi^\varepsilon_n(C_n)$, $E_n^\varepsilon\coloneqq E\setminus C_n^\varepsilon$.
				Set 
				\begin{align}
						\partial C_n^\varepsilon&= \left( C_n^\varepsilon\cap C_\varepsilon\right)\cup \left(\partial C_n^\varepsilon\cap \left(C_\varepsilon\times(0, +\infty)\right)\right) \cup \left(\partial C_n^\varepsilon\setminus \left(C_\varepsilon\times\mathbb R\right)\right)\\
						&\eqqcolon C_{n, f}^\varepsilon\cup C_{n, c}^\varepsilon\cup C_{n, r}^\varepsilon.
				\end{align} 
				Notice that, for $n$ sufficiently large, $C_{n, f}^\varepsilon=C_\varepsilon$.
                
				By \Cref{lm: approssimazione}, considering the sequence of convex bodies given by $\{x_N \leq 0 \}$ and $(C_n^{\varepsilon}\cap \left(C_\varepsilon\times(0, +\infty)\right))_{n\in\mathbb{N}}$, we may assume that $E\cap\{x_N>0\}=F\cap\{x_N>0\}$, where $F$ is a bounded open set with $C^\infty$ boundary such that $P(F, \{x_N=0\})=0$ and such that $P(F, C_{n,c}^{\varepsilon})=0 $ for every $n\in\mathbb{N}$.

                Let us study $P(E_n^\varepsilon; \partial C_n^\varepsilon)$.
				For what concerns the flat part, by \Cref{teo: trace 1} it holds
				\begin{equation}
					P(E_n^\varepsilon; C^\varepsilon_{n, f})=\int_{C^\varepsilon_{n, f}} \operatorname{Tr}_{\partial C_n^\varepsilon}^+ \mathbbm{1}_{E_n^\varepsilon}\,d\mathcal H^{N-1}= \int_{C_\varepsilon} \operatorname{Tr}_{C}^- \mathbbm{1}_E\,d\mathcal H^{N-1},
				\end{equation} for $n$ large enough, where $C_n^\varepsilon$ is oriented by the external normal and $C_\varepsilon$ by $e_N$.

				By \Cref{cor: convergence measures} and \eqref{eq: bilipschitz}, $\mathcal{H}^{N-1} \mres C^\varepsilon_{n, c}\xrightharpoonup{*} \mathcal{H}^{N-1} \mres C_\varepsilon$.  By \cite[Theorem 16.3]{Ma12} and the Portmanteau Lemma,
				\begin{align}
					\lim_n P(E_n^\varepsilon, C^\varepsilon_{n, c})&=\lim_n \mathcal{H}^{N-1} \mres  C^\varepsilon_{n, c}(F)=\mathcal{H}^{N-1} \mres C_\varepsilon (F)\\
					&=\int_{C_\varepsilon} \operatorname{Tr}_{C}^+ \mathbbm{1}_F\,d\mathcal H^{N-1}=\int_{C_\varepsilon} \operatorname{Tr}_{C}^+ \mathbbm{1}_E\,d\mathcal H^{N-1}.
				\end{align}
				Finally, for the rest of the wet part it is easy to estimate 
				\begin{equation}
					\limsup_n P(E_n^\varepsilon, C^\varepsilon_{n, r})\leq\limsup_n \mathcal H^{N-1}(C^\varepsilon_{n, r})\leq 2 \mathcal{H}^{N-1}(C\setminus C_\varepsilon).
				\end{equation}
				Putting all together, 
				\begin{equation}
					\limsup_n \left|P(E_n^\varepsilon; C_n^\varepsilon)-\int_C \left(\operatorname{Tr}_{C}^+ \mathbbm{1}_E+ \operatorname{Tr}_{C}^- \mathbbm{1}_E\right)\,d\mathcal H^{N-1}\right|=o_\varepsilon(1).
				\end{equation}
				Moreover, by Hausdorff convergence and the properties of $\Phi_n^\varepsilon$, it is immediate to see that $P(E_n^\varepsilon, \mathbb R^N\setminus C_n^{\varepsilon})\xrightarrow{n \to +\infty} P(E, \mathbb R^N\setminus C)$. From here, we conclude that there exists a vanishing sequence $\varepsilon_n\to 0$ such that $\left(\Phi_n^{\varepsilon_n}\right)^{-1}(E_n^{\varepsilon_n})$ does the job.
			\end{proof}
			
			\begin{proposition}\label{prop: Gamma liminf caso perdita di dimensione}
				Let $C$ be a bounded convex body in $\{x_N=0\}$ and $E\subset\mathbb{R}^N\setminus C$ a set of finite perimeter such that $|E|<+\infty$. Let $(C_n)_{n\in \mathbb N}$ be a sequence of convex bodies such that $C_n\to C$ in the Hausdorff convergence sense and $(E_n)_{n\in \mathbb N}$ be a sequence of sets of finite perimeter such that $E_n \subset \mathbb{R}^N\setminus C_n$ and $E_n\to E$ in $L^1$. Then 
				\begin{equation}
					\mathcal F_C(E)\leq \liminf_{n \to +\infty} \mathcal F_{C_n}(E_n).
				\end{equation}
			\end{proposition}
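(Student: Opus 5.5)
We prove the $\Gamma$-$\liminf$ inequality in the case $\dim C=N-1$. We may assume $\liminf_n\mathcal F_{C_n}(E_n)<+\infty$, pass to a subsequence realizing the $\liminf$, and, by \Cref{cor: coercivity}, assume $P(E_n)$ is equibounded. Then $E$ has finite perimeter and $\mu_{E_n}\xrightharpoonup{*}\mu_E$.

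If $\lambda\le 0$ the wet term in $\mathcal F_{C_n}$ is nonnegative. Since $C_n\to C$ in Hausdorff distance, any open set $A\Subset\mathbb R^N\setminus C$ is eventually disjoint from $C_n$. Lower semicontinuity of the perimeter on open sets then gives $P(E;\mathbb R^N\setminus C)\le\liminf_n P(E_n;\mathbb R^N\setminus C_n)$. For the wet term, $-\lambda P(E_n;\partial C_n)$ should dominate $-\lambda\int_C(\operatorname{Tr}^+_C+\operatorname{Tr}^-_C)\mathbbm 1_E\,d\mathcal H^{N-1}$ in the limit. Instead of proving this separately, I would run a single argument valid for all signs of $\lambda$, as follows.

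Fix $\varepsilon,\delta,\eta>0$ and split the wet part of $E_n$ into the contributions on $C_n^u$, $C_n^l$, $C_n^s$ from \Cref{cor: convergence measures}. Since $\mathcal H^{N-1}(C_n^s)\to0$, the lateral part is negligible. Over $C_\varepsilon\times\mathbb R$ the caps are graphs of $f_n^u,f_n^l$, which by \Cref{lemma: funzioni calotte} tend to $0$ with Lipschitz constants tending to $0$. Using the maps $\Psi_n^\varepsilon$ and $\Phi_n^\varepsilon$, I would straighten the upper cap to $C_\varepsilon\subset\{x_N=0\}$ on the upper side and, separately, the lower cap on the lower side. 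Set $G_n^+\coloneqq\Psi_n^\varepsilon(E_n)\cap\{x_N>0\}$ and $G_n^-\coloneqq\Phi_n^\varepsilon(E_n)\cap\{x_N<0\}$ over $C_\varepsilon\times\mathbb R$. By \eqref{eq: bilipschitz}, perimeters and $\mathcal H^{N-1}$ change by factors tending to $1$, and $G_n^\pm\to E\cap\{\pm x_N>0\}$ locally in $L^1$ on $C_\varepsilon\times(\pm(0,\delta))$. The wet term on the upper cap then equals, up to $o(1)$, $\int_{C_\varepsilon}\operatorname{Tr}^+_C\mathbbm 1_{G_n^+}\,d\mathcal H^{N-1}$, and analogously for the lower cap.

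The key step is continuity of these traces, which is the analogue of the estimate in the proof of \Cref{prop: semicontinuity}. Apply \Cref{lemma: local bulk wet estimate N-1} to $C_\varepsilon$ and to the symmetric difference $G_n^+\Delta (E\cap\{x_N>0\})$. This gives
\begin{equation}
\Bigl|\int_{C_\varepsilon}\operatorname{Tr}^+_C\mathbbm 1_{G_n^+}-\int_{C_\varepsilon}\operatorname{Tr}^+_C\mathbbm 1_{E}\Bigr|\le P(G_n^+\Delta E; C_\varepsilon\times(0,\delta))+m|(G_n^+\Delta E)\cap C_\varepsilon\times(0,\delta)|+\eta.
\end{equation}
Bounding the perimeter of the symmetric difference by $P(G_n^+;C_\varepsilon\times(0,\delta))+P(E;C_\varepsilon\times(0,\delta))$, the trace of $E_n$ is controlled by that of $E$ plus the bulk perimeter of $E_n$ in the thin layer $C_\varepsilon\times(0,\delta)$. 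As in \Cref{prop: semicontinuity}, this bulk perimeter is absorbed by the coefficient $1-|\lambda|>0$. We write $P(E_n;\mathbb R^N\setminus C_n)\ge P(E_n;\mathbb R^N\setminus \overline{I})+P(E_n;I)$, where $I$ is the union of the two layers straightened back. Lower semicontinuity of $P(E_n;\mathbb R^N\setminus\overline I)$ then yields $P(E;\mathbb R^N\setminus \overline I)$, and the remaining $P(E;I)$ and $\mathcal H^{N-1}(C\setminus C_\varepsilon)$ terms vanish as $\delta,\varepsilon,\eta\to0$. This sign-independent argument replaces the case split.

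The main obstacle is the layer region near $\partial C_n$ that is not over $C_\varepsilon$, together with making the straightened layers genuinely disjoint from $C_n$. The latter requires the layer to lie between $C_n$ and the cap graph plus $\delta$, which holds because $f_n^u\ge f_n^l$ and both tend to $0$ uniformly. For the boundary strip, I would bound its wet contribution by $|\lambda|\,\mathcal H^{N-1}(\partial C_n\cap((C\setminus C_\varepsilon)\times\mathbb R)\cup C_n^s)$. By \Cref{cor: convergence measures} this is at most $2|\lambda|\mathcal H^{N-1}(C\setminus C_{\varepsilon/2})+o(1)$, which is $o_\varepsilon(1)$. A diagonal choice $\varepsilon\to0$, then $\delta,\eta\to0$, concludes.
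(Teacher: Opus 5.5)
Your proposal is correct and follows essentially the paper's proof. You straighten the two caps over $C_\varepsilon$ with $\Psi_n^\varepsilon$ and $\Phi_n^\varepsilon$, run the argument of \Cref{prop: semicontinuity} on each side via \Cref{lemma: local bulk wet estimate N-1} (which you spell out in more detail than the paper does), and control everything outside $C_\varepsilon\times\mathbb R$ with \Cref{cor: convergence measures}. One cosmetic fix: the discarded wet region should be all of $\partial C_n\setminus(C_\varepsilon\times\mathbb R)$, including the caps over $\tilde C_n\setminus C$, and by Portmanteau its $\limsup$ measure is at most $2\mathcal H^{N-1}(C\setminus C_\varepsilon)$ rather than a bound involving $C\setminus C_{\varepsilon/2}$; this still vanishes as $\varepsilon\to0$.
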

			\begin{proof}
				Fix $\varepsilon>0$. Set $C_n^{+, \varepsilon}\coloneqq \Phi^\varepsilon_n(C_n)$, $E_n^{+, \varepsilon}\coloneqq \Phi^\varepsilon_n(E_n)$, $C_n^{-, \varepsilon}\coloneqq \Psi^\varepsilon_n(C_n)$, $E_n^{-, \varepsilon}\coloneqq \Psi^\varepsilon_n(E_n)$. In order to simplify some computations, with a slight abuse of notation, we will use the notation
                \begin{equation}
                	\mathcal F_K(E; B):=P(E; B\setminus K)-\lambda P(E; \partial K\cap B)
                \end{equation}
                for the localized functional also for sets $K\subset\mathbb{R}^N$ with $\dim(K)=N$ and Borel sets $B$.
				Notice that, by \eqref{eq: bilipschitz} it follows
				\begin{equation}
					\liminf_{n \to +\infty}\mathcal{F}_{C_n}(E_n; \{(x', x_N):x'\in C_\varepsilon, x_N\geq f_n^u(x')\})=\liminf_{n \to +\infty} \mathcal{F}_{C_n^{-, \varepsilon}}(E_n^{-, \varepsilon}; C_\varepsilon\times [0,+\infty))
				\end{equation}
				and 
				\begin{equation}
					\liminf_{n \to +\infty} \mathcal{F}_{C_n}(E_n; \{(x', x_N):x'\in C_\varepsilon, x_N\leq f_n^l(x')\})= \liminf_{n \to +\infty} \mathcal{F}_{C_n^{+, \varepsilon}}(E_n^{+, \varepsilon}; C_\varepsilon\times(-\infty,0]).
				\end{equation}
				By the fact that
				\begin{align}
					 \mathcal{F}_{C_n}(E_n; C_\varepsilon\times\mathbb R)&= \mathcal{F}_{C_n}(E_n; \{(x', x_N):x'\in C_\varepsilon, x_N\geq f_n^u(x')\})\\
					 &\qquad+\mathcal{F}_{C_n}(E_n; \{(x', x_N):x'\in C_\varepsilon, x_N\leq f_n^l(x')\}), 
				\end{align}
				it follows
				\begin{align}
					\liminf_{n \to +\infty} \mathcal{F}_{C_n}(E_n; C_\varepsilon\times\mathbb R)\geq& \liminf_{n \to +\infty} \mathcal{F}_{C_n^{-, \varepsilon}}(E_n^{-, \varepsilon}; C_\varepsilon\times[0,+\infty))\\
					&\qquad+\liminf_{n \to +\infty} \mathcal{F}_{C_n^{+, \varepsilon}}(E_n^{+, \varepsilon}; C_\varepsilon\times(-\infty,0]).
				\end{align}
				Notice moreover that
				$E_n^{+, \varepsilon}, E_n^{-, \varepsilon}\to E$ in $L^1$.
				
				By computations similar to those in the proof of \Cref{prop: semicontinuity}, it follows that
				\begin{equation}
					P(E; C_\varepsilon\times(0, +\infty))-\lambda\int_{C_\varepsilon} \operatorname{Tr}_{C}^+ \mathbbm{1}_E \, d\mathcal H^{N-1}\leq \liminf_{n \to +\infty} \mathcal{F}_{C_n^{-, \varepsilon}}(E_n^{-, \varepsilon}, C_\varepsilon\times[0, +\infty)).
				\end{equation}
				Repeating these arguments on the other side of $C$ we get 
				\begin{equation}
					P(E; C_\varepsilon\times(-\infty, 0))-\lambda\int_{C_\varepsilon} \operatorname{Tr}_{C}^- \mathbbm{1}_E \, d\mathcal H^{N-1}\leq \liminf_{n \to +\infty} \mathcal{F}_{C_n^{+, \varepsilon}}(E_n^{+, \varepsilon}, C_\varepsilon\times(-\infty, 0])
				\end{equation}
				where $C$ is always oriented by $e_N$. Finally, notice that
				\begin{equation}
					P\left(E; \mathbb R^N\setminus \left(\bar{C_\varepsilon}\times\mathbb R\right)\right)\leq \liminf_{n \to +\infty} P\left(E_n; \mathbb R^N\setminus \left(\bar{C_\varepsilon}\times\mathbb R\right)\right),
				\end{equation}
				and that, by \Cref{cor: convergence measures}, we have
				\begin{equation}
					\limsup_{n \to +\infty} P\left(E_n; C_n\setminus\left( C_\varepsilon\times\mathbb R\right)\right) \leq\limsup_{n \to +\infty} \mathcal H^{N-1}  (\partial C_n\setminus\left( C_\varepsilon\times\mathbb R)\right)=o_\varepsilon(1)
				\end{equation}
				and that, similarly, 
				\begin{equation}
					\int_{C\setminus C_\varepsilon} \left(\operatorname{Tr}_{C}^+ \mathbbm{1}_E+ \operatorname{Tr}_{C}^- \mathbbm{1}_E\right) \, d\mathcal H^{N-1}\leq 2\mathcal{H}^{N-1}(C\setminus C_\varepsilon)=o_\varepsilon(1).
				\end{equation}
				Summing up, we get
				\begin{equation}
					\mathcal{F}_C(E)\leq \liminf_{n \to +\infty} \mathcal F_{C_n}(E_n)+o_{\varepsilon}(1).
				\end{equation}
				By the arbitrariness of $\varepsilon$, the thesis follows.
			\end{proof}
			In conclusion, we can now state the $\Gamma$--convergence result in full generality. In particular, we may also allow the volumes to vary, namely $v_n \to v >0$, without affecting the result:
			\begin{theorem}\label{teo: gamma conv}
				Let $\Lambda>0$, $C\subset \mathbb R^N$ be a compact convex set, $(C_n)_{n\in \mathbb N}$ be a sequence of closed convex sets converging to $C$ in the Hausdorff metric and let $v_n \to v >0$ be a sequence of positive numbers. Then:
				\begin{itemize}
					\item for every set $E\subset\mathbb R^N$ of finite perimeter and every sequence $(E_n)_{n\in \mathbb N}$ such that $E_n \subset \mathbb{R}^N\setminus C_n$ and $E_n\to E$ in $L^1$, 
					\begin{equation}
						\mathcal {F}^{v}_{C, \Lambda}(E)\leq \liminf_n\mathcal {F}^{v_n}_{C_n, \Lambda}(E_n);
					\end{equation}
					\item for every set $E\subset\mathbb R^N$ of finite perimeter and finite volume there exists a sequence $(E_n)_{n\in \mathbb N}$ such that $E_n \subset \mathbb{R}^N\setminus C_n$ and $E_n\to E$ strictly and 
					\begin{equation}
						\mathcal{F}^{v}_{C, \Lambda}(E)=\lim_n \mathcal{F}^{v_n}_{C_n, \Lambda}(E_n).
					\end{equation}
					Moreover, if $E\Subset B(x, R)$ for some $x\in \mathbb R^N$, $R>0$, the sets $E_n$ can be chosen to be such that $E_n\Subset B(x, R)$.
				\end{itemize}
                In other words, $\mathcal{F}^{v}_{C, \Lambda}=\Gamma\text{-}\lim\limits_{n \to +\infty} \mathcal{F}^{v_n}_{C_n, \Lambda}$, where the $\Gamma$-limit holds both with respect to the
                $L^1$ topology and with respect to the strict topology in $BV$.
			\end{theorem}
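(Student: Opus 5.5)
The plan is to reduce \Cref{teo: gamma conv} to the results already proved for the unpenalized energies $\mathcal F_{C_n}\to\mathcal F_C$. Then add the volume penalization and the varying volumes $v_n$ as a continuous perturbation. First, since $C$ is compact and $C_n\to C$ in the Hausdorff metric, the $C_n$ are eventually compact and equibounded. I split by $\dim C$ and, up to subsequences, by $\dim C_n$. By the subsequence principle for $\Gamma$-convergence \cite[Proposition 8.3]{dalmasoIntroductionGConvergence1993}, it suffices to prove both inequalities along a further subsequence of any subsequence. The cases are as follows.
\begin{itemize}
\item If $\dim C=N$, then $C_n$ is eventually a convex body, and the unpenalized theorem of the subsection ``No loss of dimension'' applies.
\item If $\dim C=N-1$ and $C_n$ are convex bodies, \Cref{prop: gamma limsup perdita dimensione} and \Cref{prop: Gamma liminf caso perdita di dimensione} apply after rotating so that $C\subset\{x_N=0\}$.
\item If $\dim C\le N-2$, the wet contribution of any $E_n\subset\mathbb R^N\setminus C_n$ is at most $|\lambda|\mathcal H^{N-1}(\partial C_n)$ (or its analogue for lower-dimensional $C_n$). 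This tends to $0$ by \Cref{rmk: convergenza convessi} applied to $\varepsilon$-neighbourhoods, or directly because the $\mathcal H^{N-1}$-measure of convex sets is continuous in the Hausdorff metric. The liminf then follows from semicontinuity of the perimeter together with $P(E_n;\mathbb R^N\setminus C_n)\ge P(E_n)-\mathcal H^{N-1}(\partial C_n)$. For the limsup I take $E_n=E\setminus C_n$ with $E$ a smooth bounded set given by density, and check $P(E_n)\to P(E)$ via the same vanishing estimate.
\end{itemize}

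The cases where $C_n$ themselves are lower dimensional (for instance $\dim C_n=N-1$ with $\dim C=N-1$) I would handle by approximating. I replace $C_n$ by a convex body $C_n^\delta=C_n+\overline B(0,\delta_n)$ with $\delta_n\to0$ chosen diagonally, so that $\mathcal F_{C_n^{\delta_n}}$ and $\mathcal F_{C_n}$ differ little on the relevant sequences. For the liminf, a set $E_n\subset\mathbb R^N\setminus C_n$ is not outside $C_n^{\delta}$. So I instead compare through the slicing/trace estimates \Cref{lemma: local bulk wet estimate N-1}: the traces on the two sides of $C_n$ are controlled by perimeter and volume in a thin slab. Alternatively, and more simply, I can note that when $\dim C_n=N-1=\dim C$, a rigid motion converging to the identity brings $C_n$ into $\{x_N=0\}$. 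Then $\Gamma$-convergence reduces to Hausdorff convergence of flat convex sets, treated as in the full-dimensional case inside the hyperplane using the dilation trick of \Cref{lemma: aggiustamento convessi}. I expect this bookkeeping of mixed-dimension cases to be the main obstacle, and I would first try the rigid-motion reduction.

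Next I add the penalization. For the liminf, $E_n\to E$ in $L^1$ gives $|E_n|\to|E|$, so $\Lambda\,\bigl||E_n|-v_n\bigr|\to\Lambda\,\bigl||E|-v\bigr|$ as $v_n\to v$. This continuous term adds to the liminf inequality for $\mathcal F$. If $|E|=+\infty$ both sides are $+\infty$ or trivially fine, and the finite-perimeter hypothesis covers the rest. For the limsup, the recovery sequences above converge strictly, hence in $L^1$, so the penalization converges as well. The containment $E_n\Subset B(x,R)$ is inherited from the cited statements, or from $E_n=E\setminus C_n$ in the construction. Finally, the $\Gamma$-limit in the strict topology follows by two observations. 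The liminf inequality in strict convergence is implied by the $L^1$ one, since strict convergence is stronger. The recovery sequences already converge strictly, which is the limsup.
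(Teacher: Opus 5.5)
Your proposal is correct. In every case but one it follows the paper's proof:
\begin{itemize}
\item the same split by $\dim C$ and $\dim C_n$;
\item the full-dimensional theorem when $\dim C=N$;
\item \Cref{prop: gamma limsup perdita dimensione} and \Cref{prop: Gamma liminf caso perdita di dimensione} when $\dim C=N-1$ and the $C_n$ are bodies;
\item vanishing of $\mathcal H^{N-1}(\partial C_n)$ when $\dim C\le N-2$, which you spell out in more detail than the paper;
\item for the $\limsup$ when $\dim C_n=\dim C=N-1$, exactly the paper's rigid motion, dilation and choice $E_n=E\setminus C_n$.
\end{itemize}

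The genuine difference is the $\liminf$ when $\dim C_n=\dim C=N-1$. The paper thickens, setting $C_{n,h}=C_n\times[-h,h]$. It avoids exactly the obstruction you point out ($E_n$ need not lie outside the thickened set) as follows:
\begin{itemize}
\item it applies \Cref{prop: gamma limsup perdita dimensione} to the fixed flat set $C_n$ and the bodies $C_{n,h}\to C_n$;
\item it chooses $h_n\to0$ diagonally, obtaining $\tilde E_n\subset\mathbb R^N\setminus C_{n,h_n}$ with asymptotically equal energy and $|\tilde E_n\triangle E_n|\to0$;
\item it then invokes \Cref{prop: Gamma liminf caso perdita di dimensione} for $C_{n,h_n}\to C$.
\end{itemize}
So no slab or trace estimate via \Cref{lemma: local bulk wet estimate N-1} is needed.

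Your preferred rigid-motion route also works, but ``treated as in the full-dimensional case'' hides the one computation that makes it go. Once $C_n,C\subset\{x_N=0\}$, write $t^\pm=\operatorname{Tr}^\pm\mathbbm 1_{E_n}$. The two energies of the same set then differ only on $C_n\triangle C$, where the integrand $|t^+-t^-|+\lambda(t^++t^-)$ takes values in $[-2,2]$. Hence
\[
\bigl|\mathcal F_{C_n}(E_n)-\mathcal F_C(E_n)\bigr|\le 2\,\mathcal H^{N-1}(C_n\triangle C)\longrightarrow 0,
\]
using \Cref{rmk: convergenza convessi} applied in $\mathbb R^{N-1}$. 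The $\liminf$ then follows from \Cref{prop: semicontinuity} for the fixed obstacle $C$. This bound is two-sided and uniform in the set, so it also gives the $\limsup$ with $E_n$ the rigidly moved $E$. It makes the dilation of \Cref{lemma: aggiustamento convessi} superfluous.

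What each route buys: yours is shorter and elementary. The paper's recycles already proved statements without a new computation, at the cost of the bi-Lipschitz straightening machinery and a diagonal argument for what is really a small perturbation of equal-dimensional flat obstacles.
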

            \begin{proof}
                If $\dim C<N-1$, we have that  $\mathcal H^{N-1}(\partial C_n)\to 0$, so the thesis is immediate.
                Suppose that $\dim (C) \geq N-1$. By semicontinuity of the dimension, then, $\liminf\limits_{n \to +\infty} \dim(C_n)\geq N-1$. 
                Up to partitioning the set of indices, it is enough to prove the statement in the two cases where $\dim(C_n)\equiv N-1$ and $\dim(C_n)\equiv N$. In the case where $\dim(C_n)\equiv N$, the thesis follows from \Cref{cor: gamma limite dimensione piena} and \Cref{prop: gamma limsup perdita dimensione,prop: Gamma liminf caso perdita di dimensione}.

                Let us finally deal with the case where $\dim(C_n)\equiv N-1=\dim(C)$.
                Let us start by proving the $\liminf$-inequality.
                Let $E$ be a set of finite perimeter and $(E_n)_{n\in \mathbb N}$ be a sequence of sets of finite perimeter such that $E_n\to E$ in $L^1$ and $E_n \subset \mathbb{R}^{N}\setminus C_n$.
                Without loss of generality, we can assume that $\sup \mathcal F_{C_n, \Lambda}^v(E_n)<+\infty$.
                
                For $h>0$, let $C_{n, h}=C_n\times[-h, h]$.
                By \Cref{prop: gamma limsup perdita dimensione} and a diagonal argument, there exists a vanishing sequence $(h_n)_{n\in \mathbb N}$ and a sequence $n\mapsto \tilde E_n\subset \mathbb{R}^N \setminus C_{n, h_n}$ such that 
                \begin{equation}\label{eq: approssimazione limsup}
                    |\mathcal F_{C_n, \Lambda}^v(E_n)-\mathcal F_{C_{n, h_n}, \Lambda}^v(\tilde E_n)|\to 0.
                \end{equation}
                Now, since $h_n\to 0$, $C_{n, h_n}\to C$ in the Hausdorff sense.
                In particular, by \Cref{prop: Gamma liminf caso perdita di dimensione} and \eqref{eq: approssimazione limsup}, 
                \begin{equation}
                    \liminf_n \mathcal F_{C_n, \Lambda}^v(E_n)= \liminf_n \mathcal F_{C_{n, h_n}, \Lambda}^v(\tilde E_n)\geq  \mathcal F_{C, \Lambda}^v(E).
                \end{equation}

                Finally, let us prove the $\limsup$-inequality.
                Up to a sequence of isometries converging to the identity, we can assume that $C_n\subset \{x_N=0\}$ for all $n$.
                Moreover, up to a sequence of rescalings as in \Cref{lemma: aggiustamento convessi}, we can assume that $C\subset C_n$ for all $n$.
                Notice that, by the Hausdorff convergence, this implies that $\mathcal H^{N-1}(C_n\setminus C)\to 0$. 
                Let $E$ be a set of finite perimeter and volume.
                Let $E_n=E\setminus C_n$.
                Then
                \begin{align}
                    \mathcal F_{C_n}(E_n)&= P\left(E;\mathbb{R}^N\setminus {C_n}\right)-\lambda \int_{C_n} \left(\operatorname{Tr}_{\{x_N=0\}}^+ \mathbbm{1}_E+ \operatorname{Tr}_{\{x_N=0\}}^- \mathbbm{1}_E\right)\,d\mathcal H^{N-1}\\
                    &\to P\left(E;\mathbb{R}^N\setminus C\right)-\lambda \int_C \left(\operatorname{Tr}_{C}^+ \mathbbm{1}_E+ \operatorname{Tr}_{C}^- \mathbbm{1}_E\right)\,d\mathcal H^{N-1}= \mathcal{F}_{C}(E),
                \end{align}
                concluding the proof.
            \end{proof}
            
	\section{Existence of generalized minimizers}\label{section: existence}
		In the following, let us fix $C\subset\mathbb R^N$ be a closed convex set, $v>0$, $\lambda\in(-1, 1)$ and $\Lambda>0$.
		We are finally ready to prove the existence of generalized minimizers for the capillarity energy. 
        In order to explain the precise notion of minimizers we are looking for, let us recall \Cref{def: asymptotic configuration}.
		\begin{definition}
			Let $C\subset\mathbb R^N$ be a closed convex set.
            We say that $(\mathbf{E}, \mathbf{C})$ is an \emph{asymptotic configuration} for $C$ if there exist $k\in\mathbb{N} \cup \{ 0 \}$, $C_1, \ldots, C_k$ asymptotic cylinders for $C$, $E_0\subset \mathbb R^N\setminus C$, $E_j\subset\mathbb R^N\setminus C_j$, $j=1, \ldots, k$ sets of finite perimeter such that \begin{equation}
				\mathbf E=(E_0, \ldots, E_k), \qquad \mathbf C=(C,C_1, \ldots, C_k).
			\end{equation}
			We say that $(\mathbf{E}, \mathbf{C})$ has length $k$ and volume $|\mathbf E|=\sum_{j=0}^k |E_j|$.
            
            In order to avoid trivial situations, we assume that $|E_j|>0$ for $j>0$.
		\end{definition}
		
		In order to prove the existence, we recall the picture coming from \Cref{cor: esistenza}, \Cref{prop: density est} (cfr. \cite[Lemma 3.6]{FoFuLeMo23}) and \Cref{cor: stima sulle comp connesse e loro diametro}.
		Let us fix $v>0$ and $\Lambda>0$.
		Up to the choice of a suitable sequence of monotonically diverging radii $(R_n)_{n\in \mathbb N}$, there exist $0\leq M_1\leq M\in\mathbb N$, $d>0$ and a sequence of sets of finite perimeter $(F_n)_{n\in \mathbb N}$ such that:
		\begin{itemize}
			\item $\mathcal {F}^{v}_{C, \Lambda}(F_n)=\min\limits_{F\subset B(x, R_n)\setminus C} \mathcal{F}^{v}_{C, \Lambda}(F)$;
			\item $F_n$ has exactly $M$ connected components $F_n^{(1)}, \ldots, F_n^{(M)}$ and \begin{equation}
				\mathcal{F}_C(F_n)=\sum_{j=1}^M \mathcal{F}_C(F_n^{(j)});
			\end{equation}
			\item each connected component $F_n^{(j)}$ is such that $\operatorname{diam}(F_n^{(j)})<d$;
			\item for all $1\leq j\leq M_1$, there exist diverging sequences $\left(x_n^{(j)}\right)_{n\in\mathbb{N}}\subset \partial C$ such that $x_n^{(j)}\in \widebar{F_n^{(j)}}\cap \partial C$ for all $n$ and $j$.
			\item for all $M_1+1\leq j\leq M$, $\widebar{F_n^{(j)}}\cap \partial C$ is uniformly bounded in $n$.
		\end{itemize}
        \begin{remark}
            One might expect the existence of connected components $\Omega$ such that $\widebar{\Omega}\cap C=\emptyset$.
            However, this cannot happen, as it can be shown arguing as in \cite[Theorem 3.2, step 5]{FoFuLeMo23} (see also \Cref{cor: le componenti devono toccare il convesso}).
        \end{remark}
       By a standard approximation argument, we see that 
        \begin{align}
            \lim\limits_{n \to +\infty} \mathcal{F}^{v}_{C, \Lambda}(F_n)=\inf \mathcal{F}^{v}_{C, \Lambda}(F), \label{eq: equazione degli F_n in Section 4}
        \end{align}
        where $F\subset\mathbb R^N\setminus C$ varies among all sets of finite perimeter.
		
		In analogy with what has been done with $\mathcal{F}_C$, we introduce the penalized functional $ \mathcal G^{v}_{C, \Lambda}$ defined as
		\begin{equation}
			 \mathcal G^{v}_{C, \Lambda}(\mathbf{E}; \mathbf{C})= \mathcal G_C(\mathbf{E}; \mathbf{C})+\Lambda \left| |\mathbf{E}|-v\right|
		\end{equation}
		Similar to what we have done for the capillarity energy, we look for asymptotic minimizers for the penalized functional. 
		\begin{definition} 
			Let $\Lambda>0$ and let $(\mathbf{E}, \mathbf{C})$ be an asymptotic configuration. We say that $(\mathbf{E}, \mathbf{C})$ is an \emph{asymptotic} (or  \emph{generalized}) \emph{minimizer} for $\mathcal G^{v}_{C, \Lambda}$ if 
			\begin{equation}
				\mathcal G^{v}_{C, \Lambda}(\mathbf E, \mathbf C)\leq \mathcal G^{v}_{C, \Lambda}( \tilde{\mathbf E}, \tilde{\mathbf C}),
			\end{equation}
			where $( \tilde{\mathbf E}, \tilde{\mathbf C})$ varies among all asymptotic configurations (regardless of the lengths) and $E_i \neq \emptyset$ for every $i=1,\dots, k$.
		\end{definition}
		\begin{theorem}\label{teo: penal unb}
			For every $\Lambda>0$  there exist an asymptotic minimizer $(\mathbf E, \mathbf C)$ for the functional $\mathcal G^{v}_{C, \Lambda}$.
			Moreover,
			\begin{equation}
				\inf \mathcal F^{v}_{C, \Lambda}(F)=\min \mathcal G^{v}_{C, \Lambda}(\tilde{\mathbf E}; \tilde{\mathbf C})=\mathcal G^{v}_{C, \Lambda}(\mathbf E; \mathbf C),
			\end{equation}
			where $F\subset \mathbb R^N\setminus C$ varies among all sets of finite perimeter and $(\tilde{\mathbf E}; \tilde{\mathbf C})$ varies among all asymptotic configurations.
			Finally, if $\mathbf{E}=(E_0, \ldots, E_k)$, $\mathbf{C}=(C,C_1, \ldots, C_k)$ and $C_j=\lim\limits_{n \to +\infty} \left(C-x_n^{(j)}\right)$, $j=1, \ldots, k$ in the sense of Kuratowski for some diverging sequences $\left(x_n^{(j)}\right)_{n\in \mathbb{N}}\subset\partial C$, then there exist a sequence $(\tilde F_n)_{n\in \mathbb{N}}$ of sets of finite perimeter and $k+1$ sequences $\left(\tilde F_n^{(j)}\right)_{n\in \mathbb{N}}$ in $\mathbb R^N\setminus C$ with the following properties:
			\begin{itemize}
				\item for all $n\in \mathbb N$, the sets $\tilde F_n^{(i)}$, $\tilde F_n^{(j)}$ have positive distance when $i\neq j$;
				\item for all $n\in \mathbb N$, $\tilde F_n=\bigcup\limits_{j=0}^{k} \tilde F_n^{(j)}$;
				\item $\tilde F^{(j)}_n-x^{(j)}_n\xrightarrow{n \to +\infty} E_j$ strictly;
				\item $\mathcal F_{C}(\tilde F_n^{(j)})\xrightarrow{n \to +\infty} \mathcal F_{C_j}(E_j)$.
			\end{itemize}
		\end{theorem}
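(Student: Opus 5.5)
We take the sequence $(F_n)_{n\in\mathbb N}$ of penalized local minimizers in $B(0,R_n)\setminus C$ fixed above, with its $M$ components $F_n^{(1)},\dots,F_n^{(M)}$ of diameter less than $d$, and build a candidate asymptotic configuration from it. The bounded components $F_n^{(j)}$, $M_1<j\le M$, lie eventually in a fixed ball. By \Cref{cor: coercivity} their perimeters are equibounded, so up to a subsequence $\bigcup_{j>M_1}F_n^{(j)}\to E_0$ in $L^1$, with $E_0\subset\mathbb R^N\setminus C$.

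For the escaping components $j\le M_1$ we first group the indices: $i\sim j$ if $|x_n^{(i)}-x_n^{(j)}|$ stays bounded along a subsequence. For each class we pick one representative point $x_n^{(j)}\in\partial C$, pass to a subsequence along which $C-x_n^{(j)}\to C_j$ in the Kuratowski sense (Blaschke), and take the translated union of the components in that class, $G_n^{(j)}-x_n^{(j)}$. These translated sets are contained in a fixed ball, so they converge in $L^1$ to some $E_j$. Discarding classes with $|E_j|=0$ (in fact the density estimates of \Cref{prop: density est} give $|E_j|>0$), we obtain an asymptotic configuration $(\mathbf E,\mathbf C)$.

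The lower bound comes from localizing in a large ball $B$ around the origin. There \Cref{lemma: local hausdorff} turns Kuratowski convergence into Hausdorff convergence of the compact sets $(C-x_n^{(j)})\cap\widebar B\to C_j\cap\widebar B$, so \Cref{teo: gamma conv} applies. Since the pieces lie in $B$ and are far from $\partial B$, the energy of a piece outside $C-x_n^{(j)}$ coincides with its energy outside the truncated obstacle, as long as the truncation ball contains a neighbourhood of the piece. Using translation invariance, the $\Gamma$-liminf gives
\[
\mathcal F_{C_j}(E_j)\le\liminf_n\mathcal F_C(G_n^{(j)}).
\]
The volumes converge because everything stays in bounded regions, so summing over $j$ yields
\[
\mathcal G^v_{C,\Lambda}(\mathbf E,\mathbf C)\le\lim_n\mathcal F^v_{C,\Lambda}(F_n)=\inf\mathcal F^v_{C,\Lambda},
\]
using \eqref{eq: equazione degli F_n in Section 4}.

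For the upper bound, take an arbitrary asymptotic configuration $(\tilde{\mathbf E},\tilde{\mathbf C})$ with $\tilde C_j=\lim(C-y_n^{(j)})$. By \Cref{lm: approssimazione} and a truncation argument we may assume each $\tilde E_j$ is bounded. The $\Gamma$-limsup part of \Cref{teo: gamma conv} (localized as before, with recovery sequences inside a fixed ball) gives sets $H_n^{(j)}\subset\mathbb R^N\setminus(C-y_n^{(j)})$ such that $H_n^{(j)}\to\tilde E_j$ strictly and $\mathcal F_{C-y_n^{(j)}}(H_n^{(j)})\to\mathcal F_{\tilde C_j}(\tilde E_j)$. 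The translates $H_n^{(j)}+y_n^{(j)}$ lie outside $C$, and their energies and volumes converge. Since the $y_n^{(j)}$ diverge, passing to a subsequence in which the $y_n^{(j)}$ are mutually far apart and far from $\tilde E_0$ makes the pieces pairwise at positive distance. If two $y_n^{(j)}$ stay at bounded distance, we translate one of the points slightly along the cylinder direction; \Cref{prop: i cilindri asint dei cilindri asint sono cilindri asint}-type diagonal arguments give the freedom to pick far-apart sequences with the same limit. Then energy and volume are additive, so $\inf\mathcal F^v_{C,\Lambda}\le\mathcal G^v_{C,\Lambda}(\tilde{\mathbf E},\tilde{\mathbf C})$. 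Together with the lower bound, this proves that $(\mathbf E,\mathbf C)$ is a minimizer and that the three quantities coincide.

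Applying the same construction to the minimizer $(\mathbf E,\mathbf C)$ itself produces the sequences $\tilde F_n^{(j)}$ with all the listed properties. The main obstacle, I expect, is the separation step in the upper bound: we need far-apart base points $x_n^{(j)}\in\partial C$ realizing each $C_j$. I would first try to replace $x_n^{(j)}$ by $x_{m(n)}^{(j)}$ along rapidly increasing subsequences $m(n)$, chosen distinct for different $j$ so that the distances diverge. A secondary delicate point is the localization of the $\Gamma$-convergence from Hausdorff to Kuratowski convergence, which requires that nothing be lost at the boundary of the truncating ball.
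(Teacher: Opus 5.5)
Your proposal is correct and is essentially the paper's proof. The lower bound comes from the localized $\Gamma$-liminf of \Cref{teo: gamma conv} applied to the translated components of the local penalized minimizers $F_n$. The upper bound, and the final approximating sequences, come from translated $\Gamma$-limsup recovery sequences, separated exactly by your second idea: recursively choose indices $m_n^{(j)}$ so that each new translate is at positive distance from all previously chosen sets. No sliding along cylinder directions is needed, since the recovery sets lie in a fixed ball and each $x_n^{(j)}$ diverges.

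Grouping escaping components at bounded mutual distance is a harmless refinement of the paper's component-by-component treatment. For the last part, boundedness of the $E_j$ is automatic for the minimizer you build; for an arbitrary generalized minimizer the paper gets it from the density estimates.
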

		\begin{proof}
			\textbf{Step 1:} We begin by proving that the two infima coincide. First, note that the infimum of $\mathcal{G}^{v}_{C, \Lambda}$ is non--negative and bounded above by $P(B)$, where $B\subset\mathbb R^N$ is some ball of volume $v$.
			
			Let $\varepsilon>0$. Then there exist $k\in\mathbb N$ and an asymptotic configuration $(\mathbf E, \mathbf C)$  of length $k$, where $\mathbf E=(E_0, \ldots, E_k)$, with $E_j$ bounded for $j=0, \ldots, k$,  and $\mathbf C=(C,C_1, \ldots, C_k)$, such that
			\begin{equation}
				\mathcal{G}^{v}_{C, \Lambda}(\mathbf E; \mathbf C)\leq \inf \mathcal{G}^{v}_{C, \Lambda}(\tilde{\mathbf E}; \tilde{\mathbf C})+\varepsilon,
			\end{equation}
			where $(\tilde{\mathbf E}, \tilde{\mathbf C})$ varies among all possible asymptotic configurations.
			Let us now exploit the approximating sequences granted by \Cref{teo: gamma conv} in order to approximate the sets $E_j$ with sequences in $\mathbb R^N\setminus C$.
			
			Let $1\leq j\leq k$ be fixed. Let $R_j>0$ be such that $E_j\Subset B(0, R_j)$. 
			Now, by definition of asymptotic cylinder and \Cref{lemma: local hausdorff} (notice that $0\in C_j$ by definition), there exists $\left(x^{(j)}_n\right)_{n\in \mathbb{N}}$ such that $\left(C-x_n^{(j)}\right)\cap \widebar{B(0, R_j)}\xrightarrow{n \to + \infty} C_j\cap \widebar{B(0, R_j)}$ in Hausdorff distance. Moreover, since $E_j\Subset B(0, R_j)$, we have that
			\begin{equation}
				\mathcal F_{C_j}(E_j)=\mathcal F_{C_j\cap \widebar{B(0, R_j)}}(E_j).
			\end{equation}
			Now, by \Cref{teo: gamma conv}, there exists a sequence of sets of finite perimeter $\left(A_n^{(j)}\right)_{n\in \mathbb N}$ such that:
			\begin{itemize}
				\item $A^{(j)}_n\subset\mathbb R^N\setminus \left(C-x_n^{(j)}\right)$ and $A^{(j)}_n\Subset B(0, R_j)$;
				\item $A^{(j)}_n\to E_j$ strictly for $n \to +\infty$;
				\item $\mathcal F_{\left((C-x_n^{(j)})\cap \widebar{ B(0, R_j)}\right)} \left(A_n^{(j)} \right)\to \mathcal F_{C_j}(E_j)$ for $n \to +\infty$.
			\end{itemize} 
			
			Let us now choose our sets $\tilde F_n^{(j)}$. For $j=0$, we set $\tilde F_n^{(0)}=E_0$ for all $n\in \mathbb N$.
			Let now $j=1$. Since $x_n^{(1)}$ diverges, we can set $m_1^{(1)}$ as the first $n$ such that $A_n^{(1)}+x^{(1)}_n$ has positive distance from $E_0$. Let $2 \leq j \leq k$ and suppose to have defined $m_1^{(l)}$ for $l < j$. We define $m_1^{(j)}$ to be the first natural number $n$ such that $A_n^{(j)}+x^{(j)}_n$ has positive distance from $E_0, A_{m_1^{(l)}}^{(l)}+x^{(l)}_{m_1^{(l)}}$ for $l=1, \ldots, j-1$.
			We can then pass to $m_2^{(1)}$, by choosing it as the first $n$ such that $A_n^{(1)}+x^{(1)}_n$ has positive distance from all the previously selected sets.
			Proceeding recursively, we obtain k sequences $\left(m_n^{(j)} \right)_{n\in \mathbb N, j=1, \ldots, k}$ such that, setting
			\begin{equation}
				\tilde F_n^{(j)}=A_{m_n^{(j)}}+x^{(j)}_{m_n^{(j)}},
			\end{equation}
			and relabelling $x^{(j)}_{m_n^{(j)}}$ as $x_n^{(j)}$, we have that:
			\begin{itemize}
				\item $d(\tilde F_n^{(i)}, \tilde F_n^{(j)})>0$ when $i\neq j$;
				\item $E_j=\lim\limits_{n \to +\infty} \tilde F_n^{(j)}-x_n^{(j)}$ strictly;
				\item $\mathcal F_{C}(\tilde F_n^{(j)})\to \mathcal F_{C}(E_j)$.
			\end{itemize}
			Moreover, since the sets are at positive distance from each other, if we set $\tilde F_n=\bigcup\limits_{j=0}^k \tilde F_n^{(j)}$, we get that
			\begin{equation}
				\mathcal{F}_C(\tilde F_n)=\sum_{j=0}^k \mathcal{F}_C(\tilde F_n^{(j)})= \mathcal{F}_C(E_0)+\sum_{j=1}^k \mathcal F_{(C-x_n^{(j)})\cap B(0, R_j)}(A_n^{(j)}).
			\end{equation}
			By definition of the $A_n^{(j)}$, we have that 
			\begin{equation}
				\inf \mathcal{F}^{v}_{C, \Lambda}(F)\leq \lim_n \mathcal{F}_C(\tilde F_n)= \mathcal{G}^{v}_{C, \Lambda}(\mathbf E;\mathbf C)\leq \inf \mathcal{G}^{v}_{C, \Lambda}(\tilde{\mathbf E}; \tilde{\mathbf C})+\varepsilon,
			\end{equation}
			where $F\subset\mathbb R^N\setminus C$ varies among all sets of finite perimeter and $(\tilde{\mathbf E}, \tilde{\mathbf C})$ varies among all possible asymptotic configurations for $C$.
			Since $\varepsilon>0$ is arbitrary, it follows that $\inf \mathcal{F}^{v}_{C, \Lambda}(F)\leq\inf \mathcal{G}^{v}_{C, \Lambda}(\tilde{\mathbf E}; \tilde{\mathbf C})$. The other inequality is trivial, since all possible competitor sets for $\mathcal {F}^{v}_{C, \Lambda}$ yield an asymptotic configuration for $C$.
			
			\noindent\textbf{Step 2:} we prove that the infimum for $\mathcal{G}^{v}_{C, \Lambda}$ is indeed a minimum. Let $F_n^{(1)}, \ldots, F_n^{(M)}$ for $n\in \mathbb N$ and $(x_n^{(j)})_{n\in \mathbb N}$ be the sequences chosen at the beginning of \Cref{section: existence}.
			The behaviour of the sets $F_n^{(j)}$ for $j>M_1$ is easier to describe: up to subsequences, their union converge in $L^1$ to a set $E_0\subset\mathbb R^N\setminus C$ such that, by \Cref{prop: semicontinuity}, satisfies
			\begin{equation}
				\mathcal F^{v}_{C, \Lambda}(E_0)\leq \liminf_{n\to +\infty}\mathcal {F}^{v}_{C, \Lambda} \left( \bigcup_{j=M_1+1}^{M} F_n^{(j)}\right).
			\end{equation}
			
			The components $F^{(j)}_n$ with $j\leq M_1$ are treated differently. Up to extracting a subsequence, we may assume that $C-x_n^{(j)}\to C_j$ in the sense of Kuratowski and that $F_n^{(j)}-x_n^{(j)} \to E_j$ in $L^1$, where $E_j\subset\mathbb R^N\setminus C_j$, for $n\to \infty$ and $j=1, \ldots, M_1$.
			Now, since $ \left(C-x_n^{(j)}\right)\cap \widebar{B(0, 2d)}\to \left(C_j\cap \widebar{B(0, 2d)}\right)$ and $\left(F_n^{(j)}-x_n^{(j)}\right)\Subset \widebar{B(0, 2d)}$ for all $n$ and $j$, we may apply \Cref{teo: gamma conv} to obtain
			\begin{align}
				\mathcal {F}^{v}_{C_j, \Lambda}(E_j)&=\mathcal {F}^{v}_{C_j\cap \widebar{B(0, 2d)}, \Lambda}(E_j)\\
				&\leq \liminf_{n\to +\infty} \mathcal {F}^{v}_{ C-x_n^{(j)}\cap \widebar{B(0, 2d)}, \Lambda}\left(F_j-x_n^{(j)}\right)\\
				&= \liminf_{n \to +\infty} \mathcal {F}^{v}_{ C, \Lambda}(F_n^{(j)})
			\end{align}
			for all $j=1, \ldots, M_1$. By adding up in $j$ and setting $\mathbf{E}=(E_0, \ldots, E_{M_1})$, $\mathbf{C}=(C,C_1, \ldots, C_{M_1})$, we finally get that		
			\begin{equation}
				\mathcal{G}^{v}_{C, \Lambda}(\mathbf{E};\mathbf{C})\leq \liminf_n \mathcal{F}^{v}_{C, \Lambda}(F_n)=\inf\mathcal{F}^{v}_{C, \Lambda}(F)=\inf \mathcal{G}^{v}_{C, \Lambda}(\tilde{\mathbf{E}};\tilde{\mathbf{C}}), 
			\end{equation}
			where $F\subset \mathbb R^N\setminus C$ varies among all sets of finite perimeter and $(\tilde{\mathbf{E}}, \tilde{\mathbf{C}})$ varies among all possible asymptotic configurations for $C$: we have proved that $(\mathbf{E}, \mathbf{C})$ is an asymptotic minimizer for $\mathcal {G}^{v}_{C, \Lambda}$, so the thesis follows.
            
            \textbf{Step 3:} Finally, if $(\mathbf E, \mathbf C)$ is a generalized minimizer for $\mathcal{G}_{C,\Lambda}^v$, the existence of a sequence $(\tilde{F}_n)_{n\in\mathbb{N}}$ with the properties stated in the Theorem follows by exactly the same argument as in Step 1. Indeed, since $(\mathbf E, \mathbf C)$ is a generalized minimizer, each $E_i$ is a $\Lambda$-minimizer for $\mathcal{F}_{C_i}$ and, by the density estimates in \Cref{prop: density est}, it is bounded. Therefore, the construction of Step 1 can be applied.
		\end{proof}
		In order to conclude the existence of asymptotic minimizers with volume constraint (and not just volume penalization), it suffices to show that for sufficiently large values of $\Lambda$ the volume constraint is attained.
		
		Before doing so, we prove a little generalization of \cite[Lemma 2.1]{MoPoSp22}:
		\begin{lemma}\label{lm: concentration2}
			Let $a, b>0$. There exists $m=m\left(a, b\right)>0$ with the following property: if $E_1, \ldots, E_n\subset\mathbb R^N$ are sets of finite perimeter with $\sum_{j=1}^n|E_j|\geq a$ and $\sum_{j=1}^N P\left(E_j\right)\leq b$, then there exists $1\leq j\leq n$ and a cube $\tilde{Q}$ with sides of unitary length and parallel to the coordinate axes such that $|\tilde{Q}\cap E_j|\geq m$.
		\end{lemma}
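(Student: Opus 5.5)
This is a concentration lemma in the spirit of \cite[Lemma 2.1]{MoPoSp22}. The plan is to tile $\mathbb R^N$ by the unit lattice cubes $Q_z=z+[0,1)^N$, $z\in\mathbb Z^N$, and to combine the relative isoperimetric inequality on each cube with a summation over the tiling. (The hypothesis $\sum_{j=1}^N P(E_j)\le b$ is read as $\sum_{j=1}^n P(E_j)\le b$.) Set
\begin{equation}
\mu\coloneqq \sup_{j}\sup_{z\in\mathbb Z^N}|E_j\cap Q_z|.
\end{equation}
We may assume $\mu\le \tfrac12$, since otherwise $\mu\ge\tfrac12$ and any $m\le\tfrac12$ works.

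For each cube we use the relative isoperimetric inequality in the unit cube. There is a dimensional constant $c_N>0$ such that, for $F$ of finite perimeter with $|F\cap Q|\le\tfrac12$,
\begin{equation}
|F\cap Q|^{\frac{N-1}{N}}\le c_N\,P(F;{\rm int}\,Q).
\end{equation}
Under $\mu\le\tfrac12$ we then get
\begin{equation}
|E_j\cap Q_z| = |E_j\cap Q_z|^{\frac1N}\,|E_j\cap Q_z|^{\frac{N-1}{N}}\le \mu^{\frac1N}c_N\,P(E_j;{\rm int}\,Q_z).
\end{equation}
The open cubes ${\rm int}\,Q_z$ are pairwise disjoint, so summing over $z$ gives $|E_j|\le c_N\mu^{1/N}P(E_j)$; the lattice cube faces, a null set, are simply discarded. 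Summing then over $j$,
\begin{equation}
a\le \sum_j|E_j|\le c_N\,\mu^{\frac1N}\sum_j P(E_j)\le c_N\, b\,\mu^{\frac1N},
\end{equation}
so $\mu\ge \bigl(a/(c_Nb)\bigr)^N$.

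We take $m\coloneqq\tfrac12\min\{\tfrac12,(a/(c_Nb))^N\}$. Since $\mu$ is a supremum strictly larger than $m$, some pair $(j,z)$ satisfies $|E_j\cap Q_z|\ge m$, and $\tilde Q=\overline{Q_z}$ is the required cube. The only point needing care is that the relative isoperimetric inequality is applied in the open cube, so boundary perimeter is not double counted; the argument is otherwise routine. The constant depends only on $a$, $b$ and $N$, and not on the number $n$ of sets, which is the generalization over \cite{MoPoSp22}.
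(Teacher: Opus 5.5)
Your proof is correct and follows essentially the same route as the paper: tile $\mathbb R^N$ by unit lattice cubes, dispose of the case where some cube is at least half filled, apply the relative isoperimetric inequality in each cube after pulling out the factor $\mu^{1/N}$, and sum over cubes and sets to obtain $\mu\ge \bigl(a/(c_Nb)\bigr)^N$. Reading the sum $\sum_{j=1}^N$ in the statement as $\sum_{j=1}^n$, and taking the extra factor $\tfrac12$ in $m$ so that the supremum is strictly exceeded by some pair $(j,z)$, are harmless refinements of the paper's argument.
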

		\begin{proof}
			Let $Q$ be the (open) unitary cube. If there exist $1\leq j\leq n$ and $z\in \mathbb Z$ such that $|E_j\cap\left(z+Q\right)|\geq \frac{1}{2}$, the thesis follow. If not, we have 
			\begin{equation}
				|E_j \cap \left(z+Q\right)| \leq |\left(z+Q\right) \setminus E_j|
			\end{equation} 
			for all $z\in\mathbb Z$. 
            
            Let $\gamma>0$ be the constant of the relative isoperimetric inequality, i.e.\ such that $|F\cap Q|^{\frac{N-1}{N}}\leq \gamma P\left(F, Q\right)$ for all sets of finite perimeter $F$ satisfying $|F\cap Q|\leq \frac 12$.
            Writing $V=\sup_{z\in\mathbb Z^N, \,j=1, \ldots, n} |E_j\cap\left(z+Q\right)|$, have:
			\begin{align}
				a&\leq \sum_{j=1}^n|E_j|=\sum_{z\in\mathbb Z^N}\sum_{j=1}^n |E_j\cap\left(z+Q\right)|\leq  V^{\frac{1}{N}} \sum_{j=1}^n\sum_{z\in\mathbb Z^N} |E_j\cap\left(z+Q\right)|^{\frac{N-1}{N}}\\
				&\leq \gamma V^{\frac{1}{N}}  \sum_{j=1}^n\sum_{z\in\mathbb Z^N} P\left(E_j, z+Q\right)\leq \gamma V^{\frac{1}{N}} \sum_{j=1}^n P\left(E_j\right)\leq \gamma V^{\frac{1}{N}} b.
			\end{align}
			This implies $\sup\limits_{z\in\mathbb Z^N, \,j=1, \ldots, n} |E_j\cap\left(z+Q\right)|\geq \min\left\{\left(\frac{a}{\gamma b}\right)^N, \frac{1}{2}\right\}$, which concludes.
		\end{proof}

\begin{lemma}[{\cite[Lemma 2.5]{RigLargeVol}, see also \cite[Theorem 1.1]{EspositoFusco}}]\label{lm: ef}
There exist $\varepsilon(N), c(N)>0$ with the following property. Let $E\subset\mathbb{R}^N$ be a set of finite perimeter with $|E|<v$ and let $B_R$ be a ball such that
$$
|E\cap B_{\frac{R}{2}}|\leq\varepsilon R^N,\quad |E\cap B_R|\geq \frac{\omega_NR^N}{2^{N+2}},
$$
with $\varepsilon\in(0,\varepsilon(N))$. Then there exists $\sigma_0=\sigma_0(E,v)>0$ such that for all $\sigma\in(0,\sigma_0)$ there exists a bi-Lipschitz map $\Phi_\sigma:\mathbb{R}^N\mapsto\mathbb{R}^N$, with $\Phi_\sigma(x)=x$ for all $|x|\geq R$, such that, setting $\widetilde E=\Phi_\sigma(E)$, one has
$$
|\widetilde E|<v, \quad P(E;\overline{B}_R)-P(\widetilde E;\overline{B}_R)\geq -2^NN\sigma P(E;\overline{B}_R), \qquad |\widetilde E|-|E|\geq c(N)\sigma R^N.
$$
\end{lemma}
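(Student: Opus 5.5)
The plan is to use the explicit radial construction of Esposito and Fusco and to bound the tangential Jacobian of $\Phi_\sigma$ from \emph{above}, since the statement asks for $P(\widetilde E;\overline B_R)\le(1+2^NN\sigma)P(E;\overline B_R)$. Centre $B_R$ at the origin and set $\Phi_\sigma(x)=x\,g(|x|)/|x|$ with
\begin{equation}
g(r)=\begin{cases}(1-(2^N-1)\sigma)\,r, & r<\tfrac R2,\\ r+\sigma r-\sigma R^N r^{1-N}, & \tfrac R2\le r<R,\\ r, & r\ge R.\end{cases}
\end{equation}
The two formulas agree at $r=R/2$ and at $r=R$, and $g'>0$ for small $\sigma$. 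Hence $\Phi_\sigma$ is bi-Lipschitz and equals the identity for $|x|\ge R$.

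\textbf{Perimeter.} Write $\widetilde E=\Phi_\sigma(E)$ and use the area formula on $\partial^*E$. The tangential Jacobian is
\begin{equation}
J^{\partial^*E}\Phi_\sigma=\det\nabla\Phi_\sigma\,\bigl|(\nabla\Phi_\sigma)^{-T}\nu_E\bigr|.
\end{equation}
It therefore suffices to show that
\begin{equation}
\det\nabla\Phi_\sigma\,\|(\nabla\Phi_\sigma)^{-1}\|\le 1+2^NN\sigma \qquad\text{a.e. in } B_R .
\end{equation}
On $B_{R/2}$ the map is a contraction, so the Jacobian is at most $1$. On the annulus the singular values are:
\begin{itemize}
\item the radial one, $g'(r)=1+\sigma+(N-1)\sigma R^Nr^{-N}$;
\item the tangential one, $g(r)/r=1+\sigma-\sigma R^Nr^{-N}$, repeated $N-1$ times.
\end{itemize}
Their product satisfies $\det\nabla\Phi_\sigma=\frac{d(g^N)}{d(r^N)}=1+N\sigma+O(\sigma^2)$. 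The smallest singular value is at least $1-(2^N-1)\sigma$, so $\|(\nabla\Phi_\sigma)^{-1}\|\le 1+(2^N-1)\sigma+O(\sigma^2)$. Multiplying gives $1+(2^N+N-1)\sigma+O(\sigma^2)$, which is at most $1+2^NN\sigma$ once $\sigma<\sigma_0$. The points of $\partial^*E$ on $\partial B_R$ are fixed, and there the Jacobian is also $\le 1+2^NN\sigma$ by the same bound, since it holds at $r=R$. Integrating over $\partial^*E\cap\overline B_R$ gives the perimeter inequality. The main obstacle is this step: one must check that the $O(\sigma^2)$ terms are uniformly controlled for $r\in[R/2,R]$, so that $\sigma_0$ can be taken depending only on $N$ for this part.

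\textbf{Volume.} Compute
\begin{equation}
|\widetilde E|-|E|=\int_E(\det\nabla\Phi_\sigma-1)\,dx .
\end{equation}
\begin{itemize}
\item On $B_{R/2}$ the integrand is at least $-N(2^N-1)\sigma$, so the loss is at most $N2^N\sigma\,\varepsilon R^N$.
\item On the annulus the integrand is at least $\tfrac N2\sigma$ for $\sigma$ small. By hypothesis,
\begin{equation}
|E\cap(B_R\setminus B_{R/2})|\ge\Bigl(\tfrac{\omega_N}{2^{N+2}}-\varepsilon\Bigr)R^N .
\end{equation}
\end{itemize}
Choosing $\varepsilon(N)$ so small that $N2^N\varepsilon(N)\le\frac N4\bigl(\frac{\omega_N}{2^{N+2}}-\varepsilon(N)\bigr)$, we obtain $|\widetilde E|-|E|\ge c(N)\sigma R^N$.

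\textbf{Strict bound $|\widetilde E|<v$.} Since $\det\nabla\Phi_\sigma\le1+C\sigma$ on $B_R$, we have $|\widetilde E|\le|E|+C\sigma\omega_NR^N$. Because $|E|<v$, this stays below $v$ after shrinking $\sigma_0$ in a way depending on $E$ and $v$, which is exactly the dependence $\sigma_0=\sigma_0(E,v)$ allowed in the statement.
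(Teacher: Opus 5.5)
The paper does not prove this lemma; it quotes it from \cite[Lemma 2.5]{RigLargeVol} and \cite{EspositoFusco}. Your argument is correct and uses exactly the Esposito--Fusco radial map on which those sources rest.

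Two small remarks. First, since $g(r)/r\le 1\le g'(r)$ on the annulus, the maximal $(N-1)$-Jacobian there is $g'(r)\,(g(r)/r)^{N-2}\le g'(r)\le 1+\sigma+(N-1)2^N\sigma\le 1+2^NN\sigma$ exactly, with no $O(\sigma^2)$ bookkeeping. Second, the $R^N$ in your final volume bound is controlled by the hypothesis, since $\omega_NR^N\le 2^{N+2}|E|$; hence $\sigma_0$ genuinely depends only on $E$, $v$ and $N$.
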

		
		\begin{proposition}\label{prop: esposito fusco unb}
			Let $\lambda\in(-1,1)$ and $v>0$. Then there exists $\Lambda_0=\Lambda_0(\lambda,v)>0$ such that if $\Lambda\geq \Lambda_0$, $C\subset \mathbb R^N$ is a closed convex set and $(\tilde{\mathbf{E}};\tilde{\mathbf{C}})$ is a generalized minimizer for $\mathcal G^{v}_{C,\Lambda}$, then $|\tilde{\mathbf{E}}|=v$. Moreover, the constants can be chosen so that $\Lambda_0(\lambda,v)=v^{-\frac{1}{N}}\Lambda_0(\lambda,1)$.
        \end{proposition}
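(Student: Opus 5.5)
First, I reduce to $v=1$ by scaling. If $(\tilde{\mathbf E},\tilde{\mathbf C})$ is a generalized minimizer for $\mathcal G^v_{C,\Lambda}$ and $t=v^{-1/N}$, then $(t\tilde{\mathbf E}, t\tilde{\mathbf C})$ is an asymptotic configuration for $tC$. Asymptotic cylinders scale accordingly, $\mathcal K(tC)=t\mathcal K(C)$. The energy scales as
\begin{equation}
\mathcal G^{1}_{tC,t^{-1}\Lambda}(t\tilde{\mathbf E},t\tilde{\mathbf C})=t^{N-1}\mathcal G^v_{C,\Lambda}(\tilde{\mathbf E},\tilde{\mathbf C}),
\end{equation}
using $t^{N-1}\Lambda\,\bigl||\mathbf E|-v\bigr|=t^{-1}\Lambda\, \bigl||t\mathbf E|-1\bigr|$. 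Since the class of closed convex sets is scale invariant, this yields $\Lambda_0(\lambda,v)=v^{-1/N}\Lambda_0(\lambda,1)$. It therefore suffices to find $\Lambda_0$ uniform in $C$ for $v=1$.

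\textbf{Step 1: excess volume is impossible.} Suppose $|\tilde{\mathbf E}|>1$. Removing a small amount of volume lowers the penalization at rate $\Lambda$. Concretely, replace each component $E_j$ by $E_j\setminus B(0,r)\cap\ldots$. A cleaner choice is to replace $E_j$ by $sE_j$ with respect to a point of $C_j$: for $s<1$ and $0\in C_j$, $sC_j\subset C_j$ fails in general, so instead I translate so that $0\in{\rm int}$ of a neighbourhood. The simplest route is to cut by a half-space or a small ball, using the foliation lemma and the bound $P(E_j;\partial B)\le$ the volume derivative. The same strategy also works in the deficit case, via Step 2.

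\textbf{Step 2: volume deficit.} Suppose $|\tilde{\mathbf E}|<1$. First obtain a priori bounds. Comparison with a ball of volume $1$ placed far from $C$, equivalently a half-ball on a flat region, gives
\begin{equation}
\mathcal G^1_{C,\Lambda}(\tilde{\mathbf E},\tilde{\mathbf C})\le P(B_1)=N\omega_N^{1/N}.
\end{equation}
Hence $\Lambda(1-|\tilde{\mathbf E}|)\le P(B_1)$, so $|\tilde{\mathbf E}|\ge 1/2$ once $\Lambda\ge 2P(B_1)$. By \Cref{cor: coercivity}, $\sum_j P(E_j)\le 2P(B_1)/(1-\lambda)$. \Cref{lm: concentration2} then gives an index $j$ and a unit cube $\tilde Q$ with $|E_j\cap\tilde Q|\ge m(\lambda)$. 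Choose a ball $B_R$ with $R$ a fixed dimensional multiple of $1$ containing $\tilde Q$, so that $|E_j\cap B_R|\ge\omega_NR^N/2^{N+2}$. If $|E_j\cap B_{R/2}|$ is large, shift the centre of the ball. The condition $|E_j\cap B_{R/2}|\le\varepsilon R^N$ is arranged by taking $R$ large relative to $m$ and placing $B_{R/2}$ on a region of low density, e.g.\ via a Fubini/averaging argument over centres. This is the standard argument of \cite{EspositoFusco}.

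Apply \Cref{lm: ef} to $E_j$ to get $\Phi_\sigma$ with $|\Phi_\sigma(E_j)|-|E_j|\ge c\sigma R^N$ and a perimeter increase of at most $2^NN\sigma P(E_j)$. The resulting set must stay outside $C_j$. The main obstacle is exactly this: $\Phi_\sigma$ may push $E_j$ into $C_j$, and the wet part changes. To handle it, take $\widetilde E_j=\Phi_\sigma(E_j)\setminus C_j$. The volume lost is at most $|\Phi_\sigma(E_j)\setminus E_j\cap C_j|$, and I expect this to be controllable. A cleaner alternative is to choose the ball $B_R$ disjoint from $C_j$, using \Cref{prop: density est}\ref{it: height est}-type arguments or \Cref{lm: concentration2} applied away from $C_j$. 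Then the wet part is unchanged and $\Phi_\sigma={\rm id}$ near $C_j$. If $B_R$ must meet $C_j$, I would instead replace the Esposito--Fusco deformation by a dilation $s(E_j-y)$ with $y\in{\rm int}$ of a suitable point and $C_j$ star-shaped. For energy bounded by $P(B_1)$, a uniform wet-part-plus-perimeter change estimate $\mathcal F_{C_j}$-variation $\le c'\sigma$ would suffice.

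Granting the deformation, $\mathcal G$ increases by at most $c_1\sigma$ while the penalization decreases by $\Lambda c\sigma R^N$ (since $|\widetilde{\mathbf E}|<1$ is kept). For $\Lambda>\Lambda_0(\lambda,1)$ this contradicts minimality. Combining with Step 1 gives $|\tilde{\mathbf E}|=1$.
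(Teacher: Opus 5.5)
\textbf{There is a genuine gap: the deficit case rests on a step you grant but do not prove.} Your scaling reduction is correct. So are the a priori bounds ($|\tilde{\mathbf E}|\ge 1/2$, $\sum_j P(E_j)\le 2P(B_1)/(1-\lambda)$) and the use of \Cref{lm: concentration2}. The excess case also closes cleanly: cut one $E_j$ with a half-space whose boundary hyperplane misses $C_j$. The wet part is unchanged, the free perimeter does not increase, the penalization strictly decreases, and nothing depends on $\Lambda$.

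The problem is the volume-deficit case. The gain is $\Lambda c(N)\sigma R^N$ and the perimeter increase is at most $2^NN\sigma P(E_j;\overline B_R)\le 2^NN\sigma\cdot 2P(B_1)/(1-\lambda)$. For the gain to win with a threshold independent of $C$ and of the minimizer, you need a ball $B_R$ with three properties:
\begin{itemize}
\item $R\ge R_0(\lambda)>0$;
\item $\overline B_R\cap C_j=\emptyset$;
\item both density hypotheses of \Cref{lm: ef} hold.
\end{itemize}
None of your suggestions produces such a ball.
\begin{itemize}
\item A ball of radius comparable to $1$ around $\tilde Q$ only carries mass $m(\lambda)$. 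This can be far below $\omega_NR^N/2^{N+2}$, and enlarging $R$ makes it worse, while $|E_j\cap B_{R/2}|\le\varepsilon R^N$ pulls the other way. Together the two conditions describe a ball straddling the free boundary of $E_j$ at a scale where $E_j$ looks like a half-space. That is a blow-up property, not something a Fubini average over centres provides.
\item The height estimate of \Cref{prop: density est} does not help. Its radius is tied to $r_0(\lambda,\Lambda)$, which shrinks as $\Lambda$ grows, so $\Lambda R^N$ need not be large and the argument is circular.
\item The fallbacks when $B_R$ meets $C_j$ (removing $C_j$ from $\Phi_\sigma(E_j)$, or dilating) come with no control of the wet term.
\end{itemize}

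\textbf{The missing idea is the compactness argument the paper uses to obtain this uniform ball.} Argue by contradiction: take $\Lambda_n\to\infty$, closed convex sets $C_n$, and generalized minimizers with $|\mathbf E^{(n)}|<v$. Let $z_n$ be the translations given by \Cref{lm: concentration2}, and extract $C^{(n)}_{j_n}-z_n\to\tilde C$ in the Kuratowski sense and $E^{(n)}_{j_n}-z_n\to E$ in $L^1_{\rm loc}$.

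Since $|E\cap Q|\ge m$ and $|E|<\infty$, one has $\partial^*E\setminus\tilde C\neq\emptyset$. Blowing up at a point of this set gives a fixed ball $B(x_0,r)$ with $\overline B(x_0,r)\cap\tilde C=\emptyset$, on which the two density conditions hold with strict inequalities for $E$. By convergence they then hold for $E^{(n)}_{j_n}-z_n$ when $n$ is large. Kuratowski convergence also makes $\overline B(x_0+z_n,r)$ disjoint from $C^{(n)}_{j_n}$.

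So \Cref{lm: ef} applies on a ball of fixed radius $r$ away from the obstacle, the wet part is untouched, and $\Lambda_n\to\infty$ gives the contradiction. A direct, non-contradiction proof along your lines would need a quantitative replacement for this step, which your proposal does not supply.
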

		\begin{proof}
			The proof is based on an argument similar to \cite[Theorem 3.2, step 6]{FoFuLeMo23}. 
            For completeness, we provide the full proof.

			We argue by contradiction and assume that there exist a diverging sequence $(\Lambda_n)_{n\in\mathbb{N}}$, a sequence $(C_n)_{n\in\mathbb{N}}$ of closed convex sets, and for each $n$ a generalized minimizer $(\mathbf E^{(n)}, \mathbf{C}^{(n)})$ for $\mathcal{G}_{C_n,\Lambda_n}^{v}$ such that the volume constraint is not attained. In particular, let
            \begin{equation}
            \mathbf{E}^{(n)}=(E^{(n)}_0,\ldots,E^{(n)}_{k_n}), \qquad \mathbf{C}^{(n)}=(C_n,C^{(n)}_1,\ldots,C^{(n)}_{k_n}).
            \end{equation} 
			We observe that necessarily $|\mathbf{E}^{(n)}|<v$ for every $n$. Indeed, if $E_i^{(n)}\neq \emptyset$ for some $i$, one can cut $E_i^{(n)}$ with a hyperplane not intersecting $C_i^{(n)}$ in such a way that the volume remains larger than $v$, thus producing a competitor with strictly lower energy and contradicting minimality.

			By \Cref{cor: coercivity}, $\mathcal F_\cdot\geq \frac{1-\lambda}{2}P$.
            In particular, there exists $M>0$, depending only on $\lambda$, $v$ and $N$, such that
            \begin{equation}
            \sum_{j=0}^{k_n} P(E_j^{(n)}) \leq M
            \end{equation}
            for all $n$. 
            Indeed,
            \begin{equation}
            \sum_{j=0}^{k_n} P(E_j^{(n)}) \leq \frac{2}{1-\lambda}\mathcal G^{v}_{C_n,\Lambda_n}(\mathbf E^{(n)};\mathbf C^{(n)}) \leq \frac{2}{1-\lambda}P(\Omega_n)=:M,
            \end{equation}
            where $\Omega_n$ is a ball disjoint from $C_n$ with volume $v$ (notice that the value $P(\Omega_n)$ doesn't depend on $n$). Moreover, $\Lambda_n(v-|\mathbf{E}^{(n)}|)\leq M\frac{1-\lambda}{2}$, hence, for $n$ big enough, we have
            \begin{equation}
            \sum_{j=0}^{k_n} |E_j^{(n)}| \geq \frac{v}{2}>0.
            \end{equation} 
			
            By \Cref{lm: concentration2}, there exist a sequence $(j_n)_{n\in \mathbb N}$ and translations $(z_n)_{n\in \mathbb N}\subset\mathbb R^N$ such that $|(Q+z_n)\cap E_{j_n}^{(n)}|>m>0$ for some fixed $m=m(v,\lambda,N)>0$. Since $P(E_{j_n}^{(n)})\leq M$, up to subsequences there exist a closed convex set $\tilde C$ and $E\subset\mathbb R^N\setminus \tilde C$ such that $C_{j_n}^{(n)}-z_n\to \tilde C$ in the sense of Kuratowski and $E_{j_n}^{(n)}-z_n\to E$ in $L^1_{loc}$. In particular, $|E\cap Q|\geq m$ and $|\tilde C\cap Q|\leq 1-m$, hence $\partial^*E\setminus \tilde C\neq \emptyset$. Let $\bar{x}\in \partial^*E\setminus \tilde C$ and $\bar r>0$ be such that $\overline{B}(\bar{x},\bar r)\cap \tilde C=\emptyset$. By Kuratowski convergence, for $n$ big enough we have
            \begin{equation}
            \overline{B}(\bar{x},\bar r)\cap (C_{j_n}^{(n)}-z_n)=\emptyset.
            \end{equation}
            
            Fix $\varepsilon>0$. By the definition of reduced boundary, there exists $r=r(\varepsilon)>0$ such that, setting $E_r:=\frac{E-\bar{x}}{r}$ and $y:=-\frac{\nu_E(\bar{x})}{2}$, we have
        \begin{equation}
        \left|E_r\cap B\left(y,\frac{1}{2}\right)\right|<\varepsilon, \qquad \left|E_r\cap B\left(y,1\right)\right|>\frac{\omega_N}{2^{N+2}}.
        \end{equation}
        Up to decreasing $r$, setting $x_0=\bar{x}+ry$, it follows that
        \begin{equation}
        B(x_0,r)\subset B(\bar{x},\bar r), \quad \left|E\cap B\left(x_0,\frac{r}{2}\right)\right|<\varepsilon r^N, \quad \left|E\cap B(x_0,r)\right|>\frac{\omega_N}{2^{N+2}}r^N.
        \end{equation}
        Moreover, $\overline{B}(x_0 + z_n,r)\cap C_{j_n}^{(n)}=\emptyset$ for $n$ large. Hence the assumptions of \Cref{lm: ef} are satisfied for the balls $B_n:=B(x_0+z_n,r)$ and the sets $E_{j_n}^{(n)}$.

        For each $n$, choose $\sigma_n<\sigma_0\big(E_{j_n}^{(n)},\,v-|\mathbf{E}^{(n)}|+|E_{j_n}^{(n)}|\big)$ and let $\Phi_n:=\Phi_{\sigma_n}$ be given by \Cref{lm: ef}. Let $\tilde{E}_{j_n}^{(n)}=\Phi_n\left(E_{j_n}^{(n)}\right)$ and $(\tilde{\mathbf{E}}^{(n)};\mathbf{C}^{(n)})$ be the asymptotic configuration where we replace $E_{j_n}^{(n)}$ with $\tilde{E}_{j_n}^{(n)}$. Then

        $$
        |\widetilde {\mathbf{E}}^{(n)}|<v, \quad P(E_{j_n}^{(n)},\overline{B}_n)-P(\widetilde {E}_{j_n}^{(n)};\overline{B}_n)\geq -C(N)\sigma_n P(E_{j_n}^{(n)};\overline{B}_n), \quad |\widetilde {E}_{j_n}^{(n)}|-|E_{j_n}^{(n)}|\geq C(N)\sigma_n .
        $$
        Hence we have that
        \begin{align}
            \mathcal{G}_{C_n}(\tilde{\mathbf{E}}^{(n)};\mathbf{C}^{(n)}) + \Lambda_n(v- |\tilde{\mathbf{E}}^{(n)}|) &< \mathcal{G}_{C_n}(\mathbf{E}^{(n)};\mathbf{C}^{(n)}) + \Lambda_n(v- |\mathbf{E}^{(n)}|) 
            \\&+ C(N)\sigma_n ( P(E_{j_n}^{(n)},\overline{B}_n) - \Lambda_n )
        \end{align}
        Since $\Lambda_n \to +\infty$ and $P\left(E_{j_n}^{(n)}, \overline{B}_n\right) \leq M$, it follows that 
        \begin{equation}
            \mathcal {G}_{C_n, \Lambda_n}^{v}(\tilde{\mathbf E}^{(n)}; \mathbf C^{(n)})< \mathcal {G}_{C_n, \Lambda_n}^{v}(\mathbf E^{(n)}; \mathbf C^{(n)})
        \end{equation}
        for $n$ large enough, which is the desired contradiction. The dependence of the constants can be arranged in the stated form by a standard scaling argument.
    \end{proof}

		We are finally able to prove \Cref{teo: generalized}:
		\begin{proof}[Proof of \Cref{teo: generalized}]
			The proof does not immediately follow by combining \Cref{teo: penal unb} with \Cref{prop: esposito fusco unb}, since we still need to construct an infimizing sequence $(\tilde{F}_n)_{n\in\mathbb{N}}$ for the functional $\mathcal{F}_C$ satisfying the volume constraint $v>0$ and whose energy converges to the minimum of the generalized functional, as the one provided by \Cref{teo: penal unb} may not satisfy the volume constraint.
            
            Fix $\Lambda>\Lambda_0(\lambda,v)$ and consider the corresponding sequence $(F_n)_{n\in\mathbb{N}}$ defined at the beginning of \Cref{section: existence}.
            Up to subsequences, as shown in Step 2 of \Cref{teo: penal unb}, 
            \begin{gather}
                \bigcup_{j=M_1+1}^{M} F_n^{(j)} \xrightarrow{n \to +\infty} E_0\quad \text{ in }L^1, \\
                F_n^{(j)} - x_n^{(j)} \xrightarrow{n \to +\infty} E_j \quad \text{ in }L^1 \text{ for every} \ j=1,\dots, M_1,\\
                C- x_n^{(j)} \xrightarrow{n \to +\infty} C_j \quad \text{ in the Kuratowski sense,}
            \end{gather} 
            where $E_j \subset \mathbb{R}^N \setminus C_j$ for every $j=1,\dots,M_1$, that $(\mathbf E, \mathbf C) = ((E_0,\dots,E_{M_1});(C,C_1,\dots, C_{M_1}))$ is a generalized minimizer for $C$ with volume constraint $v$ and that $\mathcal{F}_{C,\Lambda}^{v}(F_n) \to \mathcal{G}_C(\mathbf E, \mathbf C)$. 
            
            Let us now construct a sequence $(\tilde{F}_n)_{n\in\mathbb{N}}$ that satisfies the volume constraint $v$. 
            Since $|\mathbf E|=v$, $\sum_{j=1}^k \big|F^{(j)}_n\big|-v$ is infinitesimal in $n$. Let us suppose that $\sum_{j=1}^k \big|F^{(j)}_n\big|>v$ for all $n\in \mathbb N$, as the other case is simpler. 
            
            Let us fix $j$ such that $E_j \neq \emptyset$.
            By \Cref{prop: density est}, \Cref{it: density est,it: perim est,it: height est}, there exists a sequence $n\mapsto B_n\Subset \mathbb{R}^N \setminus C_j$ of balls with infinitesimal radii such that, for $n$ large enough,
			\begin{equation}
			    \big|B_n\cap F_n^{(j)}\big|=  \sum_{i=1}^k \big|F^{(i)}_n\big|-v 
			\end{equation}
			and
			\begin{equation}
				P\left(F_n^{(j)}\setminus B_n;\mathbb R^N\setminus C\right)- P\left(F_n^{(j)};\mathbb R^N\setminus C\right)\to 0.
			\end{equation}
			Now, if we call 
			\begin{equation}
	        \tilde{F}_n=\bigcup_{\overset{i=0}{i\neq j}}^k F_n^{(i)}\, \cup \left(F_n^{(j)}\setminus B_n\right),
			\end{equation}
			we see that $|\tilde{F}_n|=v$ for all $n$ big enough. 
            Finally, 
            \begin{equation}
                |\mathcal{F}_C(\tilde{F}_n) -\mathcal{F}_C(F_n)|\leq P\left(F_n^{(j)}\setminus B_n;\mathbb R^N\setminus C\right)- P\left(F_n^{(j)};\mathbb R^N\setminus C\right), 
            \end{equation}
            which implies $\mathcal{F}_C(\tilde{F}_n) \to \mathcal{G}_C(\mathbf E, \mathbf C)$.
            This concludes the proof.
		\end{proof}
        Here we collect some regularity results about minimizers.
        \begin{proposition}\label{prop: gli E_i sono Lambda minimi}
        Let $C$ be a closed convex set and let $\lambda \in (-1,1)$.
        \begin{itemize} 
            \item If $(\mathbf{E};\mathbf{C})$ is a generalized minimizer for $\mathcal{G}_{C}$ with volume constraint $v>0$, then $E_i$ is a minimizer for $\mathcal{F}_{C_i}$ with volume constraint $v_i = |E_i|$ for all $i=0, \ldots, k$, where $k$ is the length of $(\mathbf{E};\mathbf{C})$.
            Moreover, for $\Lambda \geq \Lambda_0$ given by \Cref{prop: esposito fusco unb}, $E_i$ is a $\Lambda$-minimizer for $\mathcal{F}_{C_i}$ for $i=0, \ldots, k$.
            \item Let $\Lambda>0$ and $v>0$. If $(\mathbf{E};\mathbf{C})$ is a generalized minimizer for $\mathcal{G}_{C, \Lambda}^{v}$, then $E_i$ is a minimizer for $\mathcal{F}_{C_i, \Lambda}^{v_i}$ where $v_i=|E_i|$ for all $i=0, \ldots, k$, where $k$ is the length of $(\mathbf{E};\mathbf{C})$.
            In particular, $E_i$ is a $\Lambda$-minimizer for $\mathcal{F}_{C_i}$ for $i=0, \ldots, k$.
        \end{itemize}
        In particular, if $(\mathbf{E};\mathbf{C})$ is as in either of the two items above, then for $i=1, \ldots, k$, $E^{(1)}_i$ is open and $\partial^e E_i=\partial E_i^{(1)}$.
    \end{proposition}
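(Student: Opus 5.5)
The plan is a replacement argument for each component separately, using the additivity of $\mathcal G_C$ over components.

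\textbf{Minimality within each volume class.} Let $(\mathbf E;\mathbf C)$ be a generalized minimizer with volume constraint $v$ and fix $i$. Suppose some $F\subset\mathbb R^N\setminus C_i$ with $|F|=|E_i|$ had $\mathcal F_{C_i}(F)<\mathcal F_{C_i}(E_i)$. Replacing $E_i$ by $F$ in $\mathbf E$ and keeping $\mathbf C$ unchanged gives an asymptotic configuration of the same total volume $v$. For $i\ge1$ we still have $|F|>0$, and for $i=0$ the obstacle is $C$ itself. Since $\mathcal G_C$ is the sum of the component energies, the new configuration has strictly lower energy, which contradicts minimality. The penalized case (second item) is identical. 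Replacing $E_i$ by any $F$ changes $\mathcal G^v_{C,\Lambda}$ by
\begin{equation}
\mathcal F_{C_i}(F)-\mathcal F_{C_i}(E_i)+\Lambda\bigl(\bigl||F|+v'-v\bigr|-\bigl||E_i|+v'-v\bigr|\bigr),
\end{equation}
where $v'=\sum_{j\ne i}|E_j|$. Under the assumption that the penalization at the optimum is exactly $\Lambda\bigl||\mathbf E|-v\bigr|$, this says that $E_i$ minimizes $\mathcal F_{C_i,\Lambda}^{\,v-v'}$. The statement asks for $\mathcal F^{v_i}_{C_i,\Lambda}$ with $v_i=|E_i|$, so I still need an extra step. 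I would first try to show $|\mathbf E|=v$ whenever it is relevant. Failing that, I would fall back on the triangle inequality $\bigl||F|+v'-v\bigr|-\bigl||E_i|+v'-v\bigr|\le\bigl||F|-|E_i|\bigr|$, which gives $\mathcal F_{C_i}(E_i)\le\mathcal F_{C_i}(F)+\Lambda\bigl||F|-|E_i|\bigr|$. That is exactly minimality of $E_i$ for $\mathcal F^{v_i}_{C_i,\Lambda}$. Then \eqref{eq: Lambda min non loc} yields $\Lambda$-minimality in every ball: a localized competitor $F$ with $F\Delta E_i\Subset B$ has $\mathcal F_{C_i}(F)-\mathcal F_{C_i}(E_i)=\mathcal F_{C_i}(F,B)-\mathcal F_{C_i}(E_i,B)$.

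\textbf{$\Lambda$-minimality under the volume constraint.} For the first item with $\Lambda\ge\Lambda_0$, I would reduce to the penalized case. By Theorem \ref{teo: penal unb} and Proposition \ref{prop: esposito fusco unb}, every generalized minimizer of $\mathcal G^v_{C,\Lambda}$ has volume $v$, so $\min\mathcal G^v_{C,\Lambda}=\min\mathcal G_C$ over configurations of volume $v$. Hence a constrained generalized minimizer $(\mathbf E;\mathbf C)$ attains $\min\mathcal G^v_{C,\Lambda}$. Two inequalities give this:
\begin{equation}
\mathcal G^v_{C,\Lambda}(\mathbf E;\mathbf C)=\mathcal G_C(\mathbf E;\mathbf C)\le\mathcal G^v_{C,\Lambda}(\mathbf E';\mathbf C')
\end{equation}
holds for penalized minimizers $(\mathbf E';\mathbf C')$, which have volume $v$; the reverse inequality is trivial. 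Then the second item applies and gives $\Lambda$-minimality of each $E_i$. Here $\Lambda_0$ depends only on $\lambda$ and $v$, uniformly in the obstacle, which is precisely what Proposition \ref{prop: esposito fusco unb} provides.

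\textbf{Conclusion and main obstacle.} The final claim, that $E_i^{(1)}$ is open with $\partial^e E_i=\partial E_i^{(1)}$, follows from item \ref{it: regularity min} of Proposition \ref{prop: density est} applied to the $\Lambda$-minimizer $E_i$ of $\mathcal F_{C_i}$; note that $C_i$ is a closed convex set, possibly lower-dimensional, and the proposition covers that case. I expect the main subtlety to be in the first step: checking that replacing a single component keeps an admissible asymptotic configuration, which requires $|F|>0$ and a fixed $\mathbf C$. The triangle-inequality step is what passes from the global penalization to the component-wise one without knowing the other volumes.
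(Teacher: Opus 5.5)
Your proposal is correct and follows essentially the same route as the paper. You replace a single component and use the triangle inequality $\bigl||F|+v'-v\bigr|-\bigl||E_i|+v'-v\bigr|\le\bigl||F|-|E_i|\bigr|$ to get minimality for $\mathcal F^{v_i}_{C_i,\Lambda}$ and then $\Lambda$-minimality; for the constrained item you reduce to the penalized one via \Cref{prop: esposito fusco unb}, and your explicit use of the existence of penalized minimizers from \Cref{teo: penal unb} justifies that reduction more carefully than the paper's one-line claim. The one loose end, which the paper shares, is a competitor with $|F|=0$ for $i\ge 1$: it is not an admissible entry, but you can handle it by deleting that component, since configurations of any length are allowed.
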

    \begin{proof}
        Let $\Lambda>0$ and $v>0$ and assume that $(\mathbf{E};\mathbf{C})$ is a generalized minimizer for $\mathcal{G}_{C, \Lambda}^{v}$. 
        Let $i\in \{ 0, \ldots, k \}$, where $k$ is the length of $(\mathbf{E};\mathbf{C})$.
        Let $B$ be a ball and $F_i\subset \mathbb R^N\setminus C_i$ be such that $E_i\Delta F_i\Subset B$ and let $\mathbf F=(E_0, \ldots, E_{i-1}, F_i, E_{i+1}, \ldots, E_k)$.
        By minimality, 
        \begin{align}
            \mathcal{G}_{C, \Lambda}^{v}(\mathbf{E};\mathbf{C})&\leq \mathcal{G}_{C, \Lambda}^{v}(\mathbf{F};\mathbf{C})\\
            &=\mathcal{G}_{C, \Lambda}^{v}(\mathbf{E};\mathbf{C})+\mathcal F_{C_i}(F_i)-\mathcal F_{C_i}(E_i)+\Lambda\Big|\sum_{j\neq i} |E_j|+|F_i|-v\Big| -\Lambda\Big|\sum_{j\neq i} |E_j|+|E_i|-v\Big|\\
            &\leq \mathcal{G}_{C, \Lambda}^{v}(\mathbf{E};\mathbf{C})+\mathcal F_{C_i}(F_i)-\mathcal F_{C_i}(E_i)+\Lambda \Big| |E_i|-|F_i|\Big|
        \end{align}
        Since $E\Delta F\Subset B$, this implies 
        \begin{equation}
            \mathcal F_{C_i}(E_i; B)\leq \mathcal F_{C_i}(F_i; B) +\Lambda |E_i\Delta F_i|,
        \end{equation}
        proving that $E_i$ is a $\Lambda$-minimizer for $\mathcal F_{C_i}$.
        Similarly, from the second-to-last line, 
        \begin{align}
            F_{C_i}(E_i)+\Lambda \Big| |E_i|-v_i\Big|=\mathcal F_{C_i}(E_i)&\leq \mathcal F_{C_i}(F_i) +\Lambda \Big| |F_i|- |E_i|\Big|\\
            &=\mathcal F_{C_i}(F_i) +\Lambda \Big| |F_i|- v_i\Big|,
        \end{align}
        so $E_i$ is also a minimizer for $\mathcal F^{v_i}_{C_i, \Lambda}$.

        Assume now that $(\mathbf{E};\mathbf{C})$ is a generalized minimizer for $\mathcal G_C$.
        The fact that the components $E_i$, $i=1, \ldots, k$, with $k$ being the length, are minimizer for $\mathcal F_C$ follows from computations analogue to the above.
        Moreover, by \Cref{prop: esposito fusco unb}, for $\Lambda\geq \Lambda_0$, $(\mathbf{E};\mathbf{C})$ is a generalized minimizer for $\mathcal G^v_{C, \Lambda}$, where $v=|\mathbf E|$.
        By what has been proved above, then, $E_i$ is a $\Lambda$-minimizer for $\mathcal{F}_{C_i}$ for $i=0, \ldots, k$.        
    \end{proof}
    If $(\mathbf{E};\mathbf{C})$ is as in either of the two items in \Cref{prop: gli E_i sono Lambda minimi} we may choose each $E_i$ to be its open representative $E_i^{(1)}$. Throughout this and the following sections, we always adopt this convention without further mention.
    \begin{corollary}\label{cor: le componenti devono toccare il convesso}
        Let $(\mathbf{E};\mathbf{C})=((E_0,\dots,E_k);(C,C_1,\dots,C_k))$ be a generalized minimizer for $C$ with volume constraint $v$. 
        Then, for every $i=0,\dots,k$ and every connected component $\Omega$ of $E_i$ the wet contribution of $\Omega$ is non zero, i.e.\ 
        \begin{equation}
            \mathcal F_{C_i}(\Omega)-P(\Omega; \mathbb R^N\setminus C_i) \neq 0.
        \end{equation}

        In particular, for every $i=1,\dots,k$, $\dim(C_i)\geq N-1$ 
    \end{corollary}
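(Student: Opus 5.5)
We argue by contradiction. Suppose some connected component $\Omega$ of $E_i$ has vanishing wet contribution, i.e.\ $\mathcal F_{C_i}(\Omega)=P(\Omega;\mathbb R^N\setminus C_i)$. By \Cref{prop: gli E_i sono Lambda minimi}, $E_i$ is a $\Lambda$-minimizer of $\mathcal F_{C_i}$ for $\Lambda\ge\Lambda_0$. By \Cref{prop: density est} it is open, bounded, and has finitely many components at positive mutual distance. Hence $\mathcal G_C$ splits additively over components, and the isolated piece $\Omega$ contributes exactly $P(\Omega;\mathbb R^N\setminus C_i)$ to the energy. Since $\lambda>-1$, $\mathcal F$ of any set is at most its full perimeter, so we may discard the constraint $\Omega\subset\mathbb R^N\setminus C_i$ in the comparison: we just need a set of the same volume with strictly smaller energy.

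\textbf{Step 1: $\Omega$ is a ball.} Replace $\Omega$ by a ball $B$ of volume $|\Omega|$. We place $B$ far away inside the same configuration, for instance in $E_0$ outside $C$ at large distance from everything else if $C$ is bounded, or simply far from $E_i$ and $C_i$. Placing it far away is always possible unless $C_i$ is the whole space, which cannot happen since $E_i\subset\mathbb R^N\setminus C_i$ has positive measure. By the isoperimetric inequality the energy does not increase. Minimality then forces equality, so $P(\Omega)=P(B)$, and rigidity gives that $\Omega$ is a ball with $P(\Omega;C_i)=0$.

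\textbf{Step 2: slide the ball onto the obstacle.} Now translate the ball $\Omega$ until it touches the boundary of the obstacle, as a ball tangent to a supporting hyperplane of the nearest convex set. This is the main obstacle, and the cases depend on $\lambda$.
\begin{itemize}
\item If $\lambda>0$, the wet term lowers the energy. Consider a competitor obtained by pushing the ball slightly into a supporting half-space: replace it by the spherical cap of the same volume sitting on the supporting hyperplane of $C_i$ (or of $C$), cut so that it stays in the complement of the convex set, with contact angle given by Young's law. The known strict inequality for half-space capillarity with $\lambda\neq0$ gives strictly less energy than the ball, a contradiction.
\item If $\lambda\le0$, a ball does no better than a ball, so we instead argue through the first variation. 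Because the ball touches nothing, we can translate it rigidly to touch the obstacle at a point without changing the energy. This gives another minimizer whose closure meets $\partial C_i$ at a point with zero wet area. The regularity and first-variation theory for $\Lambda$-minimizers (the ball has constant mean curvature, and a cap touching tangentially violates the contact-angle condition when $\lambda\in(-1,1)$) then yields the contradiction. Alternatively, for $\lambda<0$, slightly flatten the tangent contact: the contact set still has measure zero, but the boundary-contact Young condition fails.
\end{itemize}
Which of these routes works cleanly for $\lambda\le0$ is the uncertain point; I would try the half-space cap comparison first for $\lambda\ge0$ and the first-variation/contact-angle argument otherwise.

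\textbf{Step 3: the dimension bound.} Suppose $\dim C_i\le N-2$ for some $i\ge1$. Then the wet contribution of every component of $E_i$ vanishes by the definition in \eqref{eq: energy}, contradicting the first part. Hence $\dim(C_i)\ge N-1$.
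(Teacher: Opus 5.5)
\textbf{Verdict: there is a genuine gap, in Step 2.} Your Steps 1 and 3 match the paper. In Step 1, a component with no wet contact is a volume-constrained perimeter minimizer, hence a ball. In Step 3, a lower-dimensional $C_i$ gives zero wet contribution by definition. Step 2 carries the whole difficulty, and neither branch works as written.

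\emph{The branch $\lambda>0$.} A cap $B^\lambda[w]$ resting on a supporting hyperplane $\partial H$ of $C_i$ (or of $C$) gets wet credit only on $\mathrm{base}\cap\partial C_i\subset \partial H\cap C_i$. This set is $\mathcal H^{N-1}$-negligible unless $\partial C_i$ contains a flat disc large enough to host the base. For a strictly convex obstacle, for instance, the energy of the cap is its full perimeter, which is \emph{larger} than $N\omega_N^{1/N}w^{(N-1)/N}$. The strict inequality $I_H(w)<N\omega_N^{1/N}w^{(N-1)/N}$ is a statement about the half-space obstacle and does not transfer to general convex obstacles.

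\emph{The branch $\lambda\le 0$.} Here you reach the right configuration: translate the ball rigidly until it touches, obtaining another minimizer with a single contact point. This is exactly what the paper does. The contradiction you then invoke is not available, however.
\begin{itemize}
\item Young's law is only known for obstacles with $C^{1,1}$ boundary (or in the relative interior of a codimension-one obstacle), and only at regular points of the contact line.
\item Here $C_i$ is an arbitrary closed convex set. An isolated tangency point is not a regular contact-line point, so Young's law says nothing there unless one first proves such points cannot occur.
\item If $\dim C_i\le N-2$, which is exactly the case needed for the last claim, touching $C_i$ gives nothing: the complement of $B\cup C_i$ has density $\tfrac12$ at the contact point. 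The ball has to be moved into $E_0$, against $C$ or against $E_0$.
\end{itemize}

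\textbf{The missing idea} is a contradiction mechanism valid for arbitrary convex obstacles. The paper takes it from the density estimates of \Cref{prop: density est}.
\begin{itemize}
\item It relocates the ball into $E_0$ so that $\widebar{B}$ meets $\widebar{E_0}\cup C$ in a single point, lying in $\partial^*E_0$ or in $\partial^*C$.
\item The energy is unchanged, so the new configuration is again a generalized minimizer. By \Cref{prop: gli E_i sono Lambda minimi}, $E_0\cup B$ is then a $\Lambda$-minimizer of $\mathcal F_C$.
\item Hence $\mathbb R^N\setminus(E_0\cup B\cup C)$ is a $\Lambda$-minimizer of the capillarity energy with coefficient $-\lambda$.
\item At the tangency point, the blow-ups of $B$ and of $E_0$ (resp.\ $C$) are complementary half-spaces. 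So this complement is a cusp with zero volume density there, contradicting the uniform lower density estimate.
\end{itemize}
No case distinction on $\lambda$ and no regularity of the obstacle are needed.

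Your ``push the ball in'' idea can also be made rigorous, but not through the half-space cap. Push the ball to depth $\delta$ into a full-dimensional obstacle, at a point where its boundary is differentiable, and remove the overlap. Let $D_\delta$ be the projection of the contact region. The energy changes by $-(1+\lambda)|D_\delta|(1+o(1))$, while the volume drops by at most $\delta|D_\delta|$. Hence $\mathcal G^v_{C,\Lambda}$ strictly decreases for $\Lambda\ge\Lambda_0$ as in \Cref{prop: esposito fusco unb}. What matters is $1+\lambda>0$, not the sign of $\lambda$, so the split into $\lambda>0$ and $\lambda\le 0$ is unnecessary.

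Two minor points:
\begin{itemize}
\item Components of a $\Lambda$-minimizer need not be at positive distance from each other. Additivity of the energy over components follows instead from the $M$-connected component theory cited in \Cref{rmk: componenti connesse e scelta del rappresentante}.
\item Step 1, like the statement itself, implicitly requires $\lambda\neq 0$. For $\lambda=0$ the wet contribution vanishes identically, so it does not give $P(\Omega;\partial C_i)=0$.
\end{itemize}
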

    \begin{proof}
        Suppose by contradiction that, for some $i$, $E_i$ has a connected component $\Omega$ whose wet contribution to the functional is zero.
        By \Cref{prop: gli E_i sono Lambda minimi}, $\Omega$ is a perimeter minimizer with volume constraint $w=|\Omega|$, and hence it is a ball. 
        
        Let then $(\tilde{\mathbf{E}}; \tilde{\mathbf{C}})$ be the asymptotic configuration with $\tilde{E}_j=E_j$ for every $j\notin \{0,i\}$, $\tilde{E}_i=E_i\setminus\Omega$
        and $\tilde{E}_0=E_0\cup B,$ where $B$ is an open ball contained in $\mathbb{R}^N \setminus(E_0\cup C)$ with volume $w$, such that
        either of the following hold:
        \begin{itemize}
            \item $\widebar{B}$ is disjoint from $C$ and $\widebar{B}\cap \widebar{E_0}$ is a point of the reduced boundary of $E_0$;
            \item $\widebar{B}$ is disjoint from $\widebar{E_0}$ and $\widebar{B}\cap C$ is a point of the reduced boundary of C.
        \end{itemize}

        Notice that $\mathcal G_C(\tilde{\mathbf{E}}; \tilde{\mathbf{C}})=\mathcal G_C(\mathbf{E}; \mathbf{C})$, so that $(\tilde{\mathbf{E}}; \tilde{\mathbf{C}})$ is a generalized minimizer for $\mathcal G_C$. 
        By \Cref{prop: gli E_i sono Lambda minimi}, for $\Lambda$ large enough $E_0\cup B$ is a $\Lambda$-minimizer for $\mathcal F_C$.
        
        Finally, it is easy to prove that, the complement $F=\mathbb R^N\setminus (E_0\cup B \cup C)$ is a $\Lambda$-minimizer for the capillarity energy with adhesion coefficient $-\lambda$.
        A computation shows that $F$ does not satisfy the density estimates of \Cref{prop: density est}, leading to a contradiction.

        In particular, for every $i=1,\dots,k$, $\dim (C_i) \geq N-1$ since otherwise the wet contribution of $E_i$ would be zero.
    \end{proof}

    The following remark follows from well known facts in the literature (see \cite{Ma12,DePhilMaggi,Taylor1977}).

    \begin{remark}[Regularity]
        Let $(\mathbf{E};\mathbf{C})=((E_0,\dots,E_k);(C,C_1,\dots,C_k))$ be a generalized minimizer for the closed convex set $C$ with volume constraint $v$. 
        For each $i=0,\dots,k$ define $M_i:=  \overline{\bigl(\partial E_i\cap(\mathbb{R}^N\setminus C_i)\bigr)}.$
        Then there exists a closed set $\Sigma_i\subset M_i$ such that $M_i \setminus (C_i \cup \Sigma_i) $ is an analytic hypersurface and $\text{dim}_{\mathcal{H}}(\Sigma_i \cap(\mathbb{R}^N \setminus C_i) ) \leq N-8$. 
        Moreover:
        \begin{itemize}
            \item if $\text{dim}(C_i)=N$ and $C_i$ has $C^{1,1}$ boundary, then $M_i \setminus \Sigma_i$ is of class $C^{1,\frac{1}{2}}$ as a hypersurface with boundary. Moreover, \emph{Young's law} holds, namely, setting $B_i = C_i$,
            \begin{equation}
                \nu_{C_i}(x) \cdot \nu_{E_i}(x) = -\lambda \ \text{for every } x\in (M_i\setminus \Sigma_i) \cap B_i 
            \end{equation}
            \item if $\text{dim} (C_i)=N-1$, then $ M_i \setminus (\Sigma_i \cup \partial C_i)$ is of class $C^{1,\frac12}$, where $\partial C_i$ denotes the relative boundary of $C_i$ in the hyperplane $H$ containing $C_i$. Let $H^\pm$ be the open half-spaces determined by $H$, and let $\mu^\pm\in\mathbb S^{N-1}$ denote the unit normal to $H$ pointing into $H^\pm$. Setting $M_i^\pm:= \overline{(\partial E_i\cap H^\pm)}$ and $B_i = C_i \setminus \partial C_i$, \emph{Young's law} holds at every point in the relative interior of $C_i$, namely
                \begin{equation}
                \nu_{E_i}(x)\cdot \mu^{\pm}=-\lambda \qquad \text{for every } x\in  (M_i^{\pm}\setminus \Sigma_i)\cap B_i.
                \end{equation}
        \end{itemize}
        Finally, if $C_i$ satisfies the assumptions of either of the previous two cases, then $\Sigma_i \cap B_i = \emptyset$ if $N=3$, $\Sigma_i \cap B_i$ is discrete if $N=4$, and $\mathcal{H}^{s}(\Sigma_i \cap B_i) = 0$ for every $s>N-4$ if $N\geq 5$.
    \end{remark}

    We conclude this section by discussing two simpler settings where \Cref{teo: generalized} yields existence of minimizers for $\mathcal F_C$.

    The first is when $C$ is a compact convex set.
    In that case, existence for the capillarity problem is already known via different arguments, but it follows at once from \Cref{teo: generalized} 
    \begin{corollary}
        Let $C$ be a compact convex set, $\lambda \in (-1, 1)$ and $v>0$.
        Then there exists a minimizer of $\mathcal F_C$ among sets $E\subset \mathbb R^N\setminus C$ with volume $v$.
    \end{corollary}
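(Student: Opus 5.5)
The plan is to apply \Cref{teo: generalized} and then show that, when $C$ is compact, a generalized minimizer cannot have any component at infinity. The first step is to describe the asymptotic cylinders of a compact $C$. By \Cref{def: asymptotic cylinders}, an asymptotic cylinder arises as a Kuratowski limit of $C-x_n$ with $(x_n)\subset C$ and $|x_n|\to+\infty$. Since $C$ is bounded, no such sequence exists, so $\mathcal K(C)=\emptyset$. If $C=\emptyset$ the problem is the classical isoperimetric one and a ball is a minimizer, so we may exclude this case.

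It follows that every asymptotic configuration for $C$ has length $k=0$, i.e.\ it reduces to $(\mathbf E,\mathbf C)=((E_0),(C))$ with $E_0\subset\mathbb R^N\setminus C$ of finite perimeter. For such a configuration $\mathcal G_C(\mathbf E,\mathbf C)=\mathcal F_C(E_0)$, and the volume condition $|\mathbf E|=v$ reads $|E_0|=v$.

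By \Cref{teo: generalized} there is a generalized minimizer $(\mathbf E,\mathbf C)$ with $|\mathbf E|=v$ and
\begin{equation}
\mathcal G_C(\mathbf E,\mathbf C)=\inf\{\mathcal F_C(F):F\subset\mathbb R^N\setminus C,\ |F|=v\}.
\end{equation}
By the previous step this configuration is $((E_0),(C))$, so $|E_0|=v$ and $\mathcal F_C(E_0)$ equals the infimum. Hence $E_0$ is the desired minimizer.

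None of these steps is a real obstacle. The only point requiring care is that the definition of asymptotic cylinder uses sequences taken inside $C$, so compactness of $C$ makes $\mathcal K(C)$ empty outright and nothing can be lost at infinity. All the substantive work (the $\Gamma$-convergence, the concentration argument and the handling of the volume constraint) is already contained in \Cref{teo: generalized}.
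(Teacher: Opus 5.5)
Your proposal is correct and follows the paper's argument: since $C$ is bounded, $\mathcal K(C)=\emptyset$, so any generalized minimizer given by \Cref{teo: generalized} has length $0$, and $E_0$ is the minimizer. The only difference is in the last step. You read off minimality directly from the identity $\mathcal G_C(\mathbf E,\mathbf C)=\inf \mathcal F_C$ with $|E_0|=v$, whereas the paper invokes \Cref{prop: gli E_i sono Lambda minimi}; both are valid.
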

    \begin{proof}
        Let us start by noticing that $\mathcal K(C)=\emptyset$.
        Let $(\mathbf E, \mathbf C)$ be a generalized minimizer for $\mathcal G_C$.
        Then, by construction, the length of $(\mathbf E, \mathbf C)$ is 0, since $\mathcal K(C)=\emptyset$.

        By \Cref{prop: gli E_i sono Lambda minimi}, $E_0$ is a minimizer of $\mathcal F_C$ with volume constraint $v=|E_0|$.
    \end{proof}

    Furthermore, our existence result generalizes \cite[Theorem 3.2]{FoFuLeMo23}, in which existence was proved for minimizers of the capillarity energy outside obstacles coinciding at infinity with convex semi--cylinders. Indeed, a stronger result readily follows from \Cref{teo: generalized}:
		\begin{corollary}\label{cor: nanowires}
			Let $C\subset\mathbb R^N$ be a closed convex set such that all of its asymptotic cylinders have compact section. 
            Assume that for asymptotic cylinder $K$ for all $R>0$, there exist a diverging sequence $(x_n)_{n\in \mathbb N}\subset C$ and a sequence $y_n\in K$ such that there is $C\cap B(x_n, R)=\left(K\cap B(y_n, R)\right)+(x_n-y_n)$.
			Then for all $\lambda\in (-1, 1)$ and $v>0$ $\mathcal{F}_{C}$ admits a minimizer with volume constraint $v$.
		\end{corollary}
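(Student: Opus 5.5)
The plan is to take a generalized minimizer $(\mathbf E,\mathbf C)$ from \Cref{teo: generalized} and replace each escaping component by a genuine competitor outside $C$ with the same energy. Write $\mathbf E=(E_0,\dots,E_k)$ and $\mathbf C=(C,C_1,\dots,C_k)$. For each $j\ge 1$, \Cref{prop: gli E_i sono Lambda minimi} shows that $E_j$ is a $\Lambda$-minimizer of $\mathcal F_{C_j}$. By \Cref{cor: stima sulle comp connesse e loro diametro} (the argument applies verbatim to $\Lambda$-minimizers of finite volume via the density estimates), $E_j$ has finitely many components of bounded diameter. Hence $E_j$ is bounded, so we may fix $R>0$ with $E_j\Subset B(0,R)$ for every $j$.

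Since $\mathcal F_{C_j}(E_j)$ only involves $C_j\cap \widebar{B(0,R)}$, the hypothesis lets us place a translated copy of $C_j\cap B(0,R)$ inside $C$. Concretely, apply the assumption with radius $R'=2R+1$ to get $x_n\in C$ diverging and $y_n\in C_j$ such that $C\cap B(x_n,R')=(C_j\cap B(y_n,R'))+(x_n-y_n)$. The catch is that $y_n$ is a point of $C_j$, not the origin. However, $C_j$ is a cylinder with compact section, hence invariant under translations along its lines. If the cylinder $C_j$ were exactly $K'\times\mathbb R^{d}$ with $K'$ compact, I would expect $y_n$ to lie in a bounded set modulo the invariant directions, and after translating along those directions I could take $y_n=y$ within a compact set. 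The main obstacle is this reduction. I would first try to show that the section is compact, so that every point of $C_j$ is, modulo the lineality space $L$, within bounded distance of $0$. Replacing $y_n$ by $y_n-\pi_L(y_n)$ (using $C_j+L=C_j$) keeps the coincidence with a possibly shifted $x_n$, so a subsequence gives $y_n\to y$. Choosing $R'$ large enough to contain $B(y,R)$ with margin then yields
\begin{equation}
C\cap B(z_n,R)=(C_j\cap B(0,R))+z_n,\qquad z_n:=x_n-y_n,
\end{equation}
with $|z_n|\to\infty$.

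Next, set $\tilde E_j:=E_j+z_{n_j}$. Because the obstacle coincides with $C_j+z_{n_j}$ on $B(z_{n_j},R)\supset\widebar{\tilde E_j}$, translation invariance gives $\mathcal F_C(\tilde E_j)=\mathcal F_{C_j}(E_j)$; here I use that the wet part is localized inside the ball. Choose the indices $n_1,\dots,n_k$ successively so that the sets $E_0,\tilde E_1,\dots,\tilde E_k$ are pairwise at positive distance, which is possible because $|z_n|\to\infty$ and all the sets are bounded ($E_0$ is bounded as well, by the same diameter argument). Then $F=E_0\cup\bigcup_j\tilde E_j\subset\mathbb R^N\setminus C$ satisfies $|F|=v$ and $\mathcal F_C(F)=\mathcal G_C(\mathbf E,\mathbf C)=\inf\mathcal F_C$, so $F$ is a minimizer.

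A remaining subtlety is the codimension-one case $\dim C_j=N-1$. Here $C$ itself is a convex body, while $C_j$ is flat, so the local coincidence $C\cap B=C_j\cap B+z$ would force $C$ to be flat in a ball. I would check that this is consistent with the hypothesis, namely that it implies every asymptotic cylinder is full dimensional or the situation is degenerate. In either case the energy identity holds with the matching definition in \eqref{eq: energy}, since $\mathcal F_C$ and $\mathcal F_{C_j}$ are computed by the same formula on the coinciding pieces.
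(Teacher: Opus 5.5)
Your proposal is correct and follows essentially the same route as the paper. You take a generalized minimizer from \Cref{teo: generalized} and use boundedness of the $E_j$. You then combine the local-coincidence hypothesis with the translation invariance of $C_j$ along its lineality space and the compactness of its section, place translated copies of the $E_j$ far apart where $C$ agrees with translates of $C_j$, and conclude by additivity of the energy. The difference is cosmetic: the paper slides $E_j$ along the cylinder, using $\widebar{E}_j\cap C_j\neq\emptyset$ and $R>2(d+D)$, so that it sits inside $B(y_{n_j},R)$, whereas you slide $y_n$ into the compact section near $0\in C_j$. Two small corrections: compactness of the section is a hypothesis, not something you need to prove, and $R'$ should be fixed larger than $R+\max_{C_j\cap L^\perp}|\cdot|$ from the start rather than after extracting $y$. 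Your remark that the coincidence forces $\dim C_j=\dim C$, so that the same case of \eqref{eq: energy} applies to both, is a point the paper leaves implicit.
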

		\begin{proof}
			Let $(\mathbf E, \mathbf{C})$ be an asymptotic minimizer for the capillarity energy outside $C$, with 
			\begin{equation}
				\mathbf{E}=(E_0, \ldots, E_k), \qquad \mathbf{C}=(C,C_1, \ldots, C_k).
			\end{equation}
            
			Let $d=\max\limits_{j\geq 0}\operatorname{diam}(E_j)$ and let $D=\max\limits_{j\in \mathbb N}\operatorname{diam} \omega_j$, where $\omega_j$ is the section of $C_j$.
            Fix $R>2(d+D)$ and, for $j=1, \ldots, k$, let $(x^{(j)}_n)_{n\in \mathbb N}, (y^{(j)}_n)_{n\in \mathbb N}$ be as in the statement.
            
            Let $x_0\in E_0$ and $n_1, \ldots, n_k$ indexes such that $B(x_0, R)$ and $B\left(x_{n_j}^{(j)}, R\right)$ are all disjoint (it is possible to do so since the sequences $\left(x^{(j)}_n\right)_{n\in \mathbb N}$, $j=1, \ldots, k$ diverge).

            Notice that, by \Cref{cor: le componenti devono toccare il convesso}, $\widebar{E}_j\cap C_j\neq \emptyset$.
            By the invariance with respect to translations of the cylinders and the bounds on $R$, we can assume that $E_j\subset B(y_{n_j}^{(j)}, R)$ for $j=1, \ldots, k$.
			Let $F_0=E_0$, $F_j=E_j+x_{n_j}^{(j)}-y_{n_j}^{(j)}$, $j=1, \ldots, k$. We have that
			\begin{equation}
            \mathcal{F}_C(F_0) =\mathcal{F}_C(E_0), \qquad \mathcal{F}_C(F_j)=\mathcal F_{C_j}(E_j),
			\end{equation} 
			so, since by construction the sets $F_j$ are well-separated, $\mathcal F_C(F)=\mathcal G_C(\mathbf E;\mathbf C)$, where $F=\bigcup_{j=0}^k F_k$. 
            By \Cref{teo: generalized}, the thesis follows.
		\end{proof}

\section{The isoperimetric profile and its properties}\label{section: profilo}
This section is dedicated to the study of some general properties of the isoperimetric profile associated with the capillarity functional. We recall its definition.

\begin{definition}
    Let $C$ be a closed convex set.
    For $\lambda \in (-1,1)$, $v>0$, the isoperimetric profile $I_C$ associated to $\mathcal F_C$ is defined as 
    \begin{equation}
        I_C(v)=\inf \left\{ \mathcal{F}_C(E) : E \subset \mathbb{R}^N \setminus C,\ |E| = v \right\}.
    \end{equation}
    More in general, for $R>0$, we define the local isoperimetric profile as
    \begin{equation}
        I_{C,R}(v) := \inf \left\{ \mathcal{F}_C(E) : E \subset B(0,R) \setminus C,\ |E| = v \right\}.
    \end{equation}
\end{definition}
\begin{lemma}\label{lemma: first properties profile}
    Let $C$ be a closed convex set and $\lambda\in (-1, 1)$.
    Then for all $v>0$
    \begin{equation}\label{eq: asymptotic profile}
        I_C(v)=\min \mathcal{G}_{C}(\mathbf{E};\mathbf{C}),
    \end{equation}
    where the minimum is taken among asymptotic configurations of volume $v$.
    Moreover, for all $v>0$, $I_{C,R}(v)\to I_C(v)$ as $R\to \infty$.
\end{lemma}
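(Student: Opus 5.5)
The first identity is essentially a restatement of \Cref{teo: generalized}. By definition, $I_C(v)$ is the infimum of $\mathcal F_C(F)$ over sets of finite perimeter $F\subset\mathbb R^N\setminus C$ with $|F|=v$. \Cref{teo: generalized} says precisely that this infimum coincides with $\min\mathcal G_C(\tilde{\mathbf E},\tilde{\mathbf C})$ over asymptotic configurations of volume $v$, and that the minimum is attained by a generalized minimizer. So \eqref{eq: asymptotic profile} follows immediately; nothing new needs to be proved.

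For the convergence $I_{C,R}(v)\to I_C(v)$, I will argue by monotonicity and approximation.

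\textbf{Monotonicity and lower bound.} Since the admissible classes increase with $R$, the map $R\mapsto I_{C,R}(v)$ is nonincreasing. Moreover $I_{C,R}(v)\geq I_C(v)$ for every $R$, so the limit exists and is at least $I_C(v)$. (For $R$ small, $B(0,R)\setminus C$ may have measure less than $v$; then $I_{C,R}(v)=+\infty$, which is harmless.)

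\textbf{Upper bound.} It remains to show $\limsup_R I_{C,R}(v)\leq I_C(v)$. Fix $\varepsilon>0$ and pick $F\subset\mathbb R^N\setminus C$ with $|F|=v$ and $\mathcal F_C(F)\leq I_C(v)+\varepsilon$. I then need a bounded competitor of exactly volume $v$ with energy close to $\mathcal F_C(F)$.

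The natural route is to use the last part of \Cref{teo: generalized}: its proof produces an infimizing sequence $\tilde F_n$ with $|\tilde F_n|=v$ and $\mathcal F_C(\tilde F_n)\to I_C(v)$. Each $\tilde F_n$ is a finite union of bounded pieces: a translate of $E_0$, the sets $A^{(j)}_n+x^{(j)}_n\Subset B(x^{(j)}_n,R_j)$, or pieces of components of the local minimizers $F_n$, which have diameter bounded by \Cref{cor: stima sulle comp connesse e loro diametro}. Hence $\tilde F_n$ is bounded, so $\tilde F_n\subset B(0,R)$ for $R$ large, which gives $I_{C,R}(v)\leq\mathcal F_C(\tilde F_n)$. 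Letting $R\to\infty$ and then $n\to\infty$ concludes. Boundedness of $E_0$ follows from the density estimates, via \Cref{prop: gli E_i sono Lambda minimi}.

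\textbf{Direct alternative.} If I prefer an argument independent of that construction, I will truncate: set $F_R=F\cap B(0,R)$ for radii with $\mathcal H^{N-1}(\partial^*F\cap\partial B(0,R))=0$, chosen so that $\mathcal H^{N-1}(F^{(1)}\cap\partial B(0,R))\to0$ along a sequence. This is possible because $\int_0^\infty\mathcal H^{N-1}(F\cap\partial B_r)\,dr=|F|<\infty$.

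\begin{itemize}
\item The bulk energy then satisfies $\mathcal F_C(F_R)\leq\mathcal F_C(F;B(0,R))+\mathcal H^{N-1}(F\cap\partial B_R)$, where the wet part only decreases in absolute value and is controlled by dominated convergence. Hence $\mathcal F_C(F_R)\to\mathcal F_C(F)$.
\item The volume defect $v-|F_R|\to0$ is then restored by adding a small ball $B$ of volume $v-|F_R|$, placed inside $B(0,R)\setminus(C\cup F_R)$ far from everything. Its cost is $P(B)\lesssim(v-|F_R|)^{(N-1)/N}\to0$.
\end{itemize}

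\textbf{Main obstacle.} The delicate point in the direct route is guaranteeing room for that ball inside $B(0,R)\setminus C$. If $C$ almost fills $B(0,R)$, take instead a slightly larger radius. Since $|B(0,R')\setminus (C\cup F)|\to\infty$ whenever $\mathbb R^N\setminus C$ has infinite measure, which holds because $C$ is convex and $I_C(v)<\infty$, a ball of small volume fits in $B(0,2R)\setminus(C\cup F_R)$ for $R$ large. Comparing with $I_{C,2R}$ is enough by monotonicity, so I regard this as routine. I therefore expect the first route, citing \Cref{teo: generalized}, to be the cleanest.
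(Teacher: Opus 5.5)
Your first route is the paper's proof. The identity is read off directly from \Cref{teo: generalized}, and the convergence follows from the monotonicity of $R\mapsto I_{C,R}(v)$ together with the boundedness of the exact-volume infimizing sequence $(\tilde F_n)$ built in that proof from the local minimizers $F_n\subset B(0,R_n)$. One correction: you should not cite the sets $E_0\cup\bigcup_j\bigl(A^{(j)}_n+x^{(j)}_n\bigr)$ from \Cref{teo: penal unb}, since their volume only tends to $v$; your truncation-plus-small-ball alternative also works and has the advantage of not relying on the existence theory at all.
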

\begin{proof}
    \eqref{eq: asymptotic profile} follows directly from \Cref{teo: generalized}.
    The convergence $I_{C,R}(v)\to I_C(v)$ follows from the monotonicity of $I_{C,R}(v)$ with respect to $R$ and the fact that the sets in the infimizing sequence constructed in the proof of \Cref{teo: generalized} are bounded for each fixed index. Hence, for each such set, its energy is attained for all sufficiently large $R$.
\end{proof}
Before studying the properties of the isoperimetric profile for the general convex body, let us recall some properties in case $C$ is a half-space.
To this end, for $\lambda \in (-1, 1)$ and $r>0$, we recall the definition of the $\lambda$-spherical cap
\begin{equation}
    B^{\lambda}_r = \{ x \in B(0,r) \mid x  \cdot e_N > \lambda \} - \lambda r e_N = B(-\lambda r e_N, r) \setminus \{ x_N \leq 0 \}.
\end{equation}
For $v>0$, $\lambda \in (-1, 1)$, we denote by $B^{\lambda}[v]$ the rescaling of $B^{\lambda}_r$ satisfying $|B^{\lambda}_r|=v$.
\begin{proposition}\label{prop: isop profile del semispazio}
    Let $H$ be a half-space and $C$ be a closed convex set.
    Then
    \begin{enumerate}
        \item\label{it: profile flat} For all $v>0$, $I_H(v)=v^\frac{N-1}N I_H(1)$.
        \item \label{it: strict subadditivity} For all $v>w>0$ and $n\geq 2$, 
        \begin{equation}
            I_H(v) < \inf \{ I_H(v_1)+\dots+I_H(v_n) : v_1+\dots+v_n=v,\ v_i \ge w \ \forall i=1,\dots,n \}.
        \end{equation}
        \item \label{it: minimality} For all $v>0$, $I_H(v)\leq I_C(v)$.
        Moreover, if there exist $v>0$ and $E\subset \mathbb R^N\setminus C$ with $|E|=v$ such that $\mathcal F_C(E)=I_H(v)$, then $E$ is isometric to $B^\lambda[v]$ and it lies on a flat part of $\partial C$.
    \end{enumerate}
\end{proposition}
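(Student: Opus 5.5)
The three items are of different nature. Items \ref{it: profile flat} and \ref{it: strict subadditivity} follow from scaling. Item \ref{it: minimality} is essentially the relative isoperimetric inequality outside convex obstacles for the full range of $\lambda$ proved in \cite{IsopCap}. I will take that inequality and its equality case as given, and only explain how it transfers to possibly lower-dimensional closed convex sets $C$.

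For item \ref{it: profile flat}, I use the invariance of $H$ under dilations centered at a point of $\partial H$. If $E\subset\mathbb R^N\setminus H$ and $t>0$, then $tE\subset \mathbb R^N\setminus H$, $|tE|=t^N|E|$, and both the outer perimeter and the wet part scale by $t^{N-1}$. Hence $\mathcal F_H(tE)=t^{N-1}\mathcal F_H(E)$. Taking infima with $t=v^{1/N}$ gives $I_H(v)=v^{\frac{N-1}{N}}I_H(1)$. Note also that $I_H(1)>0$ by \Cref{cor: coercivity} and the classical isoperimetric inequality.

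For item \ref{it: strict subadditivity}, I insert item \ref{it: profile flat} and reduce to the strict concavity of $s\mapsto s^{\frac{N-1}{N}}$. For $v_i\ge w>0$ with $\sum v_i=v$ and $n\ge2$,
\begin{equation}
\sum_i v_i^{\frac{N-1}{N}}=\sum_i v_i\, v_i^{-\frac1N}\ge v\,\max_i v_i^{-\frac1N}\ge v\,(v-w)^{-\frac1N},
\end{equation}
since $\max_i v_i\le v-(n-1)w\le v-w$. The right-hand side is a constant strictly larger than $v^{\frac{N-1}{N}}$, independent of $n$ and of the $v_i$. So the infimum stays strictly above $I_H(v)$; the uniform lower bound $w$ is exactly what prevents the infimum from degenerating.

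For item \ref{it: minimality}, the inequality $\mathcal F_C(E)\ge I_H(|E|)$ for convex bodies $C$ is \cite[Theorem 1.1]{IsopCap}. For $\dim C\le N-2$, we have $\mathcal F_C(E)=P(E)\ge P(B[v])>I_H(v)$, because a half-ball-type cap beats the ball. In the comparison with $\lambda$ one uses that $B^\lambda[v]$ outside $H$ has energy $\le$ that of a ball placed away from $H$, and the ball is admissible for $\mathcal F_H$. For $\dim C=N-1$, I would approximate. Take $C_h=C\times[-h,h]$, which are convex bodies, and apply the $\limsup$ construction in \Cref{teo: gamma conv} (\Cref{prop: gamma limsup perdita dimensione}) to obtain $E_h\subset\mathbb R^N\setminus C_h$ with $E_h\to E$ strictly and $\mathcal F_{C_h}(E_h)\to\mathcal F_C(E)$. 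Since $|E_h|\to v$, item \ref{it: profile flat} (continuity of $I_H$) gives $\mathcal F_C(E)\ge I_H(v)$.

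For the equality case with $C$ a convex body, I would cite the characterization in \cite{IsopCap} (cf. \cite{FuMo23}): equality forces $E$ to be a $\lambda$-cap resting on a facet of $\partial C$. For lower-dimensional $C$ there are two cases. If $\dim C\le N-2$, the strict inequality above rules out equality. If $\dim C=N-1$, I expect the main obstacle, since equality cannot be passed through the limit. There I would argue directly. Split $E$ into $E^\pm=E\cap\{\pm x_N>0\}$, each lying outside the half-space $\{\mp x_N\ge0\}$. The energy is then at least $\mathcal F_{\{x_N\le0\}}(E^+)+\mathcal F_{\{x_N\ge0\}}(E^-)\ge I_H(|E^+|)+I_H(|E^-|)$. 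The first inequality holds because the wet part is only counted on $C$, and for $\lambda<0$ one must handle the sign, using that the traces off $C$ are compensated by the perimeter of $E$ across $\{x_N=0\}\setminus C$. By the strict concavity from item \ref{it: strict subadditivity}, equality forces one of $E^\pm$ to be empty and the other to be an optimal set for $H$. By the half-space uniqueness, that set is a $\lambda$-cap whose base must lie in $C$, i.e.\ on the flat part.
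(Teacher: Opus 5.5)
Items \ref{it: profile flat} and \ref{it: strict subadditivity} are handled as in the paper: scaling, plus strict concavity of $s\mapsto s^{\frac{N-1}{N}}$. Your uniform bound $v(v-w)^{-1/N}$ is more explicit than the paper's one-line justification. However, the middle term must be $v\,(\max_i v_i)^{-1/N}=v\min_i v_i^{-1/N}$, because $\sum_i v_i^{\frac{N-1}{N}}\ge v\max_i v_i^{-1/N}$ is false for unequal $v_i$.

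For item \ref{it: minimality}, the paper simply invokes \cite[Theorem 1.1]{IsopCap} and makes no separate reduction for lower-dimensional $C$. Your added reduction is where the trouble lies. Two smaller points come first.
\begin{itemize}
\item \Cref{prop: gamma limsup perdita dimensione} and \Cref{teo: gamma conv} require a bounded limit obstacle. For unbounded $C$ you must first truncate $E$, choosing radii with $\mathcal H^{N-1}(E\cap\partial B(0,R))\to0$, and replace $C$ by $C\cap\overline{B(0,R)}$.
\item For $\dim C\le N-2$, comparing with a ball placed away from $H$ only gives $I_H(v)\le P(B[v])$. The strict inequality you need to exclude equality follows from the half-space equality case: such a ball would otherwise be an equality set for $H$, but it is not a $\lambda$-cap.
\end{itemize}

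The genuine gap is the equality case for $\dim C=N-1$ and $\lambda<0$. Take $C\subset\{x_N=0\}$ and $E^\pm=E\cap\{\pm x_N>0\}$. A direct computation gives
\begin{equation}
\mathcal F_C(E)-\Big[\mathcal F_{\{x_N\le 0\}}(E^+)+\mathcal F_{\{x_N\ge 0\}}(E^-)\Big]=\int_{\{x_N=0\}\setminus C}\Big(\big|\operatorname{Tr}^+\mathbbm{1}_E-\operatorname{Tr}^-\mathbbm{1}_E\big|+\lambda\big(\operatorname{Tr}^+\mathbbm{1}_E+\operatorname{Tr}^-\mathbbm{1}_E\big)\Big)\,d\mathcal H^{N-1}.
\end{equation}
The integrand equals $1+\lambda$ where exactly one trace is $1$. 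It equals $2\lambda$ where both traces are $1$, i.e.\ where $E$ crosses the hyperplane outside $C$. There $E$ has no jump, so there is no perimeter to ``compensate'', and for $\lambda<0$ your first inequality is false.

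For instance, take $C=\overline{B}(0,\rho)\cap\{x_N=0\}$ with $\rho<1$ and $E=B(0,1)\setminus C$. Then the left-hand side equals $2\lambda\,\mathcal H^{N-1}\big((B(0,1)\setminus\overline{B}(0,\rho))\cap\{x_N=0\}\big)<0$.

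Conceptually, your split energy is exactly $\mathcal F_{\{x_N=0\}}(E)$. You are therefore using the monotonicity $\mathcal F_C\ge\mathcal F_{C'}$ for $C\subset C'$, which holds only for $\lambda\ge0$. For $\lambda\ge0$ your chain is correct, and equality in the first step does force the base of the cap into $C$. For $\lambda<0$, the sets straddling $\{x_N=0\}\setminus C$ are precisely those the argument must rule out, and nothing in the proposal does so. As you note yourself, the approximation by $C\times[-h,h]$ cannot help, since equality does not pass to that limit.

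So either you take this case of item \ref{it: minimality} as part of what is imported from \cite[Theorem 1.1]{IsopCap}, which is what the paper does by citing it for the whole item, or you need a genuinely new rigidity argument for $\lambda<0$, $\dim C=N-1$.
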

\begin{proof}
    \Cref{it: profile flat} follows from a simple scaling argument and the homogeneity of $\mathcal F_C$.
    \Cref{it: strict subadditivity} follows from the homogeneity and subadditivity of $I_H$.
    \Cref{it: minimality} follows from \cite[Theorem 1.1]{IsopCap}.
\end{proof}
While it is not possible to give an explicit formula for $I_C$ and a characterization of minimizers as in the case $C=H$, we are still able to give some description of the isoperimetric profile and of minimizers for the general closed convex set.

\begin{proposition}\label{prop: proprietà del profilo isoperimetrico}
    Let $C$ be a closed convex set and $\lambda \in (-1, 1)$. 
    The following hold:
    \begin{enumerate}
        \item $I_C$ is strictly increasing and for every $a>0$ it is Lipschitz on $[a,+\infty)$, with constant $\Lambda_0(\lambda) a^{-\frac{1}{N}}$.
        \item If $v>0$ and $(\mathbf{E};\mathbf{C})=((E_0,\dots,E_k);(C,C_1,\dots,C_k))$ is a generalized minimizer for $I_C(v)$ there is a constant $H_{\mathbf{E}}$ such that the mean curvature $H_{E_i}$ of each $E_i$, $0,\dots,k$ is constant and equal to $ H_{\mathbf{E}}$ on $\partial^{*}E_i \setminus C$
        \item There exists a constant $\Lambda_0=\Lambda_0(\lambda)>0$ such that $0<H_{\mathbf{E}} \leq \Lambda_0 v^{-\frac{1}{N}}$.
        \item If $I_C$ is differentiable at $v>0$, then $I'_C(v)= H_{\mathbf{E}}$.
    \end{enumerate}
\end{proposition}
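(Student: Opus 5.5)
The plan is to derive everything from two tools: the $\Lambda$-minimality of the components of a generalized minimizer (\Cref{prop: gli E_i sono Lambda minimi}), and a volume-penalization comparison with $\Lambda\ge\Lambda_0$ from \Cref{prop: esposito fusco unb}.

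\textbf{Monotonicity and Lipschitz bound (item 1).} Fix $w<v$ and take a generalized minimizer $(\mathbf E;\mathbf C)$ for $I_C(v)$, given by \Cref{teo: generalized}. By \Cref{prop: esposito fusco unb}, for $\Lambda\ge\Lambda_0(\lambda,w)=w^{-1/N}\Lambda_0(\lambda,1)$, any generalized minimizer of $\mathcal G^{w}_{C,\Lambda}$ has volume $w$. By \Cref{teo: penal unb} and \Cref{lemma: first properties profile}, this gives
\begin{equation}
I_C(w)=\min \mathcal G^w_{C,\Lambda}\le \mathcal G_C(\mathbf E;\mathbf C)+\Lambda|v-w| = I_C(v)+\Lambda(v-w).
\end{equation}
Symmetrically, starting from a minimizer at volume $w$ and penalizing at $v$ (with $\Lambda_0(\lambda,v)\le\Lambda_0(\lambda,w)$), we get $I_C(v)\le I_C(w)+\Lambda(v-w)$. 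For $v,w\ge a$ this yields the Lipschitz constant $\Lambda_0(\lambda)a^{-1/N}$. For strict monotonicity, take $w<v$ and a minimizer at volume $w$. Pick a component $E_i$ and a regular point of $\partial^*E_i\setminus C_i$, where $E_i$ is locally a smooth hypersurface with constant mean curvature $H$ (see item 2). An outward normal variation supported there adds volume while changing the energy at first order by $H\,\delta V$. If $H\le0$ this shows $I_C(v)<I_C(w)$ for $v$ close to $w$. The main obstacle is to show $H>0$ (item 3): a minimizer at volume $w$ and a ball/cap comparison are not enough alone. Instead I would argue that $I_C(v)\ge I_C(w)$ always holds, by cutting a generalized minimizer at volume $v$ with a hyperplane sliding into a component; the cut removes volume and, as in the proof of \Cref{prop: esposito fusco unb}, strictly decreases energy. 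Strictness then follows because equality would make the cut set a minimizer, and the cut produces a flat free boundary meeting the original at a corner, which contradicts regularity.

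\textbf{Constant mean curvature (item 2).} Each $E_i$ is a volume-constrained minimizer for $\mathcal F_{C_i}$, so it has constant mean curvature $H_i$ on its regular free boundary. To show all $H_i$ are equal, take two components $i\neq j$ and perform compactly supported normal variations away from the obstacles: inflate $E_i$ by $\delta$ and deflate $E_j$ by $\delta$. The total volume is preserved, so the result is an admissible asymptotic configuration, and its first variation is $(H_i-H_j)\delta$. Minimality forces $H_i=H_j$; doing the same within a single $E_i$ across its connected components gives one constant $H_{\mathbf E}$.

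\textbf{Bounds on $H_{\mathbf E}$ and the derivative (items 3 and 4).} A volume-increasing variation $\delta V>0$ gives
\begin{equation}
I_C(v+\delta V)\le I_C(v)+H_{\mathbf E}\,\delta V+o(\delta V),
\end{equation}
and a volume-decreasing one gives the analogous bound from the other side. Hence $D^+I_C(v)\le H_{\mathbf E}\le D^-I_C(v)$ in the sense of upper/lower Dini derivatives, which proves item 4 at differentiability points. The upper bound $H_{\mathbf E}\le\Lambda_0 v^{-1/N}$ follows from $\Lambda$-minimality with $\Lambda=\Lambda_0(\lambda,v)$: a deflating variation changes the energy by $-H\delta$ and the penalization by $\Lambda\delta$, so $H\le\Lambda$. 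Positivity of $H_{\mathbf E}$ follows from strict monotonicity combined with the left Dini inequality, $H_{\mathbf E}\ge D^-I_C(v)\ge0$. To get strict positivity I would use scaling: $tE$ with $t>1$, together with convexity of $C$ (which ensures $tE$ stays outside $C$ after a suitable recentering), gives the quantitative growth $I_C(v)\ge c\,v^{(N-1)/N}$-type behaviour. Of all the steps, strict positivity is the one I expect to be most delicate.
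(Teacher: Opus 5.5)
The gap is the strict inequality $H_{\mathbf E}>0$ in item 3: nothing in your plan can produce it. Strict monotonicity of $I_C$ only gives $H_{\mathbf E}\ge D^+I_C(v)\ge 0$. A strictly increasing Lipschitz function can have vanishing one-sided derivatives at a point (e.g.\ $v\mapsto (v-1)^3$ at $v=1$), and $H_{\mathbf E}$ is only sandwiched between one-sided Dini derivatives at the given $v$. A global growth bound such as $I_C(v)\ge c\,v^{(N-1)/N}$ carries no pointwise information at a fixed $v$ either; it is in any case immediate from \Cref{cor: coercivity} and the isoperimetric inequality. The dilation step also fails concretely. If $C$ is a convex body with $0\in{\rm int}(C)$ and $t>1$, then $tE\subset\mathbb R^N\setminus C$, but $t^{-1}C\subset{\rm int}(C)$, so $tE$ loses all contact with $\partial C$. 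Hence $\mathcal F_C(tE)=t^{N-1}P(E)$, which tends to $\mathcal F_C(E)+(1+\lambda)P(E;\partial C)$ as $t\to1^+$. This limit is strictly larger than $\mathcal F_C(E)$, since the wet part is nonzero by \Cref{cor: le componenti devono toccare il convesso}. So dilations give no first-order information on $I_C$ at $v$. Recentering at a boundary point does not help, since it only preserves contact along boundary segments through that point.

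The missing ingredient is a local maximum-principle argument at a point of the free boundary, which is how the paper proceeds. Pick $x_0\in\partial^*C_i\cap\partial^*E_i$ and translate the hyperplanes $\Pi_t=\{(x-x_0)\cdot\nu_{C_i}(x_0)=t\}$ up to the last contact value $t_*>0$ with $\overline{E_i}$. The contact point $y_0$ lies outside $C_i$, because $C_i\subset\{(x-x_0)\cdot\nu_{C_i}(x_0)\le 0\}$. It is also a regular point of $\partial E_i$: a blow-up at $y_0$ is a minimizing cone contained in a half-space, hence the half-space itself. Since $E_i$ lies on one side of $\Pi_{t_*}$, comparison gives $H_{\mathbf E}\ge 0$. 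If $H_{\mathbf E}=0$, the strong maximum principle and analyticity force a whole connected component of the free boundary to coincide with $\Pi_{t_*}$, contradicting finite perimeter.

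The other steps are fine and mostly follow the paper. Your hyperplane-cut proof of strict monotonicity is a valid alternative route. Cutting by a half-space disjoint from $C_i$ strictly lowers the energy, giving $I_C(w)<I_C(v)$ on a left neighbourhood of $v$, and continuity of $I_C$ globalizes this. The paper instead deduces monotonicity from $I_C'=H_{\mathbf E}>0$ a.e.

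Note one slip: your variations give $D^+I_C(v)\le H_{\mathbf E}\le D^-I_C(v)$. So nonnegativity comes from $H_{\mathbf E}\ge D^+I_C(v)\ge 0$, not from $H_{\mathbf E}\ge D^-I_C(v)$.
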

\noindent The number $H_{\mathbf{E}}$ is called \emph{mean curvature} of the free boundary of the generalized minimizer $(\mathbf{E};\mathbf{C})$.
\begin{proof}
    The proof of Lipschitz continuity follows by the $\Lambda$-minimality. Indeed, let $a>0$, $v_1, v_2 \in [a,+\infty)$, $\Lambda_0 = \Lambda_0(1,\lambda)>0$ be the constant of \Cref{prop: esposito fusco unb} and let $(\mathbf{E}^{(i)},\mathbf{C}^{(i)})$ be the generalized minimizer for $I_C(v_i)$ for every $i=1,2$. By \Cref{prop: esposito fusco unb}, $(\mathbf{E}^{(i)},\mathbf{C}^{(i)})$ is a minimizer of $\mathcal{G}^{v_i}_{C, \Lambda_0 v_i^{-1/N}}$. Hence we have
    \begin{align}
        I_C(v_1) &= \mathcal{G}^{v_1}_{C, \Lambda_0 v_1^{-1/N}}\left(\mathbf{E}^{(1)},\mathbf{C}^{(1)}\right) \leq \mathcal{G}_{C}\left(\mathbf{E}^{(2)},\mathbf{C}^{(2)}\right) + \Lambda_0 v_1^{-\frac{1}{N}} \left | v_1 - \left| \mathbf{E}^{(2)} \right| \right | 
        \\&\leq I_C(v_2) + \Lambda_0 a^{-\frac{1}{N}} | v_1 - v_2|
    \end{align}
    Exchanging the role of $v_1$ and $v_2$ gives the Lipschitzianity on $[a,+\infty)$.

    Let $v_0 > 0$ and let $(\mathbf{E}^{v_0}, \mathbf{C}^{v_0})=((E_0,\dots, E_k),(C,C_1,\dots,C_k))$ be a generalized minimizer for $I_C(v_0)$. From the standard first variation theory for the perimeter under a volume constraint (see \cite[Theorem 19.7]{Ma12}), it follows that for every \(i=0,\dots,k\) such that \(E_i \neq \emptyset\), the scalar mean curvature \(H_{E_i}\) is constant on the free boundary \((\partial^{*}E_i)\setminus C_i\); denote this constant by $\lambda_i$. We prove that for every $i,j\in\{0,\dots,k\}$ such that $E_i\neq\emptyset\neq E_j$, $\lambda_i=\lambda_j$. We choose two vector fields $X_i\in C_c^\infty(\mathbb R^N\setminus C_i;\mathbb R^N)$ and $X_j\in C_c^\infty(\mathbb R^N\setminus C_j;\mathbb R^N)$ such that
    \begin{equation}
        \int_{\partial^*E_i\setminus C_i} X_i\cdot \nu_{E_i}\,d\mathcal H^{N-1}\neq 0, \qquad \int_{\partial^*E_j\setminus C_j} X_j\cdot \nu_{E_j}\,d\mathcal H^{N-1}\neq 0.
    \end{equation}
    Let $\Phi_i$ and $\Phi_j$ be the local variation generated by $X_i$ and $X_j$, namely
\begin{equation}
    \begin{cases}
        \frac{\partial }{\partial s}\Phi_i(s,x)=X_i(\Phi_i(s,x)), \\
        \Phi_i(0,x)=x,
    \end{cases}
    \qquad
    \begin{cases}
        \frac{\partial}{\partial t}\Phi_j(t,x)=X_j(\Phi_j(t,x)), \\
        \Phi_j(0,x)=x.
    \end{cases}
\end{equation}
For $s,t$ sufficiently small we define a new asymptotic configuration $(\mathbf E^{s,t},\mathbf C)$ by setting
\begin{equation}
    E_h^{s,t}:=
    \begin{cases}
        \Phi_i(s,E_i), & h=i, \\
        \Phi_j(t,E_j), & h=j, \\
        E_h, & h\neq i,j,
    \end{cases}
    \qquad C_h^{s,t}:=C_h \quad \text{for every } h=0,\dots,k.
\end{equation}
Since $\operatorname{supp}(X_i)\Subset\mathbb R^N\setminus C_i$ and $\operatorname{supp}(X_j)\Subset\mathbb R^N\setminus C_j$, this is an asymptotic configuration for $C$. Define
\begin{equation}
    V(s,t):= |\mathbf{E}^{s,t}| =|\Phi_i(s,E_i)|+|\Phi_j(t,E_j)|+\sum_{h\neq i,j}|E_h|.
\end{equation}
Then $V(0,0)=v$ and, by the first variation formula for the volume (see \cite{Ma12}),
\begin{equation}
    \partial_t V(0,0)=\int_{\partial^*E_j\setminus C_j} X_j\cdot \nu_{E_j}\,d\mathcal H^{N-1}\neq 0, \qquad \partial_s V(0,0)=\int_{\partial^*E_i\setminus C_i} X_i\cdot \nu_{E_i}\,d\mathcal H^{N-1}\neq 0.
\end{equation}
Hence, by the implicit function theorem, there exist $\varepsilon>0$ and a $C^1$ function $t:(-\varepsilon,\varepsilon)\to\mathbb R$ with $t(0)=0$ such that
\begin{equation}
    V(s,t(s))=v \qquad \text{for every } s\in(-\varepsilon,\varepsilon).
\end{equation}
Moreover,
\begin{equation}
    t'(0)=-\frac{\partial_s V(0,0)}{\partial_t V(0,0)}=-\frac{\int_{\partial^*E_i\setminus C_i} X_i\cdot \nu_{E_i}\,d\mathcal H^{N-1}}{\int_{\partial^*E_j\setminus C_j} X_j\cdot \nu_{E_j}\,d\mathcal H^{N-1}}.
\end{equation}
By the minimality of $(\mathbf E,\mathbf C)$, the function $s\mapsto \mathcal G_C(\mathbf E^{s,t(s)},\mathbf C)$ has a minimum at $s=0$, hence
\begin{equation}
    0=\frac{d}{ds}\mathcal G_C(\mathbf E^{s,t(s)},\mathbf C)\bigg|_{s=0}.
\end{equation}
Again, by the standard first variation theory for the perimeter contained in \cite{Ma12}, we obtain
\begin{equation}
    0=\lambda_i \int_{\partial^*E_i\setminus C_i} X_i\cdot \nu_{E_i}\,d\mathcal H^{N-1}+\lambda_j t'(0)\int_{\partial^*E_j\setminus C_j} X_j\cdot \nu_{E_j}\,d\mathcal H^{N-1}.
\end{equation}
Substituting the expression for $t'(0)$ yields
\begin{equation}
    0=(\lambda_i-\lambda_j)\int_{\partial^*E_i\setminus C_i} X_i\cdot \nu_{E_i}\,d\mathcal H^{N-1}.
\end{equation}
Since the last integral is non-zero, we conclude that $\lambda_i=\lambda_j.$ In particular, we may denote this quantity by $H_{\mathbf{E}^{v_0}}$.

\noindent We now show that, if $I_C$ is differentiable at $v_0$, then $I_C'(v_0)=H_{\mathbf{E}^{v_0}}$. This follows from a standard first variation argument (see \cite{Sternberg-Zumbrun} and \cite[Theorem 1.2, Step 4]{FuMo23}), which we briefly recall for completeness. Fix $i$ such that $E_i \neq \emptyset$ and consider a vector field $X$ such that $\text{supp}(X) \Subset \mathbb{R}^N \setminus C_i$ and 
\begin{equation}\label{eq: eq2 dim prof isop}
    \int_{(\partial ^{*} E_i) \setminus C_i} X \cdot \nu_{E_i} >0.  
\end{equation}
Consider the local variation $\Phi$ generated by the field $X$:
\begin{equation}
    \begin{cases}
        \frac{\partial \Phi}{\partial t} (t,x) = X(\Phi (t,x)), \\
        \Phi(0,x)=x. \label{eq: eq3 dim prof isop} 
    \end{cases}
\end{equation}
Let $\mathbf{E}(t) = (E_0,\dots, E_{i-1},\Phi(t, E_i), E_{i+1}, \dots, E_k)$ and set $f(t) = \mathcal{G}_{C}(\mathbf{E}(t),C)$. By the first variation formula, we have
\begin{equation}\label{eq: eq4 dim prof isop}
    f'(0) = \frac{d}{dt} P(\Phi (t,E_i),\mathbb{R}^N \setminus C_i)_{|_{t=0}} =  \lambda_i \int_{(\partial^{*} E_i) \setminus C_i} X \cdot \nu_{E_i}, 
\end{equation}
and 
\begin{equation}\label{eq: eq5 dim prof isop}
    \frac{d}{dt} \left|\mathbf{E}(t) \right|_{|_{t=0}} = \frac{d}{dt} | \Phi(t, E_i) |_{|_{t=0}} = \int_{(\partial^{*}E_i)\setminus C_i} X \cdot \nu_{E_i} >0. 
\end{equation}
Since the map $t \mapsto | \mathbf{E}(t) |$ has positive derivative at $0$, it is locally invertible. Denote its inverse by $t(\cdot) : v \in (v_0 - \delta, v_0 + \delta) \mapsto t(v) \in (-\epsilon, \epsilon)$ and define $\tilde{f}(v) = f(t(v))$. We have $\tilde{f}(v_0) = I_C(v_0)$, $\tilde{f}(v) \geq I_C(v)$ for every $v \in (v_0 - \delta, v_0 + \delta)$, and both $I_C$ and $\tilde{f}$ are differentiable at $v_0$. This implies that their derivatives coincide, hence
\begin{equation}
    I_C'(v_0) = \tilde{f}'(v_0) = f'(0)\, t'(v_0) = H_{\mathbf{E}^{v_0}},
\end{equation}
as desired. We also obtain that  
\begin{equation}
    H_{\mathbf{E}^{v_0}} \leq \Lambda_0(\lambda)\, v_0^{-\frac{1}{N}}.
\end{equation}
Indeed, arguing as at the beginning of the proof and using the $\Lambda_0$-minimality, we have
\begin{equation}
    |f(t) - f(0)| \leq \frac{\Lambda_0}{v_0^{\frac{1}{N}}} \, |V(t) - v_0| .
\end{equation}
where $V(t)= | \mathbf{E}(t) |$. Hence,
\begin{equation}
    |f'(0)| \leq \Lambda_0(\lambda)\, v_0^{-\frac{1}{N}}\, |V'(0)|,
\end{equation}
which yields the desired conclusion using \eqref{eq: eq4 dim prof isop} and \eqref{eq: eq5 dim prof isop}.

\noindent We will now prove that $H_{\mathbf{E}^{v_0}} > 0$.
Let $x_0\in \partial^* C\cap \partial^* E_i$. 
For every $t\in \mathbb{R}$, define
\begin{equation}
    \Pi_t:=\{x\in \mathbb{R}^N : (x-x_0)\cdot \nu_C(x_0)=t\}.
\end{equation}
Let $t_*$ denote the first contact value for the translating hyperplanes, that is
\begin{equation}
    t_*:=\sup \{t\in \mathbb{R}: \overline{E}_i\cap \Pi_t \neq \emptyset\}.
\end{equation}
Then $t_*>0$ and there exists $y_0\in \Pi_{t_*}\cap \partial E_i \setminus C$. 
Moreover, by a standard argument, one can see that $y_0 \in \partial^* E_i$.
Indeed, suppose by contradiction that $y_0 \notin \partial^* E_i$. Then, blowing
up $E_i$ at $y_0$, by \cite[Theorem 28.6]{Ma12} we obtain, up to a
subsequence, a \emph{singular} minimizing cone $K$ (i.e.\ with
$\Sigma(K) = \partial K \setminus \partial^* K \neq \emptyset$). On the other
hand, since $E_i$ is contained in a half-space with $y_0$ belonging to the
boundary of that half-space, $K$ is contained in a half-space. This forces
$K$ to coincide with that half-space (see \cite[Theorem 15.5]{Giusti1984}),
hence $\Sigma(K) = \emptyset$, a contradiction. Therefore
$y_0 \in \partial^* E_i$.

By construction, $E_i$ lies on one side of $\Pi_{t_*}$ and touches it at $y_0$, hence by monotonicity of the mean curvature we have that $H_{\mathbf{E}^{v_0}} \geq 0$.
Moreover, if we assume by contradiction that $H_{\mathbf{E}^{v_0}}=0$, the free boundary $\partial^* E_i \setminus C_i$ is a minimal hypersurface. Hence, the strong maximum principle implies that $\partial^* E_i$ coincides with $\Pi_{t_*}$ in a neighbourhood of $y_0$. By analyticity of the regular part of the free boundary, this coincidence extends to the entire connected component of $\partial^* E_i \setminus C$ containing $y_0$. Thus this connected component of the free boundary is equal to $\Pi_{t_*}$, which is incompatible with the fact that $E_i$ has finite perimeter. In particular, since $I_C$ is almost everywhere differentiable and with positive derivative, we have that $I_C$ is strictly increasing.
\end{proof}

The $\Gamma$-convergence established in the previous sections also implies an upper-semicontinuity for the isoperimetric profile with respect to Kuratowski convergence of the obstacles.
    \begin{proposition}\label{prop: semicontinuity isop profile}
        Let $(C_n)_{n\in\mathbb{N}}$ be a sequence of closed convex sets converging in the Kuratowski sense to $C$ and $(v_n)_{n\in\mathbb{N}}$ be a positive sequence converging to $v>0$. Then 
        \begin{equation}
            \limsup\limits_{n \to +\infty} I_{C_n}(v_n) \leq I_C(v)
        \end{equation}
        
    \end{proposition}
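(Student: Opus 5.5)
The plan is to pass the generalized minimizer for $C$ at volume $v$ to the obstacles $C_n$ through the $\Gamma$-$\limsup$ part of \Cref{teo: gamma conv}, applied locally on large balls.

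\textbf{Step 1: choose a nearly optimal bounded configuration for $C$.} By \Cref{teo: generalized}, let $(\mathbf E,\mathbf C)=((E_0,\dots,E_k);(C,C_1,\dots,C_k))$ be a generalized minimizer with $|\mathbf E|=v$ and $\mathcal G_C(\mathbf E,\mathbf C)=I_C(v)$. By \Cref{prop: gli E_i sono Lambda minimi} and the density estimates of \Cref{prop: density est}, each $E_j$ is bounded. Alternatively one can use the approximating sets $\tilde F_m$ of \Cref{teo: generalized}, which are bounded subsets of $\mathbb R^N\setminus C$ with $|\tilde F_m|=v$ and $\mathcal F_C(\tilde F_m)\to I_C(v)$. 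I will use this second option, since it avoids handling the asymptotic cylinders of $C$ relative to $C_n$.

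So fix $\varepsilon>0$ and a bounded set $F\subset\mathbb R^N\setminus C$ with $|F|=v$ and $\mathcal F_C(F)\le I_C(v)+\varepsilon$. Then choose $R$ so large that $F\Subset B(0,R)$.

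\textbf{Step 2: localize to a compact obstacle.} First assume $C\cap B(0,R)\neq\emptyset$; otherwise enlarge $R$. If $C$ is empty, treat the trivial case directly, using a ball $F$ and Kuratowski convergence to empty, so that eventually $C_n\cap \widebar{B(0,R)}=\emptyset$. By \Cref{lemma: local hausdorff} applied with $G=B(0,R)$, we have $C_n\cap\widebar{B(0,R)}\to C\cap\widebar{B(0,R)}$ in Hausdorff distance.

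Since $F\Subset B(0,R)$, the localized energies agree: $\mathcal F_C(F)=\mathcal F_{C\cap\widebar{B(0,R)}}(F)$. The same identity holds for any set compactly contained in $B(0,R)$ when $C$ is replaced by $C_n$. One should check that the lower-dimensional cases match, i.e.\ that the dimension of $C\cap \widebar{B(0,R)}$ equals that of $C$ near $F$. The wet contribution only sees $C\cap B(0,R)$, so this is fine.

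\textbf{Step 3: recovery sequence with volume correction.} Apply \Cref{teo: gamma conv}, with fixed volume $v$ and any $\Lambda$, to the compact limit $C\cap\widebar{B(0,R)}$. This gives $G_n\Subset B(0,R)$ with $G_n\subset\mathbb R^N\setminus C_n$, $G_n\to F$ strictly, and $\mathcal F_{C_n}(G_n)\to\mathcal F_C(F)$.

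The volumes satisfy $|G_n|\to v$, and also $v_n\to v$, so the mismatch $|G_n|-v_n$ is infinitesimal. To fix it, I will take the component of $F$ where a free-boundary point exists and modify $G_n$ near a point $\bar x\in\partial^*F\setminus C$, in a ball $B(\bar x,r)$ disjoint from $C_n$ for $n$ large (this uses Kuratowski convergence). There I adjust the volume by the amount $v_n-|G_n|$:
\begin{itemize}
\item if the volume must increase, apply the bi-Lipschitz deformation of \Cref{lm: ef}, or a standard volume-fixing variation along a vector field supported in $B(\bar x,r)$;
\item if the volume must decrease, remove a small ball as in the proof of \Cref{teo: generalized}.
\end{itemize}
Either way the perimeter changes by $O(\bigl||G_n|-v_n\bigr|)\to0$. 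Because the correction takes place away from $C_n$, the wet part is unaffected.

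This yields competitors $G_n'$ with $|G_n'|=v_n$ and
\begin{equation}
\limsup_n I_{C_n}(v_n)\le\limsup_n\mathcal F_{C_n}(G_n')=\mathcal F_C(F)\le I_C(v)+\varepsilon.
\end{equation}
Letting $\varepsilon\to 0$ gives the claim.

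\textbf{Main obstacle.} The delicate step is the volume-fixing variation, which must be uniform in $n$. The cleanest route is to fix a vector field $X\in C^\infty_c(B(\bar x,r))$ with $\int_{\partial^*F}X\cdot\nu_F\neq0$. Strict convergence $G_n\to F$ then gives $\int_{\partial^*G_n}X\cdot\nu_{G_n}\to\int_{\partial^*F}X\cdot\nu_F$, so the flows $\Phi_t$ change the volume at a rate bounded away from zero, uniformly in $n$. Meanwhile the perimeter in $B(\bar x,r)$ changes by at most $C|t|P(G_n)$. Choosing $t_n\to0$ to hit $v_n$ exactly concludes.
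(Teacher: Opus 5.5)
Your proof is correct, but it handles the volume constraint differently from the paper.

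\textbf{The paper's route.} The paper never deforms the recovery sequence. It fixes $\Lambda>\Lambda_0(\lambda,1)v_n^{-1/N}$ for all $n$. By \Cref{teo: penal unb} and \Cref{prop: esposito fusco unb}, whose threshold does not depend on the obstacle, this gives $\inf \mathcal F^{v_n}_{C_n,\Lambda}=I_{C_n}(v_n)$. It then takes a penalized minimizer $E\subset B_R\setminus C$, identified with a realizer of $I_{C,R}(v)$. Next it applies \Cref{teo: gamma conv} on $\overline{B_{R+1}}$ to the penalized functionals with volumes $v_n$, so the mismatch $|E_n|-v_n$ only costs $\Lambda\bigl||E_n|-v_n\bigr|\to 0$. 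Finally it lets $R\to\infty$ using $I_{C,R}(v)\to I_C(v)$.

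\textbf{Your route.} You stay with the constrained problem and fix the volume by a flow supported in a ball $B(\bar x,r)$ that is eventually disjoint from every $C_n$. The uniformity in $n$ that you need does hold. Indeed, $\int_{\partial^*G_n}X\cdot\nu_{G_n}\,d\mathcal H^{N-1}=\int_{G_n}\operatorname{div}X\,dx\to\int_F\operatorname{div}X\,dx\neq0$ already follows from $L^1$ convergence, without strict convergence. Moreover $\operatorname{Lip}\Phi_t\le 1+C|t|$ and $P(G_n)$ is bounded.

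\textbf{Comparison.} Your argument is more elementary and self-contained. It does not use the existence theory of generalized minimizers outside $C_n$, nor the obstacle-independence of $\Lambda_0$. Nor does it use the claim that local penalized minimizers in $B_R$ attain the volume $v$. It also treats the case $C=\emptyset$ explicitly, where \Cref{lemma: local hausdorff} does not apply. The paper's route is shorter, given the machinery it has already built.

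\textbf{Minor points.}
\begin{itemize}
\item You should justify that some $\bar x\in\partial^*F\setminus C$ exists. \Cref{prop: bulk contact inequality} and \Cref{rmk: bulk contact inequality in the case N-1} do this: otherwise they would force $P(F)=0$, which is impossible since $0<|F|<\infty$.
\item The small-ball-removal alternative changes the perimeter only by $o(1)$, not by $O(\bigl||G_n|-v_n\bigr|)$. That still suffices, but the flow works for both signs of $t$, so the alternative is unnecessary.
\end{itemize}
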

    \begin{proof}
    Throughout the proof denote $B_r=B(0,r)$ for every $r>0$. Let $\Lambda > \Lambda_0(\lambda,1)v_n^{-\frac{1}{N}}$ for every $n\in\mathbb{N}$ where $\Lambda_0(\lambda,1)$ is the constant in \Cref{prop: esposito fusco unb}. Let $R>0$, from \Cref{cor: esistenza}, we have that there exists $E\subset B_R\setminus C$ such that
        \begin{equation}
            \mathcal{F}^{v}_{C,\Lambda}(E) = I_{C,R}(v)
        \end{equation}
        Since $C$ is a closed convex set, and $C_n \to C$ in the Kuratowski sense, using \Cref{lemma: local hausdorff} we have that $C_n \cap \widebar{B_{R+1}} \to C \cap \widebar{B_{R+1}}$ in Hausdorff convergence. Using the $\Gamma$-convergence result proved in \Cref{teo: gamma conv} we have that there exists $E_n \subset \mathbb{R}^N \setminus C_n$ such that $E_n \Subset B_{R+1}$, $E_n \to E$ strictly and $\mathcal{F}_{C_n, \Lambda}^{v_n} (E_n)=\mathcal{F}_{C_n\cap \widebar{B_{R+1}}, \Lambda}^{v_n} (E_n) \to \mathcal{F}^{v}_{C \cap \widebar{B_{R+1}},\Lambda}(E) = \mathcal{F}^{v}_{C \cap \widebar{B_{R}},\Lambda}(E)$. Putting all together we have
        \begin{align}
            I_{C,R}(v) &= I_{C\cap \widebar{B}_{R}, R}(v) = \mathcal{F}_{C\cap \widebar{B}_{R}}(E) = \mathcal{F}_{C\cap \widebar{B}_{R},\Lambda}^{v}(E) = \lim_{n\to +\infty} \mathcal{F}_{C_n \cap \widebar{B}_{R}, \Lambda}^{v_n}(E_n) 
            \\ &= \lim_{n\to +\infty} \mathcal{F}_{C_n, \Lambda}^{v_n}(E_n) \geq \limsup_{n\to +\infty} \inf\limits_{F \subset \mathbb{R}^N\setminus C_n} \mathcal{F}_{C_n,\Lambda}^{v_n}(F) = \limsup_{n\to +\infty} I_{C_n}(v_n)
        \end{align}
    \end{proof}
\begin{remark}\label{rmk: controesempio semicont isop}
    One might wonder if the semicontinuity proved in \Cref{prop: semicontinuity isop profile} could be improved to the continuity of the isoperimetric profile.
    This is certainly the case when the obstacles are compact and the convergence is in the sense of Hausdorff, thanks to \Cref{teo: gamma conv}.

    This is not the case, however, in general.
    Indeed, let $v>0$ and $C$ be a compact convex set whose boundary contains $B(0, R)\cap \{x_N=0\}$, let $(x_n)_{n\in \mathbb N}\subset \mathbb R^N$ such that $|x_n|\to \infty$ and let $C_n=C-x_n$.
    It is clear that, if $R$ is large enough, $I_C(v)=I_{C_n}(v)=I_{ \{x_N\leq 0 \} }(v)<I_{\emptyset}(v)$.
    On the other hand, $C_n\to \emptyset$ in the Kuratowski sense.
    
    \Cref{es: semicontinuity isop profile}, which relies heavily on the results of next section, gives an example in which the same lack of continuity happens even without loss of dimension.
\end{remark}

\section{Connectedness properties of generalized minimizers and asymptotic dimension}\label{section: connessione}
This section is devoted to studying the connectedness properties of generalized minimizers and to the study of the relation between the asymptotic geometry of the obstacle and its isoperimetric profile. 
The first result is a universal bound on the number of connected components (cfr.~\Cref{rmk: componenti connesse e scelta del rappresentante}) of a generalized minimizer, independent of both the volume $v$ and the closed convex obstacle $C$. 
As a preliminary step, we revisit the density estimates of \Cref{prop: density est} in the setting of generalized minimizers, proving that the radius $r_0$ can be chosen depending on the total volume rather than on the volumes of the individual components.

   \begin{lemma}\label{lemma: density est gen min}
    Let $v > 0$. 
    Then there exist $c_0 = c_0(\lambda) > 0$ and $r_0 = r_0(\lambda, v) \in (0,1)$ such that the following holds. If $C$ is a closed convex set, $(\mathbf{E};\mathbf{C}) = ((E_0,\dots,E_k);(C,C_1,\dots,C_k))$ is a generalized minimizer with volume constraint $v$, and $\Omega'$ is a connected component of $E_i$ for some $i$, then
    \begin{equation}
        |\Omega' \cap B(x,r)| \geq c_0 r^N
    \end{equation}
    for all $x \in \overline{\Omega'}$ and $r \in (0, r_0)$.
    \end{lemma}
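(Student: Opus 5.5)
The strategy is to reduce the statement to \Cref{prop: density est}\ref{it: density est}. The only work is to produce a single penalization parameter $\Lambda$, depending only on $\lambda$ and the total volume $v$, for which every component $E_i$ of every generalized minimizer of volume $v$ is a $\Lambda$-minimizer of $\mathcal F_{C_i}$. The constants $c_0=c_0(\lambda)$ and $r_0=r_0(\lambda,\Lambda)$ given there then depend only on $\lambda$ and $v$, and not on the obstacle or on the individual volumes $|E_i|$.

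\textbf{Step 1: a uniform $\Lambda$.} Set $\Lambda:=\Lambda_0(\lambda,v)=v^{-1/N}\Lambda_0(\lambda,1)$, as in \Cref{prop: esposito fusco unb}. That proposition holds uniformly over all closed convex $C$, so every generalized minimizer $(\mathbf E;\mathbf C)$ for $C$ with volume constraint $v$ is also a generalized minimizer for $\mathcal G^v_{C,\Lambda}$. To see this, compare with any configuration $(\tilde{\mathbf E};\tilde{\mathbf C})$. A minimizer of $\mathcal G^v_{C,\Lambda}$ exists by \Cref{teo: penal unb} and has volume $v$ by \Cref{prop: esposito fusco unb}. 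Hence $\min\mathcal G^v_{C,\Lambda}=\min\{\mathcal G_C:|\cdot|=v\}=\mathcal G_C(\mathbf E;\mathbf C)=\mathcal G^v_{C,\Lambda}(\mathbf E;\mathbf C)$. The second item of \Cref{prop: gli E_i sono Lambda minimi} then shows that each $E_i$ is a $\Lambda$-minimizer of $\mathcal F_{C_i}$ in the sense of \eqref{eq: lambda minimality}. The key point is that this $\Lambda$ depends on $v$ and not on $v_i=|E_i|$. Using the individual volumes would give $\Lambda_0(\lambda,v_i)$, which blows up as $v_i\to0$; that is exactly the dependence the lemma is meant to remove.

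\textbf{Step 2: apply the density estimates.} Each $C_i$ is a closed convex set: $C_0=C$, and the $C_i$ with $i\ge1$ are asymptotic cylinders, which are closed and convex as Kuratowski limits of convex sets. By the convention after \Cref{prop: gli E_i sono Lambda minimi}, each $E_i$ is identified with its open representative $E_i^{(1)}$, so connected components make sense (\Cref{rmk: componenti connesse e scelta del rappresentante}). \Cref{prop: density est}\ref{it: density est}, applied to the $\Lambda$-minimizer $E_i$ of $\mathcal F_{C_i}$, gives
\begin{equation}
|\Omega'\cap B(x,r)|\ge c_0 r^N
\end{equation}
for every component $\Omega'$ of $E_i$, every $x\in\overline{\Omega'}$ and every $r\in(0,r_0)$. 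Here $r_0=r_0(\lambda,\Lambda_0(\lambda,v))=:r_0(\lambda,v)$.

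\textbf{Main obstacle.} The delicate point is that the constants in \Cref{prop: density est}\ref{it: density est} must be independent of the obstacle, including when $\dim C_i=N-1$. That proposition is stated for arbitrary closed convex $C$ with $c_0=c_0(\lambda)$ and $r_0=r_0(\lambda,\Lambda)$, and its proof (following \cite[Lemma 3.6]{FoFuLeMo23}) uses only \Cref{prop: bulk contact inequality} and \Cref{rmk: bulk contact inequality in the case N-1}, whose constants are universal. I would check this explicitly, in particular that \Cref{cor: coercivity} and the comparison with $E\setminus B(x,r)$ involve no geometric constant of $C_i$. The rest is bookkeeping.
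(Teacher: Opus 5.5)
Your proposal is correct and follows the paper's route. The paper fixes $\Lambda=\Lambda_0(\lambda,v)$ from \Cref{prop: esposito fusco unb}, gets that each $E_i$ is a $\Lambda$-minimizer of $\mathcal F_{C_i}$ from \Cref{prop: gli E_i sono Lambda minimi}, and then applies \Cref{prop: density est}; the only difference is that you spell out, correctly, why a volume-constrained generalized minimizer is also a minimizer of $\mathcal G^v_{C,\Lambda}$, a step the paper only asserts inside the first item of \Cref{prop: gli E_i sono Lambda minimi}.
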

    
    \begin{proof}
    Let $\Lambda = \Lambda_0(\lambda, v)$ be the constant given by \Cref{prop: esposito fusco unb}. By \Cref{prop: gli E_i sono Lambda minimi}, $E_i$ is a $\Lambda$-minimizer of $\mathcal{F}_{C_i}$ for every $i\in \{ 0 , \dots , k\}$. 
    Hence, by \Cref{prop: density est} and the fact that $\Lambda$ depends only on $\lambda$ and $v$, the conclusion follows.
    \end{proof}
    
    We are now ready to prove the bound on the number of connected components.

     \begin{proposition} \label{prop: bound componenti e diametro}
    There exist constants $L=L(\lambda)>0$ and $d=d(\lambda)>0$ such that if $C$ is a closed convex set, $v>0$, and $(\mathbf{E};\mathbf{C})$ is a generalized minimizer with volume constraint $v$, then the number of connected components of $(\mathbf{E};\mathbf{C})$ is less than $L$. Moreover, for every $i=0,\dots,k$, if $\Omega$ is a connected component of $E_i$, then $\operatorname{diam}(\Omega)\leq d\,v^{\frac{1}{N}}.$
    \end{proposition}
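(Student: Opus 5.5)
The plan is to reduce to volume $v=1$ by scaling, and then run the argument of \Cref{cor: stima sulle comp connesse e loro diametro}, now applied to all components of the generalized minimizer simultaneously.

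\textbf{Reduction to unit volume.} If $(\mathbf E;\mathbf C)$ is a generalized minimizer for $C$ with volume $v$, I claim that $(v^{-1/N}\mathbf E; v^{-1/N}\mathbf C)$ is a generalized minimizer for $v^{-1/N}C$ with volume $1$. This rests on three facts:
\begin{itemize}
\item asymptotic cylinders rescale to asymptotic cylinders, that is, $\mathcal K(tC)=t\,\mathcal K(C)$;
\item the energy is $(N-1)$-homogeneous, $\mathcal G_{tC}(t\mathbf E;t\mathbf C)=t^{N-1}\mathcal G_C(\mathbf E;\mathbf C)$;
\item connected components and diameters transform in the obvious way.
\end{itemize}
Since the constants must be independent of $C$, it suffices to prove the statement for $v=1$ with constants depending only on $\lambda$.

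\textbf{Density estimates.} With $v=1$, \Cref{lemma: density est gen min} gives $c_0=c_0(\lambda)$ and $r_0=r_0(\lambda,1)$. These are uniform over every connected component $\Omega'$ of every $E_i$ and over every closed convex obstacle $C$. Set $r=r_0/2$.

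\textbf{Counting components.} Pick one point $x_\Omega\in\overline\Omega$ in each connected component $\Omega$ of each $E_i$. The sets $\Omega\cap B(x_\Omega,r)$ are pairwise disjoint within the same $E_i$. For different $i$ they live in different copies of $\mathbb R^N$, so their volumes still add up when computing $|\mathbf E|$. Hence
\[
1=|\mathbf E|=\sum_{i}\sum_{\Omega\subset E_i}|\Omega|\ge \#\{\text{components}\}\,c_0r^N,
\]
which gives $L=(c_0r^N)^{-1}+1$. Note the volume constraint holds exactly, so no factor $2$ appears, unlike in \Cref{cor: stima sulle comp connesse e loro diametro}.

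\textbf{Bounding diameters.} Fix a component $\Omega$. By Vitali's covering lemma, choose $A\subset\overline\Omega$ such that the balls $\overline{B(x,r)}$, $x\in A$, are pairwise disjoint and $\Omega\subset\bigcup_{x\in A}\overline{B(x,5r)}$. The density estimate gives $\#A\,c_0r^N\le|\Omega|\le 1$. Since $\Omega$ is connected and covered by these balls, the chain argument used before yields $\operatorname{diam}\Omega\le 10r\,\#A\le 10/(c_0r^{N-1})=:d(\lambda)$. Scaling back gives $\operatorname{diam}\Omega\le d\,v^{1/N}$.

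\textbf{Main obstacle.} The delicate step is making sure $r_0$ really is independent of $C$ and of the individual component volumes. This requires $\Lambda_0(\lambda,1)$ from \Cref{prop: esposito fusco unb} to be independent of $C$, which that proposition guarantees, together with $c_0$, $r_0$ in \Cref{prop: density est} depending only on $(\lambda,\Lambda)$. Two further points need care. First, \Cref{prop: density est} must hold for $\Lambda$-minimizers in the whole space, including outside lower-dimensional cylinders $C_i$, and this is asserted there. Second, the scaling must preserve generalized minimality. This uses that the admissible class of asymptotic configurations scales correctly, which follows from $\mathcal K(tC)=t\mathcal K(C)$, itself immediate from the definition via Kuratowski limits.
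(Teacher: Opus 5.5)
Your proposal is correct and follows essentially the same route as the paper. You rescale to $v=1$, use the uniform density estimates of \Cref{lemma: density est gen min} at scale $r=r_0(\lambda,1)/2$ to count components via $1=|\mathbf E|\ge \#\{\text{components}\}\,c_0r^N$, and bound diameters with the Vitali covering and chain argument of \Cref{cor: stima sulle comp connesse e loro diametro}. Choosing the Vitali centers in $\overline\Omega$, so that only $|\Omega|\le 1$ is needed, and justifying the scaling through $\mathcal K(tC)=t\,\mathcal K(C)$ are minor refinements of details the paper leaves implicit.
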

    
    \begin{proof}
    Up to rescaling, we may assume $v=1$. For every $i=0,\dots,k$, let $\Omega_1^i,\dots,\Omega_{l_i}^i$ be the connected components of $E_i$, and set
    \begin{equation}
        M=\sum_{i=0}^{k}l_i,
    \end{equation}
    which is the \emph{number of connected components} of $(\mathbf{E};\mathbf{C})$. By the density estimates proved in \Cref{lemma: density est gen min}, there exists a constant $w=w(\lambda)>0$ such that, for every $i=0,\dots,k$ and every $j=1,\dots,l_i$, if we fix a point $x_j^i\in\Omega_j^i$, then, setting $r=\frac{r_0(\lambda,1)}{2}$, we have
    \begin{equation}
        |\Omega_j^i\cap B(x_j^i,r)|\geq w.
    \end{equation}
    Therefore,
    \begin{align}
        1=|\mathbf{E}|=\sum_{i=0}^{k}|E_i|
        &\geq \sum_{i=0}^{k}\sum_{j=1}^{l_i}
        |\Omega_j^i\cap B(x_j^i,r)|\\
        &\geq \sum_{i=0}^{k}l_iw
        =Mw.
    \end{align}
    This proves the bound on the number of connected components. The estimate on the diameter follows from the density estimate proved in \Cref{lemma: density est gen min} by arguing as in \Cref{cor: stima sulle comp connesse e loro diametro}. The general case follows by scaling.
    \end{proof}

The study of the interaction between the asymptotic geometry of the obstacle and its isoperimetric profile is inspired by \cite{RigLargeVol}, where a similar study is performed for the relative perimeter functional, i.e.~$\mathcal F_C$ for $\lambda=0$.
In \Cref{teo: principale d*(C)} we extend this to the general $\lambda\in (-1, 1)$.
To do so, let us recall the definition of the fundamental objects that describe the asymptotic geometry of $C$.
Given a closed convex set $C \subset \mathbb{R}^N$ and a point $x \in C$, the \emph{recession cone} of $C$ is defined by
\begin{equation}
   C_{\infty} := \bigcap_{t>0} t (C - x). 
\end{equation}
\noindent The recession cone $C_\infty$ does not depend on the choice of $x$ and its dimension quantifies the set of directions along which $C$ is unbounded.
The case when the dimension of $C_\infty$ is larger than $N-1$ is somewhat rigid, as the following statement shows.
\begin{lemma}\label{lem: lemma 4.3 Large volume}
    Let $C\subsetneq \mathbb R^N$ be a closed convex set such that $\dim (C_\infty) \geq N-1$. 
    Then there exists an asymptotic cylinder $K\in \mathcal K(C)$ whose boundary contains an hyperplane. 
    In particular, for all $\lambda>0$  
    \begin{equation}
        I_C(v) = I_H(v) \qquad \forall v>0,
    \end{equation}
    where $H$ is a half-space.
    \end{lemma}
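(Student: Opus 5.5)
We split the argument into a geometric step, producing the cylinder, and a variational step, identifying the profile. If $\dim(C_\infty)=N$, the recession cone has nonempty interior. We pick an interior direction $e$ of $C_\infty$ and note that $C$ contains a translate of an open cone around $e$. Since $C\neq\mathbb R^N$, $C$ has a boundary point and a supporting half-space $\{x\cdot\nu\le c\}\supset C$, so $C_\infty\subset\{x\cdot\nu\le 0\}$. We then take $y\in\partial C$ and move along $\partial C$ in directions far inside the face of $C_\infty$ orthogonal to $\nu$, choosing points $x_n\in\partial C$ that diverge and where the supporting normals converge. The simplest concrete choice is $x_n=\pi_C(y+nw)$, the projection onto $C$ of points $y+nw$ pushed out of $C$, with $w$ suitable. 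In the limit $C-x_n\to K$, where $K$ is a closed convex set containing $C_\infty$ and contained in a half-space through $0$. Since $K$ is a convex cylinder containing the $(N-1)$-dimensional lineality obtained by translating along the face, $K$ must be exactly a half-space.

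If $\dim(C_\infty)=N-1$, let $V$ be the hyperplane spanned by $C_\infty$. We argue as in \cite{RigLargeVol}: taking $x_n=x_0+n u$ with $u$ in the relative interior of $C_\infty$, the limit $K=\lim (C-x_n)$ contains $\bigcup_n (C_\infty - nu)$, which is all of $V$. So $K$ is a cylinder $V\times I$ (for instance a slab, a half-space, or $V$ itself), and its boundary contains a hyperplane parallel to $V$. We expect the main obstacle to be making this precise, because $x_n$ must be chosen so that the translates do not escape and the limit is nonempty. To handle this we will take $x_n\in C$ itself, which gives $0\in K$, and verify that $K-V\subset K$ by a diagonal argument similar to \Cref{prop: i cilindri asint dei cilindri asint sono cilindri asint}.

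For the profile, \Cref{prop: isop profile del semispazio}, \Cref{it: minimality}, gives $I_C(v)\ge I_H(v)$. For the reverse inequality, we place the cap $B^\lambda[v]$ on the flat hyperplane in $\partial K$. If $K$ is a half-space or a slab, this yields $\mathcal F_K(B^\lambda[v])=I_H(v)$. If $K=V$ has codimension one, we compute with both traces; the cap sits on one side, so the energy equals that of the half-space case. The resulting one-element asymptotic configuration $((\emptyset,B^\lambda[v]);(C,K))$ has energy $I_H(v)$. By \Cref{lemma: first properties profile}, $I_C(v)\le I_H(v)$. We note that the cap must fit inside a flat region; this holds automatically because the boundary contains an entire hyperplane.
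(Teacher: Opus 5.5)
Your variational step is fine: the lower bound comes from the half-space inequality, and the upper bound from placing $B^\lambda[v]$ on a hyperplane contained in $\partial K$, including the two-trace computation when $K$ is itself a hyperplane. The geometric step, however, has a genuine gap in both of your cases.

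In the case $\dim(C_\infty)=N-1$, the choice $x_n=x_0+nu$ works only when $C$ itself is $(N-1)$-dimensional. Then $C-x_n$ lies in $V$ and contains $(N-1)$-dimensional balls of radius $nr$, so $K=V$; this is exactly the argument the paper gives for $\dim C<N$. When $C$ is a convex body, your list of limits $V+I\nu$ omits $I=\mathbb R$, i.e.\ $K=\mathbb R^N$, whose boundary is empty. This really occurs: for $C=\{x_2\ge x_1^2\}\subset\mathbb R^2$ and $u=e_2$ one has $C-ne_2=\{x_1^2\le x_2+n\}\to\mathbb R^2$. Taking $x_n\in C$ only guarantees $0\in K$, not $K\neq\mathbb R^N$. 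You must take $x_n\in\partial C$ (here $x_n=(n,n^2)$ gives $\{x_1\le 0\}$), and then you face the same difficulty as in your other case.

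In the case $\dim(C_\infty)=N$, the decisive assertion is that $K$ has $(N-1)$-dimensional lineality ``obtained by translating along the face''. This is unjustified and false in general: moving along $w\in C_\infty\cap\nu^\perp$ only produces lines in the directions of that face, which may be $\{0\}$ or a ray. For $C=\{x_N\ge (1+|x'|^2)^{1/2}\}$, every outer normal $\nu$ of $C$ satisfies $\nu\cdot d<0$ for all $d\in C_\infty\setminus\{0\}$, so the face is trivial. For the cone $C=\{x_3\ge |x_1|+|x_2|\}$, take $\nu\propto(1,0,-1)$, which supports $C$ exactly along the edge through $(1,0,1)$. The points $x_n=n(1,0,1)\in\partial C$ all admit $\nu$ as supporting normal, yet $C-x_n\to\{x_3\ge x_1+|x_2|\}$, a wedge whose boundary contains no hyperplane. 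Even $x_n=(n,1,n+1)$, in the open facet, gives a translated wedge.

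The missing idea is a flattening argument, which proceeds as follows:
\begin{itemize}
\item Pick $u\neq 0$ at which $\partial C_\infty$ is differentiable.
\item Use $tC\to C_\infty$ as $t\to 0^+$ (with $0\in C$) to find $x_n\in\partial C$ with $|x_n|\to\infty$ such that $\partial C\cap B(x_n,R_n)$ lies within $\varepsilon_n R_n$ of a hyperplane, where $R_n\to\infty$ and $\varepsilon_n\to 0$.
\item Use convexity to get flatness at unit scale: a convex function within $\eta$ of an affine one on $B_R$ has all subgradients within $4\eta/R$ of the affine slope on $B_{R/2}$. Hence the limit is a half-space.
\end{itemize}
This is the content of \cite[Lemma 4.3]{RigLargeVol}. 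The paper simply invokes it for every full-dimensional $C$, which covers your first case and the full-dimensional part of your second, and argues by hand only when $\dim C=N-1$.
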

    \begin{proof}
        If $\dim(C)=N$, the proof follows from \cite[Lemma 4.3]{RigLargeVol}, by noticing that the existence of such a $K$ is present in the proof of that Lemma. If $\dim(C)<N$, the obvious inequality $\dim(C_\infty)\leq\dim(C)$, together with $\dim(C_\infty)\geq N-1$, implies $\dim(C_\infty)=\dim(C)=N-1$.
        Without loss of generality, assume that $0\in C$ and that $C\subset \{x_N=0 \}$. Let $w\in{\rm int}(C_\infty)$ be an interior point relative to the hyperplane. Then there exists $r>0$ such that the $(N-1)$-dimensional ball centered at $w$ with radius $r$ is contained in $C_\infty$. Since $C_\infty$ is a cone, it follows that the $(N-1)$-dimensional ball centered at $nw$ with radius $nr$ is contained in $C_\infty\subset C$ for every $n\in\mathbb N$. Setting $x_n=nw\in C$, we therefore have $C-x_n\xrightarrow{n\to+\infty}\{ x_N=0 \}$ in the Kuratowski sense.
    \end{proof}
    
    \begin{remark}
        The statement above is slightly different from \cite[Lemma 4.3]{RigLargeVol}, which is only stated for $\dim C=N$ and does not mention the existence of such a $K$.
        The proof of the existence of $K$ is however contained in the proof of \cite[Lemma 4.3]{RigLargeVol}. The proof in the case $\dim C<N$ is immediate.
    \end{remark}

The \emph{asymptotic dimension} $d^*(C)$ of $C$ is defined as
\begin{equation}
    d^*(C) \coloneqq \max \;\dim\left(\lim_n t_n (C - x_n)\right),  
\end{equation}
where the maximum is taken over the sequences $(x_n)_{n\in \mathbb N}\subset C$ and the vanishing positive sequences $(t_n)_{n\in \mathbb N}$; the convergence is understood in the sense of Kuratowski.
By construction, one always has $d^*(C) \geq \dim(C_{\infty})$, and the inequality may be strict in general. 
More in general, for a family $\mathcal H$ of closed convex sets, we define the asymptotic dimension of $\mathcal H$ as
\begin{equation}
    d^*(\mathcal H) \coloneqq  \max \;\dim\left(\lim_n t_n (C_n - x_n)\right) ,  
\end{equation}
where this time the maximum is taken over the sequences $(C_n)_{n\in \mathbb N}\subset \mathcal H$, $(x_n)_{n\in \mathbb N}$ with $x_n\in C_n$ for all $n$ and the vanishing positive sequences $(t_n)_{n\in \mathbb N}$.

\begin{remark}\label{rmk: caratterizzazione di d*(C)}
    In \cite[Remark 3.4]{RigLargeVol}, the authors give the following characterization: $d^{*}(C)$ is the smallest $k \in \mathbb{N} \cup \{ 0 \}$ such that there exist a vector subspace $Z \subset \mathbb{R}^N$ of dimension $k$ and a bounded set $B \subset Z^{\perp}$ for which $C \subset Z + B$, where $Z^{\perp}$ denotes the orthogonal complement of $Z$.
\end{remark}
Using the above characterization, we prove the following useful lemma about the asymptotic dimension of the family given by a closed convex set and the set of its asymptotic cylinders, defined in \Cref{def: asymptotic cylinders}: 
\begin{lemma}\label{lemma: dim(K) <= d*(C)}
Let $C \subset \mathbb{R}^N$ be a closed convex set and let $\mathcal H= \mathcal{K}(C) \cup \{C\}$.
Then $d^*(\mathcal H)=d^*(C)$.

\end{lemma}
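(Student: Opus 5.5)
The inequality $d^*(C)\le d^*(\mathcal H)$ holds because $C\in\mathcal H$: taking the constant sequence $C_n=C$ in the definition of $d^*(\mathcal H)$ recovers every admissible limit in the definition of $d^*(C)$. The real content is the reverse inequality $d^*(\mathcal H)\le d^*(C)$. I will prove it through the characterization in \Cref{rmk: caratterizzazione di d*(C)} rather than by manipulating blow-down sequences directly.

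Set $k=d^*(C)$. By \Cref{rmk: caratterizzazione di d*(C)} there are a $k$-dimensional subspace $Z$ and a bounded set $B\subset Z^\perp$ with $C\subset Z+B$. Choose $\rho>0$ such that $B\subset \widebar{B(0,\rho)}\cap Z^\perp$. The first step is to show that every asymptotic cylinder $K\in\mathcal K(C)$ satisfies
\begin{equation}
K\subset Z+\bigl(\widebar{B(0,2\rho)}\cap Z^\perp\bigr).
\end{equation}
Let $K=\lim_n (C-x_n)$ with $x_n\in C$, and let $y\in K$. By Kuratowski convergence there are $y_n\in C-x_n$ with $y_n\to y$. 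Write $y_n=c_n-x_n$ with $c_n,x_n\in C\subset Z+B$. Denoting by $\pi$ the orthogonal projection onto $Z^\perp$, we get $|\pi(y_n)|=|\pi(c_n)-\pi(x_n)|\le 2\rho$, and passing to the limit gives $|\pi(y)|\le 2\rho$. So the whole family $\mathcal H$ is contained in $Z+B'$ with $B'=\widebar{B(0,2\rho)}\cap Z^\perp$; the same bound with $\rho$ in place of $2\rho$ covers $C$ itself.

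The second step bounds $d^*(\mathcal H)$ uniformly. Take $C_n\in\mathcal H$, points $x_n\in C_n$ and $t_n\to0^+$ such that $t_n(C_n-x_n)\to L$ in the Kuratowski sense. Every element of $C_n-x_n$ has $Z^\perp$-component of norm at most $4\rho$. Hence $t_n(C_n-x_n)\subset Z+\bigl(\widebar{B(0,4\rho t_n)}\cap Z^\perp\bigr)$, and any limit point of a sequence in these sets lies in $Z$. Therefore $L\subset Z$ and $\dim L\le k$. Taking the maximum over all such sequences gives $d^*(\mathcal H)\le k=d^*(C)$.

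I expect the main obstacle to be formal rather than conceptual: $d^*(\mathcal H)$ is defined as a maximum, so one has to make sure that the limits exist and that the maximum is attained. This follows from Blaschke's theorem: limits are always taken along convergent subsequences, and dimensions are integers bounded by $N$, so the supremum is attained. I do not need to use the remark's characterization for $\mathcal H$ itself; only the direct containment argument above is required.
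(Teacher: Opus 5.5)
Your proof is correct and follows essentially the same route as the paper: use the characterization $C\subset Z+B$ with $\dim Z=d^*(C)$, show that every element of $\mathcal H$ lies in $Z$ plus a fixed bounded subset of $Z^\perp$, and then note that blow-downs with $t_n\to 0$ collapse this bounded part, so every limit lies in $Z$. Your pointwise argument with the projection onto $Z^\perp$ spells out the step the paper passes over quickly, namely going from $C-y^n_h\subset Z+(B-B)$ to $C_n\subset Z+A$ for the Kuratowski limits.
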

\begin{proof}
    Since $C\in \mathcal H$, $d^*(C)\leq d^*(\mathcal H)$. We will prove the other inequality. Throughout the proof, all limits of sets are taken in the Kuratowski sense. Let $K=\lim\limits_n t_n (C_n - x_n)$ where $(t_n)_{n\in \mathbb N}$ is a vanishing sequence, $(C_n)_{n\in \mathbb N}$ is a sequence in $\mathcal H$ and $x_n\in C_n$ for all $n$.
    Without loss of generality, assume $0\in C$. By \Cref{rmk: caratterizzazione di d*(C)}, we can find $Z$, a $d^*(C)$-dimensional subspace of $\mathbb{R}^N$ and a compact set $B \subset Z^{\perp}$ such that $C \subset Z + B$. 
    Since $C_n \in \mathcal H$ there exists a sequence $(y_h^n)_{h\in\mathbb{N}}$ in $C$ such that $C_n = \lim\limits_{h \to +\infty} (C - y_h^n)$. By the splitting above, we can write $y_h^n = z_h^n + b_h^n$ with $z_h^n \in Z$ and $b_h^n \in B$. Hence
    \begin{equation}
        C - y_h^n \subset Z + B - b_h^n \subset Z + (B - B).
    \end{equation}
    In particular, $C_n \subset Z + A$ for some compact set $A \subset Z^{\perp}$ and for every $n\in\mathbb{N}$. Again, writing $x_n = w_n + a_n$ with $w_n \in Z$ and $a_n \in A$, we obtain
    \begin{equation}
        t_n (C_n - x_n) 
        \subset t_n (Z + A - w_n - a_n) = Z + t_n(A-a_n) \subset Z + t_n (A-A) \to Z
    \end{equation}
    thanks to the boundedness of $A$.
    Therefore $K \subset Z$, and we conclude that $\dim (K) \leq \dim(Z) = d^*(C)$.
\end{proof}

Arguing as in the proof of \cite[Theorem~1.1]{RigLargeVol}, we get the following interesting characterization of the asymptotic dimension.
\begin{corollary}\label{cor: caratterizzazione d*}
    Let $C$ be a closed convex set. 
    Then
    \begin{equation}
        d^*(C)=\max_{K\in \{C\}\cup \mathcal K(C)} \dim(K_\infty).
    \end{equation}
\end{corollary}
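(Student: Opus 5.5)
We prove the two inequalities separately.

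\textbf{The inequality $\geq$.} For any $K\in\{C\}\cup\mathcal K(C)$, fix $x\in K$. For any vanishing positive sequence $t_n$ we have $t_n(K-x)\to K_\infty$ in the Kuratowski sense. Indeed, the sets $t(K-x)$ decrease as $t\downarrow 0$ (convexity, $0\in K-x$), and their intersection is $K_\infty$. A decreasing family of closed sets converges in the Kuratowski sense to its intersection. Hence
\begin{equation}
\dim(K_\infty)\leq d^*(\{K\})\leq d^*(\mathcal K(C)\cup\{C\})=d^*(C),
\end{equation}
where the last equality is \Cref{lemma: dim(K) <= d*(C)}. Taking the maximum over $K$ gives $\geq$.

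\textbf{The inequality $\leq$.} This is the real content: we need some $K\in\{C\}\cup\mathcal K(C)$ with $\dim K_\infty=d^*(C)$.

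Let $k=d^*(C)$ and take $Z,B$ as in \Cref{rmk: caratterizzazione di d*(C)}, so that $C\subset Z+B$ with $\dim Z=k$ minimal. Let $p$ denote the orthogonal projection onto $Z$. If $K$ is the closure of $C+Z$, minimality forces $p(C)$ to be a $k$-dimensional, hence full-dimensional, convex subset of $Z$. Indeed, if $p(C)$ lay in a proper affine subspace, $C$ would fit in a lower-dimensional $Z'+B'$.

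Now let $V=(p(C))_\infty\subset Z$ be the recession cone of the projection. Set $W=\operatorname{span}(C_\infty)\subset Z$; it suffices to produce a cylinder containing the lines of a $k$-dimensional subspace.

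\textbf{Approach via asymptotic cylinders.} Proceed inductively, following the strategy of \cite[Theorem~1.1]{RigLargeVol}. If $\dim C_\infty<k$, there is a direction in which $C$ is unbounded but which is not a recession direction. Choose $y_n\in C$ diverging with $y_n/|y_n|\to e$, where $e\in Z$ lies in the span of the blow-down limit but not in $C_\infty$. Pass to $K_1=\lim(C-y_n)\in\mathcal K(C)$. The cylinder $K_1$ contains a line, and its recession cone contains $C_\infty$. It also contains $\pm e$, or at least a new direction: since $0\in K_1$ and $C-y_n$ contains the segment toward fixed points of $C$, whose directions tend to $-e$, we get $-e\in (K_1)_\infty$.

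So $\dim (K_1)_\infty>\dim C_\infty$ as long as $e\notin\operatorname{span}C_\infty$. Iterate, using \Cref{prop: i cilindri asint dei cilindri asint sono cilindri asint} to stay within $\mathcal K(C)$. Each new cylinder $K$ still satisfies $K\subset Z+A$ by the argument of \Cref{lemma: dim(K) <= d*(C)}. The dimension strictly increases and is bounded by $k$. It must reach $k$, because whenever $\dim K_\infty<k$ the same blow-down argument applied to $K$ yields a new direction. Here we use that $d^*(K)$ is still $k$: a blow-down of $K$ is also a blow-down of $C$ after diagonalization, and conversely $K$ is a limit of translates of $C$, which keeps its asymptotic dimension at least that of $C$.

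\textbf{Main obstacle.} The delicate step is guaranteeing the new direction $e\notin\operatorname{span}(K_\infty)$. We must check that when $\dim K_\infty<d^*(K)=k$, some diverging sequence in $K$ has asymptotic direction outside $\operatorname{span} K_\infty$. We must also check that the resulting recession cone of the limit cylinder genuinely gains dimension, keeping the old cone, rather than merely gaining a direction within the old span.

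For the first point, argue by contradiction. If all diverging directions of $K$ lay in $\operatorname{span}K_\infty$ up to $o(|y|)$, every blow-down would be contained in that span, contradicting $d^*(K)=k$. For the second, note that $K_\infty\subset (K_1)_\infty$ because recession cones pass to Kuratowski limits of translates.
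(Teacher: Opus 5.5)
Your proof of ``$\geq$'' is correct and is the paper's argument: $\dim K_\infty\le d^*(K)\le d^*(\{C\}\cup\mathcal K(C))=d^*(C)$ by \Cref{lemma: dim(K) <= d*(C)}.

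\textbf{The gap is in ``$\leq$''.} Your induction rests on the claim that whenever $\dim K_\infty<d^*(K)$, some diverging $y_n\in K$ has $y_n/|y_n|\to e\notin\operatorname{span}K_\infty$. This claim is false.

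Take $C=\{(x,y)\in\mathbb R^2:\ y\ge x^2\}$. Then $C_\infty=\{0\}\times[0,\infty)$, and $d^*(C)=2$, since no line $Z$ gives $C\subset Z+B$ with $B$ bounded. However, any diverging $(a_n,b_n)\in C$ has $b_n\to\infty$ and $|a_n|/b_n\le b_n^{-1/2}\to0$. So every asymptotic direction of $C$ is $e_2\in C_\infty$, no admissible $e$ exists, and your iteration cannot start.

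Your justification (``every blow-down would be contained in that span'') is exactly where it breaks. Blow-downs have moving base points, so they see differences $w_n-x_n$ of points of $C$, and these need not point in asymptotic directions. Indeed $\frac1n\bigl(C-(n,n^2)\bigr)\to[-2,0]\times\mathbb R\not\subset\{x=0\}$.

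The statement still holds for this $C$: $C-(n,n^2)=\{y\ge x^2+2nx\}\to\{x\le 0\}$, whose recession cone is two-dimensional. The new recession directions come from the flattening of $\partial C$ around the base points, which tracking asymptotic directions cannot detect.

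\textbf{A second unsupported step.} You use $d^*(K)\ge d^*(C)$ for the cylinders produced by the iteration. \Cref{lemma: dim(K) <= d*(C)} only gives the reverse inequality, and this one fails in general. For $C=\operatorname{conv}\bigl(\{z\ge x^2+y^2\}\cup\{y=0,\ z\ge |x|^{3/2}\}\bigr)\subset\mathbb R^3$ one has $d^*(C)=3$, because $C$ contains a paraboloid. Yet $C-(n,0,n^{3/2})$ converges to the half-plane $\{y=0,\ x\le 0\}$, which has $d^*=2$.

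\textbf{The missing idea.} The base points of a blow-down realizing $d^*(C)$ already select the right cylinder, so no induction is needed.
\begin{itemize}
\item Let $t_n(C-x_n)\to K_{max}$ with $t_n\to0$, $x_n\in C$ and $\dim K_{max}=d^*(C)$.
\item If $(x_n)$ is bounded, then $K_{max}=C_\infty$ and you are done.
\item Otherwise, up to a subsequence, $C-x_n\to K\in\mathcal K(C)$.
\item Since $0\in C-x_n$ and $C$ is convex, for each fixed $t>0$ one has $t_n(C-x_n)\subset t(C-x_n)$ once $t_n\le t$.
\item Passing to the limit gives $K_{max}\subset tK$ for every $t>0$, that is $K_{max}\subset K_\infty$, hence $\dim K_\infty\ge d^*(C)$.
\end{itemize}
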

\begin{proof}
    First, by \Cref{lemma: dim(K) <= d*(C)} and the fact, observed before, that for a closed convex set $L$ it holds $d^{*}(L)\geq \dim (L_\infty)$, we have
    \begin{equation}
        d^*(C)=d^*(\mathcal{K}(C) \cup \{C\})\geq \max_{K\in \{C\}\cup \mathcal K(C)} \dim(K_\infty).
    \end{equation}

    Let us prove the opposite inequality.
    Let $(x_n)_{n\in \mathbb N}\subset C$ and $t_n\to 0$ be such that $t_n(C-x_n)\to K_{max}$ in the sense of Kuratowski with $d^*(C)=\dim(K_{max})$.
    If $(x_n)_{n\in \mathbb N}$ is bounded, $K_{max}=C_\infty$.
    Otherwise, up to subsequences, there exists $K\in \mathcal K(C)$ such that $C-x_n\to K$.
    Since $C$ is convex and $x_n\in C$, for $t>0$ and $n$ large enough, $t_n(C-x_n)\subset t (C-x_n)$.
    This implies $K_{max}\subset t K$ for all $t>0$, thus $K_{max}\subset K_\infty$, hence $\dim (K_\infty)\geq d^*(C)$, concluding the proof.
\end{proof}

The following theorem describes the properties of generalized minimizers of the capillarity energy depending on the asymptotic dimension of $C$.
We are in particular interested in when a generalized minimizer is connected, i.e.\ it consists of a single nonempty entry and such entry is connected.

\begin{theorem}\label{teo: principale d*(C)}
    Let $C$ be a closed convex set and $\lambda \in (-1, 1)$.
    Then the following properties hold:
    \begin{itemize}
        \item[(a)] Suppose that $d^{*}(C) \geq N-1$. Then there exists an asymptotic cylinder of $C$ whose boundary contains a hyperplane. As a consequence,
    \begin{align}
        I_C(v) = I_H(v) \ \text{for every }v>0 \label{eq: I_C = I_H}
    \end{align}
    where $H$ is a half-space, and every generalized minimizer of $I_C(v)$ is isometric to $B^{\lambda}[v]$, and sits on a flat part of the boundary of $C$ or on a flat part of the boundary of an asymptotic cylinder of $C$.
    \item[(b)] Suppose that $d^{*}(C) \leq N-2$. Then
    \begin{align}
        \lim_{v\to\infty}\frac{I_C(v)}{N\,\omega_N^{1/N}\,v^{(N-1)/N}} = 1\,, \label{eq: limite nel caso d*(C) < N-1}
    \end{align}
    Moreover:

    \medskip
    \noindent
    ($\text{b}1$) \textit{(Large Volume Regime)} There exists a constant $v_1(\lambda, C) > 0$ such that any generalized minimizer for $I_C(v)$ is connected for $v > v_1$.

    \medskip
    \noindent
    ($\text{b}2$) \textit{(Small Volume Regime)} There exists a constant $v_0(\lambda, C) > 0$ such that any generalized minimizer for $I_C(v)$ is connected for $v < v_0$.
    \end{itemize}
        
    \end{theorem}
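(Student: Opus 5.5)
For (a), I would first reduce to \Cref{lem: lemma 4.3 Large volume}. By \Cref{cor: caratterizzazione d*}, $d^*(C)\geq N-1$ means that some $K\in\{C\}\cup\mathcal K(C)$ has $\dim(K_\infty)\geq N-1$. The case $K=C$ is covered directly by the lemma. If instead $K\in\mathcal K(C)$, the lemma gives an $L\in\mathcal K(K)$ whose boundary contains a hyperplane, and by \Cref{prop: i cilindri asint dei cilindri asint sono cilindri asint} we have $L\in\mathcal K(C)$.

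The identity $I_C=I_H$ then follows from two inequalities. The bound $I_C\geq I_H$ is \Cref{prop: isop profile del semispazio}. For the reverse bound, take the asymptotic configuration $((\emptyset,B^\lambda[v]),(C,L))$, with the cap placed on the flat part of $\partial L$, and apply \Cref{lemma: first properties profile}.

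To characterize generalized minimizers, let $(\mathbf E,\mathbf C)$ be one. Then
\[
I_H(v)=\sum_i\mathcal F_{C_i}(E_i)\geq\sum_i I_H(|E_i|).
\]
By the strict subadditivity in \Cref{prop: isop profile del semispazio}, only one $E_i$ can be nonempty. For that $i$ we get $\mathcal F_{C_i}(E_i)=I_H(v)$, and the rigidity part of the same proposition gives the cap sitting on a flat part of $\partial C_i$. If $\dim C_i=N-1$, the rigidity statement from \cite{IsopCap} has to be adapted; I would do this by approximating with thin bodies or by handling each side separately.

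For the limit \eqref{eq: limite nel caso d*(C) < N-1}, the upper bound is immediate: use a ball far from $C$, or simply $I_C\leq I_\emptyset$. For the lower bound I would scale. Set $r=v^{1/N}$ and $E_i'=E_i/r$, so that $C_i/r$ sits inside $Z+(A/r)$ by the argument of \Cref{lemma: dim(K) <= d*(C)}, with $\dim Z\leq N-2$. The wet part is bounded by $\mathcal H^{N-1}(\partial^*E'\cap C_i/r)$, and this should vanish as $r\to\infty$. Concretely, I would take generalized minimizers for volume $1$ in the rescaled problems. The perimeters are bounded, and \Cref{prop: bound componenti e diametro} bounds both the number and the diameters of the components. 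Translating each component and extracting limits, the obstacles converge in Kuratowski to convex sets of dimension $\leq N-2$ (the $t_n\to0$ limits defining $d^*$). \Cref{teo: gamma conv}, localized as in \Cref{lemma: local hausdorff}, then gives liminf energy $\geq$ perimeter of the limit, which is $\geq P(B[1])$ since $\sum$ volumes $=1$ and the perimeter is subadditive-bounded below. One point needs care here: a component can lose volume only if it splits, and the diameter bound rules this out.

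For (b1), I would argue by contradiction with the same blow-down. Connectedness holds because the rescaled minimizers converge to perimeter minimizers of total volume $1$ outside codimension-$\geq2$ sets, which are single balls. Two macroscopic components would cost strictly more than one ball by strict concavity of $v^{(N-1)/N}$. Components of vanishing volume are excluded by the uniform density estimates of \Cref{lemma: density est gen min}, applied at the rescaled level where $\Lambda$ scales correctly. For (b2), I would blow up instead. As $v\to0$, the rescaled obstacles $(C_i-x)/v^{1/N}$ converge to tangent cones of $C$ or of asymptotic cylinders. The limit configuration minimizes outside a cone, and strict subadditivity there (energy homogeneous of degree $(N-1)/N$) gives a single component. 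The main obstacle is (b1)/(b2): ruling out several components via quantitative convergence, in particular that a component carrying vanishing volume cannot persist. For this I would use \Cref{lemma: density est gen min} together with the scale invariance $\Lambda_0(\lambda,v)=v^{-1/N}\Lambda_0(\lambda,1)$.
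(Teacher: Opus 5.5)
Your arguments for (a), for the asymptotics $I_C(v)\sim N\omega_N^{1/N}v^{(N-1)/N}$, and for (b1) are essentially the paper's. For (a) you use the recession-cone characterization of $d^*$ together with \Cref{prop: i cilindri asint dei cilindri asint sono cilindri asint}, then the isoperimetric inequality, strict subadditivity and rigidity for $I_H$. For the asymptotics and (b1) you use the blow-down with uniform bounds on number and diameter of components, the $\Gamma$-liminf, the density estimates and strict concavity. The gap is in (b2).

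In (b2) you assert three things: the rescaled obstacles converge to tangent cones, the limit configuration minimizes outside a cone, and homogeneity then gives strict subadditivity. This fails in general. As $v_n\to0$, components may sit on $C$ at base points diverging with $n$, or on asymptotic cylinders $C_i^{(n)}$ that change with $n$. The Kuratowski limits of $v_n^{-1/N}(C_i^{(n)}-x_n)$ then need not be cones.

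For example, in $\mathbb R^3$ take $Q=[-1,1]^2$, $r(z)=1/(1+z)$ and $C=\{(x,y,z):z\ge0,\ (x,y)\in(1-r(z))Q+r(z)\overline{B(0,1)}\}$. This $C$ is convex because $r$ is convex and $\overline{B(0,1)}\subset Q$, and $d^*(C)=1$. Magnifying by $1/r(z_n)$ at a point of a rounded vertical edge at height $z_n\to\infty$ gives the product of $\mathbb R$ with a quadrant whose corner is rounded by an arc of radius $1$.

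Nothing in your argument keeps a component away from such places. For such a limit $K$ the profile $I_K$ is not homogeneous, and strict subadditivity of $I_K$ for a general convex $K$ is exactly the intermediate-volume question the paper leaves open. Moreover, different components may see different limit obstacles. The claimed minimality of the limit configuration would also need a recovery-sequence argument, which you do not give.

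The missing idea is to compare with the half-space, which makes it unnecessary to identify the limit obstacles.
\begin{itemize}
\item Lower bound: by the isoperimetric inequality in \Cref{prop: isop profile del semispazio}, every component $\Omega$ satisfies $\mathcal F_{C_i^{(n)}}(\Omega)\ge I_H(|\Omega|)$, whatever the obstacle. Every rescaled component has volume at least $w(\lambda)>0$ by \Cref{lemma: density est gen min}. Strict subadditivity of $I_H$ therefore gives $I_{v_n^{-1/N}C}(1)\ge\alpha>I_H(1)$ as soon as there are two or more components.
\item Upper bound: choose $0\in\partial^*C$ with $C\subset H=\{x_N\le0\}$, so that $v_n^{-1/N}C\to H$. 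Take the sets $(B+x_n)\setminus v_n^{-1/N}C$, where $B\setminus H=B^\lambda[1]$ and $x_n\to0$ is chosen to keep the volume equal to $1$. Their energy tends to $I_H(1)$, which gives the contradiction.
\end{itemize}
The cases $\dim C\le N-1$ are handled directly: minimizers are balls, or caps on the flat part of $C$.
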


    \begin{proof}
    We divide the proof into three steps.
    
    \vspace{0.2cm}
    \noindent
        \textbf{Step 1: proof of (a).}
    By \Cref{cor: caratterizzazione d*}, there exists $K\in \{C\}\cup \mathcal K(C)$ such that $\dim (K_\infty)\geq N-1.$
    By \Cref{lem: lemma 4.3 Large volume}, there exists $L\in \mathcal K(K)$ whose boundary contains a hyperplane.
    By \Cref{prop: i cilindri asint dei cilindri asint sono cilindri asint}, $L\in \mathcal K(C)$. Consider the asymptotic configuration $(\mathbf E; \mathbf C)=((\emptyset, E_1);(C, L))$, where $E_1 \subset \mathbb{R}^N \setminus L$ is a set isometric to $B^\lambda[v]$ that sits on the flat part of the boundary of $L$.
    By \Cref{prop: isop profile del semispazio}, this is optimal, proving \eqref{eq: I_C = I_H}.
    
    Let $v > 0$ and let $(\mathbf{E};\mathbf{C}) = ((E_0, E_1, \dots, E_k); (C, C_1, \dots, C_k))$ be a generalized minimizer for $I_C(v)$. For every $i = 0, \dots, k$, let $\Omega_{1}^{i}, \dots, \Omega_{l_i}^{i}$ be the connected components of $E_i$, and set $v_j^{i} = |\Omega_{j}^{i}|$. By the density estimate proved in \Cref{lemma: density est gen min} we have that there exists $w=w(\lambda, v)>0$ such that if $E_i \neq \emptyset$, then $v_j^{i} \geq w.$ By the isoperimetric inequality in \Cref{prop: isop profile del semispazio}, we have
    \begin{equation}
        I_H(v) = I_C(v) = \sum_{i=0}^{k} \sum_{j=1}^{l_i} \mathcal{F}_{C_i}(\Omega_{j}^{i})
        \geq \sum_{i=0}^{k} \sum_{j=1}^{l_i} I_H(v_j^{i})
        \geq I_H(v).
    \end{equation}
    Hence all inequalities are equalities and, by item~\ref{it: strict subadditivity} of \Cref{prop: isop profile del semispazio}, this implies that $(\mathbf{E};\mathbf{C})$ has only one connected component, say $\Omega = E_i$ for some $i$. Consequently,
    \begin{equation}
        I_C(v) = \mathcal{F}_{C_i}(E_i) = I_H(v).
    \end{equation}
    Using again the isoperimetric inequality in \Cref{prop: isop profile del semispazio}, this implies that $E_i$ is isometric to $B^{\lambda}[v]$ and lies on a flat part of the boundary of $C_i$.
    
    \vspace{0.2cm}
    \noindent
    \textbf{Step 2: proof of \eqref{eq: limite nel caso d*(C) < N-1} and (b1).} 
    The argument follows a strategy similar to \cite[Theorem 1.1]{RigLargeVol}.
    Using a ball as a competitor, we have that 
    \begin{equation}
        I_C(v) \leq N \omega_{N}^{\frac{1}{N}} v^{\frac{N-1}{N}},
    \end{equation}
    hence, to prove \eqref{eq: limite nel caso d*(C) < N-1}, it is enough to show that for any sequence $v_n \to +\infty$,
    \begin{equation}
        \liminf_{n \to +\infty} \frac{I_C(v_n)}{N \omega_N^{\frac{1}{N}} v_n^{\frac{N-1}{N}}}
        = \liminf_{n \to +\infty} \frac{I_{v_n^{-1 / N}C}(1)}{N \omega_N^{\frac{1}{N}} } \geq 1.
    \end{equation}
    
    Let $(\tilde{\mathbf{E}}^{(n)}; \tilde{\mathbf{C}}^{(n)})$ be a generalized minimizer of $I_C(v_n)$ and let $(\mathbf{E}^{(n)}; \mathbf{C}^{(n)})= v_n^{-\frac{1}{N}} (\tilde{\mathbf{E}}^{(n)}; \tilde{\mathbf{C}}^{(n)})$ be the corresponding rescaled generalized minimizer of $I_{v_n^{-1/N}C}(1)$. 
    By the uniform bound on the number of connected components given by \Cref{prop: bound componenti e diametro}, up to subsequences, we may assume that the length of $(\mathbf{E}^{(n)}; \mathbf{C}^{(n)})$ is $k\in\mathbb{N}$ for every $n$, and that the total number of connected components is $l \in\mathbb{N}$ for every $n$. 
    More precisely, let $(\mathbf{E}^{(n)}; \mathbf{C}^{(n)})=((E^{(n)}_0,\dots,E^{(n)}_k);(C_0^{(n)},\dots,C_k^{(n)}))$ and, for every $i=0,\dots,k$, let $\Omega_{i}^{1,(n)},\dots, \Omega_{i}^{l_i,(n)}$ be the connected components of $E_i^{(n)}$, where $\sum_{i=0}^{k} l_i = l$. Using again \Cref{prop: bound componenti e diametro}, we have that there exists $d=d(\lambda)>0$ such that $\text{diam} (\Omega_{i}^{j,(n)}) < d$. 
    Hence we can find $R>0$ and $x_i^{j,(n)} \in C_i^{(n)} = v_n^{-\frac{1}{N}} \tilde{C}_{i}^{(n)}$ such that $\Omega_{i}^{j,(n)} - x_i^{j,(n)} \subset B(0,R)$.
    Up to subsequences, there exists $\Omega_{i}^{j} \subset B(0,R)$ such that 
    \begin{equation}
        \Omega_{i}^{j,(n)} - x_i^{j,(n)} \xrightarrow[n\to +\infty]{L^1} \Omega_{i}^{j}
    \end{equation}
    Up to extracting further subsequences, we can also find closed convex sets $K_i^j$ such that 
    \begin{equation}
        C_i^{(n)} - x_i^{j,(n)} \xrightarrow[]{n\to +\infty} K_i^j.
    \end{equation}
    in sense of Kuratowski. By the definition of asymptotic dimension and \Cref{lemma: dim(K) <= d*(C)}, we have that $\dim (K_i^{j}) \leq d^{*}(\tilde{C}_i^{(n)}) \leq d^{*}(C)\leq  N-2$, so $\mathcal{F}_{K_i^j}(\cdot)= \mathcal{F}_{\emptyset}(\cdot)=P(\cdot)$. By \Cref{lemma: local hausdorff} and the $\Gamma$-convergence proved in \Cref{teo: gamma conv} we have that
    \begin{equation}
        \mathcal{F}_{K_i^j} (\Omega_{i}^j) \leq \liminf_{n \to +\infty} \mathcal{F}_{C_i^{(n)}-x_i^{j,(n)}}\left(\Omega_{i}^{j,(n)}-x_i^{j,(n)} \right).
    \end{equation}
    
    Putting everything together and using isoperimetric inequality, we conclude
    \begin{align}
        N \omega_N^{\frac{1}{N}} &\geq \liminf_{n \to +\infty} I_{v_n^{- 1/N}C}(1) \geq \sum_{i=0}^{k} \sum_{j=1}^{l_i} \liminf_{n \to +\infty} \mathcal{F}_{C_i^{(n)}-x_i^{j,(n)}}\left( \Omega_{i}^{j,(n)}-x_i^{j,(n)} \right) \geq \sum_{i=0}^{k} \sum_{j=1}^{l_i} \mathcal{F}_{K_i^{j}} (\Omega_{i}^j) 
        \\&= \sum_{i=0}^{k} \sum_{j=1}^{l_i} P (\Omega_{i}^j) \geq \sum_{i=0}^{k} \sum_{j=1}^{l_i} N \omega_N^{\frac{1}{N}} (v_i^j)^{\frac{N-1}{N}} \geq N \omega_N^{\frac{1}{N}}. \label{eq: stretta concavità implica connessione per volumi grandi}
    \end{align}
    This proves \eqref{eq: limite nel caso d*(C) < N-1}. Moreover, arguing by contradiction, suppose that there exists a sequence $v_n \to +\infty$ such that a corresponding generalized minimizer has at least two connected components. Repeating the argument above, by \Cref{lemma: density est gen min} there exists $w=w(\lambda)>0$ such that $\left| \Omega_{i}^{j,(n)} \right| \geq w$, hence $v_i^{j} = |\Omega_{i}^{j}| \geq w$. Therefore, using the strict concavity of $z \mapsto z^{\frac{N-1}{N}}$, the last inequality in \eqref{eq: stretta concavità implica connessione per volumi grandi} becomes strict, yielding a contradiction. This concludes the proof of (b1).

    \vspace{0.2cm}
    \noindent
    \textbf{Step 3: proof of (b2).} 
    We may assume that $C$ is a convex body.
    Indeed, if $\dim(C)\leq N-2$, minimizers are always connected, since they are balls.
    If $\dim(C)=N-1$, it is enough to choose $v_0$ sufficiently small so that a $\lambda$-spherical cap of volume $v<v_0$ can be accommodated on the flat part of $C$.
    Then, by the isoperimetric inequality in \Cref{prop: isop profile del semispazio}, all minimizers with volume constraint $v<v_0$ are of this form.
    
    Up to a translation and a rotation, we may assume that $0\in \partial^* C$ and that $C\subset H = \{ x_N \leq 0 \}$. Suppose by contradiction that there exist a sequence $v_n \to 0$ of volumes and a sequence $(\tilde{\mathbf{E}}^{(n)};\tilde{\mathbf{C}}^{(n)})$ of generalized minimizers for $I_C(v_n)$ with at least two connected components. 
    As before, let $(\mathbf{E}^{(n)};\mathbf{C}^{(n)})=v_n^{-1/N}(\tilde{\mathbf{E}}^{(n)};\tilde{\mathbf{C}}^{(n)})$ be the rescaled generalized minimizers and $C_n=v_n^{-1/N}C$. Note that 
    \begin{equation}
        C_n\to H \ \ \text{in} \ \ L^1_{loc} \ \text{and in the Kuratowski sense.} \label{eq: boh}
    \end{equation} 
    Again, by the uniform bound on the number of connected components from \Cref{prop: bound componenti e diametro}, up to subsequences we may assume that
    \begin{equation}
    (\mathbf{E}^{(n)};\mathbf{C}^{(n)})=((E_0^{(n)},\dots,E_k^{(n)});(C_n,C_1^{(n)},\dots,C_k^{(n)}))
    \end{equation}
    and that $\Omega_{i}^{1,(n)},\dots,\Omega_{i}^{l_i,(n)}$ are the connected components of $E_i^{(n)}$ for every $n\in\mathbb{N}$ and every $i=0,\dots,k$. In this way, the total number of connected components is $l=\sum\limits_{i=0}^{k}l_i$. Set $v_i^{j,n}=|\Omega_{i}^{j,(n)}|$; from \Cref{lemma: density est gen min} we have that there exists $w=w(\lambda)>0$ such that $v_i^{j,n}>w$ whenever $E_i^{(n)} \neq \emptyset$. From \Cref{prop: isop profile del semispazio} there exists $\alpha>0$ such that
    \begin{align}
    I_{C_n}(1)&=\sum_{i=0}^{k}\mathcal{F}_{C_i^{(n)}}(E_i^{(n)})
    = \sum_{i=0}^{k}\sum_{j=1}^{l_i}\mathcal{F}_{C_i^{(n)}}\left(\Omega_{i}^{j,(n)}\right)  
    \\ &\geq \sum_{i=0}^{k}\sum_{j=1}^{l_i}\mathcal{F}_{H}\left(B^{\lambda}\left[v_i^{j,n}\right]\right) > \alpha > I_H(1).
    \end{align}
    Our aim is now to construct a sequence $(F_n)_{n\in\mathbb{N}}$ such that $F_n\subset \mathbb{R}^N\setminus C_n$, $|F_n|=1$ and
    \begin{equation}
    \mathcal{F}_{C_n}(F_n) \to I_H(1) = \mathcal{F}_{H}(B^{\lambda}[1]),
    \end{equation}
    yielding a contradiction. Let $B$ be the ball such that $B\setminus H = B^{\lambda}[1]$. 
    Since $|B|>1$ and $C_n\subset H$, by \eqref{eq: boh} we can find a sequence $(x_n)_{n\in\mathbb{N}}$ such that $x_n\to 0$ and
    \begin{equation}
    |(B+x_n)\setminus C_n|=1.
    \end{equation}
    Let $F_n=(B+x_n)\setminus C_n$, and note that $F_n \to B^{\lambda}[1]$ in $L^1$. We now prove that the two parts of the energy converge:
    \begin{gather*}
    P(F_n,\mathbb{R}^N\setminus C_n)
     =\int_{\partial B+x_n}\chi_{\mathbb{R}^N\setminus C_n}(y)\,d\mathcal{H}^{N-1}(y) 
    \\= \int_{\partial B}\chi_{\mathbb{R}^N\setminus C_n}(y-x_n)\,d\mathcal{H}^{N-1}(y) 
    \longrightarrow \int_{\partial B^{\lambda}[1]}\chi_{\mathbb{R}^N\setminus H}(y)\,d\mathcal{H}^{N-1}(y) = P(B^{\lambda}[1],\mathbb{R}^N\setminus H).
    \end{gather*}
    For the wet part, since $\mathcal{H}^{N-1} \mres\partial^*(C_n-x_n) \xrightharpoonup{*}\mathcal{H}^{N-1}\mres \partial H$, we get
    \begin{gather*}
    P(F_n,C_n) = \mathcal{H}^{N-1}(\partial(C_n-x_n)\cap B ) 
    \\= \mathcal{H}^{N-1}\mres \partial^*(C_n-x_n)(B) \longrightarrow  \mathcal{H}^{N-1}\mres \partial H (B) 
    = P(B^{\lambda}[1],H).
    \end{gather*}
    \end{proof}

\begin{corollary}\label{cor: I_C>I_H e esistenza dei minimi standard}
        Let $C$ be a closed convex set and $\lambda \in (-1, 1)$.
        Then 
        \begin{enumerate}
            \item if $d^{*}(C) \leq N-2$, $I_C(v) > I_H(v)$ for every $v$ sufficiently large, depending on $\lambda$ and $C$.
            \item if $d^{*}(C) \geq N-1$ and $v>0$,  $\mathcal{F}_C$ admits a minimizer with volume constraint $v$ in $\mathbb{R}^N \setminus C$ if and only if $\partial C$ contains a flat portion that can accommodate a spherical cap isometric to $B^{\lambda}[v]$.
        \end{enumerate}
    \end{corollary}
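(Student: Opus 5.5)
For item (1) I would combine \eqref{eq: limite nel caso d*(C) < N-1} with the explicit half-space profile. By \Cref{prop: isop profile del semispazio}, $I_H(v)=I_H(1)\,v^{\frac{N-1}{N}}$. Since $\lambda\in(-1,1)$, the cap $B^\lambda[1]$ has strictly smaller energy than the unit-volume ball, which is an admissible competitor outside $H$ and is not isometric to $B^\lambda[1]$. Indeed, for $\lambda\in(-1,1)$ a ball touching $\partial H$ in a single point has energy equal to its perimeter; that energy is strictly larger than the cap's, by the rigidity part of \cite[Theorem 1.1]{IsopCap} or simply by direct comparison. Hence $I_H(1)<N\omega_N^{1/N}$. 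On the other hand, \Cref{teo: principale d*(C)}(b) gives
\begin{equation}
\frac{I_C(v)}{v^{\frac{N-1}{N}}}\to N\omega_N^{1/N}.
\end{equation}
So for $v$ large, $I_C(v)v^{-\frac{N-1}{N}}>I_H(1)$, which gives $I_C(v)>I_H(v)$. The threshold depends only on $\lambda$ and $C$.

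For item (2) I would first observe that, by \Cref{teo: principale d*(C)}(a), $I_C(v)=I_H(v)$. For the ``if'' direction, suppose $\partial C$ contains a flat portion accommodating a copy of $B^\lambda[v]$. I would place the cap there, on the side opposite to $C$. Its energy equals $\mathcal F_H(B^\lambda[v])=I_H(v)=I_C(v)$, because its perimeter outside $C$ and its wet part coincide with those computed relative to the supporting half-space. So it is a minimizer.

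For the ``only if'' direction, let $E$ be a minimizer with $|E|=v$. Then $\mathcal F_C(E)=I_C(v)=I_H(v)$, and the rigidity statement in item~\ref{it: minimality} of \Cref{prop: isop profile del semispazio} says that $E$ is isometric to $B^\lambda[v]$ and lies on a flat part of $\partial C$. This is exactly the accommodation property.

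The main obstacle is interpreting ``flat portion accommodating the cap'' consistently with the rigidity statement, in particular when $\dim C=N-1$, where the wet part counts both sides. In that case I would still use the rigidity of \cite{IsopCap}, or argue directly: a minimizer achieving $I_H(v)$ must sit on one side of the hyperplane with its base disk contained in $C$. For item (1), the only delicate point is the strict inequality $I_H(1)<N\omega_N^{1/N}$ for all $\lambda\in(-1,1)$, including $\lambda\le0$. I would justify it via the equality case of \Cref{prop: isop profile del semispazio}, since the ball is not a cap.
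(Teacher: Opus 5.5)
Your proposal is correct and is essentially the paper's argument. The paper gets item (1) from the limit in \Cref{teo: principale d*(C)}(b) together with the scaling of $I_H$; you rightly make explicit the needed strict inequality $I_H(1)<N\omega_N^{1/N}$, which also follows elementarily from $I_H(1)\le \mathcal F_H(B^\lambda[1])=N|B^\lambda_1|^{1/N}$ and $|B^\lambda_1|<\omega_N$. Item (2) comes from $I_C=I_H$ in \Cref{teo: principale d*(C)}(a) combined with the rigidity in \Cref{prop: isop profile del semispazio}; note that the ``if'' direction only uses $I_C\ge I_H$, so it holds regardless of $d^*(C)$.
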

    \begin{proof}
        The proof follows immediately from \Cref{prop: isop profile del semispazio} and \Cref{teo: principale d*(C)}.
    \end{proof} 
    Finally, we conclude with an example showing that the semicontinuity proved in \Cref{prop: semicontinuity isop profile} cannot be improved to continuity, even when there is no loss of dimension in the limit. We construct a convex body $C \subset \mathbb{R}^3$ which is unbounded but asymptotically confined inside a cylinder with bounded base. 
    This geometric constraint implies that $d^*(C)=1$. 
    We then approximate $C$ by a sequence of convex bodies $(C_n)_{n\in\mathbb{N}}$ such that $d^*(C_n)=3$ for every $n$. 
    By \Cref{cor: I_C>I_H e esistenza dei minimi standard}, there exists $v_1>0$ such that $I_C(v) > I_H(v)$ for every $v>v_1$ and one has $I_{C_n}(v)=I_H(v)$ for every $v>0$. The sets $C_n$ are constructed so as to coincide with $C$ up to a given height, and their boundary is modified above that level by replacing the original profile with its tangent lines. 
    This modification preserves convexity while producing sets that are no longer asymptotically confined in any cylinder with bounded base. Despite this, the sequence $(C_n)_{n\in\mathbb{N}}$ converges to $C$ in the sense of Kuratowski. 
    As a consequence, for every $v>v_1$ one has
    \begin{equation}
        \limsup_{n\to\infty} I_{C_n}(v) = I_H(v) < I_C(v).
    \end{equation}
    
    \begin{example}\label{es: semicontinuity isop profile}
        Let
        \begin{equation}
            K := \left\{ (x,y,z)\in \mathbb{R}^3 : x^2+y^2 < 1 \right\}
        \end{equation}
        be the unit cylinder and define
        \begin{equation}
            C := \left\{ (x,y,z)\in \mathbb{R}^3 : x^2+y^2<1, \ 
            z \geq \frac{x^2+y^2}{1-x^2-y^2} \right\}.
        \end{equation}
        Then $C$ is a convex body contained in $K$ and unbounded in the $z$-direction. In particular, $C$ is trapped in the cylinder $K$ and therefore $d^*(C)=1$. For every $n\in \mathbb{N}$, set
        \begin{equation}
            r_n := \sqrt{\frac{n}{n+1}}
        \end{equation}
        and define the function
        \begin{equation}
            f_n(r)
            :=
            \begin{cases}
                \dfrac{r^2}{1-r^2}, & 0\leq r\leq r_n, \\[0.8em]
                n + f'(r_n)(r-r_n), & r\geq r_n,
            \end{cases}
        \end{equation}
        where $f(r)=\frac{r^2}{1-r^2}$ and $f'(r)=\frac{2r}{(1-r^2)^2}$. We then define
        \begin{equation}
            C_n := \left\{ (x,y,z)\in \mathbb{R}^3 : z \geq f_n\big(\sqrt{x^2+y^2}\big) \right\}.
        \end{equation}
        By construction, $C_n$ coincides with $C$ up to height $z=n$, while for $z\geq n$ its boundary is given, in each vertical plane containing the $z$-axis, by the tangent line to the graph of $f$ at $r=r_n$. 
        In particular, each $C_n$ is a closed convex set. It is immediate to see that $C_n \to C$ in the sense of Kuratowski. Finally, we claim that $d^*(C_n)=3$ for every $n$. Indeed, for $r\geq r_n$ the function $f_n$ is affine with strictly positive slope, hence $C_n$ contains a conical region of the form
        \begin{equation}
            \left\{ (x,y,z) : z \geq a_n \sqrt{x^2+y^2} + b_n \right\}
        \end{equation}
        for suitable constants $a_n>0$, $b_n\in \mathbb{R}$. This implies that the dimension of the recession cone of $C_n$ is $3$, and therefore $d^*(C_n)=3$.
    \end{example}

\section*{Acknowledgements}
The authors would like to thank Professor Massimiliano Morini and Professor Filippo Cagnetti for several helpful discussions and valuable suggestions.
Simone Vincini would like to thank Davide Carazzato for the valuable help in the revision of the paper and for suggesting the example in \Cref{rmk: controesempio semicont isop}. Gabriele Nani  was funded from INdAM under the INdAM–GNAMPA Project
2026 \emph{Metodi variazionali per flussi geometrici non locali e problemi isoperimetrici} (CUP: E53C25002010001). Simone Vincini was funded by the Austrian Science Fund (FWF) 10.55776/EFP6.
For open access purposes, the authors have applied a CC BY public copyright license to any author accepted manuscript version arising from this submission.

	\bibliographystyle{siam} 
	\bibliography{main/bib/bibliografia} 

\begin{thebibliography}{10}

\bibitem{AmbrosioCaselles}
{\sc L.~Ambrosio, V.~Caselles, S.~Masnou, and J.-M. Morel}, {\em Connected components of sets of finite perimeter and applications to image processing}, Journal of the European Mathematical Society, 3 (2001), pp.~39--92.

\bibitem{AmFuPa00}
{\sc L.~Ambrosio, N.~Fusco, and D.~Pallara}, {\em Functions of bounded variation and free discontinuity problems}, Oxford Mathematical Monographs, The Clarendon Press, Oxford University Press, New York, 2000.

\bibitem{refId0}
{\sc G.~Antonelli, S.~Nardulli, and M.~Pozzetta}, {\em The isoperimetric problem via direct method in noncompact metric measure spaces with lower ricci bounds}, ESAIM: COCV, 28 (2022), p.~57.

\bibitem{ChoeGhomiRitore}
{\sc J.~Choe, M.~Ghomi, and M.~Ritor{\'e}}, {\em The relative isoperimetric inequality outside convex domains in $\mathbb{R}^n$}, Calculus of Variations and Partial Differential Equations, 29 (2007), pp.~421--429.

\bibitem{dalmasoIntroductionGConvergence1993}
{\sc G.~Dal~Maso}, {\em An {Introduction} to $\Gamma$-{Convergence}}, Birkhäuser, Boston, MA, 1993.

\bibitem{DePhilMaggi}
{\sc G.~De~Philippis and F.~Maggi}, {\em Regularity of free boundaries in anisotropic capillarity problems and the validity of young’s law}, Archive for Rational Mechanics and Analysis, 216 (2014), pp.~473--568.

\bibitem{EspositoFusco}
{\sc L.~Esposito and N.~Fusco}, {\em A remark on a free interface problem with volume constraint}, Journal of Convex Analysis, 18 (2011), pp.~417--426.

\bibitem{Finn}
{\sc R.~Finn}, {\em Equilibrium Capillary Surfaces}, vol.~284 of Grundlehren der mathematischen Wissenschaften [Fundamental Principles of Mathematical Sciences], Springer New York, NY, 1986.

\bibitem{FoFuLeMo23}
{\sc I.~Fonseca, N.~Fusco, G.~Leoni, and M.~Morini}, {\em Global and local energy minimizers for a nanowire growth model}, Annales de l'Institut Henri Poincar\'{e} C. Analyse Non Lin\'{e}aire, 40 (2023), pp.~919--957.

\bibitem{IsopCap}
{\sc N.~Fusco, V.~Julin, M.~Morini, and A.~Pratelli}, {\em The isoperimetric inequality for the capillary energy outside convex sets}.
\newblock Preprint, To appear on Geometric And Functional Analysis (GAFA), 2025.

\bibitem{RigLargeVol}
{\sc N.~Fusco, F.~Maggi, M.~Morini, and M.~Novack}, {\em Rigidity and large volume residues in exterior isoperimetry for convex sets}, Advances in Mathematics, 453 (2024).

\bibitem{FuMo23}
{\sc N.~Fusco and M.~Morini}, {\em Total positive curvature and the equality case in the relative isoperimetric inequality outside convex domains}, Calculus of Variations and Partial Differential Equations, 62 (2023), pp.~Paper No. 102, 32.

\bibitem{Giusti1984}
{\sc E.~Giusti}, {\em Minimal Surfaces and Functions of Bounded Variation}, Monographs in Mathematics, Birkh{\"a}user Boston, MA, 1984.

\bibitem{Ivanov}
{\sc B.~A. Ivanov}, {\em Straight line segments on the boundary of a convex body}, Ukrainskiy Geometricheskiy Sbornik,  (1973), pp.~69--71, 2.

\bibitem{LeRiVe22}
{\sc G.~P. Leonardi, M.~Ritor\'{e}, and E.~Vernadakis}, {\em Isoperimetric inequalities in unbounded convex bodies}, Memoirs of the American Mathematical Society, 276 (2022), pp.~1--86.

\bibitem{maderbaumdicker2025quantitative}
{\sc E.~M{\"a}der-Baumdicker, R.~Neumayer, J.~Park, and M.~Rupflin}, {\em Quantitative estimates for the relative isoperimetric problem and its gradient flow outside convex bodies in the plane}, arXiv preprint arXiv:2508.21198,  (2025).

\bibitem{Ma12}
{\sc F.~Maggi}, {\em Sets of finite perimeter and geometric variational problems}, vol.~135 of Cambridge Studies in Advanced Mathematics, Cambridge University Press, Cambridge, 2012.
\newblock An introduction to geometric measure theory.

\bibitem{MoPoSp22}
{\sc M.~Morini, M.~Ponsiglione, and E.~Spadaro}, {\em Long time behavior of discrete volume preserving mean curvature flows}, Journal für die reine und angewandte Mathematik (Crelles Journal), 2022 (2022), pp.~27--51.

\bibitem{Nardulli2014}
{\sc S.~Nardulli}, {\em Generalized existence of isoperimetric regions in non-compact riemannian manifolds and applications to the isoperimetric profile}, Asian Journal of Mathematics, 18 (2014), pp.~1--28.

\bibitem{PozzettaSurvey}
{\sc M.~Pozzetta}, {\em Isoperimetry on manifolds with ricci bounded below: overview of recent results and methods}, in Anisotropic Isoperimetric Problems and Related Topics, Springer, 2024, pp.~49--89.

\bibitem{ritoreExistenceCharacterizationRegions2004}
{\sc M.~Ritoré and C.~Rosales}, {\em Existence and {Characterization} of {Regions} {Minimizing} {Perimeter} under a {Volume} {Constraint} inside {Euclidean} {Cones}}, Transactions of the American Mathematical Society, 356 (2004), pp.~4601--4622.

\bibitem{Schneider_2013}
{\sc R.~Schneider}, {\em Convex Bodies: The Brunn–Minkowski Theory}, Encyclopedia of Mathematics and its Applications, Cambridge University Press, 2~ed., 2013.

\bibitem{Sternberg-Zumbrun}
{\sc P.~Sternberg and K.~Zumbrun}, {\em On the connectivity of boundaries of sets minimizing perimeter subject to a volume constraint}, Communications in analysis and geometry, 7 (1999), pp.~199--220.

\bibitem{Taylor1977}
{\sc J.~E. Taylor}, {\em Boundary regularity for solutions to various capillarity and free boundary problems}, Communications in Partial Differential Equations, 2 (1977), pp.~323--357.

\end{thebibliography}
\end{document}